\newcommand{\ba}{{\bf a}}
\newcommand{\bd}{{\bf d}}
\newcommand{\bm}{{\bf m}}
\newcommand{\bM}{{\bf M}}
\newcommand{\E}{{\bf E}}
\renewcommand{\P}{{\bf P}}
\newcommand{\ddr}{\mathrm{d}}
\newcommand{\R}{\mathbb{R}}
\newcommand{\N}{\mathbb{N}}
\newcommand{\un}{\mathbf{1}}
\newcommand{\Dpsi}{\psi^{\prime}}
\newcommand{\Dpsistar}{\psi^{*\prime}}
\newcommand{\invpsi}{\psi^{-1}}
\newcommand{\invphi}{\varphi^{-1}}
\newcommand{\invDpsi}{\psi^{\prime-1}}
\newcommand{\Nstar}{\mathcal{N}^*}
\newcommand{\Ma}{\mathcal{M}_a^*}
\newcommand{\cD}{\mathcal{D}}
\newcommand{\cI}{\mathcal{I}}
\newcommand{\cJ}{\mathcal{J}}
\newcommand{\cM}{\mathcal{M}}
\newcommand{\cN}{\mathcal{N}}
\newcommand{\cP}{\mathscr{P}}
\newcommand{\cF}{\mathcal{F}}
\newcommand{\cR}{\mathcal{R}}
\newcommand{\cT}{\mathcal{T}}
\newcommand{\cW}{\mathcal{W}}
\def\z{{\cal Z}}
\newcommand{\veps}{\varepsilon}
\def\w{{\rm w}}
\newcommand{\wh}{\widehat}
\newcommand{\bbP}{\mathbb{P}}
\newcommand{\bbE}{\mathbb{E}}
\newcommand{\bbD}{\mathbb{D}}
\newcommand{\bbZ}{\mathbb{Z}}
\def\cq{$\hfill \square$}
\def\cqfd{$\hfill \blacksquare$}
\newcommand{\ur}{u_{r}}
\newcommand{\urtild}{\tilde{u}_{r}}
\newcommand{\Bor}{B(0,r)}
\newcommand{\supp}{\mathtt{Supp}}
\def\ov{\overline}
\def\noi{\noindent}
\newcommand{\bdelta}{\boldsymbol{\delta}}
\newcommand{\bgamma}{\boldsymbol{\gamma}}
\newcommand{\boldeta}{\boldsymbol{\eta}}
\newtheorem{thm}{Theorem}[section]
\newtheorem{lem}[thm]{Lemma}
\newtheorem{prop}[thm]{Proposition}
\newtheorem{cor}[thm]{Corollary}
\newtheorem{comment}{Comment}[section]}
\newtheorem{rem}{Remark}[section]}
 \def\blemma{\begin{lem}}\def\elemma{\end{lem}}
 \def\bproposition{\begin{prop}}\def\eproposition{\end{prop}}
 \def\btheorem{\begin{thm}}\def\etheorem{\end{thm}}
 \def\bcorollary{\begin{cor}}\def\ecorollary{\end{cor}}
 \def\bremark{\begin{rem}}\def\eremark{\end{rem}}
 \def\bcondition{\begin{condition}}\def\econdition{\end{condition}}
 \def\benumerate{\begin{enumerate}}\def\eenumerate{\end{enumerate}}
 \def\bitemize{\begin{itemize}}\def\eitemize{\end{itemize}}
 \def\beqlb{\begin{eqnarray}}\def\eeqlb{\end{eqnarray}}
 \def\beqnn{\begin{eqnarray*}}\def\eeqnn{\end{eqnarray*}}
\title{{\bf \textsc{Exact packing measure of the range of\\
$\psi$-Super Brownian motions.}}}
\author{ Xan \textsc{Duhalde}\thanks{\textbf{Institution}: PRES Sorbonne Universit\'es, UPMC Universit\'e Paris 06,  LPMA (UMR 7599). \textbf{Postal address}: LPMA, Bo\^ite courrier 188, 4 place Jussieu, 75252 Paris Cedex 05, FRANCE. \textbf{Email}: xan.duhalde@upmc.fr} \and Thomas {\sc Duquesne}\thanks{\textbf{Institution}: PRES Sorbonne Universit\'es, UPMC Universit\'e Paris 06,  LPMA (UMR 7599). \textbf{Postal address}: Bo\^ite courrier 188, 4 place Jussieu, 75252 Paris Cedex 05, FRANCE. \textbf{Email}: thomas.duquesne@upmc.fr}}
\begin{document}
\maketitle
\begin{abstract}
We consider super processes whose spatial motion is the $d$-dimensional Brownian motion and whose branching mechanism $\psi$ is critical or subcritical; such processes are called
$\psi$-super Brownian motions.  
If $d\!>\!2\bgamma/(\bgamma\!-\!1)$, where $\bgamma\!\in\!(1,2]$ 
is the lower index of $\psi$ at $\infty$, then the total range of the $\psi$-super Brownian motion 
has an exact packing measure whose gauge function is 
$g(r)\! =\! (\log\log1/r) / \varphi^{-1} ( (1/r\log\log 1/r)^{2})$, 
where $\varphi\! =\! \psi^\prime\! \circ \! \psi^{\!-1}$. More precisely, we show that the occupation measure of the $\psi$-super Brownian motion is the $g$-packing measure restricted to its total range, up to a deterministic multiplicative constant only depending on $d$ and $\psi$. This generalizes the main result of \cite{Duq09} that treats the quadratic branching case. For a wide class of $\psi$, the constant $2\bgamma/(\bgamma\!-\!1)$ is shown to be equal to the packing dimension of the total range.

\smallskip

\noindent 
{\bf AMS 2000 subject classifications}: Primary 60G57, 60J80. Secondary 28A78.

\smallskip

 \noindent   
{\bf Keywords}: {\it Super-Brownian motion; general branching mechanism; L\'evy snake; total range; occupation measure; exact packing measure.}
\end{abstract}

\section{Introduction.}

  The main result of this paper  provides an exact packing gauge function for the total range of 
super processes whose spatial motion is the $d$-dimensional Brownian motion and whose branching mechanism $\psi$ is critical or subcritical. We call such super processes \textit{$\psi$-super Brownian motions} (or \textit{$\psi$-SBM}, for short). 
This generalizes the main result of \cite{Duq09} that concerns the 
Dawson-Watanabe super process corresponding to the quadratic branching mechanism 
$\psi (\lambda)= \lambda^2$.

Before stating precisely our main results, let us briefly recall previous  works related to 
the fine geometric  properties of super processes. Most of these results concern the Dawson-Watanabe super process $(Z_t)_{t\geq 0}$. Dawson and Hochberg \cite{DaHo79} have proved that a.s.~for 
all $t\! >\! 0$, the Hausdorff dimension of the topological support of $Z_t$ is equal to $2\wedge d$.   
In \cite{DP91}, Dawson and Perkins prove that in supercritical dimensions $d\! \geq 3$, 
the Dawson-Watanabe super process $Z_t$ is a.s.~equal to the $h_1$-Hausdorff measure restricted to the topological support of $Z_t$, where $h_1(r)\! =\!  r^2 \log \log 1/r$ (see also Perkins \cite{Per88, Per89} for a close result holding a.s.~for all times $t$). By use of the Brownian snake, Le Gall and Perkins \cite{LGPe95} prove a similar result 
in the critical dimension $d\! =\! 2$ with the gauge function $h_2 (r) \! =\! r^2\log 1/r\log\log\log 1/r$. 
In \cite{LGPT}, Le Gall, Perkins and Taylor have proved that in dimension $d\! \geq \! 3 $, the topological support of $Z_t$ has no exact packing measure.

Dawson and Hochberg in \cite{DaHo79} also proved that the total range of the Dawson Watanabe super process is a.s.~equal to $4 \wedge d$. 
In \cite{DIP}, Dawson Iscoe and Perkins investigate the fine geometric properties of the total occupation measure ${\bf M}\! = \! \int_0^\infty \! Z_t \, \ddr t$ of the Dawson-Watanabe super process: they prove that in supercritical dimensions $d\! \geq \! 5 $, ${\bf M}$ is a.s.~equal to the $h_3$-Hausdorff measure restricted on the total range of the super process, where 
$h_3(r)\! = \! r^4 \log \log 1/r$. In \cite{LG99}, Le Gall considers the critical dimension $d\! = \! 4$ and he proves a similar result with respect to the gauge function $h_4(r)\! = \!  r^4\log 1/r\log\log\log 1/r$. In \cite{Duq09}, the occupation measure ${\bf M}$ is also shown to coincide a.s.~with the $g_1$-packing measure restricted to the total range of the super process, where $g_1(r)\! =\!  r^4(\log\log 1/r)^{-3}$.

For super Brownian motions whose branching mechanism is general, 
less results are available: in \cite{Del99}, Delmas computes the Hausdorff dimension of super Brownian motions whose branching mechanism is stable; this result is eventually extended in \cite{DuLG05} to general branching mechanism $\psi$ 
thanks to geometric considerations on \textit{$\psi$-Lévy trees}. The $\psi$-Lévy trees are the actual genealogical structures of the $\psi$-SBM; they are compact random real trees coded by the height process (introduced by Le Gall and Le Jan \cite{LGLJ1} and further studied in \cite{DuLG}) and they appear as the scaling limits of Galton-Watson trees; their geometric properties are discussed in \cite{DuLG05, DuLG06, Duq10, Duq12}. In particular, it is proved in \cite{Duq12} that L\'evy trees have an exact packing measure, which is closely related to the main result of our article.

\bigskip

Let us introduce precisely the main results of our paper. We first fix a \textit{branching mechanism} $\psi$ that is critical or subcritical: namely, $\psi: \R_+ \rightarrow \R_+$ is the Laplace exponent of a spectrally positive L\'evy process that is of the following L\'evy-Khintchine form:  
\begin{equation}
\label{LevyKhin}
\forall \lambda \in \R_+, \quad  \psi (\lambda )= \alpha \lambda + \beta \lambda^2 + 
\int_{(0, \infty)}\!\!\!\!\! \!\!  (e^{-\lambda r} -1+\lambda r)\,\pi (\ddr r)  \; , \end{equation}
where $\alpha, \beta  \! \in \! \R_+$, and $\pi$ is the \textit{L\'evy measure} that satisfies $\int_{{(0, \infty)}} (r\wedge r^2)\,\pi (\ddr r) < \infty$. The branching mechanism $\psi$ is the main parameter that governs the law of the processes that are considered in this paper. We introduce two 
exponents that compare $\psi$ with power functions at infinity: 
\begin{equation}\label{defexpointro}
\bgamma = \sup \big\{ c \in \R_+ :  \lim_{^{\lambda \rightarrow \infty}} \psi (\lambda) \lambda^{-c} = \infty  \big\} \, , \quad \boldeta = \inf\{ c \in \R_+ : \lim_{^{\lambda \rightarrow \infty}} \psi (\lambda) \lambda^{-c} = 0 \big\} .
\end{equation}
The \textit{lower index} $\bgamma$ and the \textit{upper index} $\boldeta$ have been introduced by Blumenthal and Getoor \cite{BluGe61}: they appear in the fractal dimensions and the regularity of the processes that we consider. The statements of the paper also involve a third exponent:  
\begin{equation}\label{defdeltaintro}
\bdelta= \sup  \big\{ c\! \in \! \R_+  :  \exists \, C \!\in \!(0, \infty)\; \textrm{such that} \; C \psi (\mu) \mu^{-c} \!  \leq \!  \psi (\lambda) \lambda^{-c}  \, , \, 1 \! \leq \! \mu \! \leq \! \lambda  \big\} 
\end{equation} 
that has been introduced in \cite{Duq12}. It is easy to check that $1\! \leq \!  \bdelta \! \leq \!  \bgamma \! \leq \!  \boldeta \! \leq \!  2$. If $\psi$ is regularly varying at $\infty$, all these exponents coincide. In general, they are however distinct and we mention that there exist branching mechanisms $\psi$ of the form (\ref{LevyKhin}) such that $\bdelta\! = \! 1 \! <\!  \bgamma \! = \! \boldeta$ (see Lemma 2.3 and Lemma 2.4 in \cite{Duq12} for more details). In our paper we shall often assume that $ \bdelta \! > \! 1 $ which is a mild regularity assumption on $\psi$ (see Comment \ref{opti} below).

The space $\R^d$ stands for the usual $d$-dimensional Euclidian space. We denote by $M_f(\R^d)$ the space of finite Borel measures equipped with the topology of weak convergence. For all $\mu \! \in \! M_f(\R^d)$ and for all Borel 
measurable functions $f: \R^d \rightarrow \R_+$, we use the following notation: 
$$ \langle f , \mu \rangle = \int_{\R^d} \!\!\! f(x) \,  \mu (\ddr x) \quad \textrm{and} \quad \langle \mu \rangle  = \mu \big(\R^d \big) \; .  $$
Then, $\langle \mu \rangle $ is the \textit{total mass} of $\mu$. We shall also denote by $\supp (\mu)$ the \textit{topological support of $\mu$} that is the smallest closed subset supporting $\mu$.

Unless the contrary is explicitly mentioned, all the random variables that we consider are defined on the same measurable space $(\Omega,\mathcal{F})$. We first introduce a $\R^d$-valued continuous process $\xi=(\xi_t)_{t\geq 0}$; for all $x\! \in \R^d$, we let $\P_x$ 
be a probability measure on $(\Omega, \cF)$ such that $\xi$ under $\P_x$ 
is distributed as a standard $d$-dimensional Brownian motion starting from $x$. 
We also introduce $Z\! = \! (Z_t)_{t\in \R_+}$ that is a $M_f(\R^d)$-valued c\`agl\`ad process defined on $(\Omega, \cF)$, and for all $\mu \! \in \! M_f(\R^d)$, we let $\bbP_\mu$ be a probability measure on $(\Omega, \cF)$ such that $Z$ under $\bbP_\mu$ is distributed as \textit{super Brownian motion with branching mechanism $\psi$}. Namely, under $\bbP_\mu$, $Z$ is a Markov process whose transitions are characterized as follows: for all bounded Borel measurable functions $f: \R^d \rightarrow \R_+$ and for all $s, t\! \in \! \R_+ $, 
\begin{equation}
\label{transition}
\textrm{$\bbP_\mu$-a.s.} \quad \bbE_\mu  \big[ \exp ( - \langle Z_{t+s}, f \rangle ) \, \big| \, Z_s \big]=\exp ( - \langle Z_s , v_t \rangle ),
\end{equation}
where the function $(v_t(x))_{t\in \R_+,x\in \R^d}$ is the unique nonnegative
solution of the integral equation
$$v_t(x)+  \E_x \Big[ \int_0^t \! \! \psi \big( v_{t-s}(\xi_s) \big) \, \ddr s \Big]  \, = \E_x\left[f(\xi_t)\right] \; , \quad x \in \R^d , \; t \in [0, \infty).$$
Dawson-Watanabe super processes correspond, up to scaling in time and space, to the branching mechanism $\psi(\lambda)= \lambda^2$. Super diffusions with general branching mechanisms of the form (\ref{LevyKhin}) have been introduced by Dynkin \cite{Dyn91bis}; for a detailed account on super processes,  we refer to the books of Dynkin \cite{Dynbook1, Dynbook2}, Le Gall \cite{LGZurich}, Perkins \cite{PerSF}, Etheridge \cite{Eth00} and Li \cite{Libook}. 

  We easily check that, under $\bbP_\mu$, 
the process $(\langle Z_t \rangle)_{t\geq 0}$ of the total mass of the $\psi$-SBM is a \textit{continuous states branching process} with branching mechanism $\psi$. Continuous states branching processes have been introduced by Jirina \cite{Ji58} and Lamperti \cite{Lamperti1, Lamperti2}, and further studied by Bingham \cite{Bi2}. 
The assumption $\bdelta \! >\! 1$, implies $\bgamma \! >\! 1$, which easily entails 
$\int^\infty \ddr \lambda / \psi (\lambda) \! <\! \infty $ that is called the \textit{Grey condition}. Under this condition, standard results on continuous states branching processes (see Bingham \cite{Bi2}) 
imply that $\langle Z \rangle$ is absorbed in $0$ in finite time: namely,   
$\bbP_\mu (\exists t \in \R_+ : Z_t = 0 )= 1$. Thus the following definition makes sense: 
\begin{equation}
\label{defboldM}
\bM= \int_0^\infty \!\!\! Z_t \,  \ddr t \; 
\end{equation}
and ${\bf M}$ is therefore a random finite Borel measure on $\R^d$: it is the \textit{occupation} measure of the $\psi$-SBM $Z$. We also define the \textit{total range} of $Z$ by 
\begin{equation}
\label{defboldR}
 {\bf R} = \bigcup_{\varepsilon >0} \overline{ \bigcup_{ t \geq \varepsilon }  \supp (Z_t) }  \; , 
\end{equation}
where for any subset $B$ in $\R^d$, $\overline{B}$ stands for its closure. We recall here a result due to Sheu \cite{She94} that gives a condition on $\psi$ for ${\bf R}$ to be bounded: 
\begin{equation}
\label{Sheucond} 
\textrm{$\bbP_\mu$-a.s.~${\bf R}$ is bounded} \quad \Longleftarrow \!\! =\!\! = \!\! \Longrightarrow\quad 
\int_{1}^\infty \!\! \!\! \frac{\ddr b}{\sqrt{\int_1^b \psi (a) \ddr a }} < \infty  \;\;  \textrm{and $\supp (\mu)$ is compact.}
 \end{equation}
See also Hesse and Kyprianou \cite{HeKy14} for a simple probabilistic proof. Note that if $\bgamma \! >\! 1$, then (\ref{Sheucond}) holds true.

We next denote by $\dim_H$ and $\dim_p$ respectively the \textit{Hausdorff and the packing dimensions} on $\R^d$. We also denote by $\underline{\dim}$ and $\overline{\dim}$ the \textit{lower and the upper box dimensions}. We refer to Falconer  \cite{Falbook} for precise definitions. We next 
recall Theorem 6.3 \cite{DuLG05} that asserts that 
for all $\mu \! \in \! M_f(\R^d)$ distinct from the null measure, the following holds true.   
\begin{equation}\label{dimHRintro}
\textrm{If $\bgamma>1$, then $\bbP_\mu$-a.s.}\qquad 
\dim_H\left({\bf R}\right)=d\wedge\frac{2\boldeta}{\boldeta-1}.
\end{equation}
If furthermore $\supp (\mu)$ is compact, then ${\bf R}$ is bounded (by (\ref{Sheucond})) and Theorem 6.3 \cite{DuLG05} also asserts that $\bbP_\mu$-a.s.~$\underline{\dim}\left({\bf R}\right)=d\wedge\frac{2\boldeta}{\boldeta-1}$. As already mentioned (\ref{dimHRintro}) generalizes the work of Delmas  \cite{Del99} that treats SBMs whose branching mechanism is stable. Note that Assumption $\bgamma \! >\! 1$ is not completely satisfactory for $\dim_H\left({\bf R}\right)$ only depends on $d$ and $\boldeta$ (see Proposition 5.7 \cite{DuLG05} and the discussion in Section 5.3 of this article). 
The first result of our paper computes the packing dimension of ${\bf R}$ under more restrictive assumptions.   
\begin{thm}
\label{thdimension}
Let $\mu \! \in \! M_f(\R^d)$ be distinct from the null measure. 
Let $\psi$ be of the form (\ref{LevyKhin}). Let ${\bf R}$ be the total range of the $\psi$-SBM with initial measure $\mu$, as defined in (\ref{defboldR}). 
Assume that $\bdelta \! >\! 1$ and that $d \! > \! \frac{2\bdelta}{\bdelta-1}$. Then, 
\begin{equation}\label{dimPRintro}
\textrm{$\bbP_\mu$-a.s.} \quad \dim_p(\bf{R})=\frac{2 \bgamma}{\bgamma-1}.
\end{equation}
If furthermore $\supp (\mu)$ is compact, then $\bbP_\mu$-a.s.~$\overline{\dim}(\bf{R}) =\frac{2 \bgamma}{\bgamma-1}$.  
\end{thm}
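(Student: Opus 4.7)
The two bounds $\dim_p({\bf R}) \le \frac{2\bgamma}{\bgamma-1}$ and $\dim_p({\bf R}) \ge \frac{2\bgamma}{\bgamma-1}$ have quite different flavors; I would prove them separately.

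For the \textbf{upper bound}, I would establish the sharper statement $\overline{\dim}({\bf R}) \le \frac{2\bgamma}{\bgamma-1}$ almost surely when $\supp(\mu)$ is compact; this yields the compact-support upper box-dimension claim and, by countable stability of $\dim_p$ and a decomposition of a general $\mu$ as a countable sum of compactly supported measures, the general upper bound on $\dim_p({\bf R})$. I would use the $\psi$-L\'evy snake representation of the $\psi$-SBM, in which ${\bf R}$ appears as the continuous image of a compact random set $\mathcal{T}$ (the $\psi$-L\'evy tree) under a spatial map $W$ whose paths are essentially Brownian. The key ingredients are (a) a covering-number estimate for $\mathcal{T}$ at tree-scale $\delta$, of order $\delta^{-\bgamma/(\bgamma-1)}$ up to slowly varying corrections, obtained from the height-process fluctuations governed by the index $\bgamma$ as in \cite{DuLG, Duq12}, and (b) a uniform modulus-of-continuity estimate for $W$ of order $\sqrt{\delta}$ up to logarithmic factors. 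Combining these at tree-scale $\delta = r^2$ yields a covering of ${\bf R}$ by $r^{-2\bgamma/(\bgamma-1)+o(1)}$ balls of radius $r$, hence the bound.

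For the \textbf{lower bound}, I would invoke the standard density characterization of packing dimension: if $\nu$ is a nonzero finite Borel measure on $\R^d$ such that $\liminf_{r \to 0} \nu(B(x,r))/r^s < \infty$ for $\nu$-a.e.\ $x$, then $\dim_p(\supp \nu) \ge s$. Applied with $\nu = {\bf M}$, whose support is contained in ${\bf R}$ and whose total mass is $\bbP_\mu$-a.s.\ positive, the task reduces to showing, for each $s < \frac{2\bgamma}{\bgamma-1}$, that the preceding $\liminf$ is finite ${\bf M}$-a.e., $\bbP_\mu$-a.s. By Fatou and Fubini, this further reduces to a two-point estimate of the form
$$\bbE_\mu\!\left[ \int_{\R^d} {\bf M}(B(x,r_n))\, {\bf M}(\ddr x) \right] \le C\, r_n^{2s}$$
for some sequence $r_n \downarrow 0$. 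Through the Palm formula for ${\bf M}$ derived from the L\'evy snake, this estimate decomposes into Brownian hitting probabilities in $\R^d$ and moments of the height process, both controlled by the tools of \cite{DuLG, Duq12}.

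The \textbf{main obstacle} is the sharpness of this lower bound: the target exponent is dictated by the lower index $\bgamma$, so the small-ball estimates for ${\bf M}$ must catch this rate without loss even though $\psi$ may oscillate between different power behaviors. The assumption $d > \frac{2\bdelta}{\bdelta-1}$ enters precisely at this point, ensuring that the multi-point Brownian hitting estimates close up so that tree-level oscillations translate into distinct spatial locations without overlap corrections; the residual algebraic manipulations are standard consequences of the defining inequalities of $\bgamma$ and $\bdelta$ in (\ref{defexpointro}) and (\ref{defdeltaintro}).
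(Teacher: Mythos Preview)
Your upper bound strategy is essentially that of the paper: the H\"older regularity of the endpoint process $\widehat W$ (Lemma~\ref{snakhold}) gives $\overline{\dim}(\cR)\le \frac{2\bgamma}{\bgamma-1}$ directly, and countable stability of $\dim_p$ handles a general $\mu$.

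The lower bound, however, has a genuine gap. Your proposed first-moment reduction
\[
\bbE_\mu\!\left[\int_{\R^d}{\bf M}(B(x,r_n))\,{\bf M}(\ddr x)\right]\le C\,r_n^{2s}
\]
(first of all the exponent should be $s$, not $2s$, for the Fatou argument you describe) is in general \emph{infinite} for every $r>0$. Via the Palm formula~(\ref{corPalm}) the diagonal part of this two-point function equals $\int_0^\infty e^{-\alpha a}\,\E_0[\cM_a^*(B(0,r))]\,\ddr a$, and conditioning on $(\xi,V)$ gives $\E_0[\cM_a^*(B(0,r))\mid\xi,V]=\int_0^a G_\alpha(\xi_t,r)\,\ddr V_t$ with $G_\alpha(\cdot,r)>0$ everywhere. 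Taking expectation over the subordinator $V$ with Laplace exponent $\psi^{*\prime}$ produces the factor $\E_0[V_1]=\psi''(0+)=2\beta+\int_0^\infty r^2\pi(\ddr r)$, which is infinite as soon as the L\'evy measure has infinite second moment --- for instance in every stable case with index $<2$. So the two-point estimate you need simply does not hold, and no choice of subsequence can rescue it.

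The paper's route is therefore not a refinement but a genuinely different mechanism: instead of controlling an expectation, it shows the \emph{almost-sure} finiteness of $\liminf_{n}\cM_a^*(B(0,s_n))/s_n^{u}$ by a Kochen--Stone second-moment argument (Lemma~\ref{Maru}), along a subsequence $(s_n)$ built in Lemma~\ref{lemmeTgammabis} to catch the scales where $\varphi^*$ sits near its $\bgamma$-index behaviour. The assumption $d>\frac{2\bdelta}{\bdelta-1}$ enters not through ``multi-point hitting estimates closing up'' but quite concretely through Lemma~\ref{Jbornee}, which makes the additive functional $J(r)$ bounded and hence gives the uniform positive lower bound on $\P_0\big(N_r(\vartheta(2r),a+\vartheta(2r))=0\big)$ in Lemma~\ref{minoprobadelta}; this is what makes the Kochen--Stone correlation bound work.
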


Let us set $\varphi= \psi^\prime \circ \psi^{-1}$. The main properties of that increasing function are stated in Section \ref{sectionexpo}. Here, we just notice that the reciprocal function of $\varphi$, that is denoted by $\varphi^{-1} $, is defined from $[\alpha, \infty)$ to $[0, \infty)$. Then, we set
\begin{equation} 
\label{gaugedef}    
 g (r) = \frac{\log \log\frac{1}{r}}{\varphi^{-1} \!\! \left( (\frac{1}{r} \log \log \frac{1}{r})^2 \right)} \; , \; r\in (0, r_0) 
\end{equation}
where $r_0\! = \! \min (\alpha^{-1/2} , e^{-e})$, with the convention $\alpha^{-1/2}\! =\!  \infty$ if 
$\alpha\! = \! 0$. We check (see Section \ref{sectionexpo}) that $g$ is a continuous increasing function 
such that $\lim_{0+} g= 0$. 

 We next denote by $\cP_g$ the $g$-packing measure on $\R^d$, whose definition is recalled in Section 
\ref{sectionpacking}. The following theorem is the main result of the paper.
\begin{thm}
\label{mainth} Let $\mu \! \in \! M_f(\R^d)$ be distinct from the null measure. Let 
$\psi$ be of the form (\ref{LevyKhin}).  Let $Z$ be a $\psi$-SBM starting from $\mu$; let 
${\bf R}$ be its total range, as defined by (\ref{defboldR}), and let ${\bf M}$ be its occupation measure, as defined by (\ref{defboldM}). 
Let $g$ be defined by (\ref{gaugedef}). Assume that 
$$\bdelta >1\quad \textrm{and} \quad d  >  \frac{2\bgamma}{\bgamma-1}\; .$$ 
Then, there exists a positive constant $\kappa_{d,\psi} $ that only depends on $d$ and $\psi$ such that 
\begin{equation}
\label{paacc}
\textrm{$\bbP_\mu$-a.s.} \qquad \bM = \kappa_{d,\psi}\,   \cP_g ( \, \cdot  \cap {\bf R} ) \; .
\end{equation}
\end{thm}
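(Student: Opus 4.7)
My plan is to adapt the strategy of \cite{Duq09} (which treats the case $\psi(\lambda)=\lambda^2$) to a general branching mechanism. The first step is to reduce the measure-theoretic identity (\ref{paacc}) to a pointwise density statement. By a classical comparison theorem between packing measures and the lower $g$-density (of Taylor--Tricot type), if one can show that, $\bbP_\mu$-a.s., the lower density
\[
D(x)\;:=\;\liminf_{r\to 0^+}\frac{\bM(B(x,r))}{g(r)}
\]
is equal to a deterministic positive constant for $\bM$-a.e.\ $x$, then (\ref{paacc}) follows, $\kappa_{d,\psi}$ being the reciprocal of this density up to an explicit geometric factor absorbed in its definition.

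To establish this density identity, I would work in the $\psi$-L\'evy snake framework of Le Gall--Le Jan and Duquesne--Le Gall: the $\psi$-SBM $Z$ is coded by a snake $(H_t,\xi_t)$ in which $H$ is the height process of a $\psi$-excursion and $\xi_t$ is the spatial position of the snake head, so that $\bM$ is the pushforward of Lebesgue measure by $t\mapsto\xi_t$. For a typical point $x=\xi_t$ chosen according to $\bM$, a Bismut-type spine decomposition of the $\psi$-L\'evy tree represents $\bM(B(x,r))$ as a sum of independent contributions from subtree-excursions grafted along the ancestral line from the root to $t$, with joint law explicitly driven by the excursion measure of the snake and a two-sided Brownian motion along the spine. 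The Laplace transform of each contribution is characterized by the semilinear equation $\tfrac{1}{2}\Delta u=\psi(u)-\lambda\mathbf{1}_{B(x,r)}$; the assumption $d>2\bgamma/(\bgamma-1)$ keeps this equation in the subcritical regime, while $\bdelta>1$ provides the uniform two-sided regularity of $\psi$ needed for the scaling analysis of $u_\lambda$. These estimates single out $1/\varphi^{-1}((1/r)^2)$ as the natural scale of $\bM(B(x,r))$ and show that its logarithmic fluctuations are of order $\log\log(1/r)$, producing precisely the gauge $g$ defined in (\ref{gaugedef}).

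With these bounds, the two halves of the liminf are proved separately. For the lower bound $D(x)\ge\kappa_{d,\psi}^{-1}$, the Laplace-transform estimates yield exponential concentration of $\bM(B(x,r_n))/g(r_n)$ around its typical value along a geometric scale $r_n=\rho^n$; a standard Borel--Cantelli argument then propagates the bound from dyadic scales to all $r$. For the matching upper bound $D(x)\le\kappa_{d,\psi}^{-1}$, the independence of the grafted subtree-excursions along the spine, combined with an appropriate 0--1 law, provides almost surely infinitely many scales $r_n$ at which $\bM(B(x,r_n))$ is only of order $g(r_n)$. The main obstacle, in contrast with \cite{Duq09}, is the absence of exact scaling: whereas the quadratic case reduces the equation $\tfrac{1}{2}\Delta u=\psi(u)$ to a one-parameter family of explicit special solutions, here one must translate the regularity exponents $\bdelta,\bgamma,\boldeta$ into sharp two-sided bounds on $u_\lambda$ expressed through $\varphi^{-1}$. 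These quantitative PDE estimates will dictate both the precise form of the gauge $g$ and the existence of the (non-explicit, but deterministic) constant $\kappa_{d,\psi}$.
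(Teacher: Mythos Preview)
Your overall strategy is close to the paper's, and the use of the Palm/spine decomposition together with a Blumenthal 0--1 law to identify the lower density as a deterministic constant is exactly right. However, there is a genuine gap in your reduction step. You claim that once
\[
\liminf_{r\to 0^+}\frac{\bM(B(x,r))}{g(r)}=\kappa_{d,\psi}\quad\text{for $\bM$-a.e.\ $x$}
\]
is established, (\ref{paacc}) follows from a Taylor--Tricot type comparison. This is not correct as stated: the relevant comparison result (Edgar's lemma, recalled as Lemma~\ref{equalitycomp}) requires the lower density to equal $\kappa$ at \emph{every} point of the set under consideration, not merely $\cM$-almost everywhere. The set $\mathtt{Bad}\subset\cR$ where the density differs from $\kappa_{d,\psi}$ is $\cM$-null, but a priori it could carry positive $\cP_g$-mass, in which case $\cM$ and $\kappa_{d,\psi}\,\cP_g(\cdot\cap\cR)$ would disagree on it. The paper closes this gap by a separate argument (Lemma~\ref{badpoint}) showing $\cP_g(\mathtt{Bad})=0$; this in turn rests on a chain of first-moment estimates (Lemmas~\ref{BP1}, \ref{BPnouveau}, \ref{BP2}) bounding, for each small dyadic cube, the $\N_x$-probability that the snake hits the cube yet deposits abnormally little mass nearby. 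These ``bad point'' estimates are substantial, use the Markov property of the snake at the hitting time $\tau_r$, and do not follow from the $\cM$-a.e.\ density statement; your sketch omits them entirely.

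A secondary point of comparison: for the positivity $\kappa_{d,\psi}>0$ the paper does \emph{not} argue via exponential concentration of $\bM(B(x,r))/g(r)$ and Borel--Cantelli. Instead it imports the analogous lower-density result for the mass measure of the $\psi$-L\'evy tree (Lemma~\ref{lemmetree}, proved in \cite{Duq12}) and transfers it to the snake via the conditional Gaussian law of $\widehat W$ given $H$. Your proposed direct route through PDE concentration estimates might also succeed, but you should be aware that the paper leans on this earlier tree result rather than reproving it in the spatial setting.
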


The paper is organised as follows. In Section \ref{secnotations}, we recall definitions and basic results.  Section \ref{sectionpacking} is devoted to packing measures and to two 
comparison lemmas that are standard technical tools used to compute packing measures. 
Section \ref{sectionexpo} gather elementary facts on the power exponents $\bdelta, \bgamma$ and $\boldeta$ associated with the branching mechanism $\psi$. 
In Section \ref{sectionheight} and in Section \ref{levysnake}, we recall the 
definitions of -- and various results on -- the 
$\psi$-height process, the corresponding $\psi$-Lévy tree and  the associated $\psi$-Lévy snake. In Section \ref{secestimates}, we prove estimates on a specific subordinator (Sections \ref{secsubord} and Section \ref{sectionsubord}) and on functionals of the snake involving the hitting time of a given ball (Section \ref{sectionhitting} and Section \ref{sectionbadpts}). Section \ref{sectionproofs} is devoted to the proof of the two main theorems:  
we prove Theorem \ref{mainth}  first and Theorem \ref{thdimension} next.

\section{Notations, definitions and preliminary results.}\label{secnotations}
\subsection{Packing measures.}
\label{sectionpacking}
In this section, we briefly recall basic results on packing measures on the Euclidian space $\R^d$ that have been introduced by Taylor and Tricot in \cite{TaTr}.

  A \textit{gauge function} is an increasing continuous function $g: \! (0, r_0)\!  \rightarrow \! (0, \infty)$, where $r_0 \! \in \! (0, \infty)$, such that $\lim_{0+} g \! =\!  0$ and that 
satisfies a \textit{doubling condition}: namely, there exists $C \! \in \! (0, \infty)$ such that 
\begin{equation}
\label{doubling}
\forall r \in (0, r_0/2), \quad g(2r) \leq Cg(r) \; .
\end{equation}   
Let $B \! \subset \! \R^d$ and let $\varepsilon \!  \in \! (0, \infty)$. Recall that a (closed) \textit{$\varepsilon$-packing of $B$} is a finite 
collection of pairwise disjoint closed ball $(\overline{B}(x_m, r_m))_{1\leq m \leq n}$ whose centers $x_m$ belong to $B$ and whose radii $r_m$ are not greater than $\varepsilon$.  We then set 
\begin{equation}
\label{preprepac}
 \cP^{(\varepsilon)}_g (B) = \sup \Big\{\! \! \!\!  \sum_{\; \; \; 1\leq m \leq n}\!\!\!\!  g(r_m) \; ;  \; \left( \overline{B}(x_m, r_m)\right)_{1\leq m \leq n}  ,   \; \textrm{$\varepsilon$-packing of $B$}  \Big\}   
 \end{equation}
and 
\begin{equation}\label{packingpremeadef}
\cP^*_g (B)= \lim_{\varepsilon \rightarrow 0+}  \cP^{(\varepsilon)}_g (B) \; \in  \; [0, \infty] \; , 
\end{equation}
that is called the \textit{$g$-packing pre-measure of $B$}. 
The \textit{$g$-packing measure of $B$} is then given by 
\begin{equation}
\cP_g (B)= \inf \Big\{  \sum_{n \geq 0} \cP^*_g (B_n) \; ; \; B \! \subset \! \bigcup_{n \geq 0} B_n \;  \Big\} \; .
\end{equation}

\begin{rem}
\label{slightdif}
The definition (\ref{preprepac}) of $\cP^{(\varepsilon)}_g$ that we adopt here is slightly different from the one introduced by Taylor and Tricot \cite{TaTr} who take the infimum of $\sum_{m=1}^n g(2r_m) $ over $\varepsilon$-packings with open balls. However, since $g$ is increasing, continuous and satisfies a doubling condition (\ref{doubling}), the resulting measure is quite close to Taylor and Tricot's definition: the difference is irrelevant to our purpose and their main results on packing measures immediately apply to the $g$-packing measures defined by (\ref{packingpremeadef}). \cq 
\end{rem}

Next recall from Lemma 5.1 \cite{TaTr} that $\cP_g$ is a Borel-regular outer measure. Moreover, it is obvious from the definition that for any subset $B\subset \R^d$, 
\begin{equation}
\label{premeabound}
\cP_g (B) \leq \cP^*_g (B) \; . 
\end{equation}
Next, if $B$ is a $\cP_g$-measurable set such that $\cP_g (B) \! <\!  \infty$, then for any $\varepsilon \! >\! 0$, 
there exists a closed subset $F_\varepsilon \! \subset \! B$ such that 
\begin{equation}
\label{innerreg}
\cP_g (B) \leq \cP_g (F_\varepsilon) + \varepsilon \; .
\end{equation}
We recall here Theorem 5.4 \cite{TaTr} that is a standard comparison result for packing measures.
\begin{thm}[Theorem 5.4 \cite{TaTr}]
\label{TaTrcomparesu} Let $\mu$ be a finite Borel measure on $\R^d$. Let $B$ be a Borel subset of $\R^d$. Let $g$ be a gauge function satisfying a doubling condition (\ref{doubling}) with a constant $C\! >\! 0$. Then, the following holds true. 
\begin{itemize} 
\item[(i)]  If $\,  \liminf_{r \rightarrow 0} \frac{\mu (B(x, r))}{g(r )} >1\, $ for any $x \! \in\! B$, then $\mu (B) \geq \cP_g (B)$. 
\item[(ii)] If $\, \liminf_{r \rightarrow 0} \frac{\mu (B(x, r))}{g(r)} < 1\,  $ for any $x \! \in \! B$, then 
$ \mu(B) \leq C^2 \cP_g (B)  $. 
\end{itemize}
\end{thm}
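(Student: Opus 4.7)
The plan is to use the density lower bound to majorize the $g$-packing pre-measure of pieces of $B$ by $\mu$. For each $x\in B$, the strict inequality $\liminf_{r\downarrow 0}\mu(B(x,r))/g(r)>1$ provides a threshold $\rho(x)>0$ such that $g(r)<\mu(\overline{B}(x,r))$ whenever $r\leq \rho(x)$. I would stratify by setting, for each $n\geq 1$,
$$B_n=\big\{x\in B:\, g(r)<\mu(\overline{B}(x,r))\text{ for every }r\leq 1/n\big\},$$
so that $(B_n)$ is non-decreasing and $B=\bigcup_{n\geq 1}B_n$. For any $\epsilon$-packing $(\overline{B}(x_m,r_m))_{1\leq m\leq p}$ centered in $B_n$ with $\epsilon\leq 1/n$, disjointness of the balls yields
$$\sum_{m=1}^{p}g(r_m)\leq \sum_{m=1}^{p}\mu(\overline{B}(x_m,r_m))\leq \mu\bigl(B_n^{\epsilon}\bigr),$$
where $B_n^{\epsilon}$ denotes the closed $\epsilon$-neighborhood of $B_n$. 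Taking the supremum over packings and then letting $\epsilon\downarrow 0$ yields $\cP_g^*(B_n)\leq \mu(\overline{B_n})$. Applying $\sigma$-subadditivity of $\cP_g$ along the covering $B\subset\bigcup_n B_n$, together with the finiteness and outer-regularity of $\mu$ in order to replace $\mu(\overline{B_n})$ by $\mu(B)$ in the limit, would then give $\cP_g(B)\leq \mu(B)$.

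\textbf{Part (ii).} Here I would run a Vitali-covering argument driven by the upper density bound. The hypothesis yields, for each $x\in B$, a sequence $r_k(x)\downarrow 0$ with $\mu(\overline{B}(x,r_k(x)))<g(r_k(x))$, so the family $\{\overline{B}(x,r_k(x)):x\in B,\,k\geq 1\}$ is a fine cover of $B$. Given an arbitrary Borel cover $B\subset\bigcup_n C_n$, Vitali's covering theorem applied to the Radon measure $\mu$ inside each $C_n$ produces a countable disjoint subfamily $(\overline{B}(x_i^{(n)},r_i^{(n)}))_i$ with centers in $B\cap C_n$ whose union covers $\mu$-almost every point of $B\cap C_n$. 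Hence
$$\mu(B\cap C_n)\leq \sum_i \mu\bigl(\overline{B}(x_i^{(n)},r_i^{(n)})\bigr)\leq \sum_i g\bigl(r_i^{(n)}\bigr).$$
Every finite partial sum on the right is bounded by $\cP_g^{(\epsilon)}(C_n)$ for any $\epsilon\geq \max_{i\leq N}r_i^{(n)}$, and after standard manipulations -- slightly enlarging radii so that the resulting packings are genuinely centered in $C_n$, and absorbing $g(2r)$ into $Cg(r)$ through (\ref{doubling}) -- one gets $\sum_i g(r_i^{(n)})\leq C^2\,\cP_g^*(C_n)$. Summing over $n$ and taking the infimum over Borel coverings of $B$ then produces $\mu(B)\leq C^2\,\cP_g(B)$.

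\textbf{Main obstacle.} The delicate step is the accounting of doubling constants in part (ii) when re-expressing the Vitali disjoint family as a legitimate input of the pre-measure $\cP_g^*(C_n)$: that pre-measure is defined via \emph{finite} packings centered in $C_n$, whereas Vitali yields a \emph{countable} disjoint collection whose balls may protrude outside $C_n$. Each correction costs one application of (\ref{doubling}), which is the origin of the $C^2$ factor. A secondary, routine concern is the appeal to Vitali's theorem for a general Radon measure on $\mathbb{R}^d$; this applies here because $\mu$ is a finite Borel measure, hence Radon. Finally, part (i) requires a small but genuine argument to pass from $\cP_g^*(B_n)\leq \mu(\overline{B_n})$ (valid on the monotone exhaustion $B_n\uparrow B$) to $\cP_g(B)\leq \mu(B)$, for which one exploits that $\cP_g$ is an outer measure while $\cP_g^*$ is not countably subadditive.
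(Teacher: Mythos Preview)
The paper does not supply its own proof of this theorem: it is quoted verbatim as Theorem~5.4 of Taylor and Tricot \cite{TaTr} and used as a black box (see Section~\ref{sectionpacking}). There is therefore nothing in the paper to compare your argument against.

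That said, your sketch is essentially the classical Taylor--Tricot proof. A couple of points deserve care if you want to turn this into a complete argument. In part~(i), from $\cP_g^*(B_n)\le \mu(\overline{B_n})$ you cannot simply sum over $n$ and invoke $\sigma$-subadditivity of $\cP_g$, because the $B_n$ are nested and $\sum_n \mu(\overline{B_n})$ is typically infinite; the correct step is $\cP_g(B)=\lim_n \cP_g(B_n)\le \lim_n \cP_g^*(B_n)\le \lim_n \mu(B_n^{\varepsilon})$, and one must argue that the closures $\overline{B_n}$ do not pick up extra $\mu$-mass outside $B$ (or, more cleanly, replace $\mu(\overline{B_n})$ by $\mu(U)$ for an open $U\supset B$ with $\mu(U)$ close to $\mu(B)$, using outer regularity). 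In part~(ii), your accounting of the doubling constant is correct in spirit, but the phrase ``slightly enlarging radii so that the resulting packings are genuinely centered in $C_n$'' is not quite what happens: one typically first intersects with an open set and uses that the Vitali balls are already centered in $B\cap C_n\subset C_n$; the two factors of $C$ arise from passing between open and closed balls and from the radius-doubling implicit in Taylor--Tricot's original definition (cf.\ Remark~\ref{slightdif}).
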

\noi
We also recall the following specific result due to Edgar in \cite{Edg07}, Corollary 5.10. 
\begin{lem}[Corollary 5.10 \cite{Edg07}]
\label{equalitycomp}Let $\mu$ be a finite Borel measure on $\R^d$. Let $\kappa \!  \in \! (0, \infty)$ and let $B$ be a Borel subset of $\R^d$ such that 
$$ \forall x \in B  , \quad \liminf_{r \rightarrow 0+} \frac{\mu (B(x,r))}{g(r)} = \kappa  \; .$$
Then, $\mu (B)= \kappa\cP_g (B) $.
\end{lem}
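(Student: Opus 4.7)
My plan is to prove the two matching inequalities $\mu(B) \geq \kappa \cP_g(B)$ and $\mu(B) \leq \kappa \cP_g(B)$. The lower bound is an immediate consequence of Theorem \ref{TaTrcomparesu}(i): for any $\varepsilon \in (0,\kappa)$, applying that result to the finite Borel measure $\mu_\varepsilon = \mu/(\kappa-\varepsilon)$ yields $\liminf_{r\to 0+} \mu_\varepsilon(B(x,r))/g(r) = \kappa/(\kappa-\varepsilon) > 1$ on $B$, whence $\mu(B) \geq (\kappa-\varepsilon)\cP_g(B)$; sending $\varepsilon \downarrow 0$ gives the desired bound.

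The upper bound is more delicate, since a direct application of Theorem \ref{TaTrcomparesu}(ii) to $\mu/(\kappa+\varepsilon)$ only produces $\mu(B) \leq C^2 \kappa\, \cP_g(B)$, losing the doubling constant. To sharpen this I would prove the following intermediate claim: for every Borel $B' \subset B$ and every $\varepsilon > 0$,
\begin{equation*}
\mu(B') \leq (\kappa+\varepsilon)\,\cP^*_g(B').
\end{equation*}
Given this, any countable cover $B \subset \bigcup_n A_n$ satisfies $\mu(B) \leq \sum_n \mu(B \cap A_n) \leq (\kappa+\varepsilon)\sum_n \cP^*_g(A_n)$ (using $\cP^*_g(B \cap A_n) \leq \cP^*_g(A_n)$), and infimizing then letting $\varepsilon \downarrow 0$ yields $\mu(B) \leq \kappa\cP_g(B)$. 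To prove the claim, I fix $\eta > 0$ small and use the hypothesis to assign to every $x \in B'$ a radius $r(x) \in (0,\eta)$ with $\mu(\overline{B}(x,r(x))) \leq (\kappa+\varepsilon)g(r(x))$; this produces a fine cover of $B'$ by closed balls. Besicovitch's covering theorem applied to the finite Borel (hence Radon) measure $\mu$ on $\R^d$ then extracts a countable disjoint subfamily $(\overline{B}(x_i,r_i))_{i \geq 1}$ satisfying $\mu(B' \setminus \bigcup_i \overline{B}(x_i,r_i)) = 0$. Summing yields
\begin{equation*}
\mu(B') \leq \sum_i \mu(\overline{B}(x_i,r_i)) \leq (\kappa+\varepsilon)\sum_i g(r_i),
\end{equation*}
and since every finite truncation of $(\overline{B}(x_i,r_i))_i$ is an $\eta$-packing of $B'$, one obtains $\sum_i g(r_i) \leq \cP^{(\eta)}_g(B') \leq \cP^*_g(B')$, establishing the claim.

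The main obstacle is the Besicovitch extraction step, which requires the strong Vitali property of the Radon measure $\mu$ -- namely that a fine cover of $B'$ by closed balls can be reduced to a pairwise disjoint subcover exhausting $B'$ up to a $\mu$-null set. This is precisely Besicovitch's covering theorem in $\R^d$, and it is what permits the doubling constant $C^2$ present in Theorem \ref{TaTrcomparesu}(ii) to be removed so that the sharp constant $\kappa$ can be attained.
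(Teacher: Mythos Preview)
Your approach is correct in spirit and in fact supplies more detail than the paper, which gives no proof at all: the lemma is simply quoted as Corollary 5.10 of Edgar \cite{Edg07}, and Remark \ref{StrVitProp} explains that Edgar's result applies because every finite Borel measure on $\R^d$ enjoys the Strong Vitali Property by Besicovitch \cite{Besi45}. Your argument is essentially a direct unpacking of that citation, and you correctly identify the Besicovitch/Vitali extraction as the crucial point.

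There is, however, one slip in your write-up. In the final line of the upper-bound argument you assert $\cP^{(\eta)}_g(B') \leq \cP^*_g(B')$. This inequality goes the wrong way: by definition $\cP^{(\varepsilon)}_g$ is nonincreasing as $\varepsilon \downarrow 0$, so $\cP^*_g(B') = \lim_{\varepsilon \to 0+} \cP^{(\varepsilon)}_g(B') \leq \cP^{(\eta)}_g(B')$ for every fixed $\eta > 0$. The fix is immediate and is already implicit in your setup: your Vitali construction works for \emph{every} $\eta > 0$ (the cover is fine), so you actually obtain $\mu(B') \leq (\kappa+\varepsilon)\,\cP^{(\eta)}_g(B')$ for all $\eta > 0$, and letting $\eta \downarrow 0$ yields $\mu(B') \leq (\kappa+\varepsilon)\,\cP^*_g(B')$ as required. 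With that correction the proof is complete.
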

\begin{rem}
\label{StrVitProp} 
The main purpose of Edgar's article \cite{Edg07} is to deal with fractal measures in metric spaces with respect to possibly irregular gauge functions. Corollary 5.10 \cite{Edg07} (stated here as Lemma \ref{equalitycomp}) holds true in this general setting if $\mu$ satisfies the so called \textit{Strong Vitali Property} (see \cite{Edg07} p.43 
for a definition and a discussion of this topic). A result due to Besicovitch \cite{Besi45} ensures that any finite measure on $\R^d$ enjoys the Strong Vitali Property. 
Therefore, Lemma \ref{equalitycomp} is an immediate consequence of Edgar's Corollary 5.10 \cite{Edg07}.  \cq 
\end{rem}
\noi
We finally recall the definition of the \textit{packing dimension}: let $\alpha\! \in \!(0, \infty)$; we simply write $\cP_\alpha$ instead of $\cP_g$ when 
$g(r)=r^\alpha$, $r\! \in (0, \infty)$. Let $B\! \subset \! \R^d$. Then, the \textit{packing dimension of $B$}, denoted by $\dim_p(B)$ is the unique real number in $[0, d]$ such that 
\begin{equation}
\label{dimpacdef}
\cP_\alpha(B)=\infty \; \textrm{if }\; \alpha<\dim_p (B) \quad \textrm{and} \quad \cP_\alpha(B)=0 \; \textrm{if }\; \alpha> \dim_p (B)\; .
\end{equation}

\subsection{Exponents.}
\label{sectionexpo}
In this section we briefly recall from \cite{Duq12} several results 
relating power exponents associated with $\psi$ to properties of the gauge function $g$ introduced 
in (\ref{gaugedef}). Recall that the branching mechanism $\psi$ has the L\'evy-Khintchine form (\ref{LevyKhin}). It is well-known that 
$\psi^\prime $ is the Laplace exponent of a subordinator, just like $\psi^{-1}$, the reciprocal of $\psi$. 
Thus, $\varphi= \psi^\prime \circ \psi^{-1}$ is also the Laplace exponent of a subordinator.
Note that $\psi^\prime (0)= \alpha$. As already mentioned, the reciprocal function of 
$\varphi $, denoted by $\varphi^{-1} $, is then defined from $[\alpha, \infty)$ to $[0, \infty)$. 
We also introduce the function $\widetilde{\psi} (\lambda) = \psi (\lambda ) / \lambda$ that easily shown to be also 
the Laplace exponent of subordinator. Next observe that 
$1/ \varphi$ is the derivative of $\psi^{-1}$ and recall that $\psi$ is convex and that $\psi^\prime$, $ \widetilde{\psi}$, 
$\psi^{-1}$ and $\varphi$ are concave. In particular, this implies $\widetilde{\psi}(2\lambda) \leq 2 \widetilde{\psi} (\lambda)$ and the following 
\begin{equation}
\label{convexpsi}
\psi (2\lambda )\! \leq \! 4 \psi (\lambda) , \;  \,     \widetilde{\psi} (\lambda) \! \leq \! 
\psi'(\lambda) \! \leq \! \frac{\psi (2\lambda) -\psi (\lambda)}{\lambda}\! \leq \! 4 \widetilde{\psi} (\lambda)  \; \,  {\rm and } \; \,     \frac{\lambda}{\psi^{-1} (\lambda) } \! \leq \! \varphi (\lambda) \! \leq \! \frac{4\lambda}{\psi^{-1} (\lambda) }. 
\end{equation}
Let $\phi: [0, \infty) \rightarrow [0, \infty)$ be a continuous increasing function. We agree on $\sup \emptyset = 0$ and $\inf \emptyset = \infty $ and we define the following exponents that compare $\phi$ with power functions at infinity: 
\begin{equation}\label{defgamma}
\gamma_{\phi} = \sup \big\{ c \in \R_+ :  \lim_{^{\lambda \rightarrow \infty}} \phi (\lambda) \lambda^{-c} = \infty  \big\} \, , \quad 
\eta_{\phi} = \inf\{  c \in \R_+ : \lim_{^{\lambda \rightarrow \infty}} \phi (\lambda) \lambda^{-c} = 0 \big\} \; . 
\end{equation}
Then, $\gamma_{\phi} $ (resp.~$\eta_\phi$) is the lower exponent (resp.~the upper) 
of $\phi$ at $\infty$. We also introduce the following exponent
\begin{equation} \label{defdelta}
\delta_{\phi}= \sup  \big\{  c \in \R_+   :  \exists \, C \!\in \!(0, \infty)\; \textrm{such that} \; C \phi (\mu) \mu^{-c}  \! \leq \! \phi (\lambda) \lambda^{-c}  , \; 1 \! \leq \! \mu \! \leq \! 
\lambda  \big\}  
\end{equation}
that plays a important role for the regularity of the gauge function. Thus by (\ref{defexpointro}) and (\ref{defdeltaintro}) 
$$\bgamma = \gamma_{\psi} \, , \quad  \boldeta= \eta_{\psi}\, ,  \quad \bdelta= \delta_\psi \; .$$ 
 It is easy to check that $1\! \leq \!  \bdelta \! \leq \!  \bgamma \! \leq \!  \boldeta \! \leq \!  2$. If $\psi$ is regularly varying at $\infty$, all these exponents coincide. In general, they are however distinct and we mention that there exist branching mechanisms $\psi$ of the form (\ref{LevyKhin}) such that $\bdelta\! = \! 1 \! <\!  \bgamma \! = \! \boldeta$: see Lemma 2.3 and Lemma 2.4 in \cite{Duq12} for more detail. As a direct consequence of (\ref{convexpsi}) we have 
$\delta_{\widetilde{\psi}} \! =\! \delta_{\psi'} \! =\!  \bdelta \! -\! 1$, $\gamma_{\widetilde{\psi}} \! =\! \gamma_{\psi'} \! =\!   \bgamma \! -\! 1$ and $\eta_{\widetilde{\psi}} \! =\! 
\eta_{\psi^\prime}\! = \! \boldeta \! -\! 1 $. 
Moreover, we get 
$\delta_\varphi= (\bdelta -1)/\bdelta$, $\gamma_{\varphi}=  (\bgamma -1)/\bgamma$ and $\eta_{\varphi}= (\boldeta -1)/\boldeta$. 

Recall from (\ref{gaugedef}) the definition of the function $g$. The arguments of the proof of Lemma 2.3 $(i)$ in \cite{Duq12} can be immediately adapted to prove that $g: \! (0, r_0) \! \rightarrow \! (0, \infty)$ is is an increasing continuous function such that $\lim_{0+} g= 0$ and such that it satisfies the following. 
\begin{itemize} 
\item[($i$)] The function $g$ satisfies the doubling condition (\ref{doubling}) iff $\bdelta \! >\! 1$. 
\item[($ii$)] If $\psi$ is regularly varying at $\infty$ with exponent $c \! >\! 1$, then $\bdelta \! = \! \bgamma \! =\!  \boldeta=c$ and $g$ is regularly varying at $\infty$ 
with exponent 
$c/(c-1)$. 
\end{itemize}
We shall further need the following bound that is a consequence of (\ref{convexpsi}). 
\begin{lem}\label{convexBP}
Let $g$ the gauge function defined by (\ref{gaugedef}). Let $c \! \in \! (0, \infty)$. 
Then there exists $r(c) \! \in \! (0, r_0) $ that only depends on $c$ such that 
$$\forall r\in(0,r (c)),\quad g(r)\, \invDpsi \! (c/r^2)\leq 4r^2 \; .$$
\end{lem}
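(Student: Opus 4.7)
The plan is to unfold the definition of $g$ using the identity $\varphi^{-1} = \psi \circ \psi^{\prime -1}$, which is immediate from $\varphi = \psi^{\prime} \circ \psi^{-1}$. With the shorthands $L = \log\log(1/r)$ and $v = \psi^{\prime -1}(L^2/r^2)$, the gauge function rewrites cleanly as $g(r) = L/\psi(v)$, while $v$ is characterised by the relation $\psi^{\prime}(v) = L^2/r^2$. Setting $u = \psi^{\prime -1}(c/r^2)$, the inequality to be proved becomes
\[
\frac{L\,u}{\psi(v)} \;\leq\; 4r^2.
\]

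The key computational input is the universal convexity bound $\psi^{\prime}(\lambda) \leq 4\widetilde{\psi}(\lambda) = 4\psi(\lambda)/\lambda$ recorded in (\ref{convexpsi}). Applied at $\lambda = v$ and combined with $\psi^{\prime}(v) = L^2/r^2$, it gives
\[
\frac{v}{\psi(v)} \;\leq\; \frac{4}{\psi^{\prime}(v)} \;=\; \frac{4r^2}{L^2},
\qquad \text{hence}\qquad \frac{L v}{\psi(v)} \;\leq\; \frac{4r^2}{L}.
\]
This is the only piece of analytic content, and it is entirely deterministic.

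It remains to replace $v$ by $u$ in the numerator. By monotonicity of $\psi^{\prime -1}$, this reduces to ensuring $u \leq v$, i.e.\ $c/r^2 \leq L^2/r^2$, i.e.\ $L \geq \sqrt{c}$. For $r$ small enough (depending only on $c$) this holds, and simultaneously $L \geq 1$, since $L = \log\log(1/r) > 1$ as soon as $r < e^{-e}$, which is guaranteed by $r < r_0$. Putting the pieces together,
\[
g(r)\,u \;=\; \frac{L u}{\psi(v)} \;\leq\; \frac{L v}{\psi(v)} \;\leq\; \frac{4r^2}{L} \;\leq\; 4r^2,
\]
and it suffices to choose $r(c)$ to be any positive number smaller than $\min\bigl(r_0,\, \exp(-e^{\sqrt{c}\vee 1})\bigr)$. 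Along the way one checks that $c/r^2$ and $L^2/r^2$ both lie in the domain $[\alpha,\infty)$ of $\psi^{\prime -1}$, which is automatic from $r < r_0 \leq \alpha^{-1/2}$ and $L \geq 1$. There is no genuine obstacle: the whole statement is a compact repackaging of the convexity bound in (\ref{convexpsi}) through the substitution $y \mapsto \psi(\psi^{\prime -1}(y))$.
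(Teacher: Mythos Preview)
Your proof is correct and follows essentially the same route as the paper's: both unfold $g$ via $\varphi^{-1}=\psi\circ\psi^{\prime-1}$, use the monotonicity of $\psi^{\prime-1}$ to replace $c/r^2$ by $(\frac{1}{r}\log\log\frac{1}{r})^2$ once $\log\log(1/r)\geq \sqrt{c}$, and then invoke the convexity bound $\psi^{\prime}\leq 4\widetilde{\psi}$ from (\ref{convexpsi}) to conclude. Your notation with $L,u,v$ makes the computation more transparent; the only cosmetic difference is that the paper writes the key step through $\widetilde{\psi}$ rather than directly as $v/\psi(v)\leq 4/\psi^{\prime}(v)$.
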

\begin{proof}
Take $r(c)\! \in \! (0, r_0)$ such that $\log\log 1/r(c) \! \geq \! 1 \! \vee \! \sqrt{c}$. Thus, 
$$ r\in(0,r (c)),\quad   \invDpsi (cr^{-2}) \! \leq \! \invDpsi ( (r^{-1} \log \log r^{-1})^2) \; .$$ Recall that $\varphi^{-1}\! \! = 
\! \psi  \circ  \invDpsi$. 
By comparing $\widetilde{\psi}$ and $\Dpsi$ thanks to (\ref{convexpsi}), we get for all $r \! \in \! (0,r (c))$
\begin{eqnarray*}
g(r)\, \invDpsi(cr^{-2})&  = & \frac{ \invDpsi(cr^{-2}) \log \log \frac{1}{r} }{\psi ( \psi^{\prime -1} ((\frac{1}{r} \log \log \frac{1}{r} )^2 )}   \leq 
\frac{ \invDpsi ( (\frac{1}{r} \log \log \frac{1}{r} )^2) \log \log \frac{1}{r} }{\psi ( \psi^{\prime -1} ((\frac{1}{r} \log \log \frac{1}{r} )^2 )}  \\
&\leq & \frac{\log\log \frac{1}{r}}{\widetilde{\psi}\left(\invDpsi((\frac{1}{r} \log \log \frac{1}{r} )^2)\right)}\leq \frac{4r^2}{\log\log\frac{1}{r}}\leq 4 r^2,  
\end{eqnarray*}
which is the desired result. \cqfd
\end{proof}

\medskip

\subsection{Height process and Lévy trees.}
\label{sectionheight}
In this section we recall the definition of the height process that encodes L\'evy trees. 
The L\'evy trees are the scaling limit of Galton-Watson trees and they are the genealogy of super-Brownian 
motion.

\paragraph{The height process.} Recall that $\psi$ stands for a branching mechanism of the form (\ref{LevyKhin}). 
{\it We always assume that $\bgamma \! >\! 1$}. 
It is convenient to work on the canonical space $\bbD ([0, \infty), \R )$ of c\`adl\`ag paths equipped with Skorohod topology and the corresponding Borel sigma-field. We denote by  $X= (X_t)_{t \geq 0}$ the canonical process and by $\bbP$ the distribution of the spectrally positive L\'evy processes starting from $0$ whose 
Laplace exponent is $\psi$. Namely, 
$$ \forall t, \lambda \in [0, \infty), \quad \bbE \big[ \! \exp (-\lambda X_t) \big] = \exp (t \psi (\lambda) )\; .
$$
Note that the specific form (\ref{LevyKhin}) of $\psi$ implies that $X_t$ is integrable and that $\bbE [X_t] \! = \! - \alpha t$. This easily entails that $X$ does not drift to $\infty$. Conversely, if a spectrally positive L\'evy process does not drift to $\infty$, then its Laplace exponent is necessarily of the form (\ref{LevyKhin}).  
We shall assume that $\bgamma \! >\! 1$ which easily implies that either $\beta \! >\! 0$ (and $\bgamma \! = \! 2$) 
or $\int_{{(0, 1)}} r \pi (dr) \! =\!  \infty$. 
It entails that $\bbP$-a.s.$\, X$ has unbounded variation paths: see Bertoin \cite{Bebook}, Chapter VII, Corollary 5 $(iii)$.

Note that $\bgamma \! >\! 1$ entails $\int^\infty \! d\lambda / \psi (\lambda) \! < \! \infty$ and,  
as shown by Le Gall and Le Jan \cite{LGLJ1} (see also \cite{DuLG}, Chapter 1), there exists a {\it continuous process} $H= (H_t )_{ t \geq 0}$ 
such that for any $t \in [0, \infty)$, the following limit holds true in $\bbP$-probability: 
\begin{equation}
\label{Hlimit}
H_t=\lim_{\veps\to 0} \frac{1}{\veps}\int_0^t {\bf 1}_{\{I^s_t<X_s<I^s_t+\veps\}}\,\ddr s , 
\end{equation}
where $I^s_t$ stands for $\inf_{s\leq r\leq t} X_r$. 
The process $H= (H_t)_{t \geq 0}$ is called the $\psi$-{\it height process}; it turns out to encode the genealogy of 
super-Brownian motion with branching mechanism $\psi$ as explained below.  
We shall need the following result that is proved in \cite{DuLG}. 
\begin{lem}[\cite{DuLG} Theorem 1.4.4]\label{holderh} 
Assume that $\bgamma \! >\! 1$. Then for every 
$c\in(0,\frac{\bgamma -1}{\bgamma})$, $H$ is $\bbP$-a.s.~locally $c$-Hölder continuous. 
\end{lem}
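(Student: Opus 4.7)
The plan is to apply Kolmogorov's continuity criterion, so the whole task reduces to establishing sharp $L^p$ bounds on the increments of $H$.

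\textbf{Step 1: Reduction to bounding $\bbE[H_t^p]$.} First I would argue that for $0 \le s \le t$ the increment $|H_t - H_s|$ can be stochastically controlled by the height process of the post-$s$ path. More precisely, using the strong Markov property of $X$ at time $s$, together with the identity $I^u_t = \inf(I^u_s \wedge X_s, \inf_{[s,t]} X)$ in the definition (\ref{Hlimit}), one can write
\[
|H_t - H_s| \le \widetilde{H}_{t-s},
\]
where $\widetilde{H}$ is the height process constructed from the shifted Lévy process $(X_{s+u} - X_s)_{u \ge 0}$ (which is independent of $\cF_s$ and has the same law as $X$). Modulo this comparison, it is enough to estimate $\bbE[H_t^p]$ for arbitrary integers $p \ge 1$.

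\textbf{Step 2: Moment bound via time-reversal and the ladder height subordinator.} Taking expectation in (\ref{Hlimit}) and using Fubini,
\[
\bbE[H_t^p] = p!\lim_{\veps \to 0} \veps^{-p} \int_{0 < s_1 < \dots < s_p < t} \bbP\!\left[\bigcap_{i=1}^p \{I^{s_i}_t < X_{s_i} < I^{s_i}_t + \veps\}\right] ds_1 \cdots ds_p.
\]
For each $s_i$ I would apply the time-reversal identity of Lévy processes, $(X_t - X_{(t-u)^-})_{0 \le u \le t} \stackrel{d}{=} (X_u)_{0 \le u \le t}$, which turns $X_{s_i} - I^{s_i}_t$ into $S_{t-s_i} - X_{t-s_i}$, the reflection of $X$ at its running supremum. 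The joint probability then decouples along the upward ladder structure of $X$, and the limits $\veps \to 0$ are identified with the potential densities at $0^+$ of the ascending ladder height subordinator. Since the Laplace exponent of that subordinator is comparable (up to constants) to $\widetilde{\psi}(\lambda) = \psi(\lambda)/\lambda$, its potential density at the origin is of order $1/\psi^{-1}(1/u)$ on scale $u$. Combining these estimates gives, after computing the resulting iterated integral, a bound of the form
\[
\bbE[H_t^p] \le C_p \left( \int_0^t \frac{du}{\psi^{-1}(1/u)} \right)^p.
\]

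\textbf{Step 3: Exponent extraction and conclusion.} Now I would use the lower index assumption: for any $\veps \in (0, \bgamma - 1)$ there exists $c > 0$ such that $\psi(\lambda) \ge c\lambda^{\bgamma - \veps}$ for $\lambda$ large, hence $\psi^{-1}(y) \le c' y^{1/(\bgamma - \veps)}$ for $y$ large. Plugging this into the integral yields, for small $t$,
\[
\bbE[|H_t - H_s|^p] \le \bbE[H_{t-s}^p] \le K_{p,\veps}\,(t-s)^{p(\bgamma - 1 - \veps)/(\bgamma - \veps)}.
\]
Choosing $p$ large enough that this exponent exceeds $1$ and applying the Kolmogorov–Chentsov criterion produces a continuous modification of $H$ that is locally Hölder of every order strictly less than $(\bgamma - 1 - \veps)/(\bgamma - \veps) - 1/p$. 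Since $\veps$ can be taken arbitrarily small and $p$ arbitrarily large, any exponent $c < (\bgamma - 1)/\bgamma$ is attained, and continuity of the limit (\ref{Hlimit}) in probability identifies this modification with $H$ itself.

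\textbf{Main obstacle.} The delicate point is Step~2: the $p$-fold iterated probability is not a product, and the clean factorisation used above relies on carefully splitting the interval $[s_{i-1}, s_i]$ at the local infima of $X$ and invoking the Markov property at those stopping times, so that each factor reduces to the local-time-at-$0^+$ density of a reflected Lévy process. Matching this density to $1/\psi^{-1}(1/u)$ with a \emph{uniform} constant (and not just up to logarithmic losses) is what ultimately produces the sharp exponent $(\bgamma - 1)/\bgamma$ rather than a strictly smaller one.
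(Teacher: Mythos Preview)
The paper does not give a proof of this lemma; it is quoted from \cite{DuLG}, Theorem~1.4.4. Your overall strategy --- moment bounds on $H_t$ followed by Kolmogorov's criterion --- is indeed the one used there, but Step~1 contains a genuine error.

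The pathwise bound $|H_t - H_s| \le \widetilde{H}_{t-s}$ is false. Already for $\psi(\lambda)=\lambda^2$, where $H$ is a constant multiple of $X-I$: if $X$ attains a new global minimum at time $t$, then $H_t=0$ and also $\widetilde H_{t-s}=0$ (since $X_t=\inf_{[s,t]}X$), while $H_s = X_s - I_s$ can be arbitrarily large. The decrease of $H$ on $[s,t]$ is governed by how far below $X_s$ the L\'evy process travels, and that quantity is not controlled by the height process of the shifted path. The correct reduction goes through the tree metric (\ref{distH}): setting $m_{s,t}=\inf_{[s,t]}H$, one has $|H_t-H_s|\le (H_t-m_{s,t})+(H_s-m_{s,t})$, and each summand is shown in \cite{DuLG} to have the law of $H_{t-s}$, giving $\bbE\big[|H_t-H_s|^p\big]\le 2^p\,\bbE\big[H_{t-s}^{\,p}\big]$, after which the Kolmogorov argument goes through. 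A smaller slip in Step~2: the density of $-I_u$ at $0^+$, which is what time-reversal actually produces, is of order $\psi^{-1}(1/u)$, not its reciprocal --- with your formula the stable case $\psi(\lambda)=\lambda^\gamma$ would give $\bbE[H_t]\le Ct^{(\gamma+1)/\gamma}$, which for small $t$ is strictly smaller than the true order $t^{(\gamma-1)/\gamma}$ and so cannot be an upper bound; your Step~3 is in fact consistent with the correct density, so this looks like a transcription error rather than a conceptual one.
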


\paragraph{Excursions of the height process.} We denote by $I$ is the infimum process of $X$: 
$$\forall t \in \R_+, \quad  I_t= \inf_{0\leq r\leq t} X_r \; .$$
When $\psi$ is of the form $\psi (\lambda) \! =\!  \beta \lambda^2$, $X$ is distributed as a Brownian motion and (\ref{Hlimit}) easily implies that $H$ is proportional to $X\! -\! I$, which is distributed as a reflected Brownian motion. In the general cases, $H$ is neither a Markov process nor a martingale. However it is possible to develop an excursion theory for $H$ as follows.

Since $X$ has unbounded variation sample paths, basic results of 
fluctuation theory (see Bertoin \cite{Bebook}, Sections VI.1 and VII.1) entail that $X\! -\! I$ is a strong Markov process in $[0, \infty)$ and that $0$ is regular for 
$(0, \infty)$ and recurrent with respect to this Markov process. Moreover, $\! -I$ 
is a local time at $0$ for $X\! -\! I$ (see Bertoin \cite{Bebook}, Theorem VII.1). 
We denote by $N$ the corresponding excursion 
measure of $X\! -\! I$ above $0$. More precisely, 
we denote by $(a_j, b_j)$, $j \! \in \! \cI$, the excursion intervals of $X\! -\! I$ above $0$ and we define the corresponding excursions by 
$X^j \! = \! X_{(a_j +\,  \cdot )\wedge b_j} \! -\! I_{a_j}$, $j \! \in \! \cI$.  
Then, $\sum_{j\in \cI} \delta_{(-I_{a_j}, X^j)}$ is a Poisson point measure on $[0, \infty) \! \times \! \bbD([0, \infty), \R)$ with intensity 
$\ddr x \, N (\ddr X)$.

  Next observe that under $\bbP $, the value of $H_t$ only depends on the excursion of $X\! -\! I$ straddling time $t$ and we easily check that 
$$\bigcup_{^{j\in \cI}} (a_j, b_j)= \{ t \geq 0: H_t >0 \} \; .$$ 
This allows to define 
the height process under $N$ as a certain measurable function $H(X)$ of $X$. 
We denote by $C(\R_+, \R)$ the space of the continuous functions from $[0, \infty)$ to $\R$ equipped with the topology of the uniform convergence on every compact subsets of $[0, \infty)$; by convenience, we shall slightly abuse notation by 
denoting by $H \! =\!  (H_t )_{ t\geq 0}$ the {\it canonical process on $C(\R_+, \R)$} and by denoting by $N(\ddr H)$ the "distribution" of the height process $H(X)$ associated with $X$ under the excursion measure $N (\ddr X)$. 
Then we derive from the previous results the following Poisson decomposition of the height process $H(X)$ associated with $X$ under $\bbP$: 
for any $j\in  \cI$, set $H^j =H_{(a_j+\, \cdot )\wedge b_j} $; then, under $\bbP$, the point measure 
\begin{equation}
\label{Poissheight}
\sum_{j\in \cI} \delta_{(-I_{a_j},H^j)}
\end{equation}
is distributed as a Poisson point measure on $[0, \infty) \! \times \! C(\R_+, \R)$ with intensity $\ddr x\,  N(\ddr H)$. For more details, we refer to \cite{DuLG}, Chapter 1.

\smallskip

We denote by $\sigma$ the \textit{duration} of $X$ under its excursion measure $N$ (with an obvious definition). It is easy to check that $H$ and $X$ under $N$ have the same duration and that the following holds true.  
$$ \textrm{$N$-a.e.} \quad \sigma < \infty\, , \quad H_0=H_{\sigma}=0 \quad \textrm{and} \quad H_t >0  \Longleftrightarrow t \in (0, \sigma) \; .$$ 
Basic results of fluctuation theory (see Bertoin \cite{Bebook}, Chapter VII) also entail: 
\begin{equation}
\label{lifetimeexc}
\forall \lambda\in (0, \infty) \quad N \big[ 1\! -\! e^{-\lambda \sigma}  \big]= \psi^{-1} (\lambda ). 
\end{equation}

\noindent
{\bf Local times of the height process.} 
We recall  from \cite{DuLG}, Chapter 1, Section 1.3, 
the following result: there exists a jointly measurable process $(L^a_s)_{a, s \in [0, \infty)}$ 
such that $\bbP$-a.s.~for any $a \! \in \! [0, \infty) $, $s \mapsto L^a_s$ 
is continuous, non-decreasing and such that 
 \begin{equation}
 \label{approxtpsloc}
 \forall \, t,  a \geq 0 , \quad  \lim_{\varepsilon \rightarrow 0} \bbE \left[  \sup_{ 0\leq s \leq t} \left| \frac{1}{\varepsilon} \int_0^s \!\!\!  \ddr r \, 
\un_{\{ a < H_r \leq a+ \varepsilon \}} -L_s^a \right| \right] =0\; . 
\end{equation}
The process $(L^a_s)_{s \in [0, \infty)}$ is called the {\it $a$-local time of $H$}. Recall that $I$ stands for the infimum process of $X$. One can show 
(see \cite{DuLG05}, Lemma 1.3.2) that for fixed $t$, $L^0_t=-I_t$. Moreover, one can 
observe that the support of the random Stieltjes measure $\ddr L^{a}_{{\cdot }}$ is contained in the closed set $\{t \geq 0:H_t=a\}$. 

  A general version of the {\it Ray-Knight theorem for $H$} asserts the following. For any $x\geq 0$, set $T_x=\inf \{ t\geq 0 \; :\;
X_t=-x\}$; then, {\it the process $(L^a_{T_x} \; ;\; a \geq 0)$ is  distributed as a  continuous-states branching process CSBP with branching mechanism $\psi$ and initial state $x$} (see Le Gall and Le Jan \cite{LGLJ1}, Theorem 4.2, and also \cite{DuLG}, Theorem 1.4.1).

\medskip

 It is possible to define the local times of $H$ under the excursion measure $N$ as follows. For any $b >0$, let us  set $v(b)=N( \sup_{^{t \in [0, \sigma ]}} H_t > b )$. Since $H$ is continuous, the Poisson decomposition (\ref{Poissheight}) implies that $v(b) < \infty$, for any $b >0$. 
It is moreover clear that $v$ is non-increasing and that $\lim_{\infty} v= 0 $. Then, for every $a\in (0, \infty)$, we define a continuous increasing process $(L^a_t)_{t\in [0, \infty)} $, such that for every
$b \in (0,\infty)$ and for any $t \! \in \!  [0, \infty)$, one has 
\begin{equation}
\label{localapprox}
\lim_{\varepsilon \rightarrow 0} \, 
N  \Big[  \un_{\{\sup H>b \}} \!\!  \sup_{ 0\leq s \leq t\wedge \sigma} \Big|  \frac{1}{\varepsilon} \int_0^s
\!\!\! \ddr r \, 
\un_{\{ a-\varepsilon< H_r \leq a\}} -L_s^a \Big| \, \Big] =0.
\end{equation}
We refer to \cite{DuLG}{ Section 1.3} for more details. Note that $N$-a.e.~$L^a_t=L_\sigma^a$ for all $t\geq \sigma$. The process $(L^a_t)_{t\in [0, \infty)} $ 
is the {\it $a$-local time of the excursion of the height process}.

\paragraph{L\'evy trees.} We briefly explain how the height process $H$ under its excursion measure $N$ can be viewed as the contour process of a tree called the L\'evy tree.  
Recall that $\sigma$ is the duration of $H$ under $N$. For any $s,t\in[0,\sigma]$, we set
\begin{equation}
\label{distH}
d(s,t)= H_t +H_s -2  \inf_{u\in [s\wedge t , s\vee t]} H_u \; .
\end{equation}
The quantity $d(s,t)$ represents the distance between the points corresponding to $s$ and $t$ in the L\'evy tree. Indeed $d$ is obviously symmetric in $s$ and $t$ and 
we easily check that $d$ satisfies the triangle inequality. 
Two real numbers $s,t \! \in \! [0, \sigma] $ correspond to the same point in 
the L\'evy tree iff $d(s, t) \! = \! 0$, which is denoted by $s\sim t$. Observe that $\sim$ is an equivalence relation. The L\'evy tree is then given by the quotient set 
$$\cT = [0, \sigma ] / \sim \; .$$ 
Then, $d$ induces a true metric on $\cT$ that we keep denoting $d$. 
Denote by $p : [0,\sigma]\rightarrow \cT$ the \textit{canonical projection}. 
Since $H$ is continuous $N$-a.e., so is $p$, which implies that $(\cT, d)$ 
is a random compact connected metric space. More specifically, $(\cT, d)$ is $N$-a.e~a compact 
\textit{real-tree}, namely a compact metric space such that any two points are connected by a unique self-avoiding path, that turns out to be geodesic: see \cite{DuLG05} for more details on L\'evy trees viewed as real-trees. 

The \textit{mass measure} of the L\'evy tree $\cT$, denoted by $\bm$, 
is the pushforward measure of the Lebesgue measure $\ell$ on $[0,\sigma]$ via the canonical projection $p$. Namely, $N$-a.e.~for all Borel measurable function $f: \cT \rightarrow \R_+$,  
$$ \langle \bm ,   f \rangle = \int_0^\sigma \ell (\ddr t) \, f( p(t)) \; .$$ 
One can show that $N$-a.e.~the mass measure is diffuse; obviously its topological support is $\cT$ and $\bm (\cT)\! =\!  \sigma$. We refer to \cite{DuLG05} for more details. 

Let $r\! \in (0, \infty)$ and let $t \! \in\! [0,\sigma]$. Let $B(p(t),r)$ denote the open ball in $(\cT, d)$ with center $p(t)$ and radius $r$. Then the 
mass measure of $B(p(t),r)$ in $(\cT, d)$ is then given by 
\begin{equation}\label{masstypicalball}
\ba(t,r):=\bm \left(B(p(t),r)\right)=\int_0^{\sigma} \!\! \!  \ell (\ddr s) \, \un_{\{d(s,t)\leq r\}} \; .
\end{equation}  
We shall need the following result on the lower density of $\bm$ at typical points that is proved in 
\cite{Duq12}, Theorem 1.2. To that end, we set 
\begin{equation}\label{defk}
\forall r\in(0, \alpha \wedge e^{-e}),\quad k(r):=\frac{\log\log\frac{1}{r}}{\varphi^{-1}(\frac{1}{r}\log\log
\frac{1}{r})}.
\end{equation}

\begin{lem}[\cite{Duq12} Theorem 1.2]\label{lemmetree}
Let $\psi$ be a branching mechanism of the form (\ref{LevyKhin}). Let $\sigma$ the the duration of the height process $H$ under its excursion measure $N$. 
Let $k$ be as in (\ref{defk}). 
Assume that $\bdelta \! > \! 1$. Then, there exists a constant $\Cl[c]{clemmetree} 
\! \in \! (0,\infty)$ 
that only depends on $\psi$ such that 
$$\textrm{$N$-a.e.~for $\ell$-almost all $t \in [0,\sigma]$}\qquad  \liminf\limits_{r\rightarrow0}\frac{\ba(t,r)}{k(r)}\geq \Cr{clemmetree} \; ,$$
where $\ba(t,r)$ is defined by (\ref{masstypicalball}) for all $r\! \in \! (0, \infty)$ and for all $t \! \in \! [0, \sigma]$.  
\end{lem}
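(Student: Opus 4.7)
My approach combines the Bismut spine decomposition of the height process $H$ under $N$ with a Laplace-transform / Borel--Cantelli argument along a geometric sequence $r_n\to 0$. For any nonnegative measurable $F$, the averaged quantity $N[\int_0^\sigma \ell(\ddr t)\, F(H,t)]$ admits a Bismut-type decomposition: the spinal height $H_t$ has a density proportional to Lebesgue measure on $\R_+$, and conditionally on $H_t=h$ the process $H$ on each side of $t$ is obtained by grafting, at heights in $[0,h]$, two independent Poisson point measures of subexcursions under $N$. The ball $B(p(t),r)\subset\cT$ then contains all grafted subexcursions with $\sup H^{(i)}\leq r/2$ attached to the last $r/2$ units of spine, hence
\[
\ba(t,r) \;\geq\; \sum_{i\in J_r} \sigma^{(i)}\, \un_{\{\sup H^{(i)}\leq r/2\}},
\]
where $\{(\sigma^{(i)},H^{(i)})\}_{i\in J_r}$ are the corresponding excursions and $J_r$ indexes the graftings on the spinal segment $[H_t-r/2,H_t]$.

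\textbf{Laplace-transform estimate.} By the exponential formula for Poisson point measures and the identity $N[1-e^{-\lambda\sigma}]=\invpsi(\lambda)$, the conditional Laplace transform of $\ba(t,r)$ satisfies
\[
\bbE\bigl[e^{-\lambda\ba(t,r)}\,\big|\,\mathrm{spine}\bigr] \;\leq\; \exp\!\Bigl(-\tfrac{r}{2}\,\Psi_r(\lambda)\Bigr),\quad \Psi_r(\lambda):= N\bigl[(1-e^{-\lambda\sigma})\,\un_{\{\sup H\leq r/2\}}\bigr].
\]
The crucial quantitative comparison is $\Psi_r(\lambda)\geq c\,\invpsi(\lambda)$ as soon as $r\,\invDpsi(\lambda)\gtrsim 1$, proved via the first-passage subordinator of $H$ (whose Laplace exponent is related to $\varphi=\Dpsi\circ\invpsi$): in this range, the excursions of height $>r/2$ contribute only a small fraction of $\invpsi(\lambda)=N[1-e^{-\lambda\sigma}]$.

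\textbf{Optimization and Borel--Cantelli.} Setting $\lambda(r):=\psi\bigl(A\,\invDpsi((1/r)\log\log 1/r)\bigr)$ with $A$ large (so that $r\,\invDpsi(\lambda(r))=A\log\log 1/r$), Markov's inequality combined with the preceding bound, Lemma \ref{convexBP}, and the definitions (\ref{gaugedef}), (\ref{defk}) yield, for some $c>0$,
\[
N\Bigl[\int_0^\sigma \ell(\ddr t)\,\un_{\{\ba(t,r)<c\,k(r)\}}\un_{\{\sup H\leq B\}}\Bigr] \;\leq\; C_B\,\bigl(\log 1/r\bigr)^{-2}.
\]
Applied along $r_n:=e^{-n}$ the bound is summable, so Fubini and Borel--Cantelli entail that, for $N$-a.e.~$H$ with $\sup H\leq B$ and $\ell$-a.e.~$t$, $\ba(t,r_n)\geq c\,k(r_n)$ for all large $n$. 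Letting $B\to\infty$ and invoking the monotonicity of $r\mapsto\ba(t,r)$ together with the doubling property of $k$---itself a consequence of $\bdelta>1$, see Section \ref{sectionexpo}---upgrades the sequential bound to the claimed continuous liminf.

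\textbf{Main obstacle.} The critical ingredient is the truncated Laplace lower bound $\Psi_r(\lambda)\geq c\,\invpsi(\lambda)$ in the regime $r\,\invDpsi(\lambda)\asymp \log\log 1/r$. It requires a careful analysis of the joint law of $(\sigma,\sup H)$ under $N$ via the first-passage subordinator, and is precisely the calibration that forces the gauge $k$ to involve $\invDpsi$ evaluated at $(1/r)\log\log 1/r$. Once this estimate is in hand, the remaining steps reduce to routine optimizations.
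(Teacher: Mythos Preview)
The paper does not prove this lemma; it is quoted from \cite{Duq12}, Theorem~1.2. Your high-level strategy---Bismut spine decomposition, Laplace/Markov tail bound, Borel--Cantelli along $r_n=e^{-n}$, then the doubling property of $k$ to pass from the sequential to the continuous $\liminf$---is correct and is indeed the route of \cite{Duq12}.

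There is, however, a concrete error in your spine decomposition that propagates through the computation. For a general $\psi$, subexcursions are \emph{not} grafted on the spine at Lebesgue rate $\ddr h$: they are grafted according to the random Stieltjes measure of a subordinator $V$ with Laplace exponent $\Dpsistar=\Dpsi-\alpha$ (this is the content of (\ref{invar-snake}) and Proposition~\ref{propPalm}; see also \cite{DuLG}, Chapter~4). The Lebesgue-rate picture you describe is special to the Brownian case $\psi(\lambda)=\beta\lambda^2$. Consequently your bound $\bbE[\,e^{-\lambda\ba(t,r)}\mid\mathrm{spine}]\le\exp(-\tfrac{r}{2}\Psi_r(\lambda))$ is unjustified; after integrating out $V$ the correct bound is $\exp\!\big(-\tfrac{r}{2}\Dpsistar(\Psi_r(\lambda))\big)$, which is genuinely weaker since $\Dpsistar(u)=o(u)$ when $\bgamma<2$. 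This extra $\Dpsistar$ layer is precisely what produces $\varphi=\Dpsi\circ\invpsi$ (hence $\invphi$) in the gauge $k$: with $\Psi_r(\lambda)\asymp\invpsi(\lambda)$ one gets $\Dpsistar(\Psi_r(\lambda))\asymp\varphi(\lambda)$, and the correct optimization is $\lambda=\invphi\!\big((K/r)\log\log 1/r\big)$. Your proposed choice $\lambda(r)=\psi\big(A\,\invDpsi((1/r)\log\log 1/r)\big)$ and the claimed identity $r\,\invDpsi(\lambda(r))=A\log\log 1/r$ do not hold; this mismatch is a symptom of the missing $V$-layer. Once that layer is restored, the remaining steps (the truncated estimate $\Psi_r(\lambda)\ge c\,\invpsi(\lambda)$, Markov, Borel--Cantelli, doubling) go through as you outline.
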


\noindent
{\bf The exploration process.} As already mentioned, the height process is not a Markov process. To explore in a Markovian way the L\'evy tree, Le Gall and Le Jan in \cite{LGLJ1} 
introduce a measure valued process $\rho=(\rho_t)_{ t\geq 0}$ that is called \textit{the exploration process} whose definition is the following. Denote by 
$M_f(\R_+)$ the set of finite measures on $[0,\infty)$ equipped with the \textit{total variation distance}. 
Recall that $X$ under $\bbP$ is a spectrally positive L\'evy process
starting from $0$ whose Laplace exponent is $\psi$ that satisfies $\bgamma \! >\! 1$. 
We denote by $\cF^X_t$ the sigma-field generated by $X_{\cdot \wedge t}$ augmented with the $\bbP$-negligible events. 
Recall that for all $s, t \! \in \! [0, \infty)$ such that $s \! \leq \! t$, $I^s_t$ stands for $\inf_{u \in [s,t]} X_u$. 
Then, for all $t\! \in [0, \infty)$, the following definition makes sense under $\bbP$ or $N$:
\begin{equation}
\label{new-rho}
\rho_t(\ddr r)=\beta \un_{[0,H_t]}(r)\,\ddr r+\sum\limits_{\substack{0<s\leq t\\X_{s-}<I^s_t}}
 (I^s_t-X_{s-})\,\delta_{H_s}(\ddr r).
\end{equation}
Note that the $M_f(\R_+)$-valued process $\rho$ is $(\mathcal{F}^X_t)_{ t\geq 0}$-adapted. 
The height process $H$ can be deduced from $\rho$ as follows: for any $\mu \! \in \! M_f(\R_+)$, we denote by $\supp (\mu)$ its topological support and we define 
$$H(\mu) = \sup ( \supp (\mu)) \; ,$$ 
that is possibly infinite. We can show that 
$$\textrm{$\bbP$-a.s.~(or $N$-a.e.)} \quad \forall t\in  [0,\infty), \quad \supp (\rho_t)=[0,H_t] \; .$$ 
As proved in \cite{DuLG}, the exploration process $\rho$ admits a c\`adl\`ag modification under $\bbP$ and $N$. By Proposition 1.2.3 \cite{DuLG}, under $N$, $\rho$ is a càdlàg strong Markov process with respect to $(\mathcal{F}^X_{t+})_{ t\geq 0}$. 

\subsection{The L\'evy Snake.}\label{levysnake}
The $\psi$-L\'evy snake is a generalization of Le Gall's Brownian snake that greatly facilitates the study of super processes: it provides a Markovian parametrisation of the genealogy and the spatial positions of the underlying continuous population that gives rise to the super process. We recall from 
\cite{DuLG}, Chapter 4, the following definition of the $\psi$-L\'evy snake. 
To that end, recall that $\xi\! = \! (\xi_t)_{t\geq 0}$ is a  $\R^d$-valued continuous process defined on 
$(\Omega, \cF)$ and that for any $x\! \in \! \R^d$, $\P_x$ is a probability measure on $(\Omega, \cF)$ such that under $\P_x$, $\xi$ is distributed as a standard $d$-dimensional Brownian motion starting from $x$.

\paragraph{Snake with a deterministic Hölder-regular lifetime process.} 
We denote $\cW$ the set of \textit{continuous stopped paths}, namely the set of pairs $(\w,\zeta_\w)$ where $\zeta_\w \in[0,\infty)$ and $\w :[0,\zeta]\rightarrow\R^d$ is a \textit{continuous} 
function. Here $\zeta_\w$ is the \textit{lifetime} of $\w$.  
We shall slightly abuse notation by simply denoting 
$\w$ instead of $(\w,\zeta)$ in the sequel. The set 
$\cW$ is equipped with the  metric $\bd$ defined for $\w,\w'\in\cW$ by : 
$$\bd(\w,\w^{\prime})=\lvert\zeta_{\w} \! -\! \zeta_{\w^\prime}\rvert + \sup_{t\geq 0} \; 
\lVert \, \w(t \! \wedge \! \zeta_{\w}) \! -\! \w'(t \! \wedge \! \zeta_{\w^\prime}) \, \rVert.$$
Here $\lVert \cdot \rVert$ stands for the Euclidian norm on $\R^d$. 
It can be shown that $(\cW,\bd)$ is a Polish space.

To define the finite dimensional marginal distributions of the snake, we first need to introduce its transition kernels. Let $\w\in\cW$, let $a\in [0,\zeta_\w]$ and let $b\in[a,\infty)$. 
We plainly define a law $R_{a,b}(\w,\ddr\w')$
on $\cW$ by requiring the following.
\begin{description}
\item{$(i)$} $R_{a,b}(\w,\ddr \w')$-a.s.~$\w'(t)=\w(t)$, $\forall
t\in[0,a]$. 
\item{$(ii)$} $R_{a,b}(\w,\ddr \w')$-a.s.~$\zeta_{\w'}=b$.
\item{$(iii)$} The law of $(\w'(a\! +\! t))_{0\leq t\leq b-a}$ 
under $R_{a,b}(\w,\ddr \w')$ is the law
of $(\xi_t)_{0\leq t\leq b-a}$ under $\P_{\w(a)}$.
\end{description}
In particular, $R_{0,b}(\w,\ddr \w')$ is the law of $(\xi_t)_{0\leq t\leq b}$ under $\P_{\w(0)}$.

\medskip

We denote by $(W_s)_{ s\geq 0}$ the canonical process on the space $\cW^{\R_+}$ of the $\cW$-valued functions on $\R_+$ equipped with the product sigma-field. We next fix $x\! \in \! \R^d$. 
We slightly abuse notation by denoting $x$ the stopped path with null lifetime starting from $x$ (and therefore ending at $x$).  
Let $h \! \in \!  C(\R_+,\R_+)$ such that $h(0)=0$. 
We call $h$ the \textit{lifetime process}. 
For all $s,s^\prime \! \in \! \R_+$ such that $s^\prime \! \geq \! s$, 
we use the notation $b_h(s,s^\prime)=\inf_{s\leq r\leq s^\prime}h(r)$. 
From the definition of the laws $R_{a, b}$ and Kolmogorov extension theorem 
there is a unique probability measure $Q^h_{x}$ on $\cW^{\R_+}$ such
that for all $0=s_0<s_1<\cdots<s_n$,
\begin{align}\label{defQh}
Q^h_{x}& \big[W_{s_0}\in A_0,\ldots,W_{s_n}\in A_n \big]\\
&=\un_{A_0}(x)\int_{A_1\times \cdots\times A_n} \!\!\!\!\!\!\!\!\!\!\!\! \!\!\! 
R_{b_h(s_0,s_1),h(s_1)}(\w_0,\ddr\w_1)\ldots
R_{b_h(s_{n-1},s_n),h(s_n)}(\w_{n-1},\ddr \w_n)\nonumber.
\end{align}
Note that $(h ,x) \mapsto Q^h_{x}$ is measurable and for all $t \! \in \! \R_+$, 
$Q^h_{x}$-a.s.~$\zeta_{W_t}=h(t)$. 

We next discuss the regularity of the process $W$ under $Q_{x}^h$. To that end, we assume the following.  
$$\textrm{The lifetime process $h$ is locally Hölder continuous with exponent $r \in (0, 1]$.}$$
Fix $p\! \in \! (1, \infty)$ and $t_0 \! \in \! (0, \infty)$. 
The last inequality of the proof of Proposition 4.4.1\cite{DuLG} p.~120, entails that 
there exists a constant $C$ that only depends on $t_0$, on $p$ 
and on the Hölder constant of $h$ on $[0, t_0]$, such that  
\begin{equation}\label{Komolg2}
\forall s, t\in [0, t_0], \quad  
Q^h_{x} \! \big[ \, \bd(W_s,W_{t})^p \, \big] \leq C \,  |t\! -\! s|^{pr/2}.
\end{equation}
If $p\! >\! 2/r$, then the Kolmogorov continuity criterion applies and asserts that there exists a continuous modification of the process $W$. We slightly abuse notation by keeping notation $Q^h_{x}$ for law on $C(\R_+,\cW)$ of such a modification; likewise, we also 
keep denoting by $(W_t)_{t\geq 0}$ 
the canonical process on $C(\R_+,\cW)$. We then call $Q_{x}^h$ the law of the \textit{snake with lifetime process $h$ starting from $x$}. 
Working on $C(\R_+, \cW)$, we see that $Q^{h}_{x}$-a.s.~for all 
$t\! \in \! \R_+$, $\zeta_{W_t} = h(t)$. We then set 
\begin{equation}
\label{termpt}
\widehat{W_t}= W_t (h(t)) \; .
\end{equation}
The process $\widehat{W}$ is called the snake's \textit{endpoint process} that is $Q^h_{x}$-a.s.~continuous. From (\ref{defQh}), we easily get that under $Q^h_x$, the endpoint process is Gaussian whose covariance is characterized by 
\begin{equation}
\label{covar}
\forall s, t \in \R_+ , \quad 
 Q^h_x \left( \lVert \widehat{W_t} \! -\! \widehat{W_s} \rVert^2 \right) = h(t)+h(s)-2\!\!\!\inf_{s\wedge t \leq u \leq s\vee t}\!\!\!\!  h(u) \; .
 \end{equation}
Moreover since (\ref{Komolg2}) holds for any $p\! >\! 1$, the 
Kolmogorov criterion implies that for any $q \! \in \! (0,r/2)$, $\widehat{W}$ is $Q^h_x$-a.s.~locally $q$-Hölder continuous.

\paragraph{The definition of L\'evy snake.}  
The L\'evy snake is the snake whose lifetime process is the height process $H$ introduced 
in Section \ref{sectionheight}. Recall that we assume that $\bgamma \! > \! 1$ and recall from Lemma \ref{holderh} that $H$ is $\bbP$-a.s.~(or $N$-a.e.) Hölder regular and that the previous construction of the snake applies. We then define the excursion measure of the $\psi$-Brownian snake starting from $x \! \in \! \R^d$ by 
\begin{equation}
\label{snex}
\N_x =\int_{C(\R_+, \R)} \!\!\!\!\!\!\!  \!\!\!\!\!\!\!   N(H \! \in \! \ddr h) \, Q^h_{x}\; .
\end{equation}
Then $H$ is the lifetime process of $W$. Namely $\N_x$-a.e.~for all $t\! \in\! [0,\sigma]$, $\zeta_{W_t} \! = \! H_t$ and thus, $\widehat{W_t}\! =\!  W_t (H_t)$.  
 Moreover, under $\N_x$, the conditional law of $W$ given $H$ is $Q^H_x$: we refer to 
 \cite{DuLG}, Chapter 4, for more details. Lemma \ref{holderh} and the results discussed right after (\ref{covar}) entail the following lemma.  
\begin{lem}
\label{snakhold} Assume that $\bgamma \! >\! 1$. Then, for any $q \! \in \! (0, \frac{\bgamma -1}{2\bgamma } )$, $\widehat{W}$ is $\N_x$-a.e.~locally 
$q$-Hölder.  
\end{lem}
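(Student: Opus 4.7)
The plan is to combine the Gaussian covariance identity (\ref{covar}) with the Kolmogorov continuity criterion, using the Hölder regularity of the height process provided by Lemma \ref{holderh}. Since under $\N_x$ the conditional law of $W$ given $H$ is $Q^H_x$, it suffices to prove, for a deterministic lifetime $h$ that is locally $r$-Hölder continuous, that the endpoint process $\widehat{W}$ is $Q^h_x$-a.s.\ locally $q$-Hölder for every $q < r/2$, and then to integrate out $H$ under $N$.

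First, I fix $q \in (0, \frac{\bgamma-1}{2\bgamma})$ and pick $r \in (2q, \frac{\bgamma-1}{\bgamma})$. By Lemma \ref{holderh}, $N$-a.e.\ the function $h = H$ is locally $r$-Hölder continuous on $[0,\sigma]$; call the corresponding (random) Hölder constant $K_H$. For any deterministic such $h$ with Hölder constant $K$ on $[0, t_0]$ and for $0 \leq s \leq t \leq t_0$, identity (\ref{covar}) yields
\begin{equation*}
Q^h_x\!\left(\lVert \widehat{W_t} - \widehat{W_s}\rVert^2\right) = h(t) + h(s) - 2\inf_{u \in [s,t]} h(u) \leq 2K\,|t-s|^r,
\end{equation*}
since each of $h(t) - \inf_{[s,t]} h$ and $h(s) - \inf_{[s,t]} h$ is bounded by $K|t-s|^r$.

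Second, under $Q^h_x$ the random vector $\widehat{W_t} - \widehat{W_s}$ is Gaussian (its components are independent one-dimensional centered Gaussians with variance equal to the right-hand side above divided by $d$), so all its moments are controlled by its second moment: there exists a universal constant $C_{p,d}$ such that for every $p \geq 2$,
\begin{equation*}
Q^h_x\!\left(\lVert \widehat{W_t} - \widehat{W_s}\rVert^p\right) \leq C_{p,d}\,(2K)^{p/2}\,|t-s|^{pr/2}.
\end{equation*}
The Kolmogorov continuity criterion then implies that for every exponent $q' < r/2 - 1/p$, the process $\widehat{W}$ admits a modification that is $Q^h_x$-a.s.\ $q'$-Hölder on $[0,t_0]$. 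Since $p$ can be chosen arbitrarily large and $t_0$ arbitrarily large, and since the already-continuous process $\widehat{W}$ coincides with any such modification, we conclude that $\widehat{W}$ is $Q^h_x$-a.s.\ locally $q$-Hölder for every $q < r/2$.

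Third, by the snake representation (\ref{snex}), we have $\N_x = \int N(H \in \ddr h)\, Q^h_x$. The set of lifetimes $h$ that are locally $r$-Hölder continuous has full $N$-measure by Lemma \ref{holderh}, and for every such $h$ the previous step gives $Q^h_x$-a.e.\ local $q$-Hölder regularity of $\widehat{W}$. Integrating over $h$ yields the conclusion under $\N_x$, and letting $r \uparrow \frac{\bgamma-1}{\bgamma}$ covers every $q \in (0, \frac{\bgamma-1}{2\bgamma})$. There is no real obstacle in this argument; the only mild subtlety is that the Hölder constant $K_H$ is random, but since we only need an almost-sure qualitative Hölder statement, fibering through $N(H \in \ddr h)$ handles this without any quantitative bookkeeping.
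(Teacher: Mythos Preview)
Your proof is correct and follows exactly the approach the paper itself indicates: the paper does not give a separate proof of this lemma but simply states that it follows from Lemma \ref{holderh} combined with the discussion around (\ref{Komolg2}) and (\ref{covar}), which is precisely the Kolmogorov-criterion argument you have written out in detail. The only cosmetic difference is that the paper cites the moment bound (\ref{Komolg2}) on $\bd(W_s,W_t)^p$ directly, whereas you re-derive the analogous bound for $\lVert \widehat{W_t}-\widehat{W_s}\rVert^p$ from (\ref{covar}) and Gaussianity; both routes are equivalent here.
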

The range of the endpoint process $\widehat{W}$ is a connected 
compact subset of $\R^d$ and we use the following notation
\begin{equation}
\label{rangeWdef}
 \cR_W =  \big\{ \widehat{W}_t \, ;  \; t \in [0, \sigma ]  \big\} \; .
\end{equation}
Recall that for any $a\! \in \! (0, \infty)$, $ (L^a_s)_{s\geq 0}$ stands for the local time of $H$ at level $a$. We then denote by $\z_a(W)$ the random
measure on $\R^d$ defined by 
$$\langle \z_a (W),f \rangle =\int_0^{\sigma}
\ddr L^a_s\,f(\wh W_s), $$
for any Borel measurable $f: \! \R^d \! \rightarrow \! \R_+$. We also set $\z_0 (W)\! =\!  0$, the null measure. 
Recall that $M_f(\R^d)$ stands for the space of finite Borel measures on $\R^d$ equipped with the topology of weak convergence. We can proves that under $\N_x$, the $M_f(\R^d)$-valued process 
$(\z_a)_{a\geq 0}$ has a c\`adl\`ag modification that is denoted in the same way by convenience. 
The \textit{occupation measure of the snake} $\cM_W$ is then defined by 
\begin{equation}
\label{occuWdef}
\langle \cM_W , f\rangle   = \int_0^\infty \langle \z_a (W) , f \rangle  \ddr a= \int_0^\sigma f\big( \widehat{W_s}\big) \, \ddr s \; ,   
\end{equation}
for any Borel measurable $f: \! \R^d \! \rightarrow \! \R_+$.

We then recall Theorem 4.2.1 \cite{DuLG} that connects the $\psi$-Lévy snake to super Brownian motions. 
\begin{thm}[\cite{DuLG} Theorem 4.2.1]
\label{super}
We keep notation from above. Assume that $\bgamma \! >\! 1$.  
Let $\mu \! \in \! M_f(\R^d)$. Let 
$$\sum_{i\in \cJ} \delta_{(x_i,W^i)}$$
be a Poisson point measure on $\R^d \! \times \! \cW$ with intensity $\mu(\ddr x)\N_x(\ddr W)$.
For every $a\! \in (0, \infty)$ set 
$$Z_a=\sum_{i\in \cJ} \z_a(W^i) \; .$$
and also set $Z_0=\mu$. Then, the process $(Z_a)_{ a\geq 0}$ is a
$\psi$-super Brownian motion starting from $\mu$ (as defined in the introduction section). Moreover, if $\, {\bf R}$ and $\, {\bf M}$ are defined in terms of $Z$ by (\ref{defboldR}) and (\ref{defboldM}), then, \begin{equation}
\label{connection}
{\bf R} \cup \{ x_j \; ; \, j \in \cJ \} = \bigcup_{j \in \cJ} \cR_{W^j}  \quad {\rm and} \quad {\bf M} = \sum_{j \in \cJ } \cM_{W^j} \; . 
\end{equation}
\end{thm}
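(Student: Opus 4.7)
The strategy is to verify the Laplace-functional characterisation (\ref{transition}) of the $\psi$-super Brownian motion, then read off the geometric identities (\ref{connection}) from the snake construction.

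By the exponential formula for Poisson point measures, for every bounded Borel $f \geq 0$ on $\R^d$,
$$\bbE\bigl[\exp(-\langle Z_a, f\rangle)\bigr] \,=\, \exp\Bigl(-\int_{\R^d}\mu(\ddr x)\, v_a(x)\Bigr),$$
with $v_a(x) := \N_x\bigl[1-\exp(-\langle \z_a, f\rangle)\bigr]$. The task thus reduces to showing that $v_t(x)$ is the unique nonnegative solution of the integral equation that defines the semigroup appearing in (\ref{transition}). I would establish this by exploiting the snake Markov property: since, conditionally on $H$, the endpoint process $\widehat{W}$ is Gaussian with the covariance (\ref{covar}), one may disintegrate $\N_x$ by decomposing the excursion of $H$ at its first passage at a level $b \in (0, a)$ (equivalently, at the first jump of the driving L\'evy process $X$ exceeding the current infimum). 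The Poisson structure of the sub-excursions of $H$ above $b$ under $N$, together with the Ray--Knight identity that makes $(L^b_{T_x})_{b\geq 0}$ a $\psi$-CSBP, reproduces the exact convolution between the Brownian semigroup and $\psi$ appearing in the integral equation. Letting $b \to 0$ and using the continuity of $\widehat W$ at the root yield the required equation for $v_t$.

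Once $v_t$ is identified, the Markov property of $(Z_a)_{a\geq 0}$ is obtained by applying the strong Markov property of the L\'evy snake at the stopping time attached to the $a$-local time $L^a$ of $H$: the excursions of $W$ above level $a$ form, conditionally on the snake at level $a$, a Poisson point measure whose intensity is of the form $Z_a(\ddr x)\, \N_x(\ddr W)$. This propagates the Laplace identity by an arbitrary time-shift $s$ and establishes (\ref{transition}), hence that $(Z_a)$ is a $\psi$-SBM starting from $\mu$. For the identity ${\bf M} = \sum_{j}\cM_{W^j}$, Fubini combined with the occupation-time representation (\ref{occuWdef}) gives
$$\sum_{j\in\cJ}\cM_{W^j} \,=\, \sum_{j\in\cJ}\int_0^\infty \z_a(W^j)\,\ddr a \,=\, \int_0^\infty Z_a\,\ddr a \,=\, {\bf M}.$$
For the range identity, the topological support of $\z_a(W^j)$ is contained in $\widehat{W^j}\bigl(\{s:H^j_s=a\}\bigr)\subset \cR_{W^j}$, so $\supp(Z_t)\subset \bigcup_j \cR_{W^j}$ for every $t>0$ and thus ${\bf R} \subset \bigcup_j \cR_{W^j}$; conversely, continuity of $\widehat{W^j}$ together with the fact that the level-sets $\{H^j=a\}$ accumulate at the endpoints of the excursion interval of $H^j$ ensures that each $\cR_{W^j}\setminus\{x_j\}$ is contained in ${\bf R}$, giving the reverse inclusion up to the atoms $x_j = \widehat{W^j}_0$, which are absorbed in the union of (\ref{connection}).

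The main obstacle is the first step: identifying $v_t$ as the nonnegative solution of the SBM integral equation. This is the technical heart of the statement and rests on a careful analysis of the snake at its first-passage levels together with the excursion structure of $H$ under $N$; uniqueness of nonnegative solutions to that integral equation, obtained by a standard Gronwall-type argument, then completes the identification of the transition semigroup.
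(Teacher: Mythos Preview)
The paper does not give its own proof of this theorem: it is recalled verbatim from \cite{DuLG}, Theorem 4.2.1, and the only additional remark is that (\ref{connection}) ``is not part of Theorem 4.2.1 \cite{DuLG} but it is an easy consequence of that result.'' So there is no argument in the present paper to compare against; your task was effectively to reconstruct the proof of a cited external result.

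Your sketch of the Laplace-functional identification is along the right lines, but several formulations are loose or incorrect. The parenthetical ``equivalently, at the first jump of the driving L\'evy process $X$ exceeding the current infimum'' does not describe the first passage of $H$ at level $b$; these are different events. The Ray--Knight statement you invoke, $(L^b_{T_x})_{b\geq 0}$ being a $\psi$-CSBP, is a statement under $\bbP$, not under $N$, so it does not directly feed into the computation of $v_a(x)=\N_x[1-e^{-\langle \z_a,f\rangle}]$. The actual argument in \cite{DuLG} proceeds via the Markov property of the exploration process $(\rho,W)$ and the branching property of the snake at a fixed level, not by ``decomposing $H$ at first passage at level $b$ and letting $b\to 0$.'' Your outline would need substantial rewriting to become a proof.

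For (\ref{connection}), your argument for ${\bf M}$ is correct. For ${\bf R}$, the reverse inclusion is not justified by ``the level-sets $\{H^j=a\}$ accumulate at the endpoints of the excursion interval'': that tells you nothing about a generic point $\widehat{W^j_s}$ with $H^j_s>0$. The issue is that $\supp(\z_a(W^j))$ is a priori only contained in $\{\widehat{W^j_t}:t\in\supp(\ddr L^a)\}$, which may be strictly smaller than $\{\widehat{W^j_t}:H^j_t=a\}$. A clean way to close the gap is to use the occupation-time identity: for $\varepsilon\in(0,H^j_s)$, the measure $\int_\varepsilon^\infty \z_a(W^j)\,\ddr a$ equals the push-forward of $\un_{\{H^j_t\geq\varepsilon\}}\,\ddr t$ by $\widehat{W^j}$, whose support is $\overline{\{\widehat{W^j_t}:H^j_t\geq\varepsilon\}}\ni\widehat{W^j_s}$, and this support is contained in $\overline{\bigcup_{a\geq\varepsilon}\supp(Z_a)}\subset{\bf R}$.
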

The last point (\ref{connection}) is not part of Theorem 4.2.1 \cite{DuLG} but it is an easy consequence of that result. To simplify notation, when there is no risk of confusion we shall simply write 
$$ \z_a := \z_a (W), \quad \cR:= \cR_W  \quad \textrm{and} \quad  \cM:= \cM_W \; .$$

\paragraph{Consequences of Markov property.} 
As the height process, the $\psi$-Lévy snake $(W_t)_{ t\geq 0}$ defined above is not a Markov process. However $\ov W:=(\rho_t, W_t)_{ t\geq 0}$ is a strong Markov process under $\N_x$: see Theorem 4.1.2 \cite{DuLG} for more details. 
Instead of fully discussing the Markov property of $\ov W$, we only state here 
the various results we need, that are consequences of the strong Markov property.  

\medskip

Denote by $(\cF^W_t)_{t\geq 0}$ the filtration generated by 
$(\ov W_t)_{t\geq 0}$. As a consequence of the strong Markov property for $\ov W$ 
(see \cite{DuLG}, Theorem 4.1.2) and a specific decomposition of the snake into excursions above the infimum of its lifetime proved in Lemma 4.2.4 \cite{DuLG}, we get the following result 
that is used in Section \ref{sectionbadpts}. 
\begin{lem}[\cite{DuLG} Theorem 4.1.2 and Lemma 4.2.4]
\label{Markovpretalemploi} 
Let $T$ be a $(\mathcal{F}^W_{t+})_{t\geq 0}$-stopping time. 
Let $Y$ be a nonnegative $\mathcal{F}^W_{T+}$-measurable random variable. 
Let $G: \R^d \rightarrow \R_+$ be 
Borel measurable. Then,
\begin{align*} 
\N_0 \Big[ \un_{\{ 0 < T < \infty \}} Y \exp\Big( \! -\! \! \int_T^{\sigma} \!\!\! G(\widehat{W}_{s})\ddr s  \Big) &  \Big] \\
& \hspace{-10mm}=  \N_0 \Big[\un_{\{0 < T< \infty \}} Y \exp \Big( \! -\!\! \int_{[0, H_T] } \!\!\! \!\!\!\!\!\!\! \!\! \rho_T(\ddr h) \, 
\N_{W_T(h)} \big(1\! -\! e^{-\int_0^\sigma G(\widehat{W}_{s})\ddr s}\,  \big) \Big)  \Big].
\end{align*}
\end{lem}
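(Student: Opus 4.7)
The plan is to derive this identity as a direct consequence of the two results of \cite{DuLG} cited in the statement: the strong Markov property of $\ov{W} = (\rho, W)$ at a $(\mathcal{F}^W_{t+})_{t\geq 0}$-stopping time (Theorem 4.1.2), and the Poisson decomposition of the snake after such a stopping time into sub-excursions above the running infimum of its lifetime (Lemma 4.2.4).

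First, since $\int_T^\sigma G(\widehat{W}_s)\, \ddr s$ is measurable with respect to the post-$T$ trajectory of $\ov{W}$, applying the strong Markov property at $T$ reduces the proof, on the event $\{0<T<\infty\}$, to computing the conditional functional
\[
F(\rho_T, W_T) := \N_0\Big[ \exp\Big(- \!\int_T^{\sigma} \!\! G(\widehat{W}_s)\, \ddr s \Big) \,\Big|\, \mathcal{F}^W_{T+} \Big]
\]
as a measurable function of the state $(\rho_T, W_T)$ attained at time $T$.

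Second, Lemma 4.2.4 of \cite{DuLG} describes the law of $\ov{W}$ started from a generic initial state $(\mu, w)$ with $\supp(\mu) \subset [0, H(w)]$: after a suitable time reparametrisation, its sub-excursions above the running infimum of the lifetime form a Poisson point measure on $[0, H(w)] \times C(\R_+, \cW)$ with intensity $\mu(\ddr h)\, \N_{w(h)}(\ddr W')$, where each sub-excursion is itself distributed as a $\psi$-L\'evy snake excursion under $\N_{w(h)}$, and the range and occupation of $\ov{W}$ decompose as a superposition of those of the independent sub-excursions grafted at the points $w(h)$. Applying this with $(\mu, w) = (\rho_T, W_T)$ and using the exponential formula for Poisson point measures then gives on $\{0 < T < \infty\}$
\[
F(\rho_T, W_T) \; = \; \exp\Big( -\!\int_{[0,H_T]}\!\!\!\!\!\!\rho_T(\ddr h)\, \N_{W_T(h)}\!\big(1-e^{-\int_0^\sigma G(\widehat{W}_s)\, \ddr s}\big) \Big).
\]

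Multiplying both sides by the $\mathcal{F}^W_{T+}$-measurable quantity $Y \, \un_{\{0<T<\infty\}}$ and integrating under $\N_0$ yields the stated identity. The only genuine subtlety, and arguably the main obstacle, lies in the fact that $\N_0$ is a sigma-finite rather than a finite measure, so the conditional-expectation arguments above must be localised on an event of finite mass: this is precisely the role of $\un_{\{0<T<\infty\}}$, as is done in the precise formulations of Theorem 4.1.2 and Lemma 4.2.4 in \cite{DuLG}.
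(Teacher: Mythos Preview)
Your proposal is correct and follows exactly the approach indicated by the paper: the paper does not give a proof of this lemma but simply records it as a consequence of the strong Markov property of $\ov{W}$ (Theorem 4.1.2 of \cite{DuLG}) together with the Poisson decomposition of the snake into sub-excursions above the infimum of its lifetime (Lemma 4.2.4 of \cite{DuLG}). Your two-step outline---apply the strong Markov property at $T$ to reduce to a functional of $(\rho_T,W_T)$, then evaluate that functional via the exponential formula for the Poisson point measure of sub-excursions---is precisely the intended derivation.
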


  We shall apply the strong Markov property of $\ov W$ at specific hitting times of the snake. 
More precisely, let us introduce several notations. 
Let $r \! \in \! (0, \infty)$. We define the first hitting time of $W$ of the closed ball $\overline{B}(0,r)$ in $\R^d$ by 
\begin{equation}\label{deftau}
\tau_{r}:=\inf \big\{t\in \R_+  : \widehat{W}_t \in \overline{B}(0,r) \big\},
\end{equation}
with the convention $\inf\emptyset =\infty$. We also introduce the following function 
\begin{equation}\label{predefuxr}
\forall x \in \overline{B}(0, r)^c, \quad  u_r(x):=\N_x\left(\tau_r <\infty\right) \; .
\end{equation}
Since $t\mapsto\widehat{W}_t$ is continuous, we also get 
\begin{equation}
\label{repi}
\forall x \in \overline{B}(0, r)^c, \quad  
 u_{r}(x)=\N_x (\cR \cap \overline{B}(0,r) \neq \emptyset ) \; .
 \end{equation}
From \cite{DuLG} p.~121 and p.~131, we know that $u_r(x) \! \in \! (0, \infty)$, for all $r \! \in \! (0, \infty)$ and all $x \! \in \! \overline{B} (0, r)^c$, and that $u_r$ is moreover radial. We then denote by  
$\widetilde{u}_r$ the function from $(r, \infty)$ to $(0, \infty)$ such that 
$$ \forall x \in \overline{B}(0, r)^c, \quad  \widetilde{u}_r (\lVert x \rVert ) = u_r (x) \; .$$
Let $r^\prime \! \in \! (0, \infty)$. For all stopped path $\w \! \in \! \cW$, we next set 
\begin{equation}\label{defTB}
T_{r^\prime}(\w)=\inf \big\{ s \in [0,\zeta_\w] : \w(s)\in \overline{B}(0,r^\prime) \big\}\; , 
\end{equation} 
with the convention $\inf\emptyset =\infty$. 
We then define a function $\varpi: \R_+^2 \rightarrow \R_+$ by 
\begin{equation}\label{defgammapsi}
\forall \lambda_1, \lambda_2 \in \R_+ , \quad \varpi (\lambda_1 ,\lambda_2)= \left\{
\begin{array}{ll}
\left(\psi (\lambda_1) -\psi ( \lambda_2) \right) / (\lambda_1-\lambda_2)
\quad& {\rm if } \quad \lambda_1\neq \lambda_2, \\
\psi'(\lambda_1) \quad& {\rm if } \quad \lambda_1=\lambda_2 \; .\hfill\\ 
\end{array}
\right.
\end{equation}
Recall that $\xi\! = \! (\xi_t)_{t\geq 0}$ is a  $\R^d$-valued continuous process defined on 
$(\Omega, \cF)$ and that for any $x\! \in \! \R^d$, $\P_x$ is a probability measure on $(\Omega, \cF)$ such that under $\P_x$, $\xi$ is distributed as a standard $d$-dimensional Brownian motion starting from $x$. 

  The following proposition is a specific application of Theorem 4.6.2 \cite{DuLG} that we use in the proof of Lemma \ref{BPnouveau} in Section \ref{sectionbadpts}. 
\begin{prop}[\cite{DuLG} Theorem 4.6.2]
\label{firsthitting}
Let $x \! \in\! \R^d$. Let $r, r^\prime \! \in \! (0, \infty)$ be such that $ r^\prime \! >\! r$ and 
$x \! \in\! \overline{B}(0,r^\prime )^c $. We keep the previous notation.  
Let $F, G: \cW \rightarrow \R_+$ be Borel-measurable. Then, 
\begin{align*} 
\N_x & \Big[ \un_{\{\tau_r <\infty\}} 
F\big( 
(W_{\tau_r} (s))_{0\leq s\leq T_{r^\prime}(W_{\tau_r})}
 \big)
\exp \Big(\! -\!\! 
\int_{[0, T_{r^\prime}(W_{\tau_r})]}  \!\!\!\!\!  \!\!\!\!\!  \!\!\!\!\! \!\!\!\!\! 
\rho_{\tau_r}(\ddr h) \; G \big( 
(W_{\tau_r} (s))_{0\leq s\leq h} 
\big) 
\Big)
\Big] \\
&=\widetilde{u}_r (r^\prime ) \, \E_x
\Big[ \un_{\{T_{r^\prime} (\xi) < \infty \} }
F \big( 
(\xi_s)_{ 0\leq s\leq T_{r^\prime} (\xi)}
\big)
\exp \Big(\! - \!\! 
\int_{[0, T_{r^\prime}(\xi)] }\!\!\!\!  \!\!\!\!  \!\!\!\! \!\!\!\!  \ddr h\;  \varpi \big(u_{r}(\xi_h) ,G((\xi_s)_{0\leq s\leq h})\big)
\Big)
\Big] \; .
\end{align*}
\end{prop}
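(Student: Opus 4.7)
The plan is to derive this statement as a direct specialization of the special Markov property of the $\psi$-L\'evy snake, namely Theorem 4.6.2 in \cite{DuLG}, applied to the hitting set $\overline{B}(0,r)$, together with a L\'evy--Khintchine computation that identifies the exponential weight on the right-hand side as involving the function $\varpi$ from (\ref{defgammapsi}).

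First, I would invoke the special Markov property at the hitting time $\tau_r$. On the event $\{\tau_r<\infty\}$, the endpoint process $\widehat W$ exits the open set $\overline B(0,r)^c$ for the first time at $\tau_r$, and the corresponding spine is the ancestral path $W_{\tau_r}$, which runs from $x$ to a point of $\partial \overline B(0,r)$. Since $\overline B(0,r)\subset \overline B(0,r')$, this spine must enter $\overline B(0,r')$ at the earlier time $T_{r'}(W_{\tau_r})$. Theorem 4.6.2 of \cite{DuLG} then asserts that, under $\N_x$ restricted to $\{\tau_r<\infty\}$ and with the Radon--Nikodym factor $\widetilde u_r(r')$ accounting for the snake's hitting probability from distance $r'$, the spine up to $T_{r'}(W_{\tau_r})$ is distributed as $(\xi_s)_{0\le s\le T_{r'}(\xi)}$ under $\P_x$. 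Conditionally on this spine, the exploration measure $\rho_{\tau_r}$ restricted to $[0,T_{r'}(W_{\tau_r})]$ decomposes into a continuous part with Lebesgue density $\beta$, plus a Poisson point measure of atoms at heights $h\in[0,T_{r'}(\xi)]$, each atom of mass $\Delta$ carrying an independent family of sub-snakes rooted at $\xi_h$ and constrained, by the very definition of $\tau_r$, not to hit $\overline B(0,r)$.

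Secondly, I would compute the joint Laplace functional of these atoms and sub-snakes. A single atom of mass $\Delta$ at height $h$ generates a Poisson collection of sub-snakes of intensity $\Delta\,\N_{\xi_h}$, so the probability that none of them hits $\overline B(0,r)$ equals $\exp(-\Delta\, u_r(\xi_h))$; simultaneously, such an atom contributes $\exp(-\Delta\, G((\xi_s)_{0\le s\le h}))$ to the left-hand-side weight. Averaging the product of these two factors against the Poisson intensity $\pi(\ddr\Delta)$ of atom masses, adding the corresponding contribution $-\beta(u_r(\xi_h)+G((\xi_s)_{0\le s\le h}))$ from the continuous Brownian part, and grouping terms via the L\'evy--Khintchine representation (\ref{LevyKhin}) of $\psi$, the resulting cumulative Laplace exponent at height $h$ collapses to $\varpi(u_r(\xi_h),\, G((\xi_s)_{0\le s\le h}))$; this is the standard identity obtained by writing $\psi(\lambda_1)-\psi(\lambda_2)$ in L\'evy--Khintchine form and dividing by $\lambda_1-\lambda_2$. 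Restricting $F$ and $G$ to depend only on the spine up to $T_{r'}$ is legitimate on both sides because the statement is written that way.

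The main obstacle is not the spine decomposition itself, which is already contained in \cite{DuLG}, but the careful bookkeeping required to match the general-domain formulation of Theorem 4.6.2 \cite{DuLG} with the specific nested-ball geometry considered here: one must correctly identify the Radon--Nikodym factor as $\widetilde u_r(r')$ (rather than an unconditioned hitting probability), correctly truncate $\rho_{\tau_r}$ at $T_{r'}(W_{\tau_r})$, and verify that the Poisson-integral computation indeed reduces to the difference quotient $\varpi$ defined in (\ref{defgammapsi}) rather than a superficially similar but inequivalent expression.
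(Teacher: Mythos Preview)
Your approach matches the paper's: the paper does not give an independent proof of this proposition but states it explicitly as ``a specific application of Theorem 4.6.2 \cite{DuLG}'', which is exactly what you propose to do. Your sketch of how the spine decomposition and the L\'evy--Khintchine computation collapse to $\varpi$ is the right bookkeeping, and nothing more is needed here.
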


\paragraph{Palm formula} We introduce the following notation 
\begin{equation}
\label{psistardef}
\forall \lambda \in \R_+, \quad \psi^* (\lambda)= \psi (\lambda)-\alpha \lambda \; , 
\end{equation} 
that is clearly the Laplace exponent of a spectrally L\'evy process. 
We then fix $x\! \in \! \R^d$. Again, recall that $\xi= (\xi_t)_{t\geq 0}$ under $\P_x$ is distributed as a standard $d$-dimensional 
Brownian motion starting from $x$. Let $U\!= \! (U_a)_{a\geq 0}$ be a subordinator defined on $(\Omega, \cF, \P_x)$ that is assumed to be independent of $\xi$ and whose Laplace exponent is 
$$ \forall \lambda \in \R_+, \quad  \widetilde{\psi}^*(\lambda): =\frac{\psi(\lambda)}{\lambda} -\alpha \; .$$ 
For any $a \! \in \! \R_+$,  we denote by 
$R_a(\ddr b)$  the random measure $\un_{[0,a]}(b)
\ddr U_b$. We first recall from \cite{DuLG}, formula (106) p.~105, 
that for any measurable function $F: M_f(\R_+) \! \times \! \cW \rightarrow \R_+$, the following holds true: 
\begin{equation} 
\label{invar-snake}
\N_x\Big(\int _0^\sigma \!\!\!  \ddr s\,F(\rho_s\, ,W_s) \, \Big)
=\int_0^\infty \!\!\! \ddr a\,e^{-\alpha a}\,\E_x \big[ F (R_a \, , (\xi_s)_{0\leq s\leq a}\, ) \big] \; .
\end{equation}

\medskip

We next provide a Palm decomposition for the occupation measure $\cM$ of the snake that is used to 
estimate its lower local density at "typical" points. To that end we need to introduce the following auxiliary random variables. Let $(V_t)_{ t\geq 0}$ be a subordinator defined on $(\Omega, \cF, \P_0)$ that is independent of $\xi$ and whose Laplace exponent is $\Dpsistar(\lambda): =\! \Dpsi(\lambda) \! -\! \alpha$. We then introduce the following point measure on $[0,\infty) \! \times \! C(\R_+,\cW)$: 
\begin{equation}\label{defN}
\Nstar(\ddr t \, \ddr W)=\sum\limits_{j\in\mathcal{J}^*}\delta_{(t_j,W^j)} \; , 
\end{equation}
such that under $\P_0$ and conditionally given $(\xi,V)$, $\cN^*$ is distributed as a Poisson point process with intensity $\ddr V_t \, \N_{\xi_t} (\ddr W)$. 

For all $j\! \in \! \mathcal{J}^*$, we denote by $\cM_j$ the occupation measure of the snake $W^j$. Then   
for all $a\! \in \! \R_+$, we define the following random measure on $\R^d$; 
\begin{equation}\label{defMa}
\cM_a^*=\sum\limits_{j\in\mathcal{J}^*}\un_{[0,a]}(t_j)\mathcal{M}_j.
\end{equation}
Note that $\cM_a^*$ is $\P_0$-a.s.~a random finite Borel measure on $\R^d$. 
Informally $\cM_a^*$ is the sum of the 
the occupation measure of the snakes grafted at a rate given by $V$ on the \textit{spatial spine $\xi$} between time $0$ and $a$. As a by-product of Formula (113) p.113~in \cite{DuLG}, we get the following Palm decomposition of $\cM$ under~$\N_0$.  
\begin{prop}[\cite{DuLG} (113)]
\label{propPalm}
Let $F : \R^d \! \times \! M_f(\R^d)\rightarrow\R_+$ be measurable. 
Then, 
\begin{equation}\label{Palm}
\N_0 \Big[ \int_{\R^d} \!\!\!\! \cM(\ddr y) \, F(y,\cM) \Big]
=\int_0^\infty \!\!\! \ddr a \,  e^{-\alpha a} \E_0 \left[ F(\xi_a,\Ma)\right].
\end{equation}
\end{prop}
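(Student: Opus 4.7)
The plan is to combine the invariance/Bismut formula (\ref{invar-snake}) with a bilateral decomposition of the snake at a uniformly chosen time. First, since $\cM$ is the pushforward of Lebesgue measure on $[0,\sigma]$ by the endpoint map $s\mapsto \widehat{W}_s$, a Fubini argument gives
\[
\N_0\Big[\int_{\R^d}\!\!\cM(\ddr y)\,F(y,\cM)\Big]
= \N_0\Big[\int_0^\sigma\!\!\ddr s\;F(\widehat{W}_s,\cM)\Big],
\]
which has the shape required by (\ref{invar-snake}); the only difficulty is that the integrand involves the whole measure $\cM$, not just the pair $(\rho_s,W_s)$.

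To cope with this, I would decompose the snake at time $s$ using the strong Markov property of $\ov W=(\rho,W)$ together with a time-reversal identity for the exploration process. The future $(\ov W_{s+u})_{u\geq 0}$ restarts from $(\rho_s,W_s)$, and an analogous description of the past $(\ov W_{s-u})_{0\leq u\leq s}$ is provided by the \emph{dual exploration process} $\eta_s$ of \cite{DuLG}. Each of these two pieces may be written as a Poisson point process of snake excursions grafted on the common spatial spine $(W_s(h))_{0\leq h\leq H_s}$: conditionally on $\rho_s,\eta_s$ and on the spine, the grafting heights are the atoms of $\rho_s$ and $\eta_s$ respectively, and each grafted subtree has mark distributed as $\N_{W_s(h)}(\ddr W)$. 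Because the spine itself is Lebesgue-null under $\cM$, summing the occupation measures of these two Poisson families recovers $\cM$ in full.

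Then I would insert this decomposition into the right-hand side of (\ref{invar-snake}). Under the invariance transform, the spine $(W_s(h))_{0\leq h\leq H_s}$ becomes $(\xi_h)_{0\leq h\leq a}$ for a standard $d$-dimensional Brownian motion $\xi$, and the joint law of the past and future grafting heights $\rho_s$ and $\eta_s$ becomes that of a pair of independent measures whose sum is, by a classical fluctuation computation involving the Laplace exponents $\widetilde\psi^*$ and $\Dpsistar$, a subordinator $V$ with Laplace exponent $\Dpsistar=\Dpsi-\alpha$. The resulting point measure of grafted snakes is exactly $\Nstar$ from (\ref{defN}), the contribution to $\cM$ produced up to the spine time $a$ is precisely $\Ma$ from (\ref{defMa}), and the identity $\widehat{W}_s=\xi_a$ together with the $e^{-\alpha a}\,\ddr a$ factor from (\ref{invar-snake}) deliver (\ref{Palm}).

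The main obstacle is the Bismut-type decomposition that identifies the joint law of past and future grafting along the spine as the subordinator $V$ with Laplace exponent $\Dpsistar$: this is the delicate fluctuation-theoretic step specific to the $\psi$-L\'evy snake. In practice this step is exactly what formula~(113) of \cite{DuLG} encapsulates, so at this point I would simply invoke that identity to conclude the proof.
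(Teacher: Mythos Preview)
The paper gives no proof of this proposition: it simply records it as a by-product of formula (113) in \cite{DuLG}. Your outline is a correct sketch of the Bismut-type decomposition underlying that formula, and since you ultimately invoke (113) of \cite{DuLG} as well, your approach coincides with the paper's.
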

We shall mostly use the Palm formula in this way: for any measurable functional 
$G: \bbD(\R_+, \R) \rightarrow \R_+$, we get 
\begin{equation}\label{corPalm}
\N_0 \Big[ \int_{\R^d} \!\!\!\! \cM(\ddr y) \,  G\big( (\cM(B(y,r)))_{ r\geq 0} \big) \Big]
=\int_0^\infty\! \!\! \! \ddr a \, e^{-\alpha a}\E_0 \big[  G\big( (\cM_a^*(B(0,r)))_{r\geq 0} \big) 
\big].
\end{equation}

\section{Estimates.}
\label{secestimates}
\subsection{Tail of a subordinator.}
\label{secsubord}
Recall from (\ref{psistardef}) that $\psi^* (\lambda)= \psi (\lambda) -\alpha \lambda$, that is the Laplace transform of a spectrally positive L\'evy process. Therefore, $\Dpsistar$ is the Laplace exponent  of a subordinator that is conservative for $\Dpsistar (0) \! = \! 0$. By subordination, $\sqrt{\Dpsistar\circ \psi^{-1} }$ is also 
the Laplace exponent of a conservative subordinator. The main idea of the proof of Theorem \ref{localth} consists in comparing 
the mass of a typical ball with a subordinator whose Laplace exponent is $\sqrt{\Dpsistar\circ \psi^{-1} }$. To that end, we first need the following result. 
\begin{lem}
\label{lemmesubord} Assume $\bdelta \! >\! 1$. Recall from (\ref{gaugedef}) the definition of the gauge function $g\! :\!  (0, r_0)\!  \rightarrow \! (0, \infty)$. 
Let $(S_r)_{r\in [0, \infty)}$ be a subordinator defined on the auxiliary probability space $(\Omega, \mathcal{F}, \P_0)$. We assume that the Laplace exponent of $S$  
is $\sqrt{\Dpsistar\circ \psi^{-1} }$. Let $\rho_n \in (0, r_0)$, $n \in \N$, be such that 
\begin{equation}
\label{miam}
 \rho_{n+1} \leq e^{-n} \rho_n   \quad {\rm and} \quad  \sup_{{n \geq 0}} \,  n^{-2} \log 1/ \rho_n < \infty\; .
\end{equation}
Then, $\sum_{n \geq 0} \P_0 \big(S_{\rho_n} \leq g(4 \rho_n) \big) = \infty $. Moreover, we get 
$$ \textrm{$\P_0$-a.s.} \quad \limsup_{n\rightarrow \infty} \frac{S_{\rho_{n+1}}}{g(4 \rho_n) } \; < \infty \; .$$
\end{lem}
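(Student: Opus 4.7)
The plan is to deduce both statements from direct Chernoff-type bounds on $S$ via its Laplace transform $\E_0[e^{-\lambda S_r}]=e^{-r\Psi(\lambda)}$, where $\Psi=\sqrt{\Dpsistar\circ\invpsi}$. I begin by introducing $L_n:=\log\log(1/(4\rho_n))$ and $\tilde\mu_n:=\invphi\big(L_n^2/(4\rho_n)^2\big)$. By the very definition of $g$, one has $g(4\rho_n)=L_n/\tilde\mu_n$, hence $\tilde\mu_n\,g(4\rho_n)=L_n$, and since $\Psi^2=\varphi-\alpha\leq\varphi$ the pivotal inequality
\begin{equation*}
\Psi(\tilde\mu_n)\leq \sqrt{\varphi(\tilde\mu_n)}= L_n/(4\rho_n)
\end{equation*}
holds by construction of $\tilde\mu_n$. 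These two elementary facts drive everything.

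For the divergence, I would start from the elementary bound obtained by splitting the Laplace transform according to whether $S_r\leq x$:
\begin{equation*}
e^{-r\Psi(\lambda)} = \E_0\!\left[e^{-\lambda S_r};S_r\leq x\right]+\E_0\!\left[e^{-\lambda S_r};S_r>x\right]\leq \P_0(S_r\leq x)+e^{-\lambda x}.
\end{equation*}
Applied with $r=\rho_n$, $x=g(4\rho_n)$ and $\lambda=\tilde\mu_n$, the pivot gives $r\Psi(\lambda)\leq L_n/4$ while $\lambda x=L_n$, whence $\P_0(S_{\rho_n}\leq g(4\rho_n))\geq e^{-L_n/4}-e^{-L_n}\geq \tfrac{1}{2} e^{-L_n/4}$ for all sufficiently large $n$. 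The second hypothesis $\sup_n n^{-2}\log(1/\rho_n)<\infty$ yields $L_n\leq 2\log n+O(1)$, hence $e^{-L_n/4}\geq c\,n^{-1/2}$, and the divergence of $\sum n^{-1/2}$ gives the first assertion.

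For the limsup, I would use the complementary Markov inequality
\begin{equation*}
\P_0(S_r>y)\leq \frac{1-e^{-r\Psi(\lambda)}}{1-e^{-\lambda y}}\leq \frac{r\Psi(\lambda)}{1-e^{-\lambda y}},
\end{equation*}
with $r=\rho_{n+1}$, $y=g(4\rho_n)$ and $\lambda=1/g(4\rho_n)=\tilde\mu_n/L_n$. Monotonicity of $\Psi$ combined with the pivot yields $\Psi(\lambda)\leq \Psi(\tilde\mu_n)\leq L_n/(4\rho_n)$, while $\lambda y=1$ bounds the denominator below by $1-e^{-1}$. Using $\rho_{n+1}\leq e^{-n}\rho_n$ then gives
\begin{equation*}
\P_0\!\left(S_{\rho_{n+1}}>g(4\rho_n)\right)\leq C\,L_n\,e^{-n},
\end{equation*}
which is summable since $L_n=O(\log n)$. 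Borel--Cantelli forces $S_{\rho_{n+1}}\leq g(4\rho_n)$ for all but finitely many $n$, hence $\limsup_n S_{\rho_{n+1}}/g(4\rho_n)\leq 1<\infty$ almost surely.

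The one delicate point I foresee is the essential role of the factor $4$ in $g(4\rho_n)$: it is precisely that factor that turns $\Psi(\tilde\mu_n)\leq L_n/\rho_n$ into $\Psi(\tilde\mu_n)\leq L_n/(4\rho_n)$, pushing the exponent in the lower bound $\P_0(S_{\rho_n}\leq g(4\rho_n))\gtrsim e^{-K L_n}$ below the critical value $K=1/2$ needed to convert $L_n\asymp \log n$ into a divergent series $\sum n^{-2K}$; without this factor one would obtain $K=1$ and $\sum n^{-2}<\infty$, destroying the divergence. Apart from this constant-tracking, the whole proof uses only the trivial bounds $\Psi^2\leq\varphi$ and the monotonicity of $\Psi$; the regularity hypothesis $\bdelta>1$ is not directly invoked here (it enters this section only to make $g$ a genuine gauge function).
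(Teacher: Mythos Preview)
Your proof is correct and follows essentially the same Chernoff-bound strategy as the paper: choose $\lambda$ so that $\lambda g(4\rho_n)=L_n$ and $\rho_n\Psi(\lambda)\le L_n/4$, then split the Laplace transform for the lower bound and use Markov plus $\rho_{n+1}\le e^{-n}\rho_n$ for the upper bound. The paper introduces the auxiliary function $g_*(r)=L/\Phi^{-1}(L/r)$ (with $L=\log\log(1/r)$) and the parameter $\lambda_n=\Phi^{-1}(L_n/(4\rho_n))$, whereas you work directly with $\tilde\mu_n=\invphi(L_n^2/(4\rho_n)^2)$ and the trivial bound $\Psi^2\le\varphi$; this is a minor streamlining of the same idea, and your choice $\lambda=1/g(4\rho_n)$ in the second part (giving $\lambda y=1$) is a harmless variant of the paper's choice $\lambda=\lambda_n$ (giving $\lambda y\ge L_n\to\infty$).
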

\begin{proof}  To simplify notation, we set $\Phi \! = \! \sqrt{\Dpsistar\circ \psi^{-1} }$. Thus, $\E_0 [\exp (-\lambda S_r)]= \exp(-r\Phi (r))$. 
Denote by $\Phi^{-1}$ the reciprocal function of $\Phi$. 
For any $r\in (0, e^{-1})$, we set 
$$g_* (r)= \frac{\log \log \frac{1}{r} }{ \Phi^{-1} (\frac{1}{r} \log \log \frac{1}{r} )} \; .$$ 
An easy computation implies that $\Phi^{-1} (y)= \varphi^{-1} (y^2 + \alpha )$. Since $\alpha \in [0, \infty)$, we easily get $g_* (r) \leq g(r)$, $r \in (0, r_0)$. For any $n \in \N$, we then set $\lambda_n \! =\!  \Phi^{-1} ( (4\rho_n )^{-1} \log \log 1/4\rho_n))$. Then, observe that 
$$\lambda_n g(4\rho_n)\geq \lambda_n g_*(4\rho_n)=\log\log(1/4\rho_n) \; .$$ 
Next note that for all $a, x \! \in [0, \infty)$, $(1-e^{-a}) \un_{\{ 0 \leq x \leq a\}} \geq e^{-x} -e^{-a}$, which easily entails 
$$ \P_0 \big(S_{\rho_n} \leq g(4 \rho_n) \big) \geq
 \frac{\exp (-\rho_n \Phi (\lambda_n ))  -\exp (-\lambda_n g (4 \rho_n) )}{1-\exp (-\lambda_n g(4 \rho_n))}  \underset{^{n \rightarrow \infty}}{\sim} (\log 1/(4\rho_n))^{-\frac{1}{4}}  \; .   \; $$
By the second assumption in (\ref{miam}), $\sum_{n \geq 0} (\log 1/(4\rho_n))^{-\frac{1}{4}} \! = \infty$, which proves the first point of the lemma.

Let us prove the second point. By a standard Markov inequality, we get 
$$  \P_0 \big(S_{\rho_{n+1}} \geq g_*(4 \rho_n) \big) \leq \frac{1-\exp (-\rho_{n+1} \Phi (\lambda_n)) }{1-\exp (-\lambda_n g_* (4 \rho_n))} \leq \frac{\rho_{n+1} \Phi (\lambda_n) }{1-\exp (-\lambda_n g_* (4 \rho_n))} . $$
First observe that $1\!-\! \exp (-\lambda_n g_* (4 \rho_n)) \! = \! 1\! -\!  (\log 1/4\rho_n)^{-1} \! \longrightarrow 1$, as $n\rightarrow \infty$. 
By (\ref{miam}), there exists a constant $\Cl[c]{miamloc} \! \in \! (0, \infty)$ such that 
 $$ \rho_{n+1} \Phi (\lambda_n) = \frac{\rho_{n+1}}{4\rho_n} \log \log 1/ (4\rho_n) \leq \Cr{miamloc} e^{-n} \log n \; , $$
Thus, $\sum_{n \geq 0} 
\P_0 \big(S_{\rho_{n+1}}\!  \geq \! g(4 \rho_n) \big)\leq \sum_{n \geq 0}\P_0 \big(S_{\rho_{n+1}} \! \geq \! g_*(4 \rho_n) \big) < \infty$, which completes the 
proof by the Borel-Cantelli lemma. \cqfd
\end{proof}

\subsection{Estimates on hitting probabilities.}
\label{sectionhitting}
As already mentioned in (\ref{Sheucond}), the total range ${\bf R}$ of a $\psi$-super Brownian motion 
is bounded if the starting measure  $\mu$ has compact support and if 
\begin{equation}
\label{Wcont}
\int_1^\infty \frac{\ddr b}{\sqrt{\int_1^b \psi (a) \ddr a}} < \infty.
\end{equation}
Observe that if $\bdelta \! >\! 1$, then $\bgamma \! >\! 1$ 
and (\ref{Wcont}) holds true, which allows to define the following function 
\begin{equation}
\label{Idef}
 \forall v \in (0, \infty)   \, , \quad I(v)= \int_v^\infty \frac{\ddr b}{\sqrt{\int_v^b \psi (a) \, \ddr a}}=\int_0^\infty \frac{\ddr b}{\sqrt{\int_0^b \psi (v+a) \,  \ddr a}}  \; .
 \end{equation}
This function is clearly decreasing and continuous and it plays a role in the proof of an upper bound of the hitting probabilities of the $\psi$-L\'evy snake. We first need the following elementary lemma. 
\begin{lem}
\label{kellerdelta} 
Assume $\bdelta \! >\! 1$. Then, there exists $\Cl[c]{ckellerdelta} \! \in \! (0,\infty)$ that only depends on $\psi$ such that for all 
$v \! \in \! (1, \infty)$ and all $r \! \in \! (0, \infty)$ satisfying $r \! \leq \! I(v)$, we have 
$$ v \leq \psi^{\prime -1} \big( 4\Cr{ckellerdelta} r^{-2} \big) \; .$$
\end{lem}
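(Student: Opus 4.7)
The plan is to derive the bound by unfolding the definition of $I(v)$ and exploiting the polynomial lower bound for $\psi$ that is hidden in the assumption $\bdelta>1$. Concretely, I would translate the inequality we want to prove into an estimate of the form $I(v)^{2}\psi'(v)\leq $ constant. Indeed, if $I(v)^{2}\psi'(v)\leq 4\Cr{ckellerdelta}$, then $r\leq I(v)$ immediately yields $\psi'(v)\leq 4\Cr{ckellerdelta} r^{-2}$, and since $\psi'$ is a continuous strictly increasing bijection from $[0,\infty)$ onto $[\alpha,\infty)$, we conclude $v\leq \psi^{\prime-1}(4\Cr{ckellerdelta}r^{-2})$.

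To establish $I(v)^{2}\psi'(v)=O(1)$, I would first pick any exponent $c\in(1,\bdelta)$. By definition (\ref{defdeltaintro}) of $\bdelta$, there exists a constant $C_c>0$ depending only on $\psi$ (and on the choice of $c$) such that $\psi(\lambda)\lambda^{-c}\geq C_c\,\psi(\mu)\mu^{-c}$ for all $1\leq \mu\leq \lambda$. Applied with $\mu=v\geq 1$ and $\lambda=v+a$, this gives
\[
\psi(v+a)\;\geq\; C_c\,\psi(v)\,(1+a/v)^{c},\qquad a\geq 0.
\]
Integrating from $0$ to $b$ yields
\[
\int_0^b \psi(v+a)\,\ddr a\;\geq\;\frac{C_c\,v\,\psi(v)}{c+1}\bigl((1+b/v)^{c+1}-1\bigr),
\]
and inserting this lower bound into the second expression for $I(v)$ in (\ref{Idef}) and changing variables $s=b/v$ produces
\[
I(v)\;\leq\;\sqrt{\frac{(c+1)\,v}{C_c\,\psi(v)}}\;\int_0^\infty\frac{\ddr s}{\sqrt{(1+s)^{c+1}-1}}.
\]
The remaining integral is finite precisely because $c>1$ (integrability at $0$ is automatic and integrability at $\infty$ needs $(c+1)/2>1$), so it defines a universal constant $J_c<\infty$ depending only on $c$.

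Finally, the resulting bound reads $I(v)\leq K\,\sqrt{v/\psi(v)}=K/\sqrt{\widetilde{\psi}(v)}$ with $K=J_c\sqrt{(c+1)/C_c}$, a constant depending only on $\psi$. Using the inequality $\widetilde{\psi}(\lambda)\geq \psi'(\lambda)/4$ from (\ref{convexpsi}), one deduces $I(v)\leq 2K/\sqrt{\psi'(v)}$, hence $I(v)^{2}\psi'(v)\leq 4K^{2}$. Setting $\Cr{ckellerdelta}=K^{2}$ gives the claim. The main (and only real) obstacle is checking integrability of the scale-free integral $\int_0^{\infty}\ddr s/\sqrt{(1+s)^{c+1}-1}$, which is exactly what forces us to pick an exponent $c$ strictly greater than $1$; this is why the hypothesis $\bdelta>1$ is essential and cannot be weakened to $\bgamma>1$ in this elementary argument.
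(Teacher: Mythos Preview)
Your proof is correct and follows essentially the same approach as the paper's. Both fix $c\in(1,\bdelta)$, use the defining lower bound $\psi(v\lambda)\geq C\psi(v)\lambda^{c}$ (equivalently, your $\psi(v+a)\geq C_c\psi(v)(1+a/v)^{c}$) to dominate $I(v)$ by $K\sqrt{v/\psi(v)}$ times a scale-free integral that converges precisely because $c>1$, and then invoke the comparison $\psi'(v)\leq 4\widetilde{\psi}(v)$ from (\ref{convexpsi}); the only difference is that the paper first rewrites $I(v)=\int_1^\infty \sqrt{v}\,\ddr b\big/\sqrt{\int_1^b \psi(va)\,\ddr a}$ by scaling, whereas you work directly from the shifted form $\int_0^\infty \ddr b/\sqrt{\int_0^b\psi(v+a)\,\ddr a}$ and substitute $s=b/v$ at the end.
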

\begin{proof}
By an elementary change of variable, we get 
$$ I(v) = \int_1^{\infty}\frac{\sqrt{v}\, \ddr b}{\sqrt{\int_1^b \psi\left(va\right)\ddr a}} \; .$$
Fix $c \in (1, \bdelta)$. 
By the definition (\ref{defdeltaintro}) of $\bdelta$, there exists $C \!  \in \! (0, \infty)$ such that $\psi (va) \!  \geq \! C \psi (v) a^c $, for any $a, v \! \in \! (1, \infty)$. 
Let $v \! \in \! (1, \infty)$ and $r \! \in \! (0, \infty)$ be such that $r \! \leq \! I(v)$. Then, 
$$ r \leq I(v) \leq C^{-1/2} (v/ \psi (v))^{1/2}  \int_1^{\infty}\frac{\ddr b}{\sqrt{\int_1^b a^c\ddr a}} =: \big(\Cr{ckellerdelta} v/ \psi (v) \big)^{1/2} , $$
which implies the desired result since $\psi^\prime (v) \leq 4 \psi (v)/v $ by (\ref{convexpsi}). \cqfd 
 \end{proof}

\medskip

Recall from (\ref{rangeWdef}) the definition of the range $\cR$ of the snake. Recall from (\ref{snex}) the notation $\N_x$ 
for the excursion measure of the snake starting from $x$. Let $r\! \in \! (0, \infty)$. Recall that $B(0, r)$ stands for the open 
ball in $\R^d$ with radius $r$ and center $0$. Then, we set for all $x \! \in \! B(0, r)$
\begin{equation}
\label{vrdef}
 v_r (x) = \N_x(\cR\cap B(0,r)^c\neq\emptyset) \; .
\end{equation} 
From \cite{DuLG} p.~121 and p.~131, we know that 
$$ \forall x\in B(0, r), \quad v_r(x) \! \in\! (0,\infty) \quad \textrm{and} \quad 
\lim_{\quad \lVert x \rVert \rightarrow r-} \!\!\!\!\!\! v_r (x) \! = \! \infty \; .$$
\vspace{-4mm}

\noi
Moreover, $v_r$ is $C^2$ on $B(0, r)$ and it satisfies $\frac{1}{2}\Delta v_r = \psi (v_r)$. As an easy consequence of Brownian motion isotropy, $v_r$ is a radial function: 
namely, $v_r (x)$ only depends on $\lVert x\rVert$ (and $r$). Therefore, one can derive estimates on $v_r$ by studying the associated ordinary differential equation corresponding to the radial function, as done by Keller in \cite{Ke57}, p.~507 inequality (25), who proves the following:  
\begin{equation}
\label{Keller}
\forall r \in (0, \infty), \quad \frac{2}{\sqrt{d}}r  \leq  I(v_r (0) ) \leq 2r \; .
\end{equation} 
We use this bound as follows. For any $r\in (0, \infty)$ and any $x \in B(0,r)^c  $, recall that  
\begin{equation}
\label{urdef}
 u_r (x) =\N_x (\cR \cap \overline{B}(0,r) \neq \emptyset ) \; .
\end{equation} 
From \cite{DuLG} p.~121 and p.~131, we know that $\ur(x)\in(0,\infty)$ for all $x \in B(0,r)^c  $, that 
\begin{equation}
\label{urbord}
\textrm{$u_r$ is radial,} \quad \lim_{\lVert x\rVert \rightarrow\infty} u_r (x)= 0 \quad {\rm and} 
 \quad \lim_{\lVert x\rVert \rightarrow r+} u_r (x)= \infty \; .
\end{equation}
Recall that we denote by $\widetilde{u}_r$ the radial function yielded by $u_r$, namely:  
\begin{equation}
\label{uradef}
\forall x \in \overline{B} (0, r)^c, \quad \widetilde{u}_r (\lVert x \rVert )= u_r (x) \; .
\end{equation}
Moreover, $u_r$ is $C^2$ in $B(0,r)^c$ and it satisfies 
\begin{equation}
\label{urEDP}
\frac{_1}{^2}\Delta u_r=\psi(u_r) \quad \textrm{in $B(0,r)^c$.}
\end{equation}
We shall use several times the following upper bound of $u_r$. 
\begin{lem}
\label{estppale}
Assume that $\bdelta \! >\! 1$ and that $d\! \geq \! 3$. Let $\varrho \! \in \! (0, \infty)$. Then there exist $\Cl[r]{restppale}, \Cl[c]{cestppale}\! \in \! (0, \infty)$, that only depend 
on $\psi, d$ and $\varrho$, such that 
$$ \forall r \in (0, \Cr{restppale}), \;  \forall x\in B\big(0, (1\! +\! \varrho )r \big)^c , \quad  \ur(x)\leq \big((1\!+ \! \varrho) r / \lVert x \rVert \big)^{d-2}  \invDpsi (\Cr{cestppale} \, r^{-2} ) \; .$$
\end{lem}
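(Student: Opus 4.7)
The plan is to exploit the semilinear PDE $\frac{1}{2}\Delta u_r = \psi(u_r) \geq 0$ on $B(0,r)^c$ (equation (\ref{urEDP})), which makes $u_r$ subharmonic, and then to compare $u_r$ with the fundamental harmonic function $\|x\|^{2-d}$ of $\R^d$ (for which $d\geq 3$ is essential). The argument splits into two pieces: first, establish a bound for $u_r$ on the inner sphere $\{\|x\| = (1+\varrho)r\}$; second, propagate that bound outward by the maximum principle.

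For the boundary estimate, fix $x$ with $\|x\|=(1+\varrho)r$. Since $\mathrm{dist}(x,\overline{B}(0,r))=\varrho r$, any realisation of the snake whose range $\cR$ meets $\overline{B}(0,r)$ must exit the ball $B(x,\varrho r)$. Translation invariance of the snake's excursion measure $\N_\cdot$ (which comes from the translation invariance of the spatial Brownian motion) then yields
\[
 u_r(x)\leq \N_x\!\left(\cR\cap B(x,\varrho r)^c \neq \emptyset\right) = v_{\varrho r}(0).
\]
Keller's lower bound in (\ref{Keller}) gives $I(v_{\varrho r}(0))\geq 2\varrho r/\sqrt{d}$, and since $I$ is decreasing we also have $v_{\varrho r}(0)\to\infty$ as $r\to 0$; hence $v_{\varrho r}(0)>1$ once $r$ is smaller than some threshold depending only on $\psi,d,\varrho$. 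Applying Lemma \ref{kellerdelta} to $v=v_{\varrho r}(0)$ with $r^{*}=2\varrho r/\sqrt{d}\leq I(v)$ then produces a constant $c=c(\psi,d,\varrho)$ such that, in the same regime of small $r$,
\[
 \urtild\!\left((1+\varrho)r\right)\leq v_{\varrho r}(0) \leq \invDpsi\!\big(c \, r^{-2}\big)=:K.
\]

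For the outer estimate, introduce the comparison function
\[
 h(x)=K\,\Big(\tfrac{(1+\varrho)r}{\|x\|}\Big)^{d-2}, \qquad x \in \R^d \setminus \{0\};
\]
since $d\geq 3$ it is harmonic on $\R^d\setminus\{0\}$, equals $K$ on $\{\|x\|=(1+\varrho)r\}$, and vanishes at infinity. The difference $u_r-h$ is therefore subharmonic on the open exterior $\{\|x\|>(1+\varrho)r\}$, nonpositive on the inner boundary by the previous step, and tends to $0$ at infinity by (\ref{urbord}). The maximum principle for subharmonic functions on unbounded open sets with controlled asymptotic value then forces $u_r\leq h$ throughout that region, which is the inequality to prove.

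The delicate step is the boundary estimate: it is the only place where the branching mechanism genuinely enters, through Keller's ODE bound (\ref{Keller}) and the regularity index $\bdelta>1$ via Lemma \ref{kellerdelta}; one has to make sure that the constant $c$ produced there depends only on $\psi,d$ and $\varrho$, not on $r$. The outward propagation is routine once subharmonicity of $u_r$ and $d\geq 3$ are combined to supply a harmonic majorant vanishing at infinity.
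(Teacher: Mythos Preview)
Your proof is correct and follows essentially the same approach as the paper: bound $u_r$ on the sphere $\{\|x\|=(1+\varrho)r\}$ by $v_{\varrho r}(0)$ via translation invariance, control $v_{\varrho r}(0)$ using Keller's inequality (\ref{Keller}) together with Lemma \ref{kellerdelta}, and then propagate outward by comparing the subharmonic $u_r$ with the harmonic $\|x\|^{2-d}$ through the maximum principle. The only cosmetic difference is the order: the paper first establishes $u_r(x)\leq v_{\varrho r}(0)\big((1+\varrho)r/\|x\|\big)^{d-2}$ for all $r>0$ and afterwards bounds $v_{\varrho r}(0)$, whereas you bound $v_{\varrho r}(0)$ first and then run the comparison.
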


\begin{proof} Let $y \! \in \! \R^d$ be such that $\lVert y \rVert = (1+\varrho) r$. First note that 
$$\N_y\left(\cR\cap\overline{B}(0,r)\neq\emptyset\right)\leq\N_y \left(\cR\cap B(y,\varrho r)^c \neq\emptyset\right) \; .$$
By translation invariance of Brownian motion, the right member of the previous inequality does not depend on $y$ and we get 
$u_r (y) \! \leq \! v_{\varrho r} (0)$, where $v_{\varrho r}$ is defined by (\ref{vrdef}). For any $x \! \in \! B(0, (1\! +\! \varrho)r)^c$, we next set 
$w(x)= u_r (x)-v_{\varrho  r} (0) ((1\!+\! \varrho)r/ \lVert x\rVert )^{d-2}$ that is clearly subharmonic. The previous upper bound implies that 
$w(y) \! \leq \! 0$, if $\lVert  y\rVert \! = \! (1\! + \! \!\! \varrho) r$. By (\ref{urbord}), 
$\lim_{\lVert x \rVert \rightarrow \infty} w(x)\! = \! 0$ and by the maximum principle, we get that $w\leq 0$ on $B(0, (1\! +\! \varrho)r)^c$. Namely, 
\begin{equation}
\label{ursubharm}
\forall r \in (0, \infty) , \quad  \forall x \in B \big(0, (1+\varrho )r \big)^c  \, , \quad u_r (x) \leq v_{\varrho r} (0) \big((1\!+ \! \varrho) r / \lVert x \rVert \big)^{d-2} \; .
\end{equation}
Since $\bdelta \! >\! 1$, (\ref{Wcont}) is satisfied and the function $I$ given by (\ref{Idef}) is well-defined; we easily check that $I (v) \rightarrow 0$ iff $v\rightarrow \infty$. 
Then, (\ref{Keller}) implies that $\lim_{r\rightarrow 0}v_{\varrho r} (0)=\infty$, so we can find $\Cr{restppale} \! \in \! (0, \infty)$ such 
that for all $r \! \in \!  (0, \Cr{restppale})$, $v_{\varrho r}(0) \! \geq \! 1$ and by the left member of (\ref{Keller}) we also have $2r \varrho /\sqrt{d} \leq I(v_{\varrho r} (0))$. 
Thus, Lemma \ref{kellerdelta} applies and asserts that $v_{\varrho r} (0)\leq \invDpsi (\Cr{cestppale}r^{-2} )$, where $\Cr{cestppale}:= \Cr{ckellerdelta} d \varrho^{-2}$, which completes the proof thanks to (\ref{ursubharm}). \cqfd 
\end{proof}

\bigskip

We use the previous lemma to get an upper bound of the expectation of a specific additive functional of the Brownian motion that involves $u_r$. 
More precisely, for any $r \! \in \! (0, \infty)$, we define 
\begin{equation}
\label{qdef}
q_r= \invDpsi (\Cr{cestppale}r^{-2} ) \; .
\end{equation}
Recall that $\Cr{cestppale}$ is the constant appearing in Lemma \ref{estppale}. 
Note that for any $0 \! < \! r \! < \! (\Cr{cestppale}/ \Dpsi(1) )^{1/2}$, we have $q_{r} \! \geq \! 1$. 
We then define 
\begin{equation}
\label{Jdef}
\forall r\in \big(0, (\Cr{cestppale}/ \Dpsi(1) )^{\frac{1}{2}} \big),  \quad J(r)=r^2q_r^{\frac{2}{d-2}}\int_1^{q_r} 
\Dpsi(v)v^{-\frac{d}{d-2}}\ddr v \; .
\end{equation}
Recall that $\xi= (\xi_t)_{t\geq 0}$ stands for a standard Brownian motion starting from $0$ that is 
defined on the auxiliary probability space $(\Omega , \mathcal{F} , \P_0)$. 
We next prove the following lemma.
\begin{lem}
\label{expectaddi}
Assume that $\bdelta \! >\! 1$ and that $d \! \geq \! 3$. Let $a \! \in \! (0, \infty)$. Then, there exist $\Cl[c]{cexpectaddi}, \Cl[c]{c1expectaddi} , \Cl[r]{rexpectaddi}\! \in \! (0, \infty)$ 
that only depend on $\psi$, $d$ and $a$, such that  
$$\forall r \in (0, \Cr{rexpectaddi})\quad  \E_0 \! \left[\int_0^{2a} \!\! \ddr s \, \un_{\left\{\lVert\xi_s\rVert\geq 2r\right\}}\Dpsi(\ur(\xi_s))\right]  \leq 
\Cr{cexpectaddi}+ \Cr{c1expectaddi} J(r) \; .$$
\end{lem}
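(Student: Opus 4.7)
\noindent\textbf{Proof plan for Lemma~\ref{expectaddi}.} The plan is to start by applying Fubini and then use the Green's function of $d$-dimensional Brownian motion to reduce the time-space expectation to a purely spatial integral, on which the key upper bound from Lemma~\ref{estppale} (with $\varrho=1$) can be brought to bear.

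First, I would write, using Fubini and the transition density $p_s$ of $d$-dimensional Brownian motion,
\begin{equation*}
\E_0\!\left[\int_0^{2a}\!\!\! \ddr s\,\un_{\{\lVert\xi_s\rVert\geq 2r\}}\Dpsi(u_r(\xi_s))\right]
=\int_{\lVert y\rVert\geq 2r}\!\!\!\! \Dpsi(u_r(y))\Big(\!\int_0^{2a}\!\!\!\! p_s(y)\,\ddr s\Big)\ddr y .
\end{equation*}
Since $d\geq 3$, the Green's function bound $\int_0^{2a}\! p_s(y)\,\ddr s\leq \int_0^\infty\! p_s(y)\,\ddr s=c_d\,\lVert y\rVert^{-(d-2)}$ holds, and in polar coordinates the right-hand side is controlled by $c_d\omega_{d-1}\int_{2r}^{\infty}\rho\,\Dpsi(\widetilde{u}_r(\rho))\,\ddr\rho$, where $\omega_{d-1}$ is the surface area of the unit sphere.

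Next, I would pick $\Cr{rexpectaddi}\leq \Cr{restppale}$ (with $\Cr{restppale}$ from Lemma~\ref{estppale} applied with $\varrho=1$), so that for $r\in(0,\Cr{rexpectaddi})$ and $\rho\geq 2r$,
\begin{equation*}
\widetilde{u}_r(\rho)\leq q_r\!\left(2r/\rho\right)^{d-2},
\end{equation*}
where $q_r=\invDpsi(\Cr{cestppale}r^{-2})$ is the same quantity as in~(\ref{qdef}). I would then split the radial integral at $\rho_*:=2r\,q_r^{1/(d-2)}$, which is precisely the radius at which the above upper bound equals $1$. On the outer piece $\rho\geq\rho_*$, monotonicity of $\Dpsi$ gives $\Dpsi(\widetilde{u}_r(\rho))\leq\Dpsi(1)$; rather than integrate $\rho\,\Dpsi(1)$ over $[\rho_*,\infty)$ (which diverges), I would handle this contribution directly at the level of the original expectation by simply bounding $\Dpsi(u_r(\xi_s))\leq\Dpsi(1)$ on $\{\widetilde{u}_r(\lVert\xi_s\rVert)\leq 1\}$, which yields the additive constant $\Cr{cexpectaddi}=2a\,\Dpsi(1)$.

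On the remaining region $2r\leq\lVert y\rVert\leq\rho_*$ I would apply the Green's function bound and the same monotonicity to get
\begin{equation*}
c_d\omega_{d-1}\int_{2r}^{\rho_*}\!\!\rho\,\Dpsi\!\left(q_r(2r/\rho)^{d-2}\right)\ddr\rho .
\end{equation*}
The change of variable $v=q_r(2r/\rho)^{d-2}$ gives $\rho\,\ddr\rho=-\frac{4r^2 q_r^{2/(d-2)}}{d-2}\,v^{-d/(d-2)}\,\ddr v$, and the bounds $\rho=2r$, $\rho=\rho_*$ correspond to $v=q_r$, $v=1$ respectively. The above integral is therefore equal to $\frac{4c_d\omega_{d-1}}{d-2}\,r^2\,q_r^{2/(d-2)}\int_1^{q_r}\Dpsi(v)\,v^{-d/(d-2)}\,\ddr v$, which matches $J(r)$ from~(\ref{Jdef}) up to a constant $\Cr{c1expectaddi}$ depending only on $d$. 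Combining the two contributions gives the claimed bound.

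The only real obstacle is the split at $\rho_*$: naively bounding $\Dpsi(\widetilde{u}_r(\rho))\leq\Dpsi(1)$ on the outer piece and keeping the Green's function $c_d/\lVert y\rVert^{d-2}$ as weight produces a divergent integral in dimension $d\geq 3$, so one has to remember that the Brownian motion can only spend total time $2a$ far from the origin and bound that contribution by the trivial estimate $\int_0^{2a}\! p_s(y)\ddr s \cdot\Dpsi(1)$ integrated against the full spatial measure, giving $2a\,\Dpsi(1)$. This is why the statement allows the constant term $\Cr{cexpectaddi}$, which depends on $a$.
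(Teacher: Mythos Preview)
Your proof is correct and follows essentially the same route as the paper: Fubini, then Lemma~\ref{estppale} with $\varrho=1$, then the split of the radial integral at $\rho_*=2r\,q_r^{1/(d-2)}$ (denoted $\alpha_r$ in the paper) with the same change of variable $v=q_r(2r/\rho)^{d-2}$ on the inner piece. The only minor difference is in the outer piece $\rho\geq\rho_*$: rather than returning to the original expectation, the paper keeps the truncated heat-kernel weight $f(y)=\int_{y^2/(4a)}^{\infty}u^{d/2-2}e^{-u}\,\ddr u$ throughout, and since $\int_0^\infty y\,f(y)\,\ddr y<\infty$ (precisely because of the time cutoff at $2a$), the outer integral $\int_{\alpha_r}^\infty y\,f(y)\,\Dpsi(1)\,\ddr y$ converges directly---your device of bounding that contribution by $2a\,\Dpsi(1)$ is an equally valid shortcut.
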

\begin{proof} To simplify notation, we denote by $L$ the expectation in the left member of the previous inequality. Recall from (\ref{uradef}) the notation $\widetilde{u}_r$ for the radial function yielded by $u_r$. By Fubini and easy changes of variable, we have the following. 
\begin{eqnarray*}
L& = & \int_{B(0, 2r)^c} \!\!\!\! \!\!\!\! \!\!\! \ddr x \;  \int_0^{2a} \!\! \ddr s \, (2\pi s)^{-d/2} e^{-\lVert x \rVert^2/ 2s} \psi^\prime \big( \widetilde{u}_r (\lVert x\rVert )\big) \\
& =& \Cl[c]{gloupi} \int_{2r}^\infty  \!\! \ddr y \, y^{d-1}  \psi^\prime \big( \widetilde{u}_r (y) \big) \! \int_0^{2a} \!\! \ddr s \, s^{-d/2} e^{-y^2/ 2s} \\
& =&  \Cl[c]{gloup}  \int_{2r}^{\infty} \!\!  \ddr y \, y f(y)  \Dpsi \big(\widetilde{u}_r (y)  \big)   \, , 
\end{eqnarray*}
where for any $y \in (0, \infty)$, we have set $f(y) \! = \! \int_{y^2/(4a)}^{\infty}\ddr u \,  u^{d/2-2}e^{-u}$ and where $\Cr{gloupi}$ and $\Cr{gloup}$ are constants that only depend on $d$. Since we assume that $d\! \geq \! 3$, $f(0)$ is well-defined and finite, and it is easy to check that $\int_{0}^\infty  y f(y) \ddr y < \infty$.

We next use Lemma \ref{estppale} with $\varrho \! = \! 1$ to get for all $r \! \in \! (0,\Cr{restppale})$ and all $y \! \in \! (2r, \infty)$ that 
$\widetilde{u}_r (y) \! \leq \! (2r/y)^{d-2} q_r $. We then set $\alpha_r \! = \! 2r  q_r^{1/(d-2)}$. Thus, 
$\widetilde{u}_r (y) \! \leq \! (\alpha_r/y)^{d-2}$, for all $r \! \in \! (0,\Cr{restppale})$ and all $y \! \in \! (2r, \infty)$. 
We next set 
$$\Cr{rexpectaddi}:= \Cr{restppale} \wedge (\Cr{cestppale}/ \Dpsi (1))^{1/2} \; . $$
Observe that for any $r\! \in \! (0, \Cr{rexpectaddi})$, 
$q_{r} \! \geq \! 1$, which implies that 
$\alpha_r \geq 2r$. Next, observe that for all $r \! \in \! (0, r_2)$ and all $y \! \in \! (\alpha_r , \infty)$, $(\alpha_r/y)^{d-2}\!\!  \leq \! 1$. Thus, 
$\Dpsi ( \widetilde{u}_r (y) ) \leq \Dpsi (1)$. It implies 
\begin{eqnarray}
\label{rognon}
 L  & \leq &   \Cr{gloup}  \Dpsi (1) 
\int_{\alpha_r}^\infty  \!\! y f (y)  \, \ddr y  \; + \;  \Cr{gloup} \int_{2r}^{\alpha_r} \!\!\! y  f(y)  \Dpsi \big((\alpha_r/y)^{d-2} \big)  \, \ddr y  \; \nonumber \\
& \leq &  \Cr{cexpectaddi} \; + \;   \Cr{gloup} f (0) \!\! \int_{2r}^{\alpha_r} \!\!\! y  \Dpsi \big( (\alpha_r/y)^{d-2} \big) \,  \ddr y \,  , 
\end{eqnarray}
where $ \Cr{cexpectaddi} \! : = \! \Cr{gloup} \Dpsi (1)  \int_{0}^\infty  y f(y) \ddr y $. By using the change of variable 
$v\! =\! (\alpha_r/y)^{d-2}$ we get 
$$ \Cr{gloup} f (0) \!\! \int_{2r}^{\alpha_r} \!\!\! y  \Dpsi \big( (\alpha_r/y)^{d-2} \big) \,  \ddr y = \frac{_1}{^{d-2}}\Cr{gloup} f (0) \alpha_r^2 \int_1^{(\alpha_r/2r)^{d-2}} \!\!\!\!\!\!\! \!\!\!\!\!\!\! \Dpsi(v) v^{-\frac{d}{d-2}}\, \ddr v = \Cr{c1expectaddi} J(r)  \; , $$
where we have set $\Cr{c1expectaddi}\! := \! \frac{4}{d-2}  \Cr{gloup} f (0)$. Then, the desired result follows from  
(\ref{rognon}). \cqfd 
\end{proof}

\bigskip

When $d$ is greater than $\frac{2\bdelta}{\bdelta-1}$, the function $J$ is \emph{bounded} for all small values of $r$ as proved in the following lemma. 
\begin{lem}
\label{Jbornee}
Assume that $\bdelta \! >\! 1$ and that $d \! >\! \frac{2\bdelta}{\bdelta-1}$. Then, there exists a constant $\Cl[c]{cJbornee} \! \in \! (0,\infty)$ that depends on $d$ and $\psi$ such that $ J(r) \! \leq \! \Cr{cJbornee}$ for all $r\! \in \! (0, \Cr{rexpectaddi})$. 
\end{lem}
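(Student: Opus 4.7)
The plan is to reduce the bound on $J(r)$ to the defining inequality of $\bdelta$ applied to $\psi^{\prime}$, for which $\delta_{\psi^\prime}=\bdelta-1$, as noted right after (\ref{defdelta}). The hypothesis $d>\frac{2\bdelta}{\bdelta-1}$ rearranges to $\bdelta(d-2)>d$, which in turn is equivalent to
\[
\bdelta -1 \; >\; \frac{2}{d-2}.
\]
This strict inequality is the crucial input: it provides a real number $c$ lying in the open interval $\bigl(\frac{2}{d-2},\,\bdelta-1\bigr)$, and such a $c$ both falls under the scope of the exponent $\delta_{\psi^\prime}$ and keeps a certain exponent above $-1$, as I will explain.

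First I would fix such a $c$. By the definition (\ref{defdelta}) of $\delta_{\psi^\prime}=\bdelta-1$ and the fact that $c<\bdelta-1$, there exists $C^\prime\in(0,\infty)$, depending only on $\psi$ and $c$ (hence on $\psi$ and $d$), such that
\[
\forall\, 1\leq v\leq \lambda ,\qquad \psi^\prime(v)\;\leq\; C^\prime\,\psi^\prime(\lambda)\,\bigl(v/\lambda\bigr)^{c}.
\]
I would apply this to $\lambda=q_r$, which is $\geq 1$ as soon as $r\in (0,\Cr{rexpectaddi})$ by the very definition (\ref{qdef}) of $q_r$ (recall $\Cr{rexpectaddi}\leq(\Cr{cestppale}/\psi^\prime(1))^{1/2}$). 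Inserting this power bound in the definition (\ref{Jdef}) of $J(r)$ gives
\[
\int_1^{q_r}\!\!\psi^\prime(v)\,v^{-\frac{d}{d-2}}\,\ddr v
\;\leq\; C^\prime\,\psi^\prime(q_r)\,q_r^{-c}\!\int_1^{q_r}\!\! v^{\,c-\frac{d}{d-2}}\,\ddr v.
\]

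Next, since $c>\frac{2}{d-2}=\frac{d}{d-2}-1$, the exponent $c-\frac{d}{d-2}$ is strictly greater than $-1$, so the inner integral is bounded by a constant multiple of $q_r^{c-\frac{d}{d-2}+1}$. Simplifying, the whole integral is then bounded by a constant times $\psi^\prime(q_r)\,q_r^{1-\frac{d}{d-2}}=\psi^\prime(q_r)\,q_r^{-\frac{2}{d-2}}$. Plugging this back in (\ref{Jdef}) yields
\[
J(r)\;\leq\; C^{\prime\prime}\,r^{2}\,q_r^{\frac{2}{d-2}}\cdot\psi^\prime(q_r)\,q_r^{-\frac{2}{d-2}}
\;=\;C^{\prime\prime}\,r^{2}\,\psi^\prime(q_r).
\]

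To conclude I would use the very definition (\ref{qdef}), namely $q_r=\psi^{\prime-1}(\Cr{cestppale}r^{-2})$, which gives $\psi^\prime(q_r)=\Cr{cestppale}\,r^{-2}$. Therefore $J(r)\leq C^{\prime\prime}\Cr{cestppale}=:\Cr{cJbornee}$ for every $r\in(0,\Cr{rexpectaddi})$, which is the desired bound. The one technical step requiring care is the choice of $c$: one must verify that the interval $\bigl(\frac{2}{d-2},\bdelta-1\bigr)$ is genuinely open and non-empty, which is precisely the content of the strict inequality $d>\frac{2\bdelta}{\bdelta-1}$; the rest is an elementary computation, so I do not expect any serious obstacle.
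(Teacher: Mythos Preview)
Your proof is correct and essentially identical to the paper's: both fix an exponent in the interval $\bigl(\tfrac{2}{d-2},\bdelta-1\bigr)$, use the defining inequality of $\delta_{\psi'}=\bdelta-1$ to bound $\psi'(v)$ by $\mathrm{const}\cdot\psi'(q_r)(v/q_r)^{c}$, integrate, and then cancel using $\psi'(q_r)=\Cr{cestppale}r^{-2}$. The only cosmetic difference is that the paper names the exponent $u$ and writes out the intermediate step $\Cr{cestppale}\,q_r^{\frac{2}{d-2}-u}\int_0^{q_r}v^{u-\frac{2}{d-2}-1}\,\ddr v$ explicitly before simplifying.
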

\begin{proof}
Observe that $\frac{2}{d-2} \! <\! \bdelta\!-\!1$. Recall that $\delta_{\Dpsi}\! =\! \bdelta \! -\! 1$. Let us fix $u \! \in \! (\frac{2}{d-2},\bdelta\!-\!1)$. 
By the definition (\ref{defdeltaintro}) of the exponent $\delta_{\Dpsi}$, there exists $K\! \in \! (0,\infty)$ depending on $\psi$ and $u$ such that 
\mbox{$\forall\  1 \! \leq \! \lambda \! \leq \! \mu$}, $ \Dpsi(\lambda) \! \leq \! K\, \Dpsi(\mu)(\lambda/ \mu)^{u}.$ Recall from (\ref{qdef}) that $\Dpsi(q_r) \! = \! \Cr{cestppale}r^{-2}$, where $\Cr{cestppale}$ is the constant appearing in Lemma \ref{estppale}. Then we get the following. 
 \begin{align*}
J(r)=r^2q_r^{\frac{2}{d-2}}\int_1^{q_r} 
\Dpsi(v)v^{-\frac{d}{d-2}}\ddr v
&\leq  r^2q_r^{\frac{2}{d-2}}\Dpsi(q_r)\int_1^{q_r} 
\left(v/q_r\right)^{u}v^{-\frac{d}{d-2}}\ddr v\nonumber\\
&\leq \Cr{cestppale}q_r^{\frac{2}{d-2}-u}\int_1^{q_r} 
v^{u-\frac{2}{d-2}-1}\ddr v \nonumber \\ 
& \leq \Cr{cestppale}q_r^{\frac{2}{d-2}-u}\int_0^{q_r} 
v^{u-\frac{2}{d-2}-1}\ddr v = \frac{\Cr{cestppale}}{^{u-\frac{2}{d-2}}}\; , 
\end{align*}
which implies the desired result with $\Cr{cJbornee}:=\frac{\Cr{cestppale}}{u-\frac{2}{d-2}}$. \cqfd 
\end{proof}

\bigskip

 When $d \! \in \! (\frac{2\bgamma}{\bgamma -1},\frac{2\bdelta}{\bdelta-1}]$, we are only able to prove that 
 $\liminf_{r\rightarrow 0} J (r) \! < \! \infty$. More precisely, we prove the following lemma. 
\begin{lem}\label{lemmertheta}
Assume that $\bgamma \!> \! 1$ and that $d \! > \! \frac{2\bgamma}{\bgamma-1}$. Recall that 
$\Cr{cestppale}$ appears in Lemma \ref{estppale}. 
Then, there exists a decreasing function 
$$\theta \in (\Dpsi(1) , \infty) \longmapsto r_\theta \in \big( 0 \, , \, (\Cr{cestppale}/\Dpsi(1))^{\frac{1}{2}} \big)$$ 
such that $\lim_{\theta \rightarrow \infty} r_\theta = 0$ and such that there exists  $\Cl[c]{clemmertheta} \! \in \! (0, \infty)$, that only depends on $d$ and $\psi$, 
and that satisfies 
\begin{equation}
\label{Jbound}
\forall \theta \in (\Dpsi (1) , \infty), \quad J(r_\theta) \leq \Cr{clemmertheta} \, . 
\end{equation}
Moreover, for any $\theta^\prime\! , \theta  \! \in \! (\Dpsi (1), \infty)$ such that 
$\theta^\prime \! \geq  \! \theta $, we also have  
\begin{equation}
\label{rcontrol}
r_{\theta^\prime }/ r_\theta \leq (\theta/ \theta^\prime)^{1/2} 
\quad {\rm and} \quad r_\theta \geq \Cl[c]{c1lemmertheta} \, \theta^{-\Cl[c]{c2lemmertheta}} \; , 
\end{equation}
where $\Cr{c1lemmertheta}, \Cr{c2lemmertheta} \! \in \! (0, \infty)$ only depend on $d$ and $\psi$. 
\end{lem}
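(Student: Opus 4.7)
The proof is a subsequence refinement of Lemma~\ref{Jbornee}: we no longer have $\bdelta \! >\! 1$ to bound $\Dpsi(v)/v^u$ uniformly on $[1,q_r]$, but the assumption $\bgamma \! >\! 1$ together with $d \! >\! 2\bgamma/(\bgamma\! -\! 1)$ still allows us to do so along a carefully chosen family of values of $r$ parametrised by $\theta$. Since $d \! >\! 2\bgamma/(\bgamma\! -\! 1)$, fix once and for all an exponent $c \! \in \! (\frac{2}{d-2}, \bgamma\! -\!1)$; because $\gamma_{\Dpsi}\! =\! \bgamma\! -\!1 \! >\! c$, the function $v \! \mapsto\! \Dpsi(v)/v^c$ tends to $\infty$. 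Introduce its running supremum
$$\Psi(v) := \sup_{u \in [1,v]} \Dpsi(u)/u^c, \qquad v \geq 1,$$
which is continuous (since $\Dpsi$ is), non-decreasing, with $\Psi(1)\! =\! \Dpsi(1)$ and $\Psi(v)\! \to\! \infty$. For $\theta \! >\! \Dpsi(1)$, define
$$q_\theta := \inf\bigl\{ v \geq 1 : \Psi(v) \geq \theta \bigr\} \quad\text{and}\quad r_\theta := \sqrt{\Cr{cestppale}/\Dpsi(q_\theta)},$$
so that in the notation of (\ref{qdef}) we have $q_{r_\theta}\! =\! q_\theta$.

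The key step — and the only slightly delicate point of the argument — is to check that $q_\theta$ is itself a ``record'' of $\Dpsi(v)/v^c$: namely,
$$\Dpsi(q_\theta)/q_\theta^c = \theta \quad\text{and}\quad \Dpsi(v) \leq \theta v^c \;\; \text{for all } v\in [1, q_\theta].$$
Continuity of $\Psi$ gives $\Psi(q_\theta)\! =\! \theta$. For $v\! <\! q_\theta$, the infimum-definition forces $\Psi(v)\! <\! \theta$, hence $\Dpsi(v)/v^c \! \leq\!  \Psi(v)\! <\! \theta$ strictly on $[1, q_\theta)$. Since $v\! \mapsto\! \Dpsi(v)/v^c$ is continuous on the compact interval $[1,q_\theta]$, its supremum (which equals $\Psi(q_\theta) \! =\! \theta$) is attained at some $v^*\! \in\! [1, q_\theta]$; the strict inequality on $[1, q_\theta)$ forces $v^*\! =\! q_\theta$, which proves both claims. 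Substituting $\Dpsi(v) \! \leq\! \theta v^c$ into the definition (\ref{Jdef}) of $J$ and using the hypothesis $c\! >\! \frac{2}{d-2}$ to estimate
$$\int_1^{q_\theta} v^{c - d/(d-2)} \ddr v \leq \frac{q_\theta^{\,c - 2/(d-2)}}{c - \frac{2}{d-2}},$$
the whole expression collapses thanks to $r_\theta^2 \Dpsi(q_\theta) \! =\! \Cr{cestppale}$ and $\Dpsi(q_\theta) \! =\! \theta q_\theta^c$ to the constant $\Cr{clemmertheta} := \Cr{cestppale}/(c - \frac{2}{d-2})$, yielding (\ref{Jbound}).

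For the remaining assertions: the inf-definition makes $\theta \! \mapsto\! q_\theta$ non-decreasing, so $r_\theta$ is non-increasing; since $\theta \! =\! \Dpsi(q_\theta)/q_\theta^c$ with $\Dpsi(v)/v^c\! \to\! \infty$, one obtains $q_\theta\! \to\! \infty$ and hence $r_\theta\! \to\! 0$. For $\theta^\prime\! \geq\! \theta$,
$$(r_{\theta'}/r_\theta)^2 = \Dpsi(q_\theta)/\Dpsi(q_{\theta'}) = (\theta/\theta')(q_\theta/q_{\theta'})^c \leq \theta/\theta',$$
which is the first half of (\ref{rcontrol}). Finally, to get the polynomial lower bound on $r_\theta$, pick a second exponent $c^{\prime\prime} \! \in \! (c, \bgamma\! -\! 1)$; since $c^{\prime\prime}\! <\! \gamma_{\Dpsi}$, there is $K\! >\! 0$ such that $\Dpsi(v)\! \geq\! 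Kv^{c^{\prime\prime}}$ for all $v$ large enough. Hence, for $\theta$ large, $\theta \! =\! \Dpsi(q_\theta)/q_\theta^c \! \geq\! K q_\theta^{c^{\prime\prime}-c}$, giving $q_\theta \! \leq\! (\theta/K)^{1/(c^{\prime\prime}-c)}$, whence $\Dpsi(q_\theta) \! =\! \theta q_\theta^c \! \leq\! K^\prime \theta^{c^{\prime\prime}/(c^{\prime\prime}-c)}$; inverting yields the bound $r_\theta \! \geq\! \Cr{c1lemmertheta}\, \theta^{-\Cr{c2lemmertheta}}$ with $\Cr{c2lemmertheta} \! = \! c^{\prime\prime}/(2(c^{\prime\prime}\! -\! c))$, which completes the proof.
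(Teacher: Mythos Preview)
Your proof is correct and essentially identical to the paper's. The paper defines $\lambda_\theta = \inf\{\lambda \geq 1 : \lambda^{-c}\Dpsi(\lambda) = \theta\}$ directly, whereas you phrase the same quantity via the running supremum $\Psi$; your argument that the supremum is attained at $q_\theta$ shows precisely that $q_\theta = \lambda_\theta$, and from there the computations (bound on $J$, monotonicity of $\theta r_\theta^2$, polynomial lower bound on $r_\theta$) coincide line by line with the paper. One tiny clean-up: for the last step you may take $K$ so that $\Dpsi(v)\geq Kv^{c''}$ holds for \emph{all} $v\geq 1$ (this is automatic since $v^{-c''}\Dpsi(v)$ is continuous, positive on $[1,\infty)$ and tends to $\infty$), which gives the bound $r_\theta \geq \Cr{c1lemmertheta}\theta^{-\Cr{c2lemmertheta}}$ for every $\theta>\Dpsi(1)$ rather than only for large $\theta$.
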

\begin{proof} Note that $\frac{2}{d-2} \! < \! \bgamma \! -\! 1 \! =\! \gamma_{\psi^{\prime}}$. Let us fix $c \! \in \! (\frac{2}{d-2} , \gamma_{\Dpsi} )$. Thus, 
$\lambda^{-c}\Dpsi(\lambda) \rightarrow \infty$ as $\lambda \rightarrow \infty$, which allows to define the following for any $\theta \! \in \! ( \Dpsi (1), \infty)$: 
\begin{equation}
\label{rthetadef}
r_\theta =\big(\Cr{cestppale} / \Dpsi (\lambda_\theta ) \big)^{\frac{1}{2}} 
\quad {\rm where} \quad 
\lambda_\theta = \inf \big\{ \lambda \in [1, \infty)  :  \lambda^{-c} \Dpsi (\lambda) = \theta  \big\} .
\end{equation}
Note that if $\theta \! >  \!  \Dpsi (1)$, then $r_\theta \! <\!  (\Cr{cestppale}/\Dpsi(1))^{\frac{1}{2}}$ and $J(r_\theta)$ is well-defined by (\ref{Jdef}). 
Clearly, 
$\theta \mapsto \lambda_\theta$ increases to $\infty$ as $\theta \rightarrow \infty$. Consequently $\theta \mapsto r_\theta$ decreases to $0$ as $\theta \rightarrow \infty$. 
Recall from (\ref{qdef}) and (\ref{Jdef}) the definitions of $q_r$ and $J (r)$ and note that $q_{r_\theta}= \lambda_\theta$. By definition, 
$\Dpsi (v) \leq \theta v^{c}$, for any $v \in [1, \lambda_\theta]$, which implies 
\begin{eqnarray*}
J(r_\theta) & = &  r_\theta^2 \, \lambda_\theta^{\frac{2}{d-2}} \!\! \int_1^{\lambda_\theta} 
\!\! \Dpsi (v) v^{-\frac{d}{d-2}} \ddr v  \leq  r_\theta^2 \, \lambda_\theta^{\frac{2}{d-2}} \, \theta \!\!  \int_1^{\lambda_\theta} \!\! v^{c-\frac{2}{d-2}-1}  \ddr v   \\
& \leq & \frac{_1}{^{c-\frac{2}{d-2}}}  r_\theta^2 \,  \lambda_\theta^{\frac{2}{d-2}} \,  \theta \,  
\lambda_\theta^{c-\frac{2}{d-2}}= \Cl[c]{floum}  \frac{\theta \lambda_\theta^c}{\Dpsi (\lambda_\theta )} = \Cr{floum},  
\end{eqnarray*}
where $\Cr{floum} \! =\!  \Cr{cestppale}/(c\! -\! \frac{2}{d-2})$ and since $\Dpsi (\lambda_\theta ) \! = \! \theta \lambda_\theta^c$, by definition. Next, observe that 
$$ \theta r^2_\theta = \frac{\Cr{cestppale} \theta }{ \Dpsi (\lambda_\theta)} =  \Cr{cestppale} \lambda_\theta^{-c}  \; .$$
Thus, $\theta \mapsto \theta r^2_\theta $ decreases, which proves the first inequality in (\ref{rcontrol}). 
To prove the second inequality, we fix $\varepsilon \!  \in \! (0, \infty)$ such that 
$c+ \varepsilon \! < \! \gamma_{\Dpsi}$. By definition of $\gamma_{\Dpsi}$, there exists $K \! \in \! (0, \infty)$ such that $\lambda^{-c} \Dpsi (\lambda) \geq K \lambda^{\varepsilon}$, for any $\lambda \in [1, \infty)$. 
It entails that $\theta = \lambda_\theta^{-c} \Dpsi (\lambda_\theta )\geq K \lambda_\theta^{\varepsilon}$. Thus, 
$$ r_\theta =\big(\Cr{cestppale} / \Dpsi (\lambda_\theta ) \big)^{\frac{1}{2}} = \big(\Cr{cestppale}  / (\theta \lambda_\theta^c)  \big)^{\frac{1}{2}} \geq  \big(\Cr{cestppale}  K^{\frac{c}{\varepsilon}} \big)^{\frac{1}{2}} \theta^{-\frac{1}{2} (1+\frac{c}{\varepsilon} )}, $$
which implies the desired result with $\Cr{c1lemmertheta}= \big(\Cr{cestppale}  K^{\frac{c}{\varepsilon}} \big)^{\frac{1}{2}} $ and $\Cr{c2lemmertheta}= \frac{1}{2} (1+\frac{c}{\varepsilon} )$.  \cqfd 
\end{proof}

\bigskip

By combing the previous lemmas we obtain the following result. 
\begin{lem}  
\label{lesbonsradis} Assume that $\bdelta \! >\! 1$ and that $d \! >\! \frac{2\bgamma}{\bgamma-1}$. Then $d\! \geq  \! 4$. 
Let $a \! \in \! (0, \infty)$. For any $n \in \N$, set $\rho_n = r_{e^{n^2}}$, where $(r_\theta)_{\theta \in [\Dpsi (1), \infty)}$ is defined as in Lemma \ref{lemmertheta}.  
Then, the sequence $(\rho_n)_{n \geq 0}$ satisfies (\ref{miam}) in Lemma \ref{lemmesubord}. Moreover, 
there exists a constant $\Cl[c]{clesbonsradis} \! \in \! (0, \infty)$, that only depends on $d$, $\psi$ and $a$, such that for all sufficiently large $n\in \N$, 
$$ \E_0\left[\int_0^{2a} \ddr s \un_{\left\{\lVert\xi_s\rVert\geq 2\rho_n \right\}}\Dpsi( u_{\rho_n} 
(\xi_s))\right]  \leq \Cr{clesbonsradis} \; .$$
\end{lem}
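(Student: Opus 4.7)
The plan is to combine Lemmas \ref{expectaddi} and \ref{lemmertheta}: the former reduces the expectation to $\Cr{cexpectaddi}+\Cr{c1expectaddi}J(\rho_n)$, and the latter both produces the subsequence $\rho_n=r_{e^{n^2}}$ satisfying (\ref{miam}) and bounds $J(\rho_n)$ by a universal constant. I would carry this out in three short steps.

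First, the assertion $d\geq 4$ is immediate. Since $\bgamma\in(1,2]$, the map $x\mapsto 2x/(x-1)$ is strictly decreasing on $(1,2]$ with minimum value $4$ attained at $x=2$; therefore $d>\frac{2\bgamma}{\bgamma-1}\geq 4$ forces $d\geq 5$. In particular, the standing hypothesis $d\geq 3$ of Lemmas \ref{estppale} and \ref{expectaddi} is automatically satisfied.

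Second, to verify (\ref{miam}) for $\rho_n:=r_{e^{n^2}}$, I would invoke the two estimates of (\ref{rcontrol}) in Lemma \ref{lemmertheta}. With $\theta_n:=e^{n^2}$ and $\theta_{n+1}=e^{(n+1)^2}$, the monotonicity bound gives
\[
\frac{\rho_{n+1}}{\rho_n}\;\leq\;\Big(\frac{\theta_n}{\theta_{n+1}}\Big)^{\!1/2}=e^{-(2n+1)/2}\leq e^{-n},
\]
while the polynomial lower bound $r_\theta\geq \Cr{c1lemmertheta}\,\theta^{-\Cr{c2lemmertheta}}$ yields $\log(1/\rho_n)\leq -\log \Cr{c1lemmertheta}+\Cr{c2lemmertheta}\,n^{2}$, so $n^{-2}\log(1/\rho_n)$ is bounded. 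Both requirements of (\ref{miam}) are then in force.

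Third, since $r_\theta\to 0$ as $\theta\to\infty$, one has $\rho_n<\Cr{rexpectaddi}$ and $e^{n^2}>\Dpsi(1)$ for all $n$ large enough, so that both Lemma \ref{expectaddi} and the bound $J(r_\theta)\leq \Cr{clemmertheta}$ of Lemma \ref{lemmertheta} are applicable at $r=\rho_n$. Combining them gives
\[
\E_0\!\left[\int_0^{2a}\!\!\un_{\{\lVert\xi_s\rVert\geq 2\rho_n\}}\Dpsi(u_{\rho_n}(\xi_s))\,\ddr s\right]\;\leq\;\Cr{cexpectaddi}+\Cr{c1expectaddi}\,J(\rho_n)\;\leq\;\Cr{cexpectaddi}+\Cr{c1expectaddi}\Cr{clemmertheta},
\]
which yields the claim with $\Cr{clesbonsradis}:=\Cr{cexpectaddi}+\Cr{c1expectaddi}\Cr{clemmertheta}$. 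There is no genuine obstacle here: the result is essentially a bookkeeping consequence of the two previous lemmas once the explicit subsequence $\theta_n=e^{n^2}$ is chosen, the only delicate point being to check that the thresholds implicit in ``for all sufficiently large $n$'' are compatible with the domains of definition of $r_\theta$ and $J$, which they are.
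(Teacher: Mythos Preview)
Your proof is correct and follows essentially the same three-step approach as the paper: verify $d\geq 4$ from $\bgamma\leq 2$, check (\ref{miam}) via the two inequalities in (\ref{rcontrol}), and combine Lemma~\ref{expectaddi} with the bound $J(r_\theta)\leq \Cr{clemmertheta}$ to get $\Cr{clesbonsradis}=\Cr{cexpectaddi}+\Cr{c1expectaddi}\Cr{clemmertheta}$. Incidentally, your observation that $d\geq 5$ (not just $d\geq 4$) is sharper than the paper's stated conclusion and is indeed correct, since $\frac{2\bgamma}{\bgamma-1}\geq 4$ with equality only when $\bgamma=2$, and the hypothesis is a strict inequality.
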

\begin{proof} Note that $\bgamma \!  \leq \! 2$, which easily entails that $d\! \geq  \! 4$. By (\ref{rcontrol}) in Lemma \ref{lemmertheta}, 
$$\rho_{n+1} / \rho_n  \leq \big( e^{n^2}/e^{(n+1)^2} \big)^{1/2}= e^{-n -\frac{1}{2}} \leq e^{-n} \quad \textrm{and} \quad n^{-2} \log 1/\rho_n \leq \Cr{c2lemmertheta} -n^{-2} \log \Cr{c1lemmertheta} \; , $$
which proves that $(\rho_n)_{n\geq 0}$ satisfies (\ref{miam}) in Lemma \ref{lemmesubord}. Moreover, for all $n\! \in \! \N$ such that $\rho_n \! \in (0, \Cr{rexpectaddi})$, Lemma \ref{expectaddi} and (\ref{Jbound}) in Lemma \ref{lemmertheta} imply 
$$  \E_0\left[\int_0^{2a} \ddr s \un_{\left\{\lVert\xi_s\rVert\geq 2\rho_n \right\}}\Dpsi( u_{\rho_n} 
(\xi_s))\right]  \leq \Cr{cexpectaddi} + \Cr{c1expectaddi} J(r_{e^{n^2}}) \leq  \Cr{cexpectaddi} + \Cr{c1expectaddi}\Cr{clemmertheta}=: \Cr{clesbonsradis} \; , $$
which completes the proof of the lemma. \cqfd 

\end{proof}

\subsection{The spine and the associated subordinator.}
\label{sectionsubord}
Recall from Section \ref{levysnake}, the Palm formula for the occupation measure of the snake. 
Recall that $\xi= (\xi_t)_{t\geq 0}$ is 
$d$-dimensional Brownian motion starting from $0$ that is defined on $(\Omega, \cF, \P_0)$. 
Recall that $(V_t)_{ t\geq 0}$ is a subordinator defined on $(\Omega, \cF, \P_0)$ that is independent of $\xi$ and whose Laplace exponent is $\Dpsistar(\lambda)\! =\! \Dpsi(\lambda) \! -\! \alpha$. Recall from (\ref{defN}) that under $\P_0$, conditionally given $(\xi, V)$, 
$\Nstar(\ddr t \ddr W)=\sum_{j\in\mathcal{J}^*}\delta_{(t_j,W^j)}$ is a a Poisson point process on $[0,\infty) \! \times \! C(\R_+,\cW)$ with intensity $\ddr V_t \, \N_{\xi_t} (\ddr W)$. Then recall from (\ref{defMa}) that for all $a \! \in \! \R_+$, we have set $\cM_a^* \! = \! \sum_{{j\in\mathcal{J}^*}}\un_{[0,a]}(t_j)\mathcal{M}_j$ where 
for all $j\! \in \! \mathcal{J}^*$, $\cM_j$ stands for the occupation measure of the snake $W^j$ as defined in (\ref{occuWdef}).  

For any $a\! \in \! \R_+$, we then introduce 
\begin{equation}\label{defTc}
T_a:= \langle \cM^*_a , \un \rangle = \sum\limits_{j\in\cJ*}\un_{\left[0,a\right]}(t_j)\sigma_j , 
\end{equation}
where $\sigma_j$ is the total duration of the excursion of the snake $W^j$. By construction of the snake excursion measure,  
\begin{equation}
\label{rappdura}
\N_x \big[1\! -\! e^{-\lambda \sigma }\big]= N \big[1\! -\! e^{-\lambda \sigma} \big] = \psi^{-1} (\lambda) \; .
\end{equation}
We shall assume throughout the paper that $d\! \geq \! 4$. We then 
introduce the following two last exit times: for all $r \! \in \! \R_+$, we set 
\begin{eqnarray}\label{deflastexit}
\vartheta(r) &=&\sup\{t\geq0 : \lVert\xi_t\rVert\leq r\} \\
\gamma(r)&=&\sup\left\{t\geq0 : \sqrt{(\xi_{t}^{_{(1)}})^2+(\xi_{t}^{_{(2)}})^2+(\xi_{t}^{_{(3)}})^2}\leq r\right\}
\end{eqnarray}
where $(\xi_t^{(i)})_{ t\geq 0}$ stands for the $i$-th coordinate of $\xi$.
We then recall the two basic facts on the processes $\gamma$ and $\vartheta$. 
\begin{itemize}
\item[(a)] The increments of $(\vartheta(r))_{ r\geq0}$ are independents. Moreover,$(\lVert\xi_{\vartheta(r)+t}\rVert)_{t\geq0}$ is independent of the two processes $(\vartheta(r^{\prime}))_{ 0\leq r^{\prime}\leq r}$ and $(\lVert\xi_{t\wedge \vartheta(r)}\rVert)_{ t\geq 0}$.
\item[(b)] The process $(\gamma(r))_{ r\geq0}$ has independent and stationary increments: it is a subordinator with Laplace exponent $\lambda\longmapsto\sqrt{2\lambda}$. 
\end{itemize}
\noindent 
The first point is proved in Getoor \cite{Get79}. The second is a celebrated result of Pitman \cite{Pit75}.

\medskip

Before stating our lemma, we introduce the following random variables: 
\begin{equation}\label{defNatheta}
\forall \, t \geq s \geq 0,   \quad N_r (s, t)=\#\left\lbrace j\in\cJ^* : \, s \! <\! t_j\! <\! t \quad   \textrm{and} \quad  \cR_j\cap\overline{B}(0,r)\neq\emptyset\right\rbrace,
\end{equation}
\noindent that counts the snakes that are grafted on the spatial spine $\xi$ 
between times $s$ and $t$, and that hit the ball $\overline{B}(0, r)$. 
\begin{lem}\label{lemmeTc} Assume that $d\! \geq \! 4$. 
We keep the previous notation. Then, the following holds true.  
\begin{itemize} 
\item[(i)] For all real numbers $r\! >\! r^\prime \! >\! \rho\! >\! \rho^\prime \! >\! 0$ 
and all $a \! \in \! (0,\infty)$, the random variables 
$$T_{\vartheta(2\rho)} \! -\! T_{\vartheta(2\rho^\prime )}\, , \quad  T_{\vartheta(2r)} \! -\! T_{\vartheta(2r^\prime )} \quad \textrm{and} \quad 
N_r(\vartheta (2r), a\! +\!  \vartheta (2r)) $$
are independent. 
\item[(ii)] The process $(T_{\gamma(r)})_{ r\geq 0}$ is a subordinator with Laplace exponent 
$\sqrt{2\, \Dpsistar \! \circ \! \psi^{-1}}$.
\end{itemize}
\end{lem}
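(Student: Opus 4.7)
My plan is to compute joint Laplace/generating transforms by conditioning on $(\xi,V)$, exploit the Poisson structure of $\cN^*$ (whose restrictions to disjoint time-subsets are independent Poisson point measures), and then integrate by using that $V\perp\xi$ has independent increments and that Getoor's property (a) splits $\xi$ into independent pieces at the last-exit times $\vartheta$.

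For (i), observe that since $r>r'>\rho>\rho'$ we have $\vartheta(2\rho')\leq\vartheta(2\rho)\leq\vartheta(2r')\leq\vartheta(2r)$, so the three $\xi$-measurable intervals
\[
I_1=(\vartheta(2\rho'),\vartheta(2\rho)],\quad I_2=(\vartheta(2r'),\vartheta(2r)],\quad I_3=(\vartheta(2r),a+\vartheta(2r))
\]
are pairwise disjoint, and the three random variables under consideration are functionals of $\cN^*$ restricted to $I_k\!\times\! C(\R_+,\cW)$ respectively. Using (\ref{rappdura}) and $\N_x(\cR\cap\overline{B}(0,r)\neq\emptyset)=\ur(x)$ (valid on $I_3$ since $\lVert\xi_t\rVert>2r>r$ there), the conditional joint Laplace/generating transform factors as
\[
\exp\!\big(\!-\!\psi^{-1}(\lambda_1)(V_{\vartheta(2\rho)}\!-\!V_{\vartheta(2\rho')})\big)\exp\!\big(\!-\!\psi^{-1}(\lambda_2)(V_{\vartheta(2r)}\!-\!V_{\vartheta(2r')})\big)\exp\!\Big(\!-\!(1\!-\!s)\!\!\int_{I_3}\!\!\ur(\xi_t)\,\ddr V_t\Big).
\]
Integrating next over $V$ conditionally on $\xi$ via the formula $\E[\exp(-\!\int\! f\,\ddr V)|\xi]=\exp(-\!\int\!\Dpsistar(f)\,\ddr t)$ turns these three factors into deterministic functions of $\vartheta(2\rho)\!-\!\vartheta(2\rho')$, $\vartheta(2r)\!-\!\vartheta(2r')$ and $(\lVert\xi_{\vartheta(2r)+t}\rVert)_{0\leq t\leq a}$ respectively. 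These three quantities are mutually independent by Getoor's property (a): apply it at threshold $2r$ to decouple the post-$\vartheta(2r)$ piece of $\lVert\xi\rVert$ from every $\vartheta(s)$, $s\leq 2r$, and then use the independent-increments part of (a) for $\vartheta$. Integrating over $\xi$ therefore factors the expectation, giving the desired independence.

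For (ii), the same conditional Poisson computation yields $\E[e^{-\lambda T_{\gamma(r)}}\,|\,V]=\exp(-\psi^{-1}(\lambda)V_{\gamma(r)})$, an identity in which $\xi$ plays no role because $\N_x[1\!-\!e^{-\lambda\sigma}]$ does not depend on $x$. Since $V\perp\xi$ and $(\gamma(r))_{r\geq 0}$ is a subordinator of Laplace exponent $\sqrt{2\,\cdot\,}$ by property (b), subordination identifies $(V_{\gamma(r)})_{r\geq 0}$ as a subordinator of Laplace exponent $\sqrt{2\,\Dpsistar}$, hence
\[
\E\big[e^{-\lambda T_{\gamma(r)}}\big]=\exp\!\big(-r\sqrt{2\,\Dpsistar\!\circ\psi^{-1}(\lambda)}\,\big).
\]
Running the same argument jointly for the increments $T_{\gamma(r_{k+1})}\!-\!T_{\gamma(r_k)}$ along an arbitrary subdivision $0\!=\!r_0\!<\!\cdots\!<\!r_n$ expresses their joint Laplace transform as a product of one-dimensional ones with exponents $(r_{k+1}\!-\!r_k)\sqrt{2\,\Dpsistar\!\circ\psi^{-1}(\lambda_k)}$, which simultaneously shows that $(T_{\gamma(r)})_{r\geq 0}$ has independent and stationary increments and identifies its Laplace exponent. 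The main obstacle is the bookkeeping in (i), where one must simultaneously handle the Poisson structure on random $\xi$-measurable time intervals, the independent increments of $V$ conditionally on $\xi$, and the Getoor decomposition of $\lVert\xi\rVert$ itself; (ii) is then a short consequence of the same machinery combined with subordination.
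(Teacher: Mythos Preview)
Your proof is correct and follows essentially the same approach as the paper: condition first on $(\xi,V)$ to exploit the Poisson structure of $\cN^*$, then integrate out $V$ using its independent increments, and finally use Getoor's property (a) for part (i) and Pitman's property (b) for part (ii). The only cosmetic difference is that for (ii) you phrase the last step as Bochner subordination of $V$ by $\gamma$, whereas the paper integrates out $V$ and $\xi$ in two separate steps; one minor notational slip is that your display $\E[e^{-\lambda T_{\gamma(r)}}\mid V]$ should really be conditional on $(\xi,V)$, as you yourself note immediately afterwards.
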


\begin{rem}
We take the opportunity to mention that it is stated incorrectly in \cite{Duq09} (60),  
that $T_{\gamma(2\rho)} \! -\! T_{\gamma(2\rho')}$, $T_{\gamma(2r)} \! -\! T_{\gamma(2r')}$ and 
$N(\vartheta (2r), a )$ are independent. More precisely, the statement (49) in \cite{Duq09} is incorrect. 
We provide here a correct statement and a correct proof. 
\end{rem}
\begin{proof}
Let us prove $(i)$. Let $\lambda, \mu, \theta \! \in \! \R_+$. To simplify notation we set 
$$ Y=  \exp \big( \! \! - \! \lambda(T_{\vartheta(2r)}  -\! T_{\vartheta(2r^\prime )})  -\! \mu (T_{\vartheta(2\rho)} \! -\! T_{\vartheta(2\rho^\prime )}) \! -\! \theta N_r(\vartheta (2r), a\! +\!  \vartheta (2r)) \,  \big) . $$
Then recall (\ref{rappdura}) and recall from (\ref{uradef}) 
notation $\widetilde{u}_r(\lVert x\rVert )= \N_x (\cR \cap \overline{B} (0, r) \neq \emptyset)$. Then, basic results on Poisson processes imply 
$$ \E_0 \big[ Y   \big|  (\xi, V)\big] \!\!  =\!  \exp \Big(  \! -\!  \psi^{\! -1} \! (\lambda) (V_{\vartheta(2r)}  -\! V_{\vartheta(2r^\prime )})  -\! \psi^{\! -1} \! (\mu) (V_{\vartheta(2\rho)} \! -\! V_{\vartheta(2\rho^\prime )})  -\! (1\! - e^{-\theta}) \!  \!\! \int_{(0, a)} \!\!\!\! \!\!\!\!  \ddr V_t   
\widetilde{u}_r ( \lVert \xi_{\vartheta(2r)+t }\rVert )  \Big). $$
Since $V$ is a subordinator with Laplace exponent $\psi^{* \prime}$, we then get 
\begin{align*}
  \E_0 \big[ Y  \, \big| \,\xi\big] 
 =  \exp \Big(  \! - (\vartheta(2r)  &-\! \vartheta(2r^\prime )) \psi^{* \prime}(\psi^{-1} (\lambda)) \\
  & - (\vartheta(2\rho) \! -\! \vartheta(2\rho^\prime )) \psi^{* \prime}(\psi^{-1} (\mu))  -\! 
\int_{0}^a \!\! \ddr t \, \psi^{*\prime} \big( (1\! - e^{-\theta})  \widetilde{u}_r ( \lVert \xi_{\vartheta(2r)+t } \rVert ) \big) 
 \Big) . 
 \end{align*}
The above mentioned property (a) for last exit times then implies that 
$$  \vartheta(2r)  -\! \vartheta(2r^\prime ), \quad \vartheta(2\rho) \! -\! \vartheta(2\rho^\prime ) \quad \textrm{and} \quad \int_{0}^a \!\! \ddr t \, \psi^{*\prime} \big( (1\! - e^{-\theta})  \widetilde{u}_r ( \lVert \xi_{\vartheta(2r)+t } \rVert )\big)  $$
are independent which easily implies $(i)$. 

The second point is proved in a similar way: let $0\! =\! r_0 \! <\! r_1\! < \! \ldots \! <\! r_n$ and let $\lambda_1,\ldots\lambda_n \! \in \R_+$. We set 
$$Y^\prime= \exp \Big(\! -\!\!\! \sum_{1\leq k \leq n} \!\! \lambda_k \, \big(T_{\gamma(r_k)}\! -\! T_{\gamma(r_{k-1})}  \big) \Big) $$
Thus, 
\begin{eqnarray*}
 \E_0 \big[ Y^\prime \big]  & = & \E_0 \Big[ \exp \Big(\! -\!\!\!  \sum_{1\leq k\leq n} \!\! 
 \psi^{-1} (\lambda_k) \,  \big( V_{\gamma (r_k)} \! -\! V_{\gamma (r_{k-1})}\big) \Big)\Big] \\
 & =&  \E_0 \Big[ \exp \Big(\! - \!\!\! \sum_{1\leq k\leq n} \!\! \psi^{*\prime} (\psi^{-1} (\lambda_k)) 
 \, \big( \gamma (r_k) \! -\! \gamma (r_{k-1})\big)  \Big)\Big]   \\
 & =& \exp \Big(\! -\!\!\!  \sum_{1\leq k\leq n} \!\! (r_k \! -\! r_{k-1}) \sqrt{2 \psi^{*\prime} (\psi^{-1} (\lambda_k))} 
\, \Big) \; . 
\end{eqnarray*}
Indeed, the first equality comes from basic results on Poisson point measures, the second equality comes from the fact that $V$ is a subordinator with Laplace exponent $\psi^{*\prime}$ and the last equality is a consequence of the above mentioned Property (b) of the last exit times $\gamma (r)$. This completes the proof of $(ii)$. \cqfd 
\end{proof}

\bigskip

Recall from (\ref{defNatheta}) the notation $N_r(s, t)$ 
that counts the snakes that are grafted on the spatial spine $\xi$ 
between times $s$ and $t$, and that hit the ball $\overline{B}(0, r)$. 
We state the following lemma that actually means, in some sense, 
that in supercritical dimension there is no 
snake $W^j$ grafted far away that hit $\overline{B}(0, r)$.
\begin{lem}\label{lemmeJars}
Assume that $\bgamma \! >\! 1$ and that $d \! >\! \frac{2\bgamma}{\bgamma-1}$. Then, for all 
$ t \! >\! s \! >\! 0$, 
$$ \forall \, t >s>0, \quad  \lim_{r\rightarrow0}\P_0 \left(N_r (s, t)=0\right)=1 \; .$$ 
\end{lem}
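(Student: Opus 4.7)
The plan is a conditional Laplace-transform argument that exploits the independence of $V$ from $\xi$ and the Poisson structure of $\Nstar$. First observe that $N_r(s,t)$ is nondecreasing in $r$, so $r\mapsto \P_0(N_r(s,t)=0)$ is nondecreasing as $r\downarrow 0$, and it suffices to show $\liminf_{r\to 0}\P_0(N_r(s,t)=0)\geq 1$.

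Fix $r_0>0$ small and set $A_{r_0}=\{\|\xi_a\|>2r_0 \text{ for all } a\in[s,t]\}$. Since $d\geq 4$ and $s>0$, under $\P_0$ the trajectory $\xi|_{[s,t]}$ almost surely stays at positive distance from the origin, so $\P_0(A_{r_0})\to 1$ as $r_0\to 0$. Restrict to $r\leq r_0$, so $A_{r_0}\subset\{\|\xi_a\|>r \text{ for all } a\in[s,t]\}$ and $\N_{\xi_a}(\cR\cap\overline{B}(0,r)\neq\emptyset)=u_r(\xi_a)<\infty$ on $A_{r_0}$. Since conditionally on $(\xi,V)$ the point measure $\Nstar$ is Poisson with intensity $\ddr V_a\,\N_{\xi_a}(\ddr W)$, we have
\[
\un_{A_{r_0}}\P_0(N_r(s,t)=0\mid \xi,V)=\un_{A_{r_0}}\exp\Bigl(-\!\int_s^t u_r(\xi_a)\,\ddr V_a\Bigr).
\]
Integrating out $V$ via the Laplace functional $\E_0[\exp(-\!\int g\,\ddr V)\mid \xi]=\exp(-\!\int\Dpsistar(g)\,\ddr a)$ for any $\xi$-measurable $g\geq 0$ then gives
\[
\P_0(N_r(s,t)=0)\geq \E_0\Bigl[\un_{A_{r_0}}\exp\Bigl(-\!\int_s^t\Dpsistar(u_r(\xi_a))\,\ddr a\Bigr)\Bigr].
\]

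It remains to show the exponent tends to $0$ uniformly on $A_{r_0}$ as $r\to 0$. By Lemma~\ref{estppale} with $\varrho=1$, for $r$ small and $\|x\|\geq 2r_0\geq 2r$,
\[u_r(x)\leq (2r/\|x\|)^{d-2}\,q_r\leq r_0^{-(d-2)}\,r^{d-2}\,q_r,\qquad q_r=\invDpsi(\Cr{cestppale}r^{-2}).\]
The hypothesis $d>2\bgamma/(\bgamma-1)$ is exactly $\gamma_{\Dpsi}=\bgamma-1>2/(d-2)$, so fixing any $c\in(2/(d-2),\bgamma-1)$, the definition of $\gamma_{\Dpsi}$ supplies a constant $K$ with $q_r\leq Kr^{-2/c}$ for small $r$; hence $u_r(x)\leq K\,r_0^{-(d-2)}r^{d-2-2/c}\to 0$ uniformly in $\{\|x\|\geq 2r_0\}$. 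Since $\Dpsistar$ is continuous with $\Dpsistar(0)=0$, this forces $\Dpsistar(u_r(\xi_a))\to 0$ uniformly on $A_{r_0}$ and thus $\int_s^t\Dpsistar(u_r(\xi_a))\,\ddr a\to 0$ on $A_{r_0}$. Bounded convergence yields $\liminf_{r\to 0}\P_0(N_r(s,t)=0)\geq \P_0(A_{r_0})$, and letting $r_0\to 0$ concludes the proof. The only delicate point is the uniform decay of $u_r(x)$ on $\{\|x\|\geq 2r_0\}$, which is precisely where the dimension hypothesis enters; everything else is routine manipulation of the Poisson--Markov structure.
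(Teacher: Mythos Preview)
Your proof is correct and follows essentially the same strategy as the paper's: write $\P_0(N_r(s,t)=0)=\E_0[\exp(-\int_s^t \Dpsistar(u_r(\xi_a))\,\ddr a)]$ via the Poisson structure, then use Lemma~\ref{estppale} together with the dimension hypothesis $\gamma_{\psi'}=\bgamma-1>2/(d-2)$ to force the exponent to vanish on an event of probability tending to~$1$. The only cosmetic difference is in the choice of spatial cutoff: the paper takes the event $\{s>\vartheta(r^b)\}$ for a carefully chosen $b\in(0,1)$ (so that a single limit $r\to 0$ suffices), whereas you fix a threshold $2r_0$, send $r\to 0$ first, and then $r_0\to 0$; both routes are valid and yield the same conclusion.
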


\begin{proof} Recall from (\ref{urdef}) the definition of $\ur$ and from (\ref{uradef}) that of 
$\urtild$. By the definition (\ref{defNatheta}), conditionally given 
$(\xi,V)$, $N_r(s, t)$ is a Poisson random variable with parameter 
$\int_s^t \ddr V_w \ur(\xi_w)$. Thus 
\begin{equation}
\label{poipoi}
 \P_0 (N_r(s, t)= 0)= \E_0 \Big[ \exp\Big( \! - \!\! \int_{s}^t \! \! \ddr w \, 
\Dpsistar(\ur(\xi_w)) \Big)  \Big] \; .
\end{equation}
Next, note that $d\! >\! \frac{2\bgamma}{\bgamma -1}$ implies that $d\!-\! 2 \! >\frac{2}{\bgamma -1}$. 
Then, there exists $ b \! \in \! (0, 1)$ and $a \! \in \! (0, \bgamma \! -\! 1)$ such that 
\begin{equation}
\label{ennuiignoble}
(d-2)(1-b) > \frac{2}{a} \; .
\end{equation}
By the definition (\ref{deflastexit}) of the last exit time process $\vartheta$, 
\begin{equation}
\label{lowporc}
\E_0 \Big[ \exp\Big( \! - \!\! \int_{s}^t \! \! \ddr w \, 
\Dpsistar(\ur(\xi_w)) \Big)  \Big] \geq \exp \big( \! -t  \Dpsistar(\urtild(r^b)) \big) \, 
\P_0 \big( s \! >\! \vartheta(r^b) \big) \; .
\end{equation}
Clearly $\lim_{r\rightarrow 0} \P_0 (s \! >\! \vartheta(r^b) ) = 1$. Thus, we only have to choose 
$b$ such that $\lim_{r\rightarrow 0} \urtild(r^b)= 0$. To that end, we apply Lemma \ref{estppale} with $\varrho\! = \! 1$: for all $r \! \in \! (0, r_1\wedge 2^{-\frac{1}{1-b}})$, we get 
\begin{equation}
\label{bleurpik}
\urtild(r^b)\leq (2r/r^b)^{d-2}\invDpsi (\Cr{cestppale} r^{-2})= 2^{d-2} r^{(d-2)(1-b)}\invDpsi (\Cr{cestppale} r^{-2}) .
\end{equation}
Since $a \! \in \! (0,\bgamma \! -\! 1)$ and since 
$\gamma_{\psi^\prime} \!= \! \bgamma \! -\! 1$, 
the definition (\ref{defgamma}) of $\gamma_{\psi^\prime}$ entails that for all sufficiently large $\lambda$, $\psi^\prime (\lambda) \! \geq \! \lambda^a $ and thus $\invDpsi(\lambda)\leq\lambda^{1/a}$. Then (\ref{bleurpik}) implies that there exists a constant $c\! \in \! (0, \infty)$ such that 
$\widetilde{u}_r (r^b) \leq c \, r^{(d-2)(1-b)-2/a}$ for all sufficiently small $r$. 
By (\ref{ennuiignoble}), this entails $\lim_{r\rightarrow 0} \urtild(r^b)= 0$, which completes the proof by (\ref{poipoi}) and (\ref{lowporc}). \cqfd 
\end{proof}

\bigskip

We next prove a similar estimate for $N_r (\vartheta (2r), a \! + \!  \vartheta (2r))$. 
\begin{lem}\label{minoprobadelta}
Assume that  $\bdelta \! >\! 1$ and that $d \! >\! \frac{2\bdelta}{\bdelta-1}$. Then,  there exist two constants $\Cl[c]{cminoprobadelta} , \Cl[r]{rminoprobadelta}\! \in \! (0, \infty)$ that only depends on $d$, $\psi$ and $a$, such that 
$$\forall r\in (0,\Cr{rminoprobadelta}) \quad   \P_0 \big(N_r (\vartheta (2r), a \! + \!  \vartheta (2r)) \! =\!  0 \big)\geq \Cr{cminoprobadelta}\; .$$
\end{lem}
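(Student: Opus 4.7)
The proof starts with a Poisson-calculus computation.  Conditionally on $(\xi,V)$, the point process $\Nstar$ is Poisson with intensity $\ddr V_t \,\N_{\xi_t}(\ddr W)$, and by (\ref{repi}) the $\N_{\xi_w}$-measure of $\{W:\cR_W\cap \overline{B}(0,r)\neq\emptyset\}$ equals $u_r(\xi_w)$.  Since both endpoints of the random window are $\xi$-measurable and $V$ is independent of $\xi$ with Laplace exponent $\Dpsistar$, integrating first over Poisson randomness and then over $V$ yields
\begin{equation*}
\P_0\bigl(N_r(\vartheta(2r),a+\vartheta(2r))=0\bigr)=\E_0[\exp(-Z)],\qquad Z:=\int_{\vartheta(2r)}^{\vartheta(2r)+a}\Dpsistar(u_r(\xi_w))\,\ddr w.
\end{equation*}

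The naive strategy of lower bounding this by Jensen, namely $\exp(-\E_0[Z])$, would require $\E_0[Z]$ to be uniformly bounded in small $r$, but enlarging the window to $[\vartheta(2r),\infty)$ produces a Green-function integral whose finiteness would demand fine control of $\Dpsistar$ near $0$ not available from $\bdelta>1$ alone.  The key idea is instead to restrict to the event $A:=\{\vartheta(2r)\leq a\}$.  On $A$, the window $[\vartheta(2r),\vartheta(2r)+a]$ lies in $[0,2a]$ and the spine stays outside $\overline{B}(0,2r)$ throughout it, so using $\Dpsistar\leq\Dpsi$ one obtains the pointwise bound $Z\un_A\leq \un_A Z'$, where
\begin{equation*}
Z':=\int_0^{2a}\un_{\{\lVert\xi_s\rVert\geq 2r\}}\Dpsi(u_r(\xi_s))\,\ddr s
\end{equation*}
is precisely the quantity estimated in Lemma \ref{expectaddi}.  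Combining $e^{-Z}\un_A\geq e^{-Z'}\un_A$ with the trivial bound $e^{-Z'}\leq 1$ and then applying Jensen to $Z'$ gives
\begin{equation*}
\P_0(N_r=0)\geq \E_0[e^{-Z'}\un_A]\geq \E_0[e^{-Z'}]-\P_0(A^c)\geq \exp(-\E_0[Z'])-\P_0(A^c).
\end{equation*}
Under $\bdelta>1$ and $d>\tfrac{2\bdelta}{\bdelta-1}$, Lemmas \ref{expectaddi} and \ref{Jbornee} together yield $\E_0[Z']\leq \Cr{cexpectaddi}+\Cr{c1expectaddi}\Cr{cJbornee}$ for every $r\in(0,\Cr{rexpectaddi})$, a constant depending only on $d,\psi,a$.

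It remains to control $\P_0(A^c)=\P_0(\vartheta(2r)>a)$.  Applying the Markov property of $\xi$ at time $a$ together with the classical identity $\P_x(\tau_{\overline{B}(0,2r)}<\infty)=(2r/\lVert x\rVert)^{d-2}$ for $\lVert x\rVert>2r$, valid because $d\geq 4$ is a transient dimension, one has
\begin{equation*}
\P_0(\vartheta(2r)>a)\leq \P_0(\lVert\xi_a\rVert\leq 2r)+\E_0\bigl[(2r/\lVert\xi_a\rVert)^{d-2}\un_{\{\lVert\xi_a\rVert>2r\}}\bigr],
\end{equation*}
and both terms tend to $0$ as $r\to 0$ by dominated convergence.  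Choosing $\Cr{rminoprobadelta}\in(0,\Cr{rexpectaddi})$ so that the right-hand side is bounded by $\tfrac{1}{2}\exp(-(\Cr{cexpectaddi}+\Cr{c1expectaddi}\Cr{cJbornee}))$ for every $r\in(0,\Cr{rminoprobadelta})$ then produces the claim with $\Cr{cminoprobadelta}:=\tfrac{1}{2}\exp(-(\Cr{cexpectaddi}+\Cr{c1expectaddi}\Cr{cJbornee}))$.  The only delicate step is the reduction to $Z'$ through the event $A$: once this reduction is made, the conclusion follows directly from the moment bounds already established in this section.
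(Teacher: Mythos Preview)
Your proof is correct and follows essentially the same route as the paper's: the Poisson computation giving $\E_0[e^{-Z}]$, the restriction to $\{\vartheta(2r)\leq a\}$ to bound $Z$ by the integral $Z'$ treated in Lemma~\ref{expectaddi}, the chain $\E_0[e^{-Z'}\un_A]\geq \E_0[e^{-Z'}]-\P_0(A^c)\geq \exp(-\E_0[Z'])-\P_0(A^c)$, and the choice of $\Cr{rminoprobadelta}$ and $\Cr{cminoprobadelta}$ are all identical to the paper's argument. Your explicit justification of $\P_0(\vartheta(2r)>a)\to 0$ via the Markov property and the hitting-probability formula is a small elaboration the paper omits, but the structure is the same.
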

\begin{proof} Recall from (\ref{urdef}) the definition of $\ur$ and from (\ref{uradef}) that of 
$\urtild$. By the definition (\ref{defNatheta}), conditionally given 
$(\xi,V)$, $N_r (\vartheta (2r), a \! + \!  \vartheta (2r))$ is a Poisson random variable with parameter 
$\int_{\vartheta (2r) }^{a + \vartheta (2r)} \ddr V_t \,  \ur(\xi_t)$. Thus 
\begin{equation}
\label{poipo}
 \P_0 \big(N_r (\vartheta (2r), a \! + \!  \vartheta (2r)) \! =\!  0 \big)= 
 \E_0 \Big[ \exp\Big( \! - \!\! \int_{\vartheta (2r) }^{a + \vartheta (2r)} \! \! \! \! \! \! \! \! \! \! \! \!  \ddr t \, \, 
\Dpsistar(\ur(\xi_t)) \Big)  \Big] \; .
\end{equation}
Next note that on $\{ \vartheta (2r) \! \leq \! a \}$, $a + \vartheta (2r) \! \leq \! 2a$ 
and that $t \! \geq \! \vartheta (2r)$ implies that $\lVert \xi_t  \rVert \! \geq \!  2r$. Thus, by~(\ref{poipo}) 
\begin{eqnarray}
\label{poip}
\hspace{-3mm}\hspace{-3mm}\hspace{-3mm} \P_0 \big(N_r (\vartheta (2r), a \! + \!  \vartheta (2r)) \! =\!  0 \big) \hspace{-3mm}& \geq &
\hspace{-2mm} \E_0 \Big[\un_{\{  \vartheta (2r)  \leq  a\}} \exp\Big( \! - \!\! \int_{0 }^{2a} \! \! \! \! \!  \ddr t \,  
\un_{\{ \lVert \xi_t \rVert \geq 2r\}} \Dpsistar(\ur(\xi_t)) \Big)  \Big]  \nonumber \\
\hspace{-3mm}& \geq & \hspace{-2mm}\E_0 \Big[ \exp\Big( \! - \!\! \int_{0 }^{2a}\! \!  \ddr t \,  
\un_{\{ \lVert \xi_t \rVert \geq 2r\}} \Dpsistar(\ur(\xi_t)) \Big)  \Big] \! -\! \P_0 ( \vartheta (2r) \! >\! a )  \nonumber \\
\hspace{-3mm}& \geq & \hspace{-2mm} \exp\Big( \! - \! \E_0 \Big[  \int_{0 }^{2a}\! \!  \ddr t \,  
\un_{\{ \lVert \xi_t \rVert \geq 2r\}} \Dpsistar(\ur(\xi_t))   \Big] \Big) \! -\! \P_0 ( \vartheta (2r) \! >\! a ),   
\end{eqnarray}
where we use Jensen inequality in the last line. By Lemma \ref{expectaddi} and Lemma \ref{Jbornee}, for any 
$r\! \in \! (0, \Cr{rexpectaddi})$, 
$$ \exp\Big( \! - \! \E_0 \Big[  \int_{0 }^{2a}\! \!  \ddr t \,  
\un_{\{ \lVert \xi_t \rVert \geq 2r\}} \Dpsistar(\ur(\xi_t))   \Big] \Big) \geq \exp (-\Cr{cexpectaddi} - \Cr{c1expectaddi} \Cr{cJbornee}) \; .$$ 
Since $ \P_0 ( \vartheta (2r) \! >\! a ) \! \rightarrow \! 0$ as $r\! \rightarrow \! 0$, there exists $\Cr{rminoprobadelta} \! \in \! (0, \Cr{rexpectaddi})$ such that 
$\P_0 ( \vartheta (2\Cr{rminoprobadelta}) \! >\! a ) \! \leq \!  \frac{1}{2}e^{-\Cr{cexpectaddi} - \Cr{c1expectaddi} \Cr{cJbornee}}$. Thus by (\ref{poip}) and the previous inequality, for any $r\! \in \! (0, \Cr{rminoprobadelta})$,
$$ \P_0 \big( N_r (\vartheta (2r), a \! + \!  \vartheta (2r))=0\big) \geq \frac{_1}{^2} \exp (-\Cr{cexpectaddi} - \Cr{c1expectaddi} \Cr{cJbornee})=: \Cr{cminoprobadelta}\; , $$
which completes the proof of lemma. \cqfd 
\end{proof}

\bigskip

Under the less restrictive condition $d \! >\! \frac{2\bgamma}{\bgamma-1}$, we get a similar lower but only for the family of radii $(\rho_n)_{n\geq 1}$ introduced in Lemma \ref{lesbonsradis}.
\begin{lem}\label{minoproba}
Assume that $\bdelta \!  >\! 1$ and that $d \! >\! \frac{2\bgamma}{\bgamma-1}$. Let $(\rho_n)_{n\geq 1}$ be the sequence introduced in Lemma \ref{lesbonsradis}.
Then, there exists a constant $\Cl[c]{cminoproba} \! \in \! (0, \infty)$,  
that only depend on $d$, $\psi$ and $a$, such that 
$$\forall n\geq 1 \quad \P_0 \left(N_r (\vartheta (2\rho_n), a \! + \!  \vartheta (2\rho_n))\! = \! 0\right)\geq \Cr{cminoproba} \; .$$
\end{lem}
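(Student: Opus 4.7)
\textbf{Proof plan for Lemma \ref{minoproba}.}

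The plan is to follow the same template as Lemma \ref{minoprobadelta}, replacing the global bound given by Lemma \ref{Jbornee} (which required the stronger hypothesis $d>\frac{2\bdelta}{\bdelta-1}$) by the sequential bound provided by Lemma \ref{lesbonsradis}, which is available under the weaker assumption $d>\frac{2\bgamma}{\bgamma-1}$.

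More precisely, I would first repeat the computation leading to (\ref{poip}): conditionally on $(\xi,V)$, the random variable $N_{\rho_n}(\vartheta(2\rho_n),a+\vartheta(2\rho_n))$ is Poisson with parameter $\int_{\vartheta(2\rho_n)}^{a+\vartheta(2\rho_n)}\Dpsistar(u_{\rho_n}(\xi_t))\,\ddr V_t$. Integrating out $V$ (which has Laplace exponent $\Dpsistar$) one obtains an analogue of (\ref{poipo}), and then using $\un_{\{\vartheta(2\rho_n)\leq a\}}$, the fact that $\|\xi_t\|\geq 2\rho_n$ for $t\geq \vartheta(2\rho_n)$, and Jensen's inequality exactly as in (\ref{poip}) yields
\begin{equation*}
\P_0\!\left(N_{\rho_n}(\vartheta(2\rho_n),a+\vartheta(2\rho_n))=0\right)\; \geq\; \exp\!\Big(\!-\!\E_0\Big[\!\int_0^{2a}\!\!\ddr s\,\un_{\{\|\xi_s\|\geq 2\rho_n\}}\Dpsistar(u_{\rho_n}(\xi_s))\Big]\Big)\; -\; \P_0(\vartheta(2\rho_n)>a).
\end{equation*}

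Next, since $\Dpsistar=\Dpsi-\alpha\leq \Dpsi$, Lemma \ref{lesbonsradis} gives the existence of $n_1\in\N$ and a constant $\Cr{clesbonsradis}$ such that the expectation in the exponent is bounded by $\Cr{clesbonsradis}$ for every $n\geq n_1$. By Lemma \ref{lemmertheta} we have $\rho_n\to 0$, hence $\P_0(\vartheta(2\rho_n)>a)\to 0$, so there exists $n_0\geq n_1$ such that for every $n\geq n_0$,
\begin{equation*}
\P_0(\vartheta(2\rho_n)>a)\leq \tfrac{1}{2}e^{-\Cr{clesbonsradis}},
\end{equation*}
which yields a uniform lower bound $\tfrac{1}{2}e^{-\Cr{clesbonsradis}}$ for all $n\geq n_0$.

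For the finitely many indices $n<n_0$, I would argue directly that each probability is strictly positive: on $\{\vartheta(2\rho_n)\leq a\}$, the interval of integration lies in $\{\|\xi_t\|\geq 2\rho_n\}$, where $u_{\rho_n}(\xi_t)\leq\widetilde{u}_{\rho_n}(2\rho_n)<\infty$ since $\widetilde u_{\rho_n}$ is finite and decreasing on $(\rho_n,\infty)$; this gives a deterministic upper bound on the Poisson parameter, so the conditional probability of no point is bounded below by a positive constant on an event of positive probability. Taking the minimum over $n<n_0$ of these positive probabilities and combining with the asymptotic bound gives the desired constant $\Cr{cminoproba}>0$ valid for all $n\geq 1$. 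The only substantive input is Lemma \ref{lesbonsradis}; the rest is a routine adaptation of the proof of Lemma \ref{minoprobadelta}, so no real obstacle is expected.
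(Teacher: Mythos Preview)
Your approach is correct and essentially identical to the paper's, which also invokes the inequality (\ref{poip}) and replaces Lemma \ref{Jbornee} by Lemma \ref{lesbonsradis}; your explicit handling of the finitely many small $n$ is even a bit more careful than the paper's brief proof. One minor slip: the Poisson parameter conditional on $(\xi,V)$ is $\int u_{\rho_n}(\xi_t)\,\ddr V_t$, without the $\Dpsistar$ (which only appears after integrating out $V$), but your displayed inequality is unaffected.
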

\begin{proof} The lower bound (\ref{poip}) applies for $r= \rho_n$. Then, Lemma \ref{lesbonsradis}, 
entails that for all sufficiently large $n$, 
$$ \exp\Big( \! - \! \E_0 \Big[  \int_{0 }^{2a}\! \!  \ddr t \,  
\un_{\{ \lVert \xi_t \rVert \geq 2\rho_n \}} \Dpsistar(u_{\rho_n}(\xi_t))   \Big] \Big) \geq \exp (-\Cr{clesbonsradis} ) \; , $$
and we completes the proof arguing as in the proof of Lemma \ref{minoprobadelta} with $\Cr{cminoproba}:= \frac{1}{2}e^{-\Cr{clesbonsradis}}$.  \cqfd 
\end{proof}

\bigskip

We then shall need the following result that is used in the proof of Lemma \ref{upperbound}.
\begin{lem}
\label{lemmeTgamma} Assume that $\bdelta \! >\! 1$ and that $d\! \geq \! 4$. 
Let $(\rho_n)_{n\geq 1}$ be the sequence of radii introduced in Lemma \ref{lesbonsradis}. Then,
$$ (i) : \sum_{n\geq 1}\P_0\left(T_{\gamma(2\rho_n)}\leq g(8\rho_n)\right)=\infty \quad \textrm{and} \quad (ii): \textrm{$\P_0$-a.s.}\; 
  \limsup\limits_{n\to \infty}\frac{T_{\gamma(2\rho_{n+1})}}{g(8\rho_n)}<\infty \; .$$
\end{lem}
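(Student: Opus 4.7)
\textbf{Plan for Lemma \ref{lemmeTgamma}.} The strategy is to reduce both assertions to a direct application of Lemma \ref{lemmesubord}, taking $T_\gamma$ in place of $S$ and the radii $\wt{\rho}_n := 2\rho_n$ in place of $\rho_n$. By Lemma \ref{lemmeTc}(ii), $(T_{\gamma(r)})_{r\geq 0}$ is indeed a subordinator, and its Laplace exponent $\sqrt{2\Dpsistar \circ \invpsi}$ differs from the exponent $\Phi := \sqrt{\Dpsistar\circ\invpsi}$ appearing in Lemma \ref{lemmesubord} only by the inessential multiplicative constant $\sqrt{2}$. Note that $g(4\wt{\rho}_n) = g(8\rho_n)$, so the two conclusions of Lemma \ref{lemmesubord} applied to $(\wt{\rho}_n)$ read exactly as (i) and (ii) of the present lemma.

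First I would verify that $(\wt{\rho}_n)$ satisfies the hypotheses (\ref{miam}). From Lemma \ref{lesbonsradis} one has $\rho_{n+1}\leq e^{-n}\rho_n$ and $\sup_n n^{-2}\log(1/\rho_n)<\infty$, and both properties transfer trivially to $(\wt{\rho}_n)=(2\rho_n)$.

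Next I would reprove Lemma \ref{lemmesubord} with $\Phi$ replaced by $\sqrt{2}\,\Phi$. The only genuine change is that the companion gauge $g_*$ becomes $\wt g_*(r) := \log\log(1/r)/\invphi(\alpha + \tfrac{1}{2} r^{-2}(\log\log(1/r))^2)$, and one must check that $\wt g_*(r) \leq C_0\, g(r)$ for all small $r$ and some fixed constant $C_0$. This reduces to a uniform bound on $\invphi(y)/\invphi(y/2)$ for large $y$, which is itself a consequence of the exponent $\delta_\varphi = (\bdelta-1)/\bdelta > 0$: the defining inequality (\ref{defdelta}) applied at $\mu = \invphi(y/2)$ and $\lambda = \invphi(y)$ yields $\invphi(y)/\invphi(y/2) \leq 2^{1/\delta_\varphi}$. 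With this substitution in place, the two halves of the argument in Lemma \ref{lemmesubord} transpose without further modification. Part (i) uses the elementary inequality $(1-e^{-a})\un_{\{x\leq a\}}\geq e^{-x}-e^{-a}$ with a tailored choice of $\lambda_n$ analogous to that in Lemma \ref{lemmesubord} to produce a lower bound on $\P_0(T_{\gamma(\wt{\rho}_n)} \leq g(4\wt{\rho}_n))$ whose series diverges thanks to $\sup_n n^{-2}\log(1/\wt{\rho}_n)<\infty$; part (ii) combines Markov's inequality with $\wt{\rho}_{n+1}/\wt{\rho}_n \leq e^{-n}$ and the Borel-Cantelli lemma.

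The only genuinely new ingredient is the doubling bound $\invphi(y)\leq 2^{1/\delta_\varphi}\invphi(y/2)$, which uses $\bdelta>1$ in an essential way; beyond that, the proof is a straightforward transcription of Lemma \ref{lemmesubord}, so no serious obstacle is expected.
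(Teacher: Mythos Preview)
Your approach is essentially that of the paper: invoke Lemma~\ref{lemmeTc}(ii) to identify $T_{\gamma(\cdot)}$ as a subordinator, check that the rescaled radii satisfy~(\ref{miam}), and appeal to Lemma~\ref{lemmesubord}. The paper's proof does exactly this in two lines, treating the constant $\sqrt{2}$ in the Laplace exponent and the rescaling $\wt\rho_n=2\rho_n$ as self-evident. You are more scrupulous in flagging the mismatch between the Laplace exponent $\sqrt{2\,\Dpsistar\circ\invpsi}$ and the exponent $\sqrt{\Dpsistar\circ\invpsi}$ assumed in Lemma~\ref{lemmesubord}, and your fix via the doubling bound $\invphi(y)\le 2^{1/\delta_\varphi}\invphi(y/2)$ (available since $\bdelta>1$) is correct; the paper simply absorbs this into the phrase ``immediately entails''.
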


\begin{proof}
Lemma \ref{lemmeTc} shows that $T_{\gamma(\cdot)}$ is a subordinator with Laplace exponent 
$\sqrt{2\, \Dpsistar \! \circ \! \psi^{-1}}$. Then, Lemma \ref{lesbonsradis} asserts 
that the sequence $(\rho_n)_{n\geq 0}$ satisfies the conditions (\ref{miam}) in Lemma \ref{lemmesubord}, which immediately entails $(i)$ and $(ii)$. \cqfd 
\end{proof}

\bigskip

We end this section with the following result that is close to the previous one and that is used 
in the proof of Theorem \ref{thdimension}.  
\begin{lem}
\label{lemmeTgammabis}
Assume that $\bgamma\! >\! 1$. Let $u \! \in \! (0, \frac{_{2\bgamma}}{^{\bgamma-1}})$. Then, 
there exists a decreasing sequence $(s_n)_{n\geq 1}$ that tends to $0$, that only depends on $\psi$ and $u$, and that satisfies the following. 
$$ (i) : \sum_{n\geq 1} \P_0 \! \left(\, T_{\gamma(2s_n)} \! \leq \! s_n^u \, \right)=\infty 
 \quad \textrm{and} \quad (ii): \textrm{$\P_0$-a.s.}\; \limsup\limits_{n\to \infty}\frac{T_{\gamma(2s_{n+1})}}{s_n^u}<\infty \; .$$
\end{lem}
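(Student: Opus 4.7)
The plan is to mimic the structure of Lemma \ref{lemmeTgamma}, but since the assumption $\bdelta>1$ is dropped and the target on the right-hand side is a power $s^u$ rather than the gauge $g$, I cannot invoke Lemma \ref{lesbonsradis}; instead I need to cook up the sequence $(s_n)$ directly from the lower exponent $\bgamma$ of $\psi$. Recall from Lemma \ref{lemmeTc}$(ii)$ that $(T_{\gamma(r)})_{r\geq 0}$ is a subordinator with Laplace exponent $\Phi:=\sqrt{2\Dpsistar\circ\psi^{-1}}$, which satisfies $\Phi(\psi(\mu))^2 \leq 2\psi'(\mu)$ for every $\mu\geq 0$. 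Set $F(\mu)=\psi(\mu)/\psi'(\mu)^{u/2}$. Using the comparison $\widetilde\psi(\mu)\leq \psi'(\mu)\leq 4\widetilde\psi(\mu)$ from (\ref{convexpsi}), one has $\log\psi'(\mu)=\log\psi(\mu)-\log\mu+O(1)$, hence
\[ \frac{\log F(\mu)}{\log\mu}=\Big(1-\frac{u}{2}\Big)\frac{\log\psi(\mu)}{\log\mu}+\frac{u}{2}+o(1)\qquad (\mu\to\infty).\]
The defining property of $\bgamma=\gamma_\psi$ provides a sequence $\widetilde\mu_k\to\infty$ along which $\log\psi(\widetilde\mu_k)/\log\widetilde\mu_k\to\bgamma$; along such a sequence the previous display tends to $\bgamma-\frac{u}{2}(\bgamma-1)$, which is strictly positive thanks to our assumption $u<\frac{2\bgamma}{\bgamma-1}$. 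Thus there exists $\epsilon_0>0$ (depending only on $\psi$ and $u$) and a subsequence $(\mu_n)_{n\geq 1}$ of $(\widetilde\mu_k)$ with $F(\mu_n)\geq \mu_n^{\epsilon_0}$ for all $n$.

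Next, pass to a further sparse subsequence so that $\psi'(\mu_{n+1})\geq 4^{n}\psi'(\mu_{n})$ for every $n\geq 1$, which is possible since $\psi'(\mu_n)\to\infty$. Then define
\[ s_n := \bigl(8\psi'(\mu_n)\bigr)^{-1/2}\quad\text{and}\quad \lambda_n:=\psi(\mu_n).\]
By construction $(s_n)$ strictly decreases to $0$ (because $\psi'$ is strictly increasing under the standing assumption $\bgamma>1$) and $s_{n+1}/s_n\leq 2^{-n}$. The two key estimates are:
\[ 2s_n\,\Phi(\lambda_n)\;\leq\; 2s_n\sqrt{2\psi'(\mu_n)}\;=\;1\qquad\text{and}\qquad \lambda_n s_n^{u}\;=\;8^{-u/2}F(\mu_n)\;\longrightarrow\;\infty.\]

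With these inputs, both conclusions fall out as in Lemma \ref{lemmesubord}. For $(i)$, the elementary identity $(1-e^{-a})\un_{\{0\leq x\leq a\}}\geq e^{-x}-e^{-a}$ applied with $a=s_n^u$ yields
\[ \P_0(T_{\gamma(2s_n)}\leq s_n^u)\;\geq\;\frac{e^{-2s_n\Phi(\lambda_n)}-e^{-\lambda_n s_n^u}}{1-e^{-\lambda_n s_n^u}}\;\geq\;\frac{e^{-1}}{2}\]
for all large $n$, so the series diverges. For $(ii)$, Markov's inequality gives
\[ \P_0(T_{\gamma(2s_{n+1})}>s_n^u)\;\leq\;\frac{2s_{n+1}\Phi(\lambda_n)}{1-e^{-\lambda_n s_n^u}}\;\leq\;\frac{s_{n+1}/s_n}{1/2}\;\leq\; 2^{2-n}\]
eventually, which is summable; Borel--Cantelli then yields $T_{\gamma(2s_{n+1})}\leq s_n^u$ for all sufficiently large $n$ a.s., so $\limsup_n T_{\gamma(2s_{n+1})}/s_n^u\leq 1$. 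The main conceptual hurdle is the first step: without a doubling regularity property on the comparison function one cannot simply copy the proof of Lemma \ref{lemmesubord}, and one must isolate a subsequence on which $\psi$ realizes its lower exponent $\bgamma$ so that the algebraic identity $\bgamma-\frac{u}{2}(\bgamma-1)>0\Leftrightarrow u<\frac{2\bgamma}{\bgamma-1}$ produces the decisive growth $\lambda_n s_n^u\to\infty$; once this is achieved, all remaining manipulations are standard subordinator tail estimates.
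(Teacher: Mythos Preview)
Your proof is correct and shares the same underlying idea as the paper's: exploit the lower exponent $\bgamma$ to produce a sequence along which the subordinator tail estimates balance, using that $u<\frac{2\bgamma}{\bgamma-1}$ translates into the positivity of $\bgamma-\tfrac{u}{2}(\bgamma-1)$. The executions differ slightly. The paper parametrises via $\varphi^*=\psi^{*\prime}\circ\psi^{-1}$: it picks $u'\in(u,\tfrac{2\bgamma}{\bgamma-1})$, uses $\liminf_{\lambda\to\infty}\varphi^*(\lambda)\lambda^{-2/u'}=0$ to find $\lambda_n$ with $\lambda_n^a\le\varphi^*(\lambda_n)\le\lambda_n^{2/u'}$, and sets $s_n=\varphi^*(\lambda_n)^{-(1+\varepsilon)/2}$. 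It then proves a single summable bound $\P_0(T_{\gamma(2s_n)}>s_n^u)\le c\,2^{-na\varepsilon/2}$, from which \emph{both} $(i)$ (complement probabilities tend to $1$) and $(ii)$ follow at once. You instead work in the $\mu$-variable with $F(\mu)=\psi(\mu)/\psi'(\mu)^{u/2}$ and prove $(i)$ and $(ii)$ separately, borrowing the lower-bound trick from Lemma~\ref{lemmesubord} for $(i)$. Your route is a bit longer but equally valid; the paper's is more economical since one Markov bound does all the work.
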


\begin{proof}
Let $u^\prime \! \in \! (u,\frac{_{2\bgamma}}{^{\bgamma-1}})$. 
We set $\varphi^*\! =\! \psi^{*\prime}\circ\psi^{\!-1}$, where $\psi^*(\lambda) \! =\! \psi(\lambda)\! -\! \alpha$. 
Recall from (\ref{defgamma}) that $\gamma_\varphi \! =\! \gamma_{\varphi^*} \! =\! \frac{{\bgamma-1}}{{\bgamma}}$. We fix $a\! \in \! (0, \frac{\bgamma -1}{\gamma})$. 
By the definition (\ref{defgamma}) of $\gamma_{\varphi^*}$, there exists $\lambda_0 \! \in \! (0, \infty)$ such that $\varphi^* (\lambda ) \! \geq \! \lambda^a$ for any $\lambda \! \in \! [\lambda_0 , \infty)$. Next observe that $2/u^\prime \! >\! \frac{{\bgamma-1}}{{\bgamma}}=\gamma_{\varphi^*}$. 
As an easy consequence of definition (\ref{defgamma}) of $\gamma_{\varphi^*}$, we get 
$\liminf_{\lambda \rightarrow \infty} \varphi^* (\lambda) \lambda^{-2/u^\prime} \! = \! 0$. Consequently,  there exists an increasing sequence $(\lambda_n)_{n\geq 0}$ such that 
\begin{equation}\label{lambdaphi}
\forall n\in \N, \quad 2^n \leq \lambda_n \quad \textrm{and} \quad \lambda_n^a \leq \varphi^* (\lambda_n) \leq \lambda_n^{2/u^\prime} \; .
\end{equation}
We next fix $\veps \! \in \! (0,\infty)$ such that $(1+\veps)u/u^\prime \! <\! 1$, which is possible since $u^\prime \! > \! u$. Then, we set
\begin{equation}\label{rayonyon}
\forall n\in\N,\quad s_n= \varphi^*(\lambda_n)^{-\frac{1+ \veps}{2}} \; .
\end{equation}
Since, by Lemma \ref{lemmeTc}, $(T_{\gamma(r)})_{ r\geq 0}$ is a subordinator with Laplace exponent $\sqrt{2\varphi^*}$, a Markov inequality entails  
\begin{equation}\label{markovsn}
\P_0 \! \left(\, T_{\gamma(2s_{n})} \! >\! s_n^u \, \right) \leq \frac{1-\exp\left(-2\sqrt{2}s_{n} \sqrt{\varphi^*(\lambda_n)} \right)}{1-\exp (-\lambda_n s_n^u))}\\
 \leq 2\sqrt{2}\frac{s_{n} \sqrt{\varphi^* (\lambda_n)} }{1-\exp (-\lambda_n s_n^u)} .
\end{equation}
By (\ref{rayonyon}) and the last inequality of (\ref{lambdaphi}), we get 
\begin{equation}\label{onehand}
\lambda_n s_n^u=\lambda_n\varphi^* (\lambda_n)^{\!-(1+\veps)u/2} \geq \lambda_n^{1-(1+\veps)u/u'}\underset{n\to\infty}{-\!\!\! -\!\!\! \longrightarrow }\infty.
\end{equation}
Moreover, the first two inequalities in (\ref{lambdaphi}) and (\ref{rayonyon}) imply 
\begin{equation}\label{otherhand}
s_n\sqrt{\varphi^*(\lambda_n)}=\varphi^*(\lambda_n)^{\!-\veps/2}\leq \lambda_n^{\!-a\veps/2}\leq 2^{\! - n a\veps/2} \; .
\end{equation}
Then (\ref{markovsn}), (\ref{onehand}) and (\ref{otherhand}) imply that there exists $c\! \in \! (0, \infty)$ such that 
$$\P_0\! \left(\, T_{\gamma(2s_{n})} \! >\! s_n^u\, \right) \leq c 2^{\!-na\veps /2}.$$
It immediately implies $(i)$ and $(ii)$ follows from $\P_0 \left(T_{\gamma(2s_{n+1})} \! >\! s_n^u\right)\! \leq \! \P_0 \left(T_{\gamma(2s_{n})} \! >\! s_n^u\right)$ and from 
the Borel Cantelli lemma. \cqfd 
\end{proof}

\subsection{Estimates for bad points.}
\label{sectionbadpts}
Recall from (\ref{snex}) the definition of the excursion measure $\N_0$ of the $\psi$-L\'evy snake 
$W$. Recall that the lifetime process of $W$ is the height process $H$. Namely, $\N_0$-a.e.~for all 
$t \! \in \! \R_+$, $\zeta_{W_t}\! = \! H_t$. Therefore, the duration $\sigma$ 
of $W$ under $\N_0$ is the duration of the excursion of $H$. 
Recall that $\widehat{W}$ is the endpoint process of the snake. Recall from (\ref{occuWdef}) that 
$\cM$ stands  the occupation measure of the snake, namely the random 
measure on $\R^d$ that is the image via $\widehat{W}$ of the Lebesgue measure  on $[0, \sigma]$. 
Recall from (\ref{rangeWdef}) the definition of $\cR$, the range of the snake.

Let $\lambda, r \! \in \!  (0, \infty)$. Note that $\langle \cM \rangle \! =\!  \sigma$, therefore we get  
\begin{equation}
\label{inegdirecte}
\N_0 \big[ 1\! -\! e^{-\lambda\cM\left(\Bor\right)} \big]\leq \N_0 \big[1\! -\! e^{-\lambda\sigma} \big]=N 
\big[1\! -\! e^{-\lambda\sigma} \big]=\psi^{-1}(\lambda), 
\end{equation}
where the last to equalities comes from (\ref{lifetimeexc}) and (\ref{rappdura}). 
The next lemma states a lower bound of the same kind. 
\begin{lem}\label{BP1}
There exists $\Cl[c]{cBP1} \! \in \! (0,1)$, that only on $d$, such that 
\begin{equation}\label{minoBP}
\forall\mu,r\in(0,\infty),  \qquad \frac{r^2\mu}{\invpsi(\mu)}\geq1 \quad =\!\! =\!\! \Longrightarrow \quad  \N_0 \big[1\! -\! e^{-\mu\cM(\Bor)} \big]\geq \Cr{cBP1}\invpsi(\mu).
\end{equation}
\end{lem}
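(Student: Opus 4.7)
My plan is to identify $w(x) := \N_x[1 - \exp(-\mu \cM(B(0,r)))]$ as the nonnegative radial solution of the semilinear equation
\begin{equation*}
\tfrac{1}{2}\Delta w = \psi(w) - \mu \un_{B(0,r)} \quad\text{on } \R^d, \qquad \lim_{\lVert x\rVert \to \infty} w(x) = 0,
\end{equation*}
and then extract the lower bound from the ODE satisfied by its radial profile. The identification is the classical Laplace-functional characterization of the total occupation measure $\bM$ of the $\psi$-SBM combined with the Poisson snake representation of Theorem \ref{super}; it is the exact analogue of equation (\ref{urEDP}) but with the extra source term $-\mu\un_{B(0,r)}$ that comes from integrating $\mu\un_{B(0,r)}(\widehat W_s)$ against $ds$ rather than only testing whether the range hits $\overline{B}(0,r)$. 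Rotational invariance of $\N_x$ under rotations of $\R^d$ fixing the origin forces $w(x)=u(\lVert x\rVert)$ with $u\in C^1([0,\infty))\cap C^2([0,\infty)\setminus\{r\})$, $u'(0)=0$, solving
\begin{equation*}
u''(\rho) + \frac{d-1}{\rho}\, u'(\rho) = 2\bigl(\psi(u(\rho)) - \mu\un_{\{\rho<r\}}\bigr).
\end{equation*}

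The heart of the argument is a dichotomy on $u(0)$. If $u(0)\geq \tfrac{1}{2}\invpsi(\mu)$, nothing remains to show. Otherwise I use convexity of $\psi$ together with $\psi(0)=0$, which gives $\psi(v)\leq \mu/2$ for every $v\in[0,\invpsi(\mu)/2]$. Setting $\rho_\ast := \sup\{\rho\geq 0 : u\leq \invpsi(\mu)/2 \text{ on } [0,\rho]\}$ and multiplying the ODE by $\rho^{d-1}$, on $[0,\rho_\ast\wedge r]$ I obtain
\begin{equation*}
\rho^{d-1} u'(\rho) = 2\int_0^\rho s^{d-1}\bigl(\psi(u(s)) - \mu\bigr) ds \leq -\frac{\mu \rho^d}{d},
\end{equation*}
so $u$ is strictly decreasing there and \emph{a fortiori} stays below $\invpsi(\mu)/2$; by continuity this forces $\rho_\ast\geq r$. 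Consequently $u'(\rho)\leq -\mu\rho/d$ on the whole $[0,r]$, and integrating once more while using $u(r)\geq 0$ yields
\begin{equation*}
u(0) \geq \frac{\mu r^2}{2d} \geq \frac{\invpsi(\mu)}{2d},
\end{equation*}
the last inequality being precisely the hypothesis $r^2\mu/\invpsi(\mu)\geq 1$. Either branch of the dichotomy gives $w(0)\geq \invpsi(\mu)/(2d)$, proving the lemma with $\Cr{cBP1} = 1/(2d)\in(0,1)$.

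I expect the only delicate step to be the classical regularity of $w$ needed to write a bona fide ODE across the interface $\partial B(0,r)$; this is standard semilinear elliptic theory since the inhomogeneity is bounded, and one can alternatively argue throughout at the level of the Feynman--Kac integral equation that $w$ satisfies. Once the PDE/ODE is in hand, the proof is an elementary ODE comparison that uses only the convexity of $\psi$ and the hypothesis $r^2\mu\geq \invpsi(\mu)$.
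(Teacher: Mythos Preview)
Your argument is correct and takes a genuinely different route from the paper. The paper works entirely inside the snake calculus: it expands $\N_0[1-e^{-\mu\cM(\Bor)}]$ by Fubini, applies the Markov property of $(\rho,W)$ at a fixed time (Lemma~\ref{Markovpretalemploi}) together with the crude bound $\N_y[1-e^{-\mu\cM(\Bor)}]\leq\invpsi(\mu)$, and then invokes the first-moment formula (\ref{invar-snake}) to reduce everything to the explicit Brownian integral $\invpsi(\mu)\int_0^\infty e^{-c}\P_0(\lVert\xi_c\rVert\leq r\sqrt{\mu/\invpsi(\mu)})\,\ddr c$, from which the hypothesis $r^2\mu\geq\invpsi(\mu)$ gives the bound with $\Cr{cBP1}=\int_0^\infty e^{-c}\P_0(\lVert\xi_c\rVert\leq 1)\,\ddr c$. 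Your approach instead imports the classical log-Laplace PDE for the total occupation measure, then runs a short ODE comparison exploiting only $\psi(0)=0$ and convexity; this is more elementary once the PDE is granted, yields the explicit constant $1/(2d)$, and avoids the snake Markov machinery and (\ref{invar-snake}) altogether. The price is that the PDE identification and the $C^1$ regularity of $w$ across $\partial B(0,r)$ are external inputs not developed in the paper, whereas the paper's proof stays strictly within the tools of Sections~\ref{sectionheight}--\ref{levysnake}.
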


\begin{proof} To simplify, set 
$q(\mu,r)\! :=\! \N_0\left[1\! -\! e^{-\mu\cM(\Bor)}\right]$. Note that $\cM(\Bor)\! =\!  \int_0^\sigma \! \ddr t \,  
\un_{ \{ \lVert\widehat{W_t}\rVert < r \}  }$. An easy argument combined with Fubini first entails the following: 
\begin{equation}
\label{gloupin}
q(\mu,r)=\mu \! \int_0^{\infty} \!\! \! \! \ddr t \,  \N_0 \Big[ \un_{ \{ t\leq\sigma \, ; \, \lVert\widehat{W_t}\rVert < r \}  }e^{-\mu\int_t^{\sigma} \! \ddr s \, \un_{ \{\lVert\widehat{W}_s\rVert < r \} } }\Big] \; .
\end{equation}
We next apply Lemma \ref{Markovpretalemploi} at the (deterministic) time $t$ (recall 
that this lemma is a specific form of the Markov property for $\ov W=(\rho,W)$). Then we get for any $t\! \in \! \R_+$: 
\begin{align}
\label{gloupi}
\N_0 \Big[ \un_{ \{ t\leq\sigma \, ; \, \lVert\widehat{W_t}\rVert < r \}  } & e^{-\mu\int_t^{\sigma} \! \ddr s \, \un_{ \{\lVert\widehat{W_s}\rVert < r \} } }\Big] \nonumber \\ 
= & \,  \N_0 \Big[\un_{ \{ t\leq\sigma \, ; \, 
\lVert\widehat{W_t}\rVert < r \} }
\exp \Big( \! -\!\! \int_{[0, H_t]} \!\!\!\!\!\!\!  \rho_t(\ddr h) \, \N_{W_t(h)} \big[ 1 - e^{-\mu\cM(\Bor)} \big] \Big) \Big]. 
\end{align}
From (\ref{inegdirecte}), we get that for all $t,h\geq 0$, $\N_{W_t(h)}\!\left[1-e^{-\mu\cM(\Bor)}\right]\leq \invpsi(\mu)$. Then, 
\begin{align}
\label{glopglop}
 \N_0 \Big[\un_{ \{ t\leq\sigma \, ; \, 
\lVert\widehat{W_t}\rVert < r \} }& 
\exp \Big( \! -\!\! \int_{[0, H_t]}   \rho_t(\ddr h) \, \N_{W_t(h)} \big[ 1 - e^{-\mu\cM(\Bor)} \big] \Big) \Big]  \nonumber \\
\geq &\,  \N_0\Big[\un_{ \{ t\leq\sigma \, ; \, \lVert\widehat{W_t}\rVert < r \} }e^{-\invpsi(\mu) \int_{[0, H_t]} \rho_t(\ddr h)} \Big]  
\end{align}
Then by (\ref{gloupin}), (\ref{gloupi}), (\ref{glopglop}) and Fubini we get 
\begin{equation}
\label{pasglop}
q(\mu,r) \geq \mu \N_0 \Big[ \int_0^\sigma \!\! \ddr t \, \un_{ \{  \lVert\widehat{W_t}\rVert < r \} }e^{-\invpsi(\mu) \int_{[0, H_t]}  \rho_t(\ddr h)}\Big] \; .
\end{equation}
We next apply  (\ref{invar-snake}) 
to the right member of the previous inequality: to that end, recall that $U\! =\! (U_a)_{a\geq 0}$ stands for a subordinator defined on $(\Omega, \cF, \P_0)$ that is independent of $\xi$ and whose Laplace exponent is $\widetilde{\psi}^* (\lambda)\! =\!  \psi (\lambda) / \lambda \! -\! \alpha  $. 
Then, (\ref{invar-snake}) to the right member of (\ref{pasglop}) entails the following:
\begin{eqnarray*}
\mu \N_0 \Big[ \int_0^\sigma \!\! \ddr t \, \un_{ \{  \lVert\widehat{W_t}\rVert < r \} }e^{-\invpsi(\mu) \int_{[0, H_t]}  \rho_t(\ddr h)}\Big] & = & \mu \int_0^{\infty} \!\!\!\! \ddr a \, e^{-\alpha a}\E_0 
\left[\un_{ \{ \lVert\xi_a\rVert < r \} }e^{-\invpsi(\mu)U_a}\right]  \\
& =& \mu \int_0^{\infty}  \!\!\!\! \ddr a \, e^{-a \mu / \psi^{-1} (\mu)} \P_0 (\,   \lVert\xi_a\rVert \! <\!   r\,  ) \; , 
 \end{eqnarray*}
since $\xi$ and $U$ are independent. Thus, (\ref{pasglop}) and a simple change of variable using the scaling property of Brownian motion, entail
\begin{equation}
\label{niglop}
q(\mu, r)  \geq \invpsi(\mu)\int_0^{\infty} \!\!\! \P_0 \left(\lVert\xi_{c}\rVert\leq r\sqrt{\mu/\invpsi(\mu)}\right)e^{-c}\ddr c \; .
\end{equation}
If $r^2\mu/ \invpsi(\mu)\!  \geq\! 1$, then (\ref{niglop}) implies (\ref{minoBP}) with $\Cr{cBP1}\! :=\! \!\int_0^{\infty}\P_0\left(\lVert\xi_c\rVert\leq 1\right)e^{-c}\ddr c$. Clearly, $\Cr{cBP1}$ only depends on $d$ and $\Cr{cBP1}\! \in \! (0, 1)$.  \cqfd 
\end{proof}

\bigskip

Before stating the next lemma, we recall a result about the first exit time from a ball for a $d$-dimensional Brownian motion. First set 
\begin{equation}\label{defchi}
\chi_{d,r}:=\inf \big\{ t\in \R_+ : \lVert\xi_t\rVert\!  = \! r \big\}.
\end{equation}
In dimension $1$, one get 
\begin{equation}\label{tempssortie1}
\forall r, \lambda \in \R_+, \quad 
\E_0 \big[ \exp(-\lambda \chi_{1,r}) \big]=\left(\cosh(r\sqrt{2\lambda})\right)^{-1}\leq 2\exp \big(\! -\! r\sqrt{2\lambda} \, \big).
\end{equation}
Indeed note that $t\mapsto\exp( \sqrt{2\lambda}\xi_t-\lambda t)$ is a martingale (see e.g.~Revuz and Yor \cite{RY}, Chapter II, (3.7)). 
In dimension $d$, observe that
$$\textrm{$\P_0$-a.s.} \quad\chi_{d,r}\geq\min\limits_{1\leq j \leq d} \inf \big\{ t\in \R+ : \sqrt{d}\ |\xi_t^{(j)}| \! =\! r \big\}, $$
where $\xi_t^{(j)}$ stands for the $j$-th coordinate of $\xi_t$. The previous inequality combined with  (\ref{tempssortie1}) then entails
\begin{equation}\label{estchi}
\forall \lambda , r \in \R_+, \quad \E_0 \big[ \exp (-\lambda\chi_{d,r}(\xi) ) \big] \leq 2d\, \exp \big(\! -\! r\sqrt{2\lambda/d} \, \big).
\end{equation}
\begin{lem}\label{BPnouveau}
Assume that $\bdelta \! >\! 1$ and that $d\! \geq \! 3$. Recall that $\Cr{restppale}$ and $\Cr{cestppale}$ are the constants appearing in Lemma \ref{estppale}. 
Then, there exist $ \Cl[c]{cBPnouveau}, \Cl[c]{c1BPnouveau} \! \in \! (0,\infty)$, that only depend on $d$ and $\psi$, such that
\begin{align*}
\forall \mu& \in(0,\infty) , \; \forall r\in (0, \Cr{restppale}) \textrm{\ such\ that \ }\frac{r^2\mu}{\invpsi(\mu)}\geq 16,\quad \forall  x\in\R^d\! \setminus \! \overline{B}(0,2r),\\
&\N_x\left[\un_{ \{\cR\cap\overline{B}(0,r)\neq\emptyset\} } \, e^{-\mu\cM(B(0,2r))}  \right] \leq\Cr{cBPnouveau}\left( r/\lVert x\rVert\right)^{d-2}\invDpsi \! \! \left(\Cr{cestppale}r^{-2}\right) \exp 
\big( \! - \! \Cr{c1BPnouveau} \, r\sqrt{\varphi(\mu)}\,  \big). 
\end{align*}
\end{lem}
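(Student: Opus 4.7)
The plan is to reduce the snake expectation to a Brownian-motion functional via the snake's Markov property and Proposition \ref{firsthitting}, and then to extract the exponential factor from an additive Feynman--Kac functional on a thin boundary layer outside $\partial B(0,2r)$.

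First, applying Lemma \ref{Markovpretalemploi} at $T=\tau_r$ with $G(y)=\mu\un_{B(0,2r)}(y)$ and $Y=\un_{\{\tau_r<\infty\}}$ (the lemma extends from $\N_0$ to $\N_x$ by translation invariance of Brownian motion), then dropping the nonnegative pre-hitting factor $\exp(-\mu\int_0^{\tau_r}\un_{B(0,2r)}(\widehat W_s)\ddr s)\leq 1$ and restricting the spine integration from $[0,H_{\tau_r}]$ to $[0,T_{2r}(W_{\tau_r})]$ (which only enlarges the exponential, since the integrand is nonnegative), one obtains
$$\N_x\!\left[\un_{\{\tau_r<\infty\}}e^{-\mu\cM(B(0,2r))}\right]\leq \N_x\!\left[\un_{\{\tau_r<\infty\}}\exp\!\Big(\!-\!\!\!\int_{[0,T_{2r}(W_{\tau_r})]}\!\!\!\!\!\!\rho_{\tau_r}(\ddr h)\,h_*(W_{\tau_r}(h))\Big)\right],$$
with $h_*(y):=\N_y[1-e^{-\mu\cM(B(0,2r))}]$. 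Applying Proposition \ref{firsthitting} with $r'=2r$, $F\equiv 1$, $G(\w)=h_*(\w(\zeta_\w))$ (the hypothesis is met since $x\in\overline{B}(0,2r)^c$), one then has
$$\N_x\!\left[\un_{\{\tau_r<\infty\}}e^{-\mu\cM(B(0,2r))}\right]\leq \widetilde u_r(2r)\,\E_x\!\left[\un_{\{T_{2r}(\xi)<\infty\}}\exp\!\Big(\!-\!\!\int_0^{T_{2r}(\xi)}\!\!\varpi\big(u_r(\xi_s),h_*(\xi_s)\big)\,\ddr s\Big)\right].$$
Lemma \ref{estppale} (applied at $\|y\|=2r$ with $\varrho=1$) gives $\widetilde u_r(2r)\leq \invDpsi(\Cr{cestppale}/r^2)$, which supplies the first factor in the claimed bound.

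It remains to bound the Brownian expectation. The bare hitting probability $\P_x(T_{2r}(\xi)<\infty)=(2r/\|x\|)^{d-2}$ (valid for $d\geq 3$, $\|x\|>2r$) already delivers the polynomial factor $(r/\|x\|)^{d-2}$ up to a constant. To obtain the exponential factor, one uses the elementary inequality $\varpi(a,b)\geq \widetilde\psi(a\vee b)$ valid whenever $\psi$ is convex with $\psi(0)=0$ (which follows by a direct computation from the definition $\varpi(a,b)=(\psi(a)-\psi(b))/(a-b)$ together with the monotonicity of $\widetilde\psi$), combined with a \emph{boundary-layer lower bound on $h_*$}: there exist $\eta, c_1\in(0,\infty)$ depending only on $d$ and $\psi$ such that $h_*(y)\geq c_1\invpsi(\mu)$ for every $y$ in the layer $\mathcal L:=\{y\in\R^d: 2r<\|y\|<(2+\eta)r\}$, under the hypothesis $r^2\mu/\invpsi(\mu)\geq 16$. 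Granted this, concavity of $\widetilde\psi$ (which is a Laplace exponent of a subordinator, hence concave) together with (\ref{convexpsi}) yields $\widetilde\psi(h_*(y))\geq c_1\widetilde\psi(\invpsi(\mu))\geq (c_1/4)\varphi(\mu)$, whence $\varpi(u_r(y),h_*(y))\geq c_2\,\varphi(\mu)$ for $y\in\mathcal L$. An application of the strong Markov property of $\xi$ at the first entry into $\overline B(0,(2+\eta)r)$, followed by the first-exit Laplace bound (\ref{estchi}) at the spatial scale $\eta r$ with uniform killing rate $c_2\,\varphi(\mu)$, then produces the required exponential factor $\exp(-\Cr{c1BPnouveau}\,r\sqrt{\varphi(\mu)})$.

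The main obstacle is the boundary-layer lower bound on $h_*$. In the interior $\overline B(0,7r/4)$, it follows directly from Lemma \ref{BP1} via the comparison $\cM(B(0,2r))\geq \cM(B(y,r/4))$ and the assumption $r^2\mu/\invpsi(\mu)\geq 16$. Extending this bound to a layer of positive width $\eta r$ \emph{just outside} $\partial B(0,2r)$ is the delicate step: one invokes the snake's Markov property at the first entry into $\overline B(0,7r/4)$ and verifies a uniform lower bound on the sub-probability of such an entry for $y\in\mathcal L$, noting that, conditionally on that entry, a non-negligible fraction of the subsequent spine grafts lands deep enough inside $B(0,2r)$ to preserve a constant multiple of the interior lower bound on $h_*$. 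Once this propagation is established, the combination with the inequality $\varpi\geq \widetilde\psi(a\vee b)$ and the exit bound (\ref{estchi}) is routine and delivers the lemma.
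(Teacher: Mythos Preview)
Your reduction to the Brownian functional is correct up to the point where you apply Proposition~\ref{firsthitting} with $r'=2r$, but the crucial exterior ``boundary-layer lower bound'' on $h_*$ is a genuine gap. You claim that
\[
h_*(y)=\N_y\bigl[1-e^{-\mu\cM(B(0,2r))}\bigr]\;\geq\; c_1\,\invpsi(\mu)
\quad\text{for } 2r<\lVert y\rVert<(2+\eta)r,
\]
with $c_1,\eta$ depending only on $d,\psi$. This is not established by your sketch and is in fact problematic. Adapting the proof of Lemma~\ref{BP1} to a starting point $y$ at distance $\eta r$ \emph{outside} $B(0,2r)$, one gets
\[
h_*(y)\;\geq\;\invpsi(\mu)\int_0^\infty e^{-c}\,\P_0\!\Bigl(\xi_{c\,\invpsi(\mu)/\mu}\in B(-y,2r)\Bigr)\ddr c,
\]
but the Brownian displacement at time $c\,\invpsi(\mu)/\mu$ has scale $\sqrt{c}\,r/K$ with $K:=r\sqrt{\mu/\invpsi(\mu)}\geq 4$, so to reach $B(-y,2r)$ one needs $c\gtrsim\eta^2K^2$, whence the integral is at most of order $e^{-\eta^2K^2}$. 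Since the hypothesis only gives $K\geq 4$ (it can be arbitrarily large), no uniform positive lower bound $c_1$ survives, and the killing rate you extract in the layer is then far too small to produce the required factor $e^{-cK}\sim e^{-\Cr{c1BPnouveau}r\sqrt{\varphi(\mu)}}$. Your proposed fix via ``Markov at $\tau_{7r/4}$'' does not close the gap either: it converts the problem into controlling $u_{7r/4}(y)$ together with an exponential of a $\rho_{\tau_{7r/4}}$-integral, which is essentially the same estimate you are trying to prove, now at a smaller radius.

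The paper sidesteps this difficulty by never leaving the interior of $B(0,2r)$ when harvesting the exponential. After applying Lemma~\ref{Markovpretalemploi} at $\tau_r$, the $\rho_{\tau_r}$-integral is restricted not to $[0,T_{2r}(W_{\tau_r})]$ but to $[T_1,T_1+T_2]$, where $T_1$ is the first time $W_{\tau_r}$ reaches $\partial B(0,3r/2)$ and $T_2$ is the subsequent time to move $r/4$. On that segment every point $W_{\tau_r}(h)$ lies in $B(0,7r/4)$, so $B(W_{\tau_r}(h),r/4)\subset B(0,2r)$ and Lemma~\ref{BP1} gives directly $\N_{W_{\tau_r}(h)}[1-e^{-\mu\cM(B(0,2r))}]\geq\Lambda_{\mu,r}\geq \Cr{cBP1}\invpsi(\mu)$. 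Proposition~\ref{firsthitting} is then applied with $r'=5r/4$ (not $2r$), so the resulting Brownian path is followed past $\partial B(0,2r)$ into the interior, and the exponential factor arises from $\E_0[e^{-c\varphi(\mu)\chi_{d,r/4}}]$ via (\ref{estchi}). The essential idea you are missing is to push the cutoff radius $r'$ strictly below $2r$, so that the spine lower bound can be read off from Lemma~\ref{BP1} at interior points rather than exterior ones.
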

\begin{proof}
We fix $\mu \! \in \! (0, \infty)$ and $x\in \R^d \backslash \overline{B}(0, 2r)$. 
To simplify notation we set 
$$ p(x, \mu, r)=  \N_x \left[ \un_{ \{\cR\cap\overline{B}(0,r)\neq\emptyset\} } \, e^{-\mu\cM(B(0,2r))}  \right]
\; .$$ 
Recall from 
(\ref{deftau}) the definition of $\tau_{r}$ that is the hitting time in $\overline{B}(0, r)$ of the snake $W$.  First recall that 
$\{  \cR \cap \overline{B} (0,r) \neq \emptyset \}= \{ \tau_{r} \! < \infty\}$ and observe that 
\begin{equation*}
\textrm{$\N_x$-a.e.~on the event $\{  \cR \cap \overline{B} (0,r) \neq \emptyset \}$,} \quad 
\int_{\tau_r}^\sigma  \un_{ \{  \lVert \widehat{W}_s  \rVert < 2r   \}} \ddr s \; \leq \;  \cM (B(0, 2r)).
\end{equation*}  
Thus, we get 
\begin{equation}
\label{inegM1}
 p(x, \mu, r) \leq 
\N_x \Big[ \un_{\{ \tau_r < \infty \}} e^{-\mu\int_{\tau_r}^\sigma  \un_{ \{  \lVert \widehat{W}_s  \rVert < 2r   \}} \ddr s } \Big].
\end{equation}
We next apply Proposition \ref{Markovpretalemploi} to the right member of (\ref{inegM1}) and to 
the stopping time $\tau_r$; thus we get
we get 
\begin{equation}
\label{inegM2}
p(x,\mu, r) \leq \N_x \Big[ \un_{\{ \tau_r < \infty \}} \exp \Big( \! -\! \! 
\int_{[0, H_{\tau_r} ]}\!\!\!\!\!\!\!\! \rho_{\tau_r} (\ddr h) \, \N_{W_{\tau_r} (h)} \big[ 1-e^{-\mu \cM(B(0, 2r))}\big]
\Big) \Big]. 
\end{equation}
Let $\w \! \in \! \mathcal{W}$ be a continuous stopped path starting from $x\! \in \! \overline{B} (0, 2r)$; 
we denote by $\zeta_\w$ its lifetime. 
We define $T_1(\w)$, $T_2(\w)$ and $T_3(\w)$ by 
 \begin{eqnarray}\label{tempsT}
T_1(\w) &=&\inf \big\{ t\in[0,\zeta_\w] : \lVert \w(t)\rVert\leq 3r/2 \big\} \nonumber  \\
T_2(\w)&=&\inf \big\{ t\in[0,\zeta_\w-T_1(\w)] : \lVert \w(t+T_1(\w))\!-\! \w(T_1(\w))\rVert >r/4 \big\}\nonumber \\
T_3(\w)&=&\inf \big\{ t\in[0,\zeta_\w] : \lVert \w(t)\rVert\leq 5r/4 \big\}
\end{eqnarray}
with the convention that $\inf \emptyset = \infty$. 
Observe that if $T_3 (\w) \! < \! \infty$, then $T_1 (\w)+T_2 (\w) \! \leq \! T_3 (\w)$. 
Moreover, since $x\!  \in \!  \overline{B}(0, 2r)^c$, $\N_x$-a.e. on the event 
$ \{\tau_r \! < \! \infty \}$, we have $T_1 (W_{\tau_r})+T_2 (W_{\tau_r}) \leq T_3 (W_{\tau_r}) \! <\!  \tau_r \! < \!  \infty $ and for any 
$ t \! \in \! [T_1 (W_{\tau_r}) , T_1 (W_\tau)+T_2 (W_{\tau_r})  ]$, the following inequality holds true: 
$$\N_{W_{\tau_r}(t)} \! \!  \left[ 1-e^{  -\mu \cM (B(0,2r)) }\right]  \geq \N_{W_{\tau_r}(t)} \! \!  \left[ 1-e^{  -\mu \cM (B(W_{\tau_r}(t) ,r/4)) }\right]= 
\N_0 \left[ 1-e^{  -\mu \cM (B(0 ,r/4)) }\right]=: \Lambda_{\mu, r}\; ,  $$ 
the last equality being a consequence of the invariance of the snake under translation; here $\Lambda_{\mu, r}$ only depends on $\mu$ and $r$. 
To simplify notation we set $T_1\! =\! T_1 (W_{\tau_r})$ and $T_2\! =\! T_2 (W_{\tau_r})$. 
An elementary inequality combined with (\ref{inegM2}) entails 
\begin{eqnarray}
\label{froom}
p(x, \mu, r)& \leq & \N_x \Big[ \un_{\{ \tau_r < \infty \}} \exp \Big( \! -\! \! 
\int_{[T_1, T_1 +T_2 ]}\!\!\!\!\!\!\!\! \!\!\!\!\!\!\!\!  \rho_{\tau_r} (\ddr h) \, \N_{W_{\tau_r} (h)} \big[ 1-e^{-\mu \cM(B(0, 2r))} \, \big]
\Big) \Big] \nonumber \\
& \leq & \N_x\Big[  \un_{ \{ \tau_r <\infty \} }
\exp \Big(\! -\! \Lambda_{\mu,r}\! \int_{[T_1 , T_1 +T_2 ]}\!\!\!\!\!\!\!\! \!\!\!\!\!\!\!\!  \rho_{\tau_r} (\ddr h) \Big)  \Big]\; .
\end{eqnarray} 
Recall from (\ref{tempsT}) the definition of $T_3$. Recall from (\ref{defgammapsi}) the definition of the function $\varpi$. 
We next apply Proposition \ref{firsthitting} with $r^\prime=\frac{5}{4}r$ to the right member of 
(\ref{froom}). Then we get 
\begin{align}
\label{froomsi}
 \N_x\Big[  \un_{ \{ \tau_r<\infty \} }& 
\exp \Big(\! -\! \Lambda_{\mu,r}\! \int_{[T_1 , T_1 +T_2 ]}\!\!\!\!\!\!\!\! \!\!\!\!\!\!\!\!  \rho_{\tau_r} (\ddr h) \Big)  \Big] \nonumber \\
=& \; \urtild(5r/4) \, \E_x \Big[ \un_{\{ T_3 (\xi) < \infty\}} \exp \Big(\! -\!\! \int_{0}^{T_3(\xi)} \!\!\!\!\!\! \ddr t\, \varpi 
\big( u_{r}(\xi_t),\Lambda_{\mu,r}\un_{_{[T_1(\xi),T_1(\xi)+T_2(\xi)]}}(t) \big) \Big)  \Big] \nonumber  \\
 \leq &\;  \urtild(5r/4) \, \E_x \Big[ \un_{\{ T_1 (\xi) < \infty\}} \exp \Big(\! -\!\! \int_{T_1(\xi)}^{T_1(\xi)+ T_2(\xi)} \!\!\!\!\!\! \ddr t\, \varpi 
\big( u_{r}(\xi_t),\Lambda_{\mu,r} \big) \Big)  \Big] \; .
\end{align}
Here recall that $\xi$ under $\P_x$ is distributed as a standard $d$-dimensional Brownian motion starting from $x$. 
The convexity of $\psi$ provides the following lower bounds for $\varpi \left(u_{r}(\xi_t),\Lambda_{\mu,r}\right)$: 
\begin{itemize}
\item if $u_{r}(\xi_t)\! \geq \! \Lambda_{\mu,r}$, then $\varphi 
\left(u_{r}(\xi_t),\Lambda_{\mu,r}\right)\! \geq \!  \Dpsi(\Lambda_{\mu,r})$;
\item if $u_{r}(\xi_t) \! < \! \Lambda_{\mu,r}$, then $\varpi \left(u_{r}(\xi_t),\Lambda_{\mu,r}\right) 
\! \geq \! \psi(\Lambda_{\mu,r})/ \Lambda_{\mu,r} \! \geq \! \frac{1}{4}\Dpsi(\Lambda_{\mu,r})$, by (\ref{convexpsi}) for the last inequality. 
\end{itemize}
These inequalities combined with (\ref{froom}) and (\ref{froomsi}), entail 
\begin{equation}\label{BPnew}
p(x,\mu, r) \leq \urtild(5r/4) \,  \E_x\left[ \un_{\{ T_1 (\xi) < \infty\}} \exp\left(-\frac{_1}{^4}\psi^\prime(\Lambda_{\mu,r})T_2(\xi) \right)\right].
\end{equation}

We now assume that 
$$\frac{r^2\mu}{16\invpsi(\mu)}\geq1 \; .$$
Recall that $\Lambda_{\mu, r}\! =\!  \N_0 [1\! -\! e^{-\mu \cM(B(0, r/4)}]$. By Lemma \ref{BP1}, $\Lambda_{\mu,r}\geq \Cr{cBP1}\invpsi(\mu)$. 
We next use the concavity of $\Dpsi$ and the fact that $\Cr{cBP1} \! \in \! (0, 1)$, to get 
\begin{equation}\label{BP6}
\Dpsi(\Lambda_{\mu,r})\geq\Dpsi(\Cr{cBP1}\invpsi(\mu))\geq \Cr{cBP1}\Dpsi(\invpsi(\mu))=\Cr{cBP1}\varphi(\mu).
\end{equation}
Recall that $\Cr{restppale}$ and $\Cr{cestppale}$ are the constants appearing in Lemma \ref{estppale}. 
We assume that $r \! \in \! (0, \Cr{restppale})$. Then, Lemma \ref{estppale} with $\varrho= 1/4$ implies that $\urtild(5r/4) \leq \psi^{\prime -1} (\Cr{cestppale} r^{-2})$. 
Thus, by (\ref{BPnew}) and (\ref{BP6}) we get 
\begin{equation}\label{BPinterm}
p(x, \mu, r) \leq \psi^{\prime -1} (\Cr{cestppale} r^{-2})\,  \E_x\left[\un_{ \{ T_1(\xi)< \infty \} }\exp(-\frac{_1}{^4}\Cr{cBP1}\varphi(\mu)T_2(\xi))\right].
\end{equation}
Recall from (\ref{tempsT}) the definition of $T_1 (\xi)$ and $T_2 (\xi)$. By the Markov property at time $T_1(\xi)$, we get 
$$\E_x \! \left[\un_{ \{ T_1(\xi)< \infty \} }\exp(-\frac{_1}{^4}\Cr{cBP1}\varphi(\mu)T_2(\xi))\right]= \P_x (T_1 (\xi) \! <\! \infty )\,  \E_0 \! \big[ \exp(-\frac{_1}{^4}\Cr{cBP1}\varphi(\mu)\chi_{d,r/4}(\xi) )\big] . $$
Since $d\! \geq \! 3$, we get $\P_x (T_1 (\xi) <\infty ) =(3r/2\lVert x \rVert)^{d-2}$. Moreover by (\ref{estchi}), we get 
$$\E_0 \big[ \exp(-\frac{_1}{^4}\Cr{cBP1}\varphi(\mu)\chi_{d,r/4}(\xi) )\big] \leq 2d \exp \big(-\frac{_1}{^8} r \sqrt{2\Cr{cBP1}\varphi(\mu)/d} \, \big) \; .$$
Then we set $\Cr{cBPnouveau} \! = \! 2d \, (3/2)^{d-2}$ and $\Cr{c1BPnouveau} \! = \! \sqrt{\Cr{cBP1}/(32d)}$ and we get  
$$ \E_x\left[\un_{ \{ T_1(\xi)< \infty \} }\exp(-\frac{_1}{^4}\Cr{cBP1}\varphi(\mu)T_2(\xi))\right]\leq  \Cr{cBPnouveau} (r/ \lVert x\rVert)^{d-2} \exp \big( \! -\!  \Cr{c1BPnouveau} r \sqrt{
\varphi(\mu)}\,  \big) , $$
which implies the desired result by (\ref{BPinterm}). \cqfd 
\end{proof}

\bigskip

Recall from (\ref{gaugedef}) the definition of the gauge function $g$: 
$$ g(r)= \frac{\log \log \frac{1}{r}}{\varphi^{-1} \big(( \frac{1}{r} \log \log \frac{1}{r})^2 \big)} \; .$$
\begin{lem}\label{BP2}
Assume they $\bdelta \! >\! 1$ and that $d\! \geq \! 3$. Recall that $\Cr{cBPnouveau}$ is the constant appearing in Lemma \ref{BPnouveau} and that $\Cr{cestppale}$ is the constant appearing in Lemma \ref{estppale}. 
There exists $\Cl[c]{cBP2}, \Cl[c]{c1BP2}, \Cl[r]{rBP2}, \kappa_0\! \in \! (0,\infty)$, that only depend $d$ and $\psi$, such that 
for all $r\! \in \! (0,\Cr{rBP2})$, for all $\kappa \! \in \! (0,\kappa_0)$ and for all $x\! \in \! \overline{B}(0,2r)^c$,
\begin{align*}
\N_x  \big( \, \cR\cap\overline{B}(0,r) \neq\emptyset  \;  ; & \;  \cM \! \left(B(0,2r)\right) \leq \kappa g(r) \,   \big) \\
 \leq &\;  \Cr{cBPnouveau} \, \invDpsi \!\! \left( \Cr{cestppale} r^{-2}\right) \left(r/\lVert x\rVert\right)^{d-2} \left( \log1/r \right)^{-\Cr{cBP2}\, \kappa^{-\Cr{c1BP2} }} \; .
\end{align*}
\end{lem}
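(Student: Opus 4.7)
The strategy is the standard Chernoff/exponential Markov trick: for any $\mu>0$, on the event $\{\cM(B(0,2r))\leq \kappa g(r)\}$ one has $1\leq e^{\mu\kappa g(r)-\mu \cM(B(0,2r))}$, hence
$$
\N_x\big(\cR\cap\overline{B}(0,r)\neq\emptyset\,;\,\cM(B(0,2r))\leq\kappa g(r)\big)\leq e^{\mu\kappa g(r)}\,\N_x\big[\un_{\{\cR\cap\overline{B}(0,r)\neq\emptyset\}}e^{-\mu\cM(B(0,2r))}\big].
$$
Provided $r\!\in\!(0,\Cr{restppale})$ and $r^2\mu/\invpsi(\mu)\!\geq\!16$, Lemma \ref{BPnouveau} bounds the right-hand side by
$$
\Cr{cBPnouveau}\,\invDpsi(\Cr{cestppale}r^{-2})\,(r/\lVert x\rVert)^{d-2}\exp\!\big(\mu\kappa g(r)-\Cr{c1BPnouveau}\,r\sqrt{\varphi(\mu)}\big).
$$
It then remains to tune $\mu$ so that $\mu\kappa g(r)-\Cr{c1BPnouveau} r\sqrt{\varphi(\mu)}\leq -c\,\kappa^{-a}\log\log 1/r$ for constants $c,a>0$; exponentiating converts this into the factor $(\log 1/r)^{-c\kappa^{-a}}$ required by the statement.

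The natural choice is $\mu=\varphi^{-1}(A^2y^2)$ with $y=\frac{1}{r}\log\log 1/r$ and $A\geq 1$ a parameter depending on $\kappa$ to be optimized. Then $r\sqrt{\varphi(\mu)}=A\log\log 1/r$, and by (\ref{convexpsi}) one has $r^2\mu/\invpsi(\mu)\geq \tfrac14 r^2\varphi(\mu)=\tfrac14 A^2(\log\log 1/r)^2$, so the hypothesis of Lemma \ref{BPnouveau} is satisfied as soon as $A\log\log 1/r\geq 8$, which we will secure by fixing $\kappa_0$ and $\Cr{rBP2}$ later. The main remaining task is to control
$$
\mu g(r)\,=\,\log\log(1/r)\cdot\frac{\varphi^{-1}(A^2y^2)}{\varphi^{-1}(y^2)}.
$$

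This is the one genuine technical point, and it is where the assumption $\bdelta>1$ intervenes. Fix any $c\in(0,\delta_\varphi)=(0,(\bdelta-1)/\bdelta)$; by the very definition (\ref{defdelta}) of $\delta_\varphi$, there exists $C_c\in(0,\infty)$ such that $\varphi(\lambda)/\varphi(\mu')\geq C_c(\lambda/\mu')^{c}$ for all $1\leq \mu'\leq\lambda$. Inverting this inequality gives, for all $y^2\geq\varphi(1)$ and all $A\geq 1$,
$$
\frac{\varphi^{-1}(A^2y^2)}{\varphi^{-1}(y^2)}\leq C_c^{-1/c}\,A^{2/c}=:C_1 A^{q},
$$
where $q=2/c$. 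Note that $q>2$ since $c<\delta_\varphi\leq 1$. Substituting yields
$$
\mu\kappa g(r)-\Cr{c1BPnouveau} r\sqrt{\varphi(\mu)}\,\leq\,\log\log(1/r)\cdot\big(C_1\kappa A^{q}-\Cr{c1BPnouveau} A\big).
$$

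The bracket is optimized by choosing $A=(\Cr{c1BPnouveau}/(2C_1\kappa))^{1/(q-1)}$; a direct computation gives $C_1\kappa A^q=\tfrac12\Cr{c1BPnouveau} A$, hence the bracket equals $-\tfrac12\Cr{c1BPnouveau} A=-c_2\,\kappa^{-1/(q-1)}$, with $c_2=\tfrac12\Cr{c1BPnouveau}\cdot(\Cr{c1BPnouveau}/(2C_1))^{1/(q-1)}$. Setting $\Cr{c1BP2}=1/(q-1)$ and $\Cr{cBP2}=c_2$, one obtains
$$
\exp\!\big(\mu\kappa g(r)-\Cr{c1BPnouveau} r\sqrt{\varphi(\mu)}\big)\leq (\log 1/r)^{-\Cr{cBP2}\kappa^{-\Cr{c1BP2}}}.
$$
Finally, fix $\kappa_0\in(0,\infty)$ so small that the above choice forces $A\geq 1$ for every $\kappa\in(0,\kappa_0)$, and fix $\Cr{rBP2}\in(0,\Cr{restppale})$ so small that $\log\log(1/\Cr{rBP2})\geq 8$ and $y^2=r^{-2}(\log\log 1/r)^2\geq\varphi(1)$ for all $r\in(0,\Cr{rBP2})$; then both Lemma \ref{BPnouveau} and the ratio bound above apply, and the stated inequality follows. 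The only delicate step is the ratio estimate for $\varphi^{-1}$: it is exactly there that $\bdelta>1$ is used, and it is precisely this polynomial-in-$A$ control that makes the Chernoff optimization yield the doubly-logarithmic decay required.
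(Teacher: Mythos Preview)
Your proof is correct and follows essentially the same route as the paper's: both apply the exponential Markov inequality together with Lemma \ref{BPnouveau}, choose $\mu=\varphi^{-1}\big(q(\tfrac{1}{r}\log\log\tfrac{1}{r})^2\big)$ (your $A^2$ is the paper's $q$), use the definition of $\delta_\varphi$ to bound $\varphi^{-1}(qz)/\varphi^{-1}(z)$ polynomially in $q$, and then optimize $q$ in terms of $\kappa$. The resulting constants and exponents match up under the identification $a=1/c$.
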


\begin{proof} Let $\kappa, \mu \! \in \! (0,\infty)$. Let $r \! \in \! (0, \Cr{restppale})$ where $\Cr{restppale}$ is the constant appearing in Lemma \ref{estppale}. 
Assume that such that $\frac{r^2\mu}{\invpsi(\mu)}\geq 16$.  A Markov inequality, combined with Lemma \ref{BPnouveau} entails :
\begin{align}\label{BP7}
\N_x \! \left(\cR\cap\overline{B}(0,r)\neq\emptyset ; \cM\left(B(0,2r)\right)\leq\kappa g(r)\right) 
&\leq e^{\kappa\mu g(r)}\N_x \big[ \un_{  \{\cR\cap\overline{B}(0,r)\neq\emptyset\}}\, e^{-\mu\cM(B(0,2r))}  \Big] \nonumber\\
&\leq \Cr{cBPnouveau} \left(r/\lVert x\rVert\right)^{d-2}\invDpsi \!\!  \left(\Cr{cestppale}r^{-2}\right) \exp \! \big( F(\mu,r,\kappa) \big) , 
\end{align}
where we have set 
$$F(\mu,r,\kappa):=\kappa\mu g(r)\! -\! \Cr{c1BPnouveau} r\sqrt{\varphi(\mu)} \; .$$ 
For any $q \! \in \! [1, \infty)$ and any $r\! \in (0, r_0)$, we set 
$$ \mu_{r,q}= \invphi \! \big(q ( \frac{_1}{^r} \log \log \frac{_1}{^r})^2 \big).$$
We first get an estimate for $\mu_{q, r}$. To that end fix $a$ such that $1/a \in (0, \delta_\varphi)$. By the definition (\ref{defdelta}) of $\delta_\varphi$, there exists $C\! \in \! (0,\infty)$ such that 
for any $p, \lambda \! \in \! [1, \infty)$, $Cp^{1/a} \varphi(\lambda) \leq \varphi (p\lambda)$. Thus, for any $z \! \geq \! \varphi (1)$ and any $q \! \geq \! C$, 
we get $\varphi^{-1} (qz) \! \leq \! (q/C)^{a} \varphi^{-1} (z)$. This easily entails that there exists 
$\Cl[r]{roupi} \! \in \! (0, r_0\wedge \Cr{restppale})$ and $\Cl[c]{coupi} \! \in \! (0, \infty)$ such that 
\begin{equation}
\label{mumuse}
\forall r \in (0, \Cr{roupi}) ,  \forall q \in [1, \infty), \quad \mu_{r, 1} \leq \mu_{r, q} \leq \Cr{coupi} q^a \mu_{r, 1}\; .  
\end{equation}
Recall that $\widetilde{\psi} (\lambda)= \psi (\lambda) / \lambda $. 
We next observe that for all $r\! \in \! (0, e^{-1})$ and all $q \! \in \! [1, \infty)$, (\ref{convexpsi}) implies 
$$ r^2\,  \frac{\mu_{r, q}}{\psi^{-1} (\mu_{r,q})}  =r^2\,\widetilde{\psi} (\psi^{-1} (\mu_{r,q} )) \geq \frac{_1}{^4} r^2\, \psi^\prime (\psi^{-1} (\mu_{r,q} ))\geq \frac{_1}{^4} r^2\, \psi^\prime (\psi^{-1} (\mu_{r, 1} ))= \frac{_1}{^4} (\log\log 1/r)^2 . $$
Then we set $\Cr{rBP2}= \exp (-e^{8}) \wedge \Cr{roupi}$ and we get  
\begin{equation}
\label{vlan}
\forall r \in (0, \Cr{rBP2}) ,  \forall q \in [1, \infty), \quad \frac{r^2 \mu_{r, q}}{\psi^{-1} (\mu_{r,q})} \geq 16 \; .
\end{equation}
Let $r \! \in \! (0, \Cr{rBP2})$, $q\! \in \! [1, \infty)$ and $\kappa \! \in \! (0, \infty)$. 
Observe that $g(r) = (\log \log 1/r)/ \mu_{r, 1}$ and that $r\sqrt{\varphi (\mu_{r,q})}= \sqrt{q} \log\log 1/r$. Thus, we get 
$$ F(\mu_{r,q}, r, \kappa)=  \Big(\kappa\frac{\mu_{r,q}}{\mu_{r,1}}-\Cr{c1BPnouveau} \sqrt{q}\Big) \log\log \frac{_1}{^r}  \leq  \sqrt{q} \big( \Cr{coupi} \kappa q^{a-\frac{1}{2}} - \Cr{c1BPnouveau}\big) \log\log \frac{_1}{^r}  \; .$$
Since $\delta_\varphi \! \leq \! 1 $, we get $a \! >\! 1$ and thus $a\! -\! \frac{1}{2} \! > \! \frac{1}{2}$. We then set $\kappa_0 \! =\!  \Cr{c1BPnouveau}/(2\Cr{coupi})$ and for all $\kappa \! \in \! (0, \kappa_0)$
we also set $q_\kappa  \! =\!  (\kappa_0/\kappa)^{\frac{1}{a-\frac{1}{2}}}$. Then $q_{\kappa} \! \geq \! 1$ and 
$$ \sqrt{q_\kappa} \big( \Cr{coupi} \kappa q_\kappa^{a-\frac{1}{2}} - \Cr{c1BPnouveau}\big) = -\frac{_1}{^2} \Cr{c1BPnouveau}(\kappa_0/\kappa)^{\frac{1}{2a-1}} \; .$$
We then set $\Cr{cBP2}= \frac{_1}{^2} \Cr{c1BPnouveau} \kappa_0^{\frac{1}{2a-1}} $ and $\Cr{c1BP2}= \frac{1}{2a-1}$. Then 
$$F(\mu_{r, q_\kappa }, r, \kappa) \leq -\Cr{cBP2}\kappa^{-\Cr{c1BP2}} \log\log \frac{_1}{^r} $$
and we complete the proof thanks to (\ref{vlan}) that allows to apply (\ref{BP7}) for any $r\! \in \! (0, \Cr{rBP2})$. \cqfd  
\end{proof}

\section{Proof of the results.}
\label{sectionproofs}
Recall from (\ref{occuWdef}) the definition of the total occupation measure of the snake $\cM$. We prove in Section \ref{locsec} the following results on the lower density of $\cM$. 
\begin{thm}\label{localth}
Let $\psi$ be a branching mechanism the form (\ref{LevyKhin}). Let $(W_t)_{ t\geq 0}$ be the associated snake. Let $g$ be defined by (\ref{gaugedef}). Assume 
that $\bdelta \! >\! 1$ and that $d \! > \! \frac{2\bgamma}{\bgamma-1}$. Then, there exists a constant $\kappa_{d,\psi}\! \in \! (0,\infty)$, that only depends on $d$ and $\psi$,  such that 
\begin{equation}\label{localliminf}
\textrm{$\N_0$-a.e.~for $\cM$-almost all $x$,} \quad \liminf\limits_{r\rightarrow0_+}\frac{\cM(B(x,r))}{g(r)}=\kappa_{d,\psi}.
\end{equation}
\end{thm}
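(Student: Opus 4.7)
The plan is to use the Palm formula (\ref{corPalm}) to reduce this statement on the lower local density of $\cM$ at $\cM$-typical points to an almost-sure statement about the spinal measures $\cM_a^*$ centered at the origin. Specifically, taking $G$ in (\ref{corPalm}) to be the indicator that the $\liminf$ differs from a candidate constant $\kappa$, (\ref{localliminf}) will follow once I exhibit a deterministic $\kappa_{d,\psi} \in (0,\infty)$ such that, for Lebesgue-a.e.~$a > 0$, $\P_0$-a.s., $\liminf_{r\to 0+} \cM_a^*(B(0,r))/g(r) = \kappa_{d,\psi}$. The proof then proceeds by matching an upper and a lower bound.

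For the upper bound on the $\liminf$ I would work along the deterministic sequence $\rho_n = r_{e^{n^2}}$ provided by Lemma \ref{lesbonsradis}, which satisfies condition (\ref{miam}). Decomposing $\cM_a^* = \sum_{t_j \leq a} \cM_j$ and using the spine's last exit time $\vartheta(2\rho_n)$, on the event
$$A_n := \{\vartheta(2\rho_n) < a\} \cap \{N_{\rho_n}(\vartheta(2\rho_n),\, a+\vartheta(2\rho_n)) = 0\},$$
every snake $W^j$ with $t_j \leq a$ and $\cM_j(B(0,\rho_n)) > 0$ must be grafted before $\vartheta(2\rho_n)$, because after that time the spine stays outside $\overline{B}(0,2\rho_n)$ and no snake grafted on this outer portion reaches $B(0,\rho_n)$. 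Hence on $A_n$ one has $\cM_a^*(B(0,\rho_n)) \leq T_{\vartheta(2\rho_n)} \leq T_{\gamma(2\rho_n)}$, using $\vartheta(r) \leq \gamma(r)$. Lemma \ref{minoproba} gives $\P_0(A_n) \geq \Cr{cminoproba}$ for large $n$, and combining Lemma \ref{lemmeTc}(ii) with Lemma \ref{lemmeTgamma}(i) shows $\sum_n \P_0(T_{\gamma(2\rho_n)} \leq g(8\rho_n)) = \infty$. Using the independence of well-separated increments supplied by Lemma \ref{lemmeTc}(i), a second Borel--Cantelli argument applied to an appropriate sub-subsequence yields $\cM_a^*(B(0,\rho_n)) \leq g(8\rho_n) \leq C g(\rho_n)$ infinitely often, the last inequality coming from the doubling of $g$; hence the $\liminf$ is finite.

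For the lower bound I would invoke Lemma \ref{BP2}. Fix $\kappa \in (0,\kappa_0)$ and set $r_k = 2^{-k}$. Combining Lemma \ref{BP2} with the snake Markov property at the hitting time of $\overline{B}(0,r_k)$, and summing over a covering of $\R^d$ by balls of radius $r_k$, one can bound
$$\N_0\Big[\cM\big(\{y \in \cR : \cM(B(y,2r_k)) \leq \kappa g(r_k)\}\big)\Big]$$
by a quantity of order $(\log 1/r_k)^{-\Cr{cBP2}\kappa^{-\Cr{c1BP2}}}$ times polynomial factors; for $\kappa$ small enough this is summable in $k$. Borel--Cantelli under the finite measure $\N_0[\cM \in \cdot]$ then shows that for $\cM$-a.e.\ $y$, $\cM(B(y,2r_k)) > \kappa g(r_k)$ holds eventually; the doubling condition on $g$ extends this to all small $r$, yielding $\liminf_{r\to 0}\cM(B(y,r))/g(r) \geq \kappa/C'$.

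The main obstacle will be to upgrade these two bracketing bounds into an exact equality with a single deterministic constant. I would establish this via a 0-1 law: the value $\liminf_{r\to 0+} \cM_a^*(B(0,r))/g(r)$ is measurable with respect to the germ $\sigma$-algebra at the origin of the Poisson point process $\Nstar$ defined in (\ref{defN}), conditional on $(\xi,V)$; Blumenthal's 0-1 law then forces this quantity to be almost-surely equal to a constant that, by the rotational symmetry of the construction and the translation invariance extracted from the Palm formula, depends only on $d$ and $\psi$. Calling this value $\kappa_{d,\psi}$ and transferring back through (\ref{corPalm}) completes the proof.
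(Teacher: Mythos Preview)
Your overall architecture---Palm formula plus a zero-one law to produce a deterministic $\kappa_{d,\psi}\in[0,\infty]$, then separate arguments that $\kappa_{d,\psi}<\infty$ and $\kappa_{d,\psi}>0$---matches the paper's proof exactly (Lemmas~\ref{limdeterm}, \ref{upperbound}, \ref{lowerbound}). The zero-one argument you sketch is essentially Lemma~\ref{limdeterm}; note that the paper first uses Lemma~\ref{lemmeJars} to show $\liminf_r \cM^*_a(B(0,r))/g(r)=\liminf_r \cM^*_s(B(0,r))/g(r)$ for every $s\in(0,a)$, which is what makes the germ $\sigma$-field argument go through cleanly.

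\medskip

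\textbf{Upper bound: a genuine gap.} Your events $A_n$ and $\{T_{\gamma(2\rho_n)}\le g(8\rho_n)\}$ are \emph{not} independent, and Lemma~\ref{lemmeTc}(i) does not give you what you claim: it asserts independence of the \emph{increments} $T_{\vartheta(2r)}-T_{\vartheta(2r')}$ from $N_r(\vartheta(2r),a+\vartheta(2r))$, not of $T_{\gamma(2r)}$ itself. Passing to a sub-subsequence does not fix this, since $T_{\gamma(2\rho_n)}$ for different $n$ share the entire small-time part of the subordinator. The paper's cure is precisely this increment trick: it sets
\[
Y_n=\un_{\{N_{\rho_n}(\vartheta(2\rho_n),a+\vartheta(2\rho_n))=0\}\cap\{T_{\vartheta(2\rho_n)}-T_{\vartheta(2\rho_{n+1})}\le g(8\rho_n)\}},
\]
so that on $\{Y_n=1\}$ one has $\cM^*_a(B(0,\rho_n))\le g(8\rho_n)+T_{\gamma(2\rho_{n+1})}$, with the remainder controlled by Lemma~\ref{lemmeTgamma}(ii). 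Lemma~\ref{lemmeTc}(i) then gives $\E_0[Y_nY_m]\le C\,\E_0[Y_n]\E_0[Y_m]$ for $n>m$, and one concludes $\P_0(\sum Y_n=\infty)>0$ by Kochen--Stone rather than Borel--Cantelli. Your sketch is missing both the increment decomposition and the Kochen--Stone step.

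\medskip

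\textbf{Lower bound: a different but valid route.} Here you diverge from the paper. The paper proves $\kappa_{d,\psi}>0$ by lifting the L\'evy-tree density estimate (Lemma~\ref{lemmetree}, imported from \cite{Duq12}) to the snake via the Gaussian conditional law (\ref{covar}): one compares $\cM(B(\widehat W_t,R_n))$ with the tree-ball mass $\ba(t,r_n)$ along $R_n=2^{-n}$, $r_n\asymp R_n^2/\log\log(1/R_n)$. Your proposal instead recycles Lemma~\ref{BP2} through a covering argument and a Borel--Cantelli under $\N_0[\cM(\cdot)]$; this is essentially the machinery of Lemma~\ref{controlvariable} (which the paper uses later, in the proof of Theorem~\ref{snaketh}, to control $\cP_g$ of the bad set). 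Your route does work once you localise to annuli $\{1/A\le\lVert y\rVert\le A\}$---without this the sum over lattice points diverges near $0$ and at infinity, so the ``polynomial factors'' you mention are not innocuous---and then let $A\to\infty$. The trade-off: your approach is more self-contained (it avoids the external L\'evy-tree input), while the paper's approach makes the link with the tree geometry explicit and yields the constant $\Cr{clemmetree}/\Cr{gizon}$ directly in terms of the tree packing constant.
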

This result is then used to prove the following theorem in Section \ref{proofsnaketh}. 
\begin{thm}
\label{snaketh}Let $\psi$ be a branching mechanism the form (\ref{LevyKhin}). Let $(W_t)_{ t\geq 0}$ be the associated snake. Let $g$ be defined by (\ref{gaugedef}). Assume 
that $\bdelta \! >\! 1$ and that $d \! > \! \frac{2\bgamma}{\bgamma-1}$. Then, there exists a constant $\kappa_{d,\psi}\! \in \! (0,\infty)$, that only depends on $d$ and $\psi$,  such that 
$$\textrm{$\N_0$-a.e.~for any Borel set $B$,} \quad  \cM (B)= \kappa_{d,\psi}  \,  \cP_g ( B \cap \cR \, ) \; .$$
\end{thm}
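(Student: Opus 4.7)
The plan is to derive Theorem \ref{snaketh} from the local-density result Theorem \ref{localth} through a measure-theoretic argument combining the comparison theorems for packing measures (Theorem \ref{TaTrcomparesu} and Edgar's Lemma \ref{equalitycomp}) with the probabilistic bad-point estimate of Lemma \ref{BP2}. Since $\cM$ is $\N_0$-a.e.~a finite Borel measure supported on $\cR$ (as the image of Lebesgue measure by the continuous endpoint process $\widehat{W}$), it suffices to show that $\cM(B)=\kappa_{d,\psi}\cP_g(B\cap\cR)$ for every Borel $B\subset\R^d$. Introduce the ``good set''
$$G:=\big\{x\in\cR:\liminf_{r\to 0_+}\cM(B(x,r))/g(r)=\kappa_{d,\psi}\big\},$$
so that by Theorem \ref{localth} we have $\cM(\cR\setminus G)=0$. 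Edgar's Lemma \ref{equalitycomp}, applied to $\cM$ on each Borel subset of $G$, then gives $\cM(B\cap G)=\kappa_{d,\psi}\cP_g(B\cap G)$ for every Borel $B$. Since $\cM(B)=\cM(B\cap G)$, the theorem reduces to proving $\cP_g(\cR\setminus G)=0$.

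Decompose $\cR\setminus G=A_+\cup A_-$, where $A_+$ and $A_-$ are the sets of $x\in\cR$ at which the lim inf above is, respectively, strictly larger or strictly smaller than $\kappa_{d,\psi}$. The bound $\cP_g(A_+)=0$ is the easy half: for each $\kappa'>\kappa_{d,\psi}$, the Borel set $A_{\kappa'}:=\{x\in\cR:\liminf_{r\to 0_+}\cM(B(x,r))/g(r)>\kappa'\}$ meets the hypothesis of Theorem \ref{TaTrcomparesu}(i) after rescaling $\cM$ by $1/\kappa'$, which yields $\kappa'\cP_g(A_{\kappa'})\leq\cM(A_{\kappa'})$. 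Since $A_{\kappa'}\subset\cR\setminus G$, the right-hand side vanishes, hence $\cP_g(A_{\kappa'})=0$; taking a countable sequence $\kappa'_k\downarrow\kappa_{d,\psi}$ covers $A_+$ and gives $\cP_g(A_+)=0$.

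The main obstacle is showing $\cP_g(A_-)=0$: Theorem \ref{TaTrcomparesu}(ii) only delivers $\cM(A_-)\leq C^2\cP_g(A_-)$, which is compatible with both sides vanishing but does not force $\cP_g(A_-)=0$. This is exactly where Lemma \ref{BP2} enters: for any $\kappa\in(0,\kappa_0)$, small $r>0$ and $x\in\overline{B}(y,r)^c$ (after translating by $y$), the $\N_x$-probability that $\cR$ hits $\overline{B}(y,r)$ while $\cM(B(y,2r))\leq\kappa g(r)$ is bounded by a constant times $\invDpsi(\Cr{cestppale}r^{-2})(r/\lVert x-y\rVert)^{d-2}(\log 1/r)^{-\Cr{cBP2}\kappa^{-\Cr{c1BP2}}}$, the last factor decaying faster than any negative power of $\log 1/r$ as $\kappa\to 0$. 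Fix $\kappa\in(0,\kappa_0)$ and define
$$A_-^\kappa:=\{x\in\cR:\liminf_{r\to 0_+}\cM(B(x,r))/g(r)<\kappa\}.$$
By the doubling property of $g$, each $x\in A_-^\kappa$ admits, for arbitrarily small $r$, a centre $y$ close to $x$ with $\cR\cap\overline{B}(y,r)\neq\emptyset$ and $\cM(B(y,2r))\leq c\kappa g(r)$. Cover $A_-^\kappa$ by such ``bad balls'' via a Besicovitch-type selection and estimate the sum of the weights $g(r_m)$ of the chosen balls through the $\N_0$-expectation obtained by combining Lemma \ref{BP2} with the Palm formula (\ref{corPalm}) and the hitting estimates of Section \ref{sectionhitting}: the super-polynomial decay in $\kappa$ of the probabilistic bound shows that the $g$-packing premeasure of an auxiliary bad set $D_\kappa(\varepsilon)\supseteq A_-^\kappa$ tends to $0$ as $\varepsilon\to 0$. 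Conclude $\cP_g(A_-^\kappa)=0$ and let $\kappa_j\uparrow\kappa_{d,\psi}$ along a countable sequence to obtain $\cP_g(A_-)=0$. Together with the first two paragraphs this yields $\cP_g(\cR\setminus G)=0$ and hence the claimed identity $\cM=\kappa_{d,\psi}\,\cP_g(\,\cdot\cap\cR)$.
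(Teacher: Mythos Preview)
Your overall architecture is right: decompose $\cR$ into the good set $G$ and the bad set, apply Edgar's Lemma \ref{equalitycomp} on $G$, and reduce to $\cP_g(\cR\setminus G)=0$. Your treatment of $A_+$ via Theorem \ref{TaTrcomparesu}(i) is correct and clean. The gap is in your handling of $A_-$.

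First, a concrete obstruction: Lemma \ref{BP2} is only stated for $\kappa\in(0,\kappa_0)$, where $\kappa_0$ is a fixed constant depending on $d,\psi$, and there is no reason to expect $\kappa_0\geq\kappa_{d,\psi}$. Hence your plan to ``let $\kappa_j\uparrow\kappa_{d,\psi}$'' and apply Lemma \ref{BP2} for each $\kappa_j$ breaks down: for $\kappa\in(\kappa_0,\kappa_{d,\psi})$ the estimate is simply unavailable, so you do not cover all of $A_-$. Second, the argument you sketch for a fixed $\kappa$ is not quite the right shape. Upper bounds on $\cP_g$ require controlling \emph{every} $\varepsilon$-packing of the set, not producing a good cover; ``Besicovitch-type selection'' is a covering tool and does not directly bound $\cP_g^*$ from above. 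Your sentence about the ``$g$-packing premeasure of $D_\kappa(\varepsilon)$ tending to $0$ as $\varepsilon\to 0$'' also conflates the $\varepsilon$ in the definition of the premeasure with a separate approximation parameter.

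The paper circumvents both issues by proving, for \emph{one} fixed small $\kappa_1<\kappa_0$, a uniform absolute-continuity-type bound: for any Borel $B$ in a fixed annulus, every $\varepsilon$-packing of $B\cap\cR$ has total $g$-weight at most $c^{-1}\cM(B^{(\varepsilon)})$ plus an error $\sum_{n:2^{-n}\lesssim\varepsilon}U_n(A)$ coming from balls with small mass. The error is controlled by summing Lemma \ref{BP2} over a dyadic grid (this is Lemma \ref{controlvariable}) and tends to $0$; the result is $\cP_g^*(B\cap\cR)\leq c^{-1}\cM(\bigcap_{\varepsilon>0}B^{(\varepsilon)})$, valid for all $B$. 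Applying this to closed subsets of the full bad set $\cR\setminus G$ (via inner regularity (\ref{innerreg})) and using $\cM(\cR\setminus G)=0$ gives $\cP_g(\cR\setminus G)=0$ in one stroke, with no need to split into $A_+$ and $A_-$ or to push $\kappa$ toward $\kappa_{d,\psi}$. If you want to rescue your route, you should replace the $A_-^\kappa$ argument by this single-$\kappa$ packing bound.
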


\subsection{Proof of theorem \ref{localth}.}
\label{locsec}
Recall from Section \ref{levysnake}, the Palm formula for the occupation measure of the snake. To that end,  
recall that $\xi\! =\!  (\xi_t)_{t\geq 0}$ is a continuous process defined on 
the auxiliary measurable space $(\Omega, \cF)$ and recall that  $\P_0$ is a probability measure on $(\Omega, \cF)$ under which $\xi$ is distributed as a standard $d$-dimensional Brownian motion starting from the origin $0$. 
Recall that $(V_t)_{ t\geq 0}$ be a subordinator defined on $(\Omega, \cF, \P_0)$ that is independent of $\xi$ and whose Laplace exponent is $\Dpsistar(\lambda)\! =\! \Dpsi(\lambda) \! -\! \alpha$. Recall from (\ref{defN}) that under $\P_0$, conditionally given $(\xi, V)$, 
$\Nstar(\ddr t \ddr W)=\sum_{j\in\mathcal{J}^*}\delta_{(t_j,W^j)}$ is a Poisson point process on $[0,\infty) \! \times \! C(\R_+,\cW)$ with intensity $\ddr V_t \, \N_{\xi_t} (\ddr W)$. Recall from (\ref{defMa}) that for all $a \! \in \! \R_+$, we have set $\cM_a^* \! = \! \sum_{{j\in\mathcal{J}^*}}\un_{[0,a]}(t_j)\mathcal{M}_j$ where 
for all $j\! \in \! \mathcal{J}^*$, $\cM_j$ stands for the occupation measure of the snake $W^j$ as defined in (\ref{occuWdef}). Also recall from (\ref{defNatheta}) the definition of the following random variables: 
$$\forall \, t \geq s \geq 0  \quad N_r (s, t)=\#\left\lbrace j\in\cJ^* : s\! <\! t_j\! <\! t \;\;   \textrm{and} \; \;   \cR_j\cap\overline{B}(0,r)\neq\emptyset\right\rbrace,$$
that counts the snakes that are grafted on the spatial spine $\xi$ 
between times $s$ and $t$, and that hit the ball $\overline{B}(0, r)$. We first prove the following lemma that is a consequence of the Blumenthal $0$-$1$ law. 
\begin{lem}\label{limdeterm}
Assume that $\bdelta \! >\! 1$ and that $d \! >\! \frac{2\bgamma}{\bgamma-1}$. Recall from (\ref{gaugedef}) the definition of the gauge function $g$. There exists a constant $\kappa_{d,\psi}\in[0,\infty]$ that only depend on $d$ and $\psi$, such that 
\begin{equation}\label{liminfMa}
\forall a\in (0, \infty), \quad \textrm{$\P_0$-a.s.}\quad \liminf\limits_{r\rightarrow 0+}\frac{\Ma(B(0,r))}{g(r)}=\kappa_{d,\psi}.
\end{equation}
\end{lem}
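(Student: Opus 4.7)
The plan is to apply a Blumenthal $0$-$1$ law argument to the joint process $(\xi, V, \mathcal{N}^*)$ under $\P_0$, exploiting the fact established in Lemma \ref{lemmeJars} that, for small $r$, only snakes grafted in an arbitrarily short initial time window contribute to $\cM_a^*(B(0,r))$.

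For each $\veps > 0$, let $\mathcal{G}_\veps$ denote the $\P_0$-completion of the $\sigma$-algebra generated by $(\xi_t, V_t)_{0 \leq t \leq \veps}$ together with the restriction of the point measure $\mathcal{N}^*$ to $[0, \veps] \times C(\R_+, \cW)$. Since $\xi$ and $V$ are independent processes starting at $0$ with trivial germ $\sigma$-algebras at $0$ (by the usual Blumenthal law for Brownian motion and for L\'evy processes), and since, conditionally on $(\xi, V)$, $\mathcal{N}^*$ is a Poisson point process whose germ at $0$ is conditionally trivial, the tail $\sigma$-algebra $\mathcal{G}_0 := \bigcap_{n \geq 1} \mathcal{G}_{1/n}$ is $\P_0$-trivial. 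The goal is to prove that the liminf in (\ref{liminfMa}) is $\mathcal{G}_0$-measurable; it will then be $\P_0$-a.s.\ equal to a constant.

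The key reduction is the following. Fix $\veps \in (0, a)$ and decompose
\begin{equation*}
\cM_a^*(B(0,r)) = \cM_\veps^*(B(0,r)) + R_\veps(r), \qquad R_\veps(r) := \sum_{j \in \mathcal{J}^* , \, \veps < t_j \leq a} \cM_j(B(0,r)).
\end{equation*}
Note that $R_\veps(r)$ vanishes on the event $\{N_r(\veps, a) = 0\}$. The random variable $N_r(\veps, a)$ is $\mathbb{N}$-valued and non-decreasing in $r$, so the limit $L := \lim_{r \downarrow 0} N_r(\veps, a)$ exists in $\mathbb{N}$ $\P_0$-almost surely; by Lemma \ref{lemmeJars}, $N_r(\veps, a) \to 0$ in $\P_0$-probability as $r \to 0+$, hence $L = 0$ a.s. Consequently there is a random $r_\veps > 0$ such that $R_\veps(r) = 0$ for every $r \in (0, r_\veps)$, and therefore
\begin{equation*}
\liminf_{r \to 0+} \frac{\cM_a^*(B(0,r))}{g(r)} = \liminf_{r \to 0+} \frac{\cM_\veps^*(B(0,r))}{g(r)} \quad \P_0\text{-a.s.}
\end{equation*}
The right-hand side is $\mathcal{G}_\veps$-measurable. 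Intersecting over $\veps = 1/n$ shows that the liminf is $\mathcal{G}_0$-measurable, hence $\P_0$-a.s.\ equal to a deterministic constant $\kappa(a) \in [0, \infty]$. Applying the same reduction to any $0 < a < a'$ with a common choice of $\veps < a$ yields $\kappa(a) = \kappa(a')$, so this constant does not depend on $a$; denote it $\kappa_{d, \psi}$. Finally, the joint law of $(\xi, V, \mathcal{N}^*)$ under $\P_0$ depends only on $d$ and $\psi$, and $g$ depends only on $\psi$, which yields the stated dependence of the constant.

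The main obstacle is the careful justification that the germ $\sigma$-algebra $\mathcal{G}_0$ is $\P_0$-trivial, which requires combining Blumenthal's law for the two independent spine processes with the conditional Poisson structure of $\mathcal{N}^*$ given $(\xi, V)$. Once that is in place, the argument reduces to the monotonicity of $r \mapsto N_r(\veps, a)$ and Lemma \ref{lemmeJars}, which is precisely why the statement is restricted to dimensions satisfying $d > 2\bgamma/(\bgamma - 1)$. Note that this lemma does not yet assert that $\kappa_{d, \psi}$ is finite and positive; these bounds will come from the subordinator estimates of Section \ref{sectionsubord} and the bad-point estimates of Section \ref{sectionbadpts}.
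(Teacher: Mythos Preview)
Your proof is correct and follows essentially the same approach as the paper's own proof: both reduce from $\cM_a^*$ to $\cM_s^*$ for $s<a$ via the vanishing of $N_r(s,a)$ for small $r$ (using Lemma \ref{lemmeJars}), then invoke a Blumenthal-type $0$--$1$ law for the germ $\sigma$-field of the spine-plus-snakes construction. The only cosmetic difference is that the paper extracts a deterministic subsequence $(r_n)$ with $\sum_n \P_0(N_{r_n}(s,a)\neq 0)<\infty$ and applies Borel--Cantelli, whereas you use the monotonicity of $r\mapsto N_r(\veps,a)$ together with convergence in probability to conclude $N_r(\veps,a)=0$ eventually a.s.; both routes yield the same conclusion.
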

\begin{proof}
Let $a \! \in\! (0,\infty)$. Let $s \! \in\! (0,a)$. Observe that if $N_r(s, a)\! =\! 0$, then $\mathcal{M}_a^*(B(0,r))\! =\! \mathcal{M}_s^*(B(0,r))$. By Lemma \ref{lemmeJars}, there exists 
a deterministic sequence $(r_n)_{n\geq 0}$ decreasing to $0$ such that 
$$\sum_{n\geq 0} \P_0 \left(N_{r_n} (s, a) \neq 0\right)<\infty\; .$$ 
By the Borel Cantelli Lemma, $\P_0$-a.s.~for all sufficiently large $n$, $N_{r_n} (s, a)\! = \! 0$. Since $r\mapsto N_r(s, a)$ is non-decreasing, we get that $\P_0$-a.s.~for all sufficiently small $r$, $N_r(s,a) \! = \! 0$. 
Consequently, 
\begin{equation}\label{cptenzero}
\forall s\in(0,a), \quad \textrm{$\P_0$-a.s.}\quad \liminf\limits_{r\rightarrow 0+}\frac{\Ma(B(0,r))}{g(r)}=\liminf\limits_{r\rightarrow0+}\frac{\mathcal{M}_s^*(B(0,r))}{g(r)} \; .
\end{equation}
Let $\mathcal{G}_s$ be the sigma-field generated by $\un_{[0,s]}(t)\, \Nstar (\ddr t\,  \ddr W) $ and completed by the $\P_0$-negligible sets. 
Using properties of Poisson random measures and the Blumenthal zero-one law for $\xi$, we easily check that $\mathcal{G}_{0+}:=\bigcap_{s>0}\mathcal{G}_s$ 
is $\P_0$-trivial: namely, for all $A\! \in \! \mathcal{G}_{0+}$, either $\P_0 (A)\! =\!  0$ or 
$\P_0(A)\!= \! 1$. Then observe that (\ref{cptenzero}) implies that the random variable $\liminf_{r\rightarrow 0}\Ma(B(0,r))/ g(r)$ is $\mathcal{G}_{0+}$-measurable. It is  therefore $\P_0$-a.s.~ equal to a deterministic constant $\kappa_{d,\psi} \! \in [0, \infty]$ that does not depends on $a$. \cqfd 
\end{proof}
\begin{rem}
We point out that Lemma \ref{limdeterm} holds true for all gauge function $g$. \cq 
\end{rem}
By (\ref{corPalm}), the previous lemma entails that 
\begin{equation} 
\label{Partiel}
\textrm{$\N_0$-a.e.~for $\cM$-almost all $x$,}  \quad  \liminf_{r \rightarrow 0+} \frac{\cM 
(B(x, r))}{g(r)} = \kappa_{d,\psi}  \in [0, \infty] \; .
\end{equation}
Now, we need to prove that $0\!  <\! \kappa_{d,\psi}  \! <\!  \infty$, which is done in two steps.

\begin{lem}
\label{upperbound} Assume that $\bdelta\! >\! 1$ and that $d\! >\! \frac{2\bgamma}{\bgamma-1}$. Then, $\kappa_{d,\psi}<\infty$. 
\end{lem}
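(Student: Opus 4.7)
By Lemma \ref{limdeterm}, $\kappa_{d,\psi}$ is the $\P_0$-a.s.\ deterministic value of $\liminf_{r\to 0+}\Ma(B(0,r))/g(r)$, so it suffices to exhibit a constant $C$ for which the event $\{\Ma(B(0,\rho_n))\leq C\,g(\rho_n)\}$ occurs for infinitely many $n$ with positive $\P_0$-probability, where $(\rho_n)$ is the sequence of Lemma \ref{lesbonsradis}. The strategy decouples the contribution of ``far'' and ``near'' snakes via the event
$G_n:=\{N_{\rho_n}(\gamma(2\rho_n),\,a+\gamma(2\rho_n))=0\}$: on $G_n$ no snake with $\gamma(2\rho_n)<t_j\leq a$ can reach $\overline{B}(0,\rho_n)$, so $\Ma(B(0,\rho_n))\leq\sum_{t_j\leq\gamma(2\rho_n)\wedge a}\sigma_j\leq T_{\gamma(2\rho_n)}$. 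Adapting the argument of Lemma \ref{minoproba} with $\gamma$ in place of $\vartheta$---legitimate because $\gamma\geq\vartheta$ still forces $\|\xi_t\|\geq 2\rho_n$ for $t\geq\gamma(2\rho_n)$, and $\P_0(\gamma(2\rho_n)>a)$ is summable in $n$ by Brownian scaling---combined with Lemma \ref{lesbonsradis} yields $\P_0(G_n)\geq c_1>0$ for all large $n$. More precisely, integrating out $V$ given $\xi$ one has $\P_0(G_n\,|\,\xi)=\exp(-W_n(\xi))$ with $W_n(\xi):=\int_{\gamma(2\rho_n)}^{a+\gamma(2\rho_n)}\Dpsistar(u_{\rho_n}(\xi_t))\,\ddr t$ satisfying $\E_0[W_n]\leq\Cr{clesbonsradis}$ eventually.

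For the subordinator estimate, Lemma \ref{lemmeTc}(ii) identifies $S_r:=T_{\gamma(r)}$ as a subordinator, so its successive increments $\Delta_n:=S_{2\rho_n}-S_{2\rho_{n+1}}$ are independent. Since $\Delta_n\leq S_{2\rho_n}$, Lemma \ref{lemmeTgamma}(i) forces $\sum_n\P_0(\Delta_n\leq g(8\rho_n))=\infty$, and the second Borel-Cantelli lemma yields $\Delta_n\leq g(8\rho_n)$ infinitely often, $\P_0$-a.s. Combined with the eventual bound $S_{2\rho_{n+1}}\leq C_2\,g(8\rho_n)$ from Lemma \ref{lemmeTgamma}(ii), this produces $T_{\gamma(2\rho_n)}\leq(1+C_2)\,g(8\rho_n)$ i.o.\ almost surely, and the doubling condition on $g$ (secured by $\bdelta>1$) converts this into $T_{\gamma(2\rho_n)}\leq C_3\,g(\rho_n)$ i.o.\ a.s.

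The \emph{main obstacle} is to combine the two steps: the subordinator bound holds along a \emph{random} infinite subsequence of indices, while $G_n$ only has uniformly positive probability at each index, and we must make both hold simultaneously infinitely often. The key structural point is the disjointness of the time windows: $G_n$ and $B_n:=\{T_{\gamma(2\rho_n)}\leq C_3\,g(\rho_n)\}$ are measurable with respect to the restrictions of $\Nstar$ (and of $V$) to the disjoint intervals $(\gamma(2\rho_n),a+\gamma(2\rho_n)]$ and $[0,\gamma(2\rho_n)]$, which makes them \emph{independent conditionally on $\xi$}. The same disjointness argument shows the increments $\Delta_n$ remain independent given $\xi$, so the Borel-Cantelli step runs $\xi$-wise and yields $\P_0(B_n\ \text{i.o.}\,|\,\xi)=1$ for a.s.\ $\xi$. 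On the $\xi$-event $\{W_n(\xi)\leq 2\Cr{clesbonsradis}\}$---of $\P_0$-probability at least $1/2$ by Markov's inequality---one has $\P_0(G_n\,|\,\xi)\geq e^{-2\Cr{clesbonsradis}}$. A Paley--Zygmund type second-moment argument (using the conditional independence of $G_n$ from $B_n$ given $\xi$, together with bounded correlations among the $G_m$'s coming from the controlled overlap of their time windows) then produces $\P_0(G_n\cap B_n\ \text{i.o.})>0$, which forces $\kappa_{d,\psi}\leq C_3<\infty$ and completes the proof.
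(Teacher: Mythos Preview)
Your overall architecture is close to the paper's---reduce to the spine picture, dominate $\Ma(B(0,\rho_n))$ by a subordinator time when no ``far'' snake hits the small ball, use Lemmas \ref{lesbonsradis}, \ref{minoproba}, \ref{lemmeTgamma}, and finish with a second-moment/Kochen--Stone argument. But there is a genuine gap at the combining step, caused by one seemingly innocuous substitution: you build $G_n$ with $\gamma(2\rho_n)$ instead of $\vartheta(2\rho_n)$.

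The paper's $Y_n=\un_{\{N_{\rho_n}(\vartheta(2\rho_n),a+\vartheta(2\rho_n))=0\}}\un_{\{T_{\vartheta(2\rho_n)}-T_{\vartheta(2\rho_{n+1})}\leq g(8\rho_n)\}}$ is designed so that Lemma \ref{lemmeTc}(i) (which rests on Getoor's property (a) of $\vartheta$) gives \emph{unconditional} independence of the three blocks $T_{\vartheta(2\rho_n)}-T_{\vartheta(2\rho_{n+1})}$, $T_{\vartheta(2\rho_m)}-T_{\vartheta(2\rho_{m+1})}$ and $N_{\rho_m}(\vartheta(2\rho_m),a+\vartheta(2\rho_m))$. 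This yields the clean bound $\E_0[Y_nY_m]\leq \Cr{cminoproba}^{-1}\E_0[Y_n]\E_0[Y_m]$ and Kochen--Stone concludes immediately. With $\gamma$ in place of $\vartheta$ that triple independence is lost: the event $G_n$ depends on the full $d$-dimensional position of $\xi$ after $\gamma(2\rho_n)$, which is \emph{not} independent of $(\gamma(r'))_{r'\leq 2\rho_n}$. Your work-around is to condition on $\xi$, where indeed $G_n\perp B_n$ holds. But the key assertion ``bounded correlations among the $G_m$'s coming from the controlled overlap of their time windows'' is not justified and, in fact, fails in the form you need: the windows $(\gamma(2\rho_n),a+\gamma(2\rho_n)]$ all collapse to essentially $(0,a]$ as $n\to\infty$, so the overlap is total, not controlled. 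Conditionally on $(\xi,V)$ the events $G_n$ are \emph{positively} correlated (they are ``no-point'' events for a Poisson process), and the only easy upper bound is $\P_0(G_n\cap G_m\mid\xi)\leq \P_0(G_n\mid\xi)$; turning this into $\leq C\,\P_0(G_n\mid\xi)\P_0(G_m\mid\xi)$ would require $W_m(\xi)\leq \log C$ eventually a.s., which you have not shown (you only have $\E_0[W_m]\leq \Cr{clesbonsradis}$).

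In short, the missing idea is precisely the use of $\vartheta$ in the hitting event so that Lemma \ref{lemmeTc}(i) applies. If you replace your $G_n$ by $\{N_{\rho_n}(\vartheta(2\rho_n),a+\vartheta(2\rho_n))=0\}$ and pair it with the increment $T_{\vartheta(2\rho_n)}-T_{\vartheta(2\rho_{n+1})}$, you can drop the conditioning on $\xi$ entirely and run Kochen--Stone directly; then on $\{Y_n=1\}$ you still get $\Ma(B(0,\rho_n))\leq g(8\rho_n)+T_{\gamma(2\rho_{n+1})}$ (via $T_{\vartheta(2\rho_{n+1})}\leq T_{\gamma(2\rho_{n+1})}$), and Lemma \ref{lemmeTgamma}(ii) finishes as you outlined.
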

\begin{proof}
Fix $a\! \in \! (0,\infty)$ and recall from (\ref{defTc}) the definition of $T_a$ that is the sum of the durations of the snakes that are grafted on the spine $(\xi_s)_{ 0\leq s\leq a}$. 
Recall from (\ref{defNatheta}) the definition of $N_r(s,t)$. Observe that if $\vartheta(2r) \! \geq \! a$, then $\Ma(\Bor) \! \leq \! T_a \! \leq \!  T_{\vartheta(2r)}$. Next not that if 
$ \vartheta(2r)\! <\! a$ and if $N_r\left(\vartheta(2r),a+\vartheta(2r)\right)=0$, then, $\Ma(\Bor)\! \leq \! T_{\vartheta(2r)}$. Therefore, 
\begin{equation*}
\hspace{-1mm}\textrm{$\P_0$-a.s.~on $\{N_r\left(\vartheta(2r),a\! +\! \vartheta(2r)\right)\! =\! 0\}$,}  \quad \Ma(\Bor)\leq T_{\vartheta(2r)}.
\end{equation*}
Then, for all $r\! >\! r^\prime\!  \geq\! 0$, and all $A\! >\! 0$,
\begin{equation}\label{inegMa}
\hspace{-1mm}\textrm{$\P_0$-a.s.~on $\{N_r\left(\vartheta(2r),a\! +\! \vartheta(2r)\right)\! =\! 0\} \! \cap \! \{T_{\vartheta(2r)}\! -\! T_{\vartheta(2r')}\leq A\}$,} \quad \Ma(\Bor)\leq A+T_{\vartheta(2r')}.
\end{equation}
Recall from (\ref{deflastexit}) the definition of $\gamma (r)$ and $\vartheta (r)$ and observe that $\vartheta (r) \leq \gamma (r)$, which implies  
$$\textrm{$\P_0$-a.s.} \quad \forall r\in (0, \infty), \quad T_{\vartheta(r)}\leq T_{\gamma(r)} \; .$$
Thus, by (\ref{inegMa}), for all $r\! >\! r^\prime\!  \geq\! 0$, and all $A\! >\! 0$,
\begin{equation}\label{inegMa2}
\hspace{-1mm}\textrm{$\P_0$-a.s.~on $\{N_r\left(\vartheta(2r),a\! +\! \vartheta(2r)\right)\! =\! 0\} \! \cap \! \{T_{\vartheta(2r)}\! -\! T_{\vartheta(2r')}\leq A\}$,} \quad \Ma(\Bor)\leq A+T_{\gamma (2r')}, \end{equation}
which allows us to bound $\Ma(\Bor)$ by the subordinator $T_{\gamma(\cdot)}$ studied in section \ref{sectionsubord}. 

  Recall Lemma \ref{lesbonsradis}, where the sequence $(\rho_n)_{n\geq 1}$ is introduced; 
recall Lemma \ref{minoproba} and Lemma \ref{lemmeTgamma} that 
provide a control respectively on $\P_0 (N_{\rho_n} (\vartheta(2\rho_n),a+\vartheta(2\rho_n))=0)$ and on $T_{\gamma (2\rho_n)}$. For any $n\! \geq \!1$, we next introduce the following random variable: 
\begin{equation}
Y_n=\un_{\{N_{\rho_n}\left(\vartheta(2\rho_n),a+\vartheta(2\rho_n)\right)=0\}\cap\{T_{\vartheta(2\rho_n)}-T_{\vartheta(2\rho_{n+1})}\leq g(8\rho_n)\}} \; .
\end{equation}
By (\ref{inegMa2}), we get  
$$ \forall n \geq 1, \quad \textrm{$\P_0$-a.s.~on $\{Y_n=1\}$,}\qquad \Ma\left(B(0,\rho_n)\right)\leq g(8\rho_n)+T_{\gamma(2\rho_{n+1})}.$$
We then define the following event: 
$$E=\Big\{\sum_{n\geq 1}Y_n=\infty \Big\}.$$
We claim that 
\begin{equation}\label{pEpositive}
\P_0(E)>0.
\end{equation}
Then the proof of the lemma is completed as follows: the previous arguments first entail 
\begin{equation}\label{liminf}
\textrm{$\P_0$-a.s.~on $E$}, \quad \liminf\limits_{n\rightarrow\infty}\frac{\Ma(B(0,\rho_n))}{g(8\rho_n)}\leq
1+\limsup\limits_{n\rightarrow\infty}\frac{T_{\gamma(2\rho_{n+1})}}{g(8\rho_n)}.
\end{equation}
Recall that the assumption $\bdelta \! >\!1$ implies that $g$ satisfies a $C$-doubling condition (\ref{doubling}), which entails
$g(8r)\! \leq \! C^3 g(r)$. Then by (\ref{liminf}),  
\begin{equation}\label{liminff}
\textrm{$\P_0$-a.s.~on $E$,} \qquad \kappa_{d,\psi}=\liminf\limits_{r\rightarrow 0+}\frac{\Ma(B(0,r))}{g(r)} \leq C^3\left(1+\limsup\limits_{n\rightarrow\infty}\frac{T_{\gamma(2\rho_{n+1})}}{g(8\rho_n)}\right).
\end{equation}
By Lemma \ref{lemmeTgamma} $(ii)$ the right member of (\ref{liminff}) is $\P_0$-a.s.~finite, which completes the proof of the lemma. It only remain to prove our claim (\ref{pEpositive}). 

\medskip

\noi
\textit{Proof of (\ref{pEpositive})}. We use a second moment method. 
By the independence property of Lemma \ref{lemmeTc} $(i)$, we first get 
$$\E_0[Y_n]=\P_0\left( N_{\rho_n}\left(\vartheta(2\rho_n),a\! +\! \vartheta(2\rho_n)\right) \! =\! 0\right)\, \P_0\left(T_{\vartheta(2\rho_n)}\! -\! T_{\vartheta(2\rho_{n+1})} \! \leq\!  g(8\rho_n)\right).$$
Then, the lower bound of Lemma \ref{minoproba} and the fact that $ T_{\vartheta(r)} \! \leq \! T_{\gamma (r)}$ entail 
\begin{eqnarray}\label{minoYn}
\E_0[Y_n]& \geq & \Cr{cminoproba}\P_0\left(T_{\vartheta(2\rho_n)} \!-\! T_{\vartheta(2\rho_{n+1})} \!  \leq  \! g(8\rho_n)\right) \\
&\geq  & \Cr{cminoproba}\P_0\left(T_{\vartheta(2\rho_n)}\leq g(8\rho_n)\right) \nonumber \\
& \geq & \Cr{cminoproba}\P_0\left(T_{\gamma(2\rho_n)}\leq g(8\rho_n)\right). \nonumber 
\end{eqnarray}
So by Lemma \ref{lemmeTgamma} (i), we get 
\begin{equation}\label{KS1}
\sum\limits_{n\geq 1}\E_0[Y_n]=\infty \; .
\end{equation}
Besides, for $n>m\geq 1$, by the independence property of Lemma  \ref{lemmeTc} $(i)$
\begin{eqnarray}\label{KS2}
 & &  \hspace{-12mm}\E_0[Y_nY_m] \nonumber \\
 \hspace{-3mm}&\leq& \hspace{-3mm}\P_0\left(T_{\vartheta(2\rho_n)} \! -\! T_{\vartheta(2\rho_{n+1})} \! \leq \! g(8\rho_n);
T_{\vartheta(2\rho_m)} \! -\! T_{\vartheta(2\rho_{m+1})} \! \leq \! g(8\rho_m); N_{\! \rho_m} \!\! \left(\vartheta(2\rho_m),a \! +\! \vartheta(2\rho_m)\right)\! =\! 0\right) \nonumber \\
&\leq& \hspace{-3mm} \P_0\left(T_{\vartheta(2\rho_n)}\! -\! T_{\vartheta(2\rho_{n+1})} \! \leq \! g(8\rho_n)\right) \, \E_0[Y_m]\nonumber \\
&\leq& \hspace{-3mm} \frac{1}{\Cr{cminoproba}}\E_0[Y_n] \, \E_0[Y_m] \, ,  
\end{eqnarray}
where the last inequality follows from (\ref{minoYn}). 
Therefore, if we denote $L_n=\sum_{1\leq k\leq n} Y_k$, then (\ref{KS1}) and (\ref{KS2}) entail
$$\limsup\limits_{n\to\infty}\frac{\E[L_n^2]}{\E[L_n]^2}<\infty.$$
and the desired result (\ref{pEpositive}) follows from the Kochen Stone Lemma. \cqfd 
\end{proof}

\bigskip

The following lemma completes the proof of Theorem \ref{localth}. Its proof relies on a density result on the $\psi$-L\'evy tree that is proved in \cite{Duq12} and that is recalled 
here as Lemma \ref{lemmetree}. 
\begin{lem}
\label{lowerbound} Assume that $\bdelta \! >\! 1$ and that $d \! >\! \frac{2\bgamma}{\bgamma-1}$. Then $\kappa_{d,\psi}\! >\! 0$. 
\end{lem}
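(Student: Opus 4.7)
By Lemma \ref{limdeterm} and the Palm formula (\ref{corPalm}), $\kappa_{d,\psi}$ coincides, up to $\N_0$-negligible sets, with the (deterministic) value of $\liminf_{r\to 0+}\cM(B(x,r))/g(r)$ at $\cM$-typical $x$. It therefore suffices to exhibit any event of positive $\N_0$-mass on which, for a $\cM$-positive set of $x$, this liminf is strictly positive. My strategy is to couple the lower density estimate for the mass measure $\bm$ of the L\'evy tree (Lemma \ref{lemmetree}) with the conditional Gaussian structure of the snake endpoint $\widehat{W}$ given the lifetime process $H$, as captured by the covariance identity (\ref{covar}).

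Since by (\ref{occuWdef}) the measure $\cM$ is the pushforward of Lebesgue measure $\ell$ on $[0,\sigma]$ by $\widehat{W}$, it is enough to show that $\N_0$-a.e., for $\ell$-a.e.~$t_0\in[0,\sigma]$, $\liminf_{r\to 0+}\cM(B(\widehat{W}_{t_0},r))/g(r)>0$. I would fix $t_0$ in the $\ell$-full set provided by Lemma \ref{lemmetree}, so that $\ba(t_0,\rho)\ge \Cr{clemmetree}\,k(\rho)$ for all sufficiently small $\rho$, and use the pointwise bound
\[
\cM(B(\widehat{W}_{t_0},r)) \;\ge\; \int_0^\sigma \un_{\{d(s,t_0)\le r^2\}}\,\un_{\{\|\widehat{W}_s-\widehat{W}_{t_0}\|\le r\}}\,\ddr s.
\]
Conditionally on $H$, by (\ref{covar}) each coordinate of $\widehat{W}_s-\widehat{W}_{t_0}$ is a centered Gaussian of variance of order $d(s,t_0)\le r^2$, so $Q^H_0(\|\widehat{W}_s-\widehat{W}_{t_0}\|\le r\mid H)\ge c_d>0$ for a dimensional constant. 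Fubini then produces the conditional first-moment bound $Q^H_0[\cM(B(\widehat{W}_{t_0},r))\mid H]\ge c_d \Cr{clemmetree}\,k(r^2)$.

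A short computation based on $\boldeta\le 2$ and the doubling estimates on $\varphi^{-1}$ implied by $\bdelta>1$ (in the spirit of Lemma \ref{convexBP}) shows that $k(r^2)/g(r)\to\infty$ at an explicit power of $L=\log\log(1/r)$. This slack is what allows the passage from the conditional first-moment bound to an almost sure lower bound by $c\,g(r)$. Concretely, along a geometric sequence of radii $r_n=\lambda^n$, I would run a conditional Chebyshev / second-moment argument on $Y_n:=\cM(B(\widehat{W}_{t_0},r_n))$ given $H$: its pairwise covariances are controlled via (\ref{covar}) in terms of tree distances, and the supercriticality assumption $d>2\bgamma/(\bgamma-1)$ is used (as already in Lemma \ref{lemmeJars} and Lemma \ref{minoprobadelta}) to discard spurious contributions coming from tree-distant parts of the snake. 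The aim is $Q^H_0(Y_n<g(r_n)\mid H)$ summable in $n$, after which Borel--Cantelli combined with the doubling property of $g$ propagates the lower bound from $(r_n)$ to all radii $r\to 0$. The main technical obstacle I expect is precisely this conditional second moment: the covariances of indicators $\un_{\{\|\widehat{W}_s-\widehat{W}_{t_0}\|\le r\}}$ and $\un_{\{\|\widehat{W}_{s'}-\widehat{W}_{t_0}\|\le r\}}$ depend nontrivially on both $d(s,t_0)$ and $d(s,s')$, and must be controlled finely enough to yield $Q^H_0[Y_n^2\mid H]\lesssim k(r_n^2)^2$ uniformly.
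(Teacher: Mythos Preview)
You have assembled the right ingredients: Lemma~\ref{lemmetree} for the tree, the conditional Gaussian structure (\ref{covar}), and the decomposition of $\cM(B(\widehat W_{t_0},r))$ via the tree metric. But the paper closes the argument by a much simpler device than your proposed second-moment analysis, and I think you should adopt it, because your Chebyshev route as written does not obviously terminate.

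The paper does not try to lower-bound your $Y_n$ directly. Instead, for a space-radius $R$ and a tree-radius $\rho$ it writes
\[
\ba(t_0,\rho)\;\le\;\bd(t_0,\rho,R)\;+\;\cM\big(B(\widehat W_{t_0},R)\big),\qquad
\bd(t_0,\rho,R):=\int_0^\sigma \un_{\{d(s,t_0)\le\rho\}}\un_{\{\|\widehat W_s-\widehat W_{t_0}\|\ge R\}}\,\ddr s,
\]
and then upper-bounds $\bd$ by a \emph{first} moment under $Q^H_0$ using (\ref{covar}):
\[
Q^H_0\big[\bd(t_0,\rho,R)\big]\;\le\;\ba(t_0,\rho)\,\P\big(\|\mathcal N_d(0,I)\|\ge R/\sqrt{\rho}\,\big).
\]
The crucial choice is $R_n=2^{-n}$ and $\rho_n=\tfrac14 R_n^2/\log\log(1/R_n)$ (not $R_n^2$), so that $R_n/\sqrt{\rho_n}=2\sqrt{\log\log(1/R_n)}\to\infty$ and the Gaussian tail is $O(n^{-3/2})$, hence summable. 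Borel--Cantelli then gives $\bd(t_0,\rho_n,R_n)/\ba(t_0,\rho_n)\to 0$ $Q^H_0$-a.s., so $\cM(B(\widehat W_{t_0},R_n))\ge \ba(t_0,\rho_n)(1-o(1))$. A short computation using $\delta_\varphi>0$ shows $k(\rho_n)\ge c\,g(R_n)$, and Lemma~\ref{lemmetree} finishes the job. No second moment is needed.

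By contrast, with your choice $\rho=r^2$ the Gaussian probability $Q^H_0(\|\widehat W_s-\widehat W_{t_0}\|>r)$ stays bounded away from $0$ on $\{d(s,t_0)\le r^2\}$, so $E[\bd\mid H]$ is a fixed proportion of $\ba$ and first moment fails. Your fallback is Chebyshev on $Y_n$, but the trivial bound $E[Y_n^2\mid H]\le \ba(t_0,r_n^2)^2$ together with $E[Y_n\mid H]\ge c_d\,\ba(t_0,r_n^2)$ yields only $Q^H_0(Y_n<\tfrac12 c_d\ba)\le 4/c_d^2$, a constant, not summable. Getting summability would require a genuine variance bound $\mathrm{Var}(Y_n\mid H)=o(\ba^2)$, i.e.\ real decorrelation of the indicators in terms of $d(s,s')$; you have not supplied this, and it is not clear it holds without further work. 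The ``slack'' $k(r^2)/g(r)\to\infty$ you compute is correct but does not by itself produce concentration.

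Two smaller remarks. First, the hypothesis $d>\frac{2\bgamma}{\bgamma-1}$ plays no role in the paper's lower-bound argument here; it is only needed so that $\kappa_{d,\psi}$ is defined via Lemma~\ref{limdeterm}. Your plan to invoke Lemmas~\ref{lemmeJars} and~\ref{minoprobadelta} to ``discard tree-distant contributions'' is therefore beside the point for this lemma. Second, once you have the lower bound along the geometric sequence $R_n=2^{-n}$, the passage to all $r$ via the doubling property of $g$ is exactly as you say.
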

\begin{proof}
We work with $W$ under $\N_0$. The proof consists in lifting to $\widehat{W}$ the estimate of Lemma \ref{lemmetree} by using the fact that conditionally given $H$, $\widehat{W}$ is a Gaussian process (see (\ref{covar}) in Section \ref{levysnake}). More precisely, recall that the height process $H$ is the lifetime process of the snake $W$ and recall 
from (\ref{snex}) the definition of $\N_0$ that shows that conditionally given $H$, the law of $W$ is $Q_0^H$ (see Section \ref{levysnake}). 
Recall from (\ref{distH}) the following notation $d$ for the pseudo-distance in the L\'evy tree:  
$$\forall s, t\in [0, \sigma], \quad d(s,t)= H_t +H_s -2 \!\!\!  \inf_{u\in [s\wedge t , s\vee t]} \!\! H_u \; .$$
Let $r, R \! \in (0, \infty)$ and let $t \! \in \! [0, \sigma]$. We set  
$$ {\bf a}(t,r)= \int_0^\sigma \un_{ \{ d_H (s,t) \leq r \}} \, \ddr s \quad {\rm and} \quad  {\bf b}(t,r, R)= \int_0^\sigma 
\un_{ \{ d_H (s,t) \leq r \} \cap \{  \lVert \widehat{W}_s -\widehat{W}_t \rVert  \geq R \}} \, \ddr s \; .$$
The quantity ${\bf a}(t,r)$ has been already introduced in Lemma \ref{lemmetree}. First note that 
\begin{equation}
\label{ineqdens}
\forall t \in [0, \sigma], \quad  {\bf a}(t,r) \leq {\bf b} (t,r, R) + \cM (B(\widehat{W}_t , R)) \; .
\end{equation}
By (\ref{covar}), we then get 
\begin{equation}
\label{condexpec}
 \textrm{$N(\ddr H)$-a.e.} \; \forall t \in [0, \sigma ] \; , \; \,  Q^H_0 \left[ {\bf  b} (t,r, R) \right] \leq {\bf a}(t,r)  \int_{ \R^d \backslash B(0, R/\sqrt{r})} \! \! \! \! \! \! \! \! \!\! \! \! \! \! \! \! \! \! (2\pi)^{-d/2} e^{-\lVert x \rVert^2/2 } \ddr x \; .
\end{equation}
For any integer $n \! \geq \! 2$, we next set $R_n \! = \! 2^{-n}$ and $r_n\! =\! \frac{1}{4}R_n^2 (\log \log 1/R_n)^{-1}$. By elementary computation,  
$$ \forall n \geq 2, \quad \int_{ \R^d \backslash B(0, R_n/ \sqrt{r_n})} \! \! \! \! \! \! \! \! \!\! \! \! \! \! \! \! \! \! (2\pi)^{-d/2} e^{-\lVert x \rVert^2/2 } \ddr x \; \,  \leq \; \, \Cl[c]{crR} 
n^{-3/2} \; , $$ 
where $\Cr{crR} \! \in \! (0, \infty)$ only depends on $d$ (note that the power $3/2$ is not optimal). By (\ref{condexpec}), we get 
$$ \textrm{$N(\ddr H)$-a.e.} \; \forall t \in [0, \sigma ] \; , \quad  Q^H_0 \left[ \sum_{n\geq 2} \frac{{\bf b}(t,r_n, R_n)}{{\bf a}(t,r_n)} \right] 
 < \infty \; .$$
Thus, $N(\ddr H)$-a.e.~for all $t \in [0, \sigma ]$, $Q^H_0 (\limsup_{n \rightarrow \infty} {\bf b}(t,r_n, R_n)/ {\bf a} (t,r_n) \!  >\!  0) \! =\!  0$. 
Then, by Fubini, 
$$  \textrm{$N(\ddr H)$-a.e.} \; \quad Q^H_0 \left[ \int_0^{\sigma} \un_{ \left\{ \limsup\limits_{n \rightarrow \infty}  \frac{{\bf b}(t,r_n, R_n)}{{\bf a} (t,r_n)} \; \, > \; \, 0  \right\}} \, \ddr t  \right]= 0 \; , $$
which implies that 
\begin{equation}
\label{almostneglig}
 \textrm{$\N_0$-a.e.~for $\ell$-almost all $ t \! \in \! [0, \sigma ]$,}   \;   \quad 
\lim_{n \rightarrow \infty}  \frac{{\bf b}(t,r_n, R_n)}{{\bf a} (t,r_n)}  = 0  
\end{equation}
($\ell$ stands here for the Lebesgue measure on the real line). Recall from (\ref{defk}), the notation $k(r)$: 
$$   \forall r\in(0, \alpha \wedge e^{-e}),\quad k(r):=\frac{\log\log \frac{1}{r}}{\varphi^{-1}\! \big( \frac{1}{r} \log\log \frac{1}{r} \big)}\; .$$
Then, (\ref{almostneglig}) combined with
(\ref{ineqdens}), entails 
$$  \textrm{$\N_0$-a.e.~for $\ell$ almost all $ t \! \in \! [0, \sigma ]$,}   \; \, 
\liminf_{n \rightarrow \infty}  \frac{{\bf a}(t,r_n)}{k(r_n)}  \leq \liminf_{n \rightarrow \infty}  \frac{\cM (B(\widehat{W}_t , R_n))}{k(r_n)}  \; .$$
The definition of $\delta_{\varphi} \! \in\! (0,1)$ easily implies that there exists a constant $\Cl[c]{gizon} \! \in \! (0,\infty)$ such that for all $n$ sufficiently large 
$\Cr{gizon}k(r_n) \! \geq \! g(R_n) $. Then, Lemma \ref{lemmetree} entails that 
$$ \textrm{$\N_0$-a.e.~for almost all $ t \! \in \! [0, \sigma ]$,} \; \,  
\liminf_{n \rightarrow \infty}  \frac{\cM (B(\widehat{W}_t ,  2^{-n}))}{g(2^{-n})} \geq  \frac{\Cr{clemmetree}}{\Cr{gizon}}\; . $$
An easy argument involving the doubling property (\ref{doubling}) for $g$ completes the proof thanks to (\ref{Partiel}).  \cqfd 
\end{proof}

\subsection{Proof of Theorem \ref{snaketh}.}\label{proofsnaketh}

We first introduce a specific decomposition of $\R^d$ into dyadic cubes. We adopt the following notation: we denote by $\lfloor \, \cdot \rfloor$ the integer part application and we write $\log_2$ for the logarithm in base $2$; we fix $d > \frac{2\bgamma}{\bgamma-1}$ and we set 
$$ p := \lfloor \log_2 (4\sqrt{d} ) \rfloor \; , $$ 
so that $2^p > 2\sqrt{d}$. To simplify notation, we set $\cD_n = 2^{-n-p}\bbZ^d $, for any $n \geq 0$. 
For any $y = (y_1, \ldots , y_d) $ in $\cD_n$, we also set 
$$ D_n (y)= \prod_{j= 1}^d \, [ \, y_j -\frac{_1}{^2} 2^{-n} \,  ;  \, y_j+\frac{_1}{^2} 2^{-n} \, ) \;  \quad {\rm and} \quad 
D^\bullet_n (y)= \prod_{j= 1}^d\,  [ \, y_j - \frac{_1}{^2} 2^{-n-p} \, ; \, y_j +\frac{_1}{^2} 2^{-n-p} \, )  .$$
It is easy to check the following properties. 
\begin{itemize}
\item{Prop(1)~} If $y,y'$ are distinct points in $\cD_n$, then $D^\bullet_n (y) \cap D^\bullet_n (y') = \emptyset$. 

\medskip

\item{Prop(2)} Let $y \in \cD_n$. Then, we have 
$$ D^\bullet_n (y) \,  \subset \, \overline{B} ( y \, , \, \frac{_1}{^2} 2^{-n-p}\sqrt{d} \, ) \, \subset  \, 
\overline{B} ( y \, , \, 2^{-n-p}\sqrt{d} \, ) \, \subset \, D_n (y) \; .$$
\end{itemize}
 For any $r \! <\! (2d)^{-1}$, we set $ n(r)= \lfloor \log_2 ( r^{-1} (1+2^{-p})\sqrt{d}  )\rfloor $, so that the following inequalities hold: 
\begin{equation}
\label{coincr}
  \frac{_1}{^2} (1+2^{-p})\sqrt{d} \, 2^{-n(r)} < r \leq (1+2^{-p})\sqrt{d} \, 2^{-n(r)} \; .
\end{equation}
Next, for any $x = (x_1, \ldots , x_d)\in \R^d$ and for $j \in \{ 1, \ldots , d \}$, we set 
$$ y_j = 2^{-n(r)-p} \lfloor  x_j 2^{n(r)+p} +  \frac{_1}{^2} \rfloor  \; .$$
Therefore, $y= (y_1, \ldots , y_d) \in \cD_{n(r)}$ and we easily check the following: 
\begin{itemize}
\item{Prop(3)~} The point $x$ belongs to  $D_{n(r)}^\bullet (y) $ and $D_{n(r)} (y) \,  \subset\,  B(x,r) $. 
\end{itemize}
We work under $\N_0$. Recall Lemma \ref{BP2}: we fix 
\begin{equation}\label{choixkappa1}
\kappa_1 \in (0,\kappa_0)\quad \textrm{such \ that}\quad \Cr{cBP2} \kappa_1^{-\Cr{c1BP2}}>2 \; .
\end{equation}
Since we assume $\bdelta\! >\! 1$, $g$ satisfies the doubling condition (\ref{doubling}), which implies that there exists $\kappa_2 \! \in \! (0, \infty)$, that only depends 
on $d$, $\psi$ and $\kappa_1$, such that for all sufficiently large $n$,
\begin{equation}
\label{tutune}
\kappa_2 g(2^{-n}) \leq \kappa_1 g \big( \frac{_1}{^2} 2^{-n-p} \sqrt{d} \, \big) \; . 
\end{equation}
We then fix $A\! >\! 100$ and  for any $n $ such that $2^{-n} \leq 1/(2A)$, we set 
$$ U_n (A) = \sum_{\substack{ y \in \cD_n \\ 1/A \leq \lVert y \rVert \leq A}} g( \sqrt{d} (1+2^{-p}) 2^{-n} ) \un_{ \{ \,  \cM (D_n(y) ) \leq \kappa_2  g(2^{-n}) \,  \} \cap \{  \, \cR \, \cap\,  D_n^\bullet (y) \neq \emptyset \, \} } \;. $$
We first prove the following lemma. 
\begin{lem}
\label{controlvariable} Assume that $\bdelta \! >\! 1$ (so that (\ref{tutune}) holds) and that $d\! \geq \! 3$. Then, for all $A\! >\! 100$, 
\begin{equation}
\label{Ulimit}
\textrm{$\N_0$-a.e.} \quad   \lim_{N \rightarrow \infty} \sum_{n \geq N} U_n (A) = 0 \; .
 \end{equation}
\end{lem}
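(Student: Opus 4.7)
The strategy is to control the first moment $\N_0\big[\sum_{n\geq 0} U_n(A)\big]$; its finiteness forces $\sum_n U_n(A)<\infty$ on a set of full $\N_0$-measure, so the tail of this convergent series vanishes and (\ref{Ulimit}) follows.

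I begin by putting each term of $U_n(A)$ into the form handled by Lemma~\ref{BP2}. Set $r_n := \tfrac{1}{2}\sqrt{d}\,2^{-n-p}$. Property Prop(2) gives $D^\bullet_n(y)\subset \overline{B}(y,r_n)$ and $\overline{B}(y,2r_n)\subset D_n(y)$, and combined with (\ref{tutune}) this yields
\[
\{\cR\cap D^\bullet_n(y)\neq\emptyset\}\cap\{\cM(D_n(y))\leq\kappa_2 g(2^{-n})\}\subset \{\cR\cap\overline{B}(y,r_n)\neq\emptyset\}\cap\{\cM(B(y,2r_n))\leq\kappa_1 g(r_n)\}.
\]
By translation invariance (the spatial motion of the snake is Brownian), and since $\lVert y\rVert\geq 1/A\geq 2r_n$ for all $n$ large enough, Lemma~\ref{BP2} applies and yields
\[
\N_0\big(\cR\cap D^\bullet_n(y)\neq\emptyset;\,\cM(D_n(y))\leq\kappa_2 g(2^{-n})\big)\leq \Cr{cBPnouveau}\,\invDpsi\!\big(\Cr{cestppale}\, r_n^{-2}\big)\Big(\frac{r_n}{\lVert y\rVert}\Big)^{\!d-2}\!(\log 1/r_n)^{-\Cr{cBP2}\kappa_1^{-\Cr{c1BP2}}}.
\]

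Next I multiply by the weight $g(\sqrt{d}(1+2^{-p})2^{-n})$ and sum over $y\in\cD_n$ with $1/A\leq\lVert y\rVert\leq A$. Lemma~\ref{convexBP} with $c=\Cr{cestppale}$, together with the doubling condition (\ref{doubling}), provides a constant $C_1$ such that
\[
g\!\big(\sqrt{d}(1+2^{-p})2^{-n}\big)\,\invDpsi\!\big(\Cr{cestppale}\, r_n^{-2}\big)\leq C_1\, 2^{-2n}.
\]
Since $d\geq 3$, comparing the discrete sum to its Riemann integral in spherical coordinates gives a constant $C_2=C_2(A,d)$ with $\sum_y\lVert y\rVert^{-(d-2)}\leq C_2\, 2^{nd}$, so that $r_n^{d-2}\sum_y\lVert y\rVert^{-(d-2)}\leq C_3(A,d)\, 2^{2n}$. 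Combining these bounds,
\[
\N_0[U_n(A)]\leq C_4(A,d,\psi)\,(\log 1/r_n)^{-\Cr{cBP2}\kappa_1^{-\Cr{c1BP2}}}\asymp n^{-\Cr{cBP2}\kappa_1^{-\Cr{c1BP2}}},
\]
and by the choice (\ref{choixkappa1}) the exponent is strictly greater than $2$, so $\sum_n\N_0[U_n(A)]<\infty$ and Tonelli's theorem concludes the proof.

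The only delicate point is the balance between the $2^{2n}$ blow-up produced by summing the bound of Lemma~\ref{BP2} over the dyadic annulus (paired with the $2^{-2n}$ coming from $g\cdot\invDpsi$ via Lemma~\ref{convexBP}) and the logarithmic decay $(\log 1/r_n)^{-\Cr{cBP2}\kappa_1^{-\Cr{c1BP2}}}$; the choice of $\kappa_1$ in (\ref{choixkappa1}) is precisely tuned so that this logarithmic exponent exceeds $2$, delivering a summable series. Everything else is a bookkeeping exercise involving the dyadic decomposition and translation invariance.
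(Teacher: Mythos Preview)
Your proof is correct and follows essentially the same approach as the paper: both reduce to Lemma~\ref{BP2} via Prop(2) and translation invariance, use Lemma~\ref{convexBP} plus doubling to balance the $g\cdot\invDpsi$ factor against $2^{-2n}$, compare the lattice sum over $y$ to a spherical integral yielding a $2^{2n}$ growth, and conclude summability from the choice (\ref{choixkappa1}) forcing the logarithmic exponent above $2$. The paper simply writes the residual power of $n$ as $n^{-2}$ (using $\Cr{cBP2}\kappa_1^{-\Cr{c1BP2}}>2$ to absorb the excess into the constant), whereas you keep the exact exponent; this is a cosmetic difference only.
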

\begin{proof} We fix $n$ such that $2^{-n} \leq 1/(2A)$ and we fix $y \in \cD_n $ such that  $1/A\leq  \lVert y \rVert \leq A$. By Prop(2) and (\ref{tutune}), we get 
\begin{eqnarray*}
 \N_0 \left(   \cM (D_n(y) ) \leq \kappa_2 g(2^{-n}) \; ; \;  \cR \cap D_n^\bullet (y) \neq \emptyset  \right)  & &
  \\
 & &   \hspace{-60mm}  \leq \N_0 \left(   \cM (  B ( y \, , \, 2^{-n-p}\sqrt{d} \, ) ) \leq \kappa_1 
 g( \frac{_1}{^2} 2^{-n-p} \sqrt{d}) \; ; \;  \cR \cap \overline{B} ( y \, , \, \frac{_1}{^2} 2^{-n-p}\sqrt{d} \, )   \neq \emptyset  \right) \\
& & \hspace{-60mm}=  \N_{-y} \left(   \cM (  B ( 0 \, , \, 2^{-n-p}\sqrt{d} \, ) ) \leq \kappa_1 
 g( \frac{_1}{^2} 2^{-n-p} \sqrt{d}) \; ; \;  \cR \cap \overline{B} ( 0 \, , \, \frac{_1}{^2} 2^{-n-p}\sqrt{d} \, )   \neq \emptyset  \right) , 
\end{eqnarray*}
the last equality being an immediate consequence of the invraince of the snake by translation. 
We next apply Lemma \ref{BP2} with $x\! =\!  -y$ and $r\! =\!  \frac{_1}{^2} 2^{-n-p}\sqrt{d}$ and $\kappa\! = \! \kappa_1$ (that satisfies (\ref{choixkappa1}); thus, there exists 
$\Cl[c]{atsoa} , \Cl[c]{astoa} \! \in \! (0, \infty)$, that only depends on $d$ and $\psi$, such that 
$$   \N_0 \left( \,   \cM (D_n(y) ) \leq \kappa_2 g(2^{-n}) \; ; \;  \cR \cap D_n^\bullet (y) \neq \emptyset \,   \right)  
\leq \,\Cr{atsoa} (2^{-n-p})^{d-2} \lVert y \rVert^{2-d} n^{-2}  \, \invDpsi\! \! \left(\Cr{astoa} 2^{2n+2p}\right) \; .$$
By Lemma \ref{convexBP} and the doubling property (\ref{doubling}) of $g$, there exists $\Cl[c]{mickey} \! \in \! (0, \infty) $, that only depends on $d$ and $\psi$, 
such that for all sufficiently large $n$, 
$$ g \big( \sqrt{d} (1+2^{-p})  2^{-n} \big) \, \invDpsi\! \! \left(\Cr{astoa} 2^{2n+2p}\right) \leq \Cr{mickey} 2^{-2n-2p}  \; , $$
which entails the following:
\begin{equation}
g( \sqrt{d} (1+2^{-p})2^{-n} )  \N_0 \left(   \cM (D_n(y) ) \! \leq \! \kappa_2 g(2^{-n})  ;   \cR \cap D_n^\bullet (y) \! \neq \! \emptyset   \right)   \leq  \Cl[c]{donald} (2^{-n-p})^{d} \lVert y \rVert^{2-d} n^{-2} \; , 
\end{equation} 
where $\Cr{donald}\! =\!  \Cr{atsoa}\Cr{mickey}$. Elementary arguments entail the following inequalities: 
\begin{eqnarray*}
 \N_0 \left(  U_n (A) \right)& \leq & \Cr{donald} \, n^{-2} \!\!\!\!\!\!\! \sum_{\substack{ y \in \cD_n \\ 1/A \leq \lVert y \rVert \leq A} }\!\! \!\!  (2^{-n-p})^{d} \lVert y \rVert^{2-d} \\
 & \leq & \Cl[c]{carlos}\,  n^{-2} \!\! \int \un_{ \{1/A\leq  \lVert x \rVert \leq A \}}  \lVert x \rVert^{2-d} dx \\
 & \leq & \Cl[c]{regine}\,  n^{-2} \! \int_{1/A}^A \!\!\! \rho \, d\rho \\
& \leq &  \Cl[c]{gudrun} \, A^2 n^{-2} , 
\end{eqnarray*}
where $\Cr{carlos}, \Cr{regine}, \Cr{gudrun}\!  \in \! (0, \infty) $ only depend on $d$ and $\psi$. Therefore, 
$\N_0 ( \sum_{n \geq N} U_n (A) ) \! <\!  \infty $, which easily implies  the lemma.  \cqfd 
\end{proof}

\bigskip

We next prove the following lemma. 
\begin{lem}
\label{badpoint} Assume that $\bdelta>1$ and that  $d > \frac{2\bgamma}{\bgamma-1}$. Then, 
\begin{equation}
\label{badzero}
\textrm{$\N_0$-a.e.} \quad  \cP_g \left(  \left\{ x \in \cR \; :\; \liminf_{r \rightarrow 0+} g(r)^{-1} \cM (B(x,r)) \, \neq \kappa_{d,\psi}  \, \right\} \right) = 0 \; ,
\end{equation}
where $\kappa_{d, \psi}$ is the constant appearing in Theorem \ref{localth}. 
\end{lem}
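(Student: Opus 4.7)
The plan is to decompose the bad set as $U \cup L$, where
$U := \{x \in \cR : \liminf_{r\to 0+}\cM(B(x,r))/g(r) > \kappa_{d,\psi}\}$
and
$L := \{x \in \cR : \liminf_{r\to 0+}\cM(B(x,r))/g(r) < \kappa_{d,\psi}\}$,
and to bound $\cP_g(U)$ and $\cP_g(L)$ separately. The upper piece $U$ is the easier one: for any $\lambda > \kappa_{d,\psi}$, on the set $G_\lambda := \{x \in \cR : \liminf \cM(B(x,r))/g(r) > \lambda\}$ the rescaled measure $\cM/\lambda$ satisfies $\liminf(\cM/\lambda)(B(x,r))/g(r) > 1$ pointwise, so Theorem~\ref{TaTrcomparesu}(i) gives $\cP_g(G_\lambda) \leq \cM(G_\lambda)/\lambda$. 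Theorem~\ref{localth} forces $\cM(G_\lambda) = 0$, and a countable union along a sequence $\lambda_k \downarrow \kappa_{d,\psi}$ yields $\cP_g(U) = 0$.

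For $L$, fix a small threshold $\lambda_0 \in (0, \kappa_{d,\psi})$ with $\lambda_0 K \leq \kappa_2$, where $K$ is a doubling constant of $g$ satisfying $g((1+2^{-p})\sqrt{d}\,2^{-n}) \leq K g(2^{-n})$, and split $L = L' \cup L''$ with $L' := \{x \in \cR : \liminf < \lambda_0\}$ and $L'' := \{x \in \cR : \lambda_0 \leq \liminf < \kappa_{d,\psi}\}$. The mild piece $L''$ is handled exactly as $U$: on $L''$, $\liminf(2\cM/\lambda_0)(B(x,r))/g(r) \geq 2 > 1$, so Theorem~\ref{TaTrcomparesu}(i) gives $\cP_g(L'') \leq 2\cM(L'')/\lambda_0 = 0$ by Theorem~\ref{localth}.

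To treat $L'$, fix $A > 100$ and consider $L' \cap (B(0,A) \setminus B(0,1/A))$. For each $x$ there, the defining property of $L'$ produces a sequence $r_k \downarrow 0$ with $\cM(B(x, r_k)) < \lambda_0 g(r_k)$; invoking Prop(3) and (\ref{coincr}), each $r_k$ yields a scale $n_k = n(r_k) \to \infty$ and $y_k \in \cD_{n_k}$ with $x \in D_{n_k}^\bullet(y_k)$ and $D_{n_k}(y_k) \subset B(x, r_k)$. By the doubling of $g$ and the choice of $\lambda_0$, $\cM(D_{n_k}(y_k)) \leq \lambda_0 g(r_k) \leq \lambda_0 K g(2^{-n_k}) \leq \kappa_2 g(2^{-n_k})$, so $y_k \in \cC_{n_k}$. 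Therefore, for every $N \geq 1$, $L' \cap (B(0,A) \setminus B(0,1/A)) \subset \bigcup_{n \geq N}\bigcup_{y \in \cC_n} D_n^\bullet(y)$. To convert this covering into a bound on $\cP_g$, decompose each $D_n^\bullet(y) \cap \cR$ along $\cR = G_{\kappa_{d,\psi}/2} \cup (\cR \setminus G_{\kappa_{d,\psi}/2})$: Theorem~\ref{TaTrcomparesu}(i) applied on $G_{\kappa_{d,\psi}/2}$ gives $\cP_g(D_n^\bullet(y) \cap G_{\kappa_{d,\psi}/2}) \leq 2\cM(D_n(y))/\kappa_{d,\psi} \leq 2\kappa_2 g(2^{-n})/\kappa_{d,\psi}$, which by the doubling of $g$ is a constant multiple of the summand of $U_n(A)$; the complementary piece is contained in $L''$ and therefore already has $\cP_g$-measure zero. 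Summing over $y \in \cC_n$ and $n \geq N$ via countable subadditivity yields $\cP_g(L' \cap (B(0,A) \setminus B(0,1/A))) \leq C \sum_{n \geq N} U_n(A)$ for every $N$, and Lemma~\ref{controlvariable} drives the right-hand side to zero as $N \to \infty$. Sending $A \to \infty$ concludes.

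The \emph{main obstacle} is this last conversion: packing measures of balls are not in general controlled by the gauge of the radius, so one cannot naively bound $\cP_g^*(D_n^\bullet(y))$ by $g(2^{-n})$. The resolution is to transfer the estimate through $\cM$ using Theorem~\ref{TaTrcomparesu}(i) on the good-density set $G_{\kappa_{d,\psi}/2}$ (where by Theorem~\ref{localth} all the $\cM$-mass concentrates), and then to invoke the defining inequality of $\cC_n$ to bound the $\cM$-mass of each relevant cube by $\kappa_2 g(2^{-n})$.
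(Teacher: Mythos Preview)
Your treatment of $U$ and of $L''$ via Theorem~\ref{TaTrcomparesu}(i) is correct and efficient. The gap is in the handling of $L'$. You claim that the complementary piece $D_n^\bullet(y)\cap(\cR\setminus G_{\kappa_{d,\psi}/2})$ ``is contained in $L''$ and therefore already has $\cP_g$-measure zero''. This containment is false: by definition $\cR\setminus G_{\kappa_{d,\psi}/2}=\{x\in\cR:\liminf_{r\to 0}\cM(B(x,r))/g(r)\le \kappa_{d,\psi}/2\}$, and this set contains $L'$ (where $\liminf<\lambda_0$), not $L''$. So your bound on $\cP_g(L'\cap\textrm{annulus})$ via countable subadditivity over the cubes $D_n^\bullet(y)$ is circular: the ``bad'' part of each cube sits exactly in the set whose $\cP_g$-measure you are trying to control. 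There is no obvious repair along these lines, because Theorem~\ref{TaTrcomparesu}(i) needs a uniform positive lower bound on the density and on $L'$ there is none; and the alternative of bounding $\cP_g^*(D_n^\bullet(y))$ directly fails since a full $d$-dimensional cube has infinite $g$-packing pre-measure when $d>\frac{2\bgamma}{\bgamma-1}$.

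The paper avoids this trap by \emph{not} using a covering argument at all. It works directly at the level of the packing \emph{pre}-measure: for an arbitrary $\varepsilon$-packing $(\overline B(x_i,r_i))_i$ of $B\cap\cR$ (with $B$ in the annulus), it splits $\sum_i g(r_i)$ according to whether $\cM(B(x_i,r_i))$ exceeds a fixed multiple of $g(r_i)$. The ``large-mass'' balls contribute at most $\Cr{zarai}^{-1}\cM(B^{(\varepsilon)})$, while the ``small-mass'' balls are each contained in a dyadic cube witnessing a term of $U_n(A)$, so their total contribution is at most $\sum_{n\ge N(\varepsilon)}U_n(A)\to 0$ by Lemma~\ref{controlvariable}. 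This yields $\cP_g^*(B\cap\cR)\le \Cr{zarai}^{-1}\cM(\,\overline{B}\,)$ for every Borel $B$ in the annulus. Applying this first to $B=B_A$ (the bad set in the annulus) gives $\cP_g(B_A)<\infty$; then inner regularity produces a closed $F\subset B_A$ with $\cP_g(F)$ close to $\cP_g(B_A)$, and applying the same bound to $B=F$ gives $\cP_g(F)\le \Cr{zarai}^{-1}\cM(F)=0$ since $\cM(B_A)=0$ by Theorem~\ref{localth}. The packing-level splitting is exactly what lets the paper separate the two regimes without ever needing $\cP_g$ of a covering piece.
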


\begin{proof} We fix $A \! >\! 100$. By Theorem \ref{localth} and Lemma \ref{controlvariable} there exists a Borel subset $\cW_A$ of $\cW$ such that $\N_0 (\cW \backslash \cW_A)= 0$ and such that on 
$\cW_A$, (\ref{localliminf}) and (\ref{Ulimit}) hold true. We shall work deterministically on  $W \in \cW_A$.

   Let $B $ be any  Borel subset of $\{ x \! \in\!  \R^d  :  1/A \!  \leq \! \lVert x \rVert \!   \leq \! A  \} $. 
Let $\varepsilon\in(0,\infty)$ and let $\overline{B} (x_1, r_1)$, $\ldots$ $\overline{B} (x_k, r_k)$ be any closed $\varepsilon$-packing of $B \cap \cR$. Namely, the later balls are disjoint, their centres belong to $B\cap \cR$ and their radii are smaller than $\varepsilon$. Let $\Cl[c]{zarai} \! \in \! (0,\infty) $ be a constant to be specified later. 
First observe that 
\begin{eqnarray}
\label{splitineq}
\sum_{i=1}^k  g(r_i)  &= & \sum_{i=1}^k  g(r_i)  \un_{ \{  \cM ( B( x_i, r_i) >\Cr{zarai} \, g(r_i)  \}} +  \sum_{i=1}^k  g(r_i)  \un_{ \{  \cM ( B( x_i, r_i) \leq \Cr{zarai} \, g(r_i)  \}} \nonumber \\
& \leq & \Cr{zarai}^{-1} \cM \big( B^{(\varepsilon)} \big) +  \sum_{i=1}^k  g(r_i)  \un_{ \{  \cM ( B( x_i, r_i) \leq \Cr{zarai} \,  g(r_i)  \}} \; , 
\end{eqnarray}
where we have set $B^{(\varepsilon)}= \{ x \in \R^d  :  {\rm dist} (x, B) \leq \varepsilon \} $. 
Next, fix $1 \leq i \leq k$; recall notation $n(r_i)$ from (\ref{coincr}) and denote by $y_i$ the point of $\cD_{n(r_i)}$ corresponding to $x_i$ such that Prop(3) holds true. Therefore, by (\ref{coincr}), we have 
\begin{eqnarray*}
 \cM ( B( x_i, r_i)\, )  \leq \Cr{zarai}\, g(r_i) \quad {\rm and} \quad  \;   x_i \in B \cap \cR \quad  \Longrightarrow  & & \\
& & \hspace{-70mm}  \cM (D_{n(r_i)} (y_i) )\leq \Cr{zarai}\, g( (1+2^{-p})\sqrt{d} 2^{-n(r_i) }) \quad {\rm and}  \quad \cR \cap D^\bullet_{n(r_i)} (y_i) \neq \emptyset .
\end{eqnarray*}
We use the doubling condition (\ref{doubling}) to choose $\Cr{zarai}$ such that 
$\Cr{zarai} g( (1+2^{-p}) \sqrt{d} 2^{-n(r) }) \! \leq \! \kappa_2 g( 2^{-n(r) })$ 
for all sufficiently small $r\in (0, 1)$. Thus, we get 
$$ \sum_{i=1}^k  g(r_i)  \un_{ \{  \cM ( B( x_i, r_i) \leq \Cr{zarai} g(r_i)  \}} \;  \leq \sum_{n\, : \,  2^{-n}  \leq \Cl[c]{cordy}\, \varepsilon } U_n (A) \; , $$
where $\Cr{cordy} \! =\!  2((1+2^{-p}) \sqrt{d})^{-1}$. Since $W$ 
belongs to $\cW_A$ where (\ref{Ulimit}) holds, this inequality combined with (\ref{splitineq})  implies the following. 
\begin{equation}
\label{abscont}
\cP_g \left( B \cap \cR\right) \leq \cP^*_g \left( B \cap \cR\right) \leq \Cr{zarai}^{-1}\,  \cM \Big(  \bigcap_{\varepsilon >0}B^{(\varepsilon)} \Big) \; .
\end{equation}
We next applies (\ref{abscont}) with  $B=B_A$ given by 
$$B_A = \left\{ x \!  \in \! \cR \; : \;  1/A \! \leq \! \lVert x \rVert \! \leq \! A \; \,  {\rm and} \; \, \liminf_{r \rightarrow 0+} g(r)^{-1} \cM (B(x,r)) \neq \kappa_{d,\psi}    \right\} \; .$$
Therefore $\cP_g (B_A)\! < \!\infty$. 
Suppose now that $\cP_g(B_A) \! >\! 0$. Then, as a consequence of (\ref{innerreg}), there exists a closed subset 
$F$, with $F \! \subset \! B_A$, such that 
$\cP_g (F) \! >\!  0$. Since $F$ is closed then $F=\bigcap_{\varepsilon >0} F^{(\varepsilon)}$; since 
$F$ is a subset of $B_A$ and since $W \! \in \! \cW_A$ (where (\ref{localliminf}) holds true), we get $\cM (F) \! \leq \! \cM (B_A) \! =\! 0$ and 
by  (\ref{abscont}) applied to $B\! =\! F$, 
we obtain $\cP_g (F) \! =\! 0$, which rises a contradiction. Thus, we have proved that $\N_0$-a.e.$ \cP_g \left( B_A \right) \! = \! 0 $, 
which easily entails the lemma by letting $A$ go to $\infty$, since $\cP_g (\{ 0 \} ) \! =\!  0$.  \cqfd 
\end{proof}

\bigskip

We now complete the proof of Theorem \ref{snaketh}: by Theorem \ref{localth} and Lemma \ref{badpoint} there exists a Borel subset $\cW^*$ of $\cW$ such that 
$\N_0 (\cW \backslash \cW^*)\! =\! 0$ and such that (\ref{localliminf}) and (\ref{badzero}) hold true on $\cW^*$. We fix $W \in \cW^*$ and we set 
$$ \mathtt{Good} = \left\{  x \in \cR \; : \;   \liminf_{r \rightarrow 0+} g(r)^{-1} \cM (B(x,r)) = \kappa_{d,\psi}   \right\} \quad {\rm and } \quad \mathtt{Bad}= \cR \backslash \mathtt{Good}. $$
Let $B$ be any Borel subset of $\R^d$. By (\ref{localliminf}) and (\ref{badzero}), we have 
$$ \cM ( B\cap \mathtt{Bad}) =\cP_g (B\cap \cR \cap \mathtt{Bad}) = 0 \; .$$
Then, we apply Lemma \ref{equalitycomp} to $\mathtt{Good}\cap B$ and we get 
$$ \cM ( B \cap \mathtt{Good})= \kappa_{d,\psi}  \, \cP_g ( B \cap \cR \cap\mathtt{Good} ) \; .  $$
Therefore, on $\cW^*$, for Borel subset $B$ of $\R^d$, $\cM (B)= \kappa_{d,\psi} \, \cP_g (B \cap \cR)$, which completes the proof of Theorem \ref{snaketh}.

\subsection{Proof of theorem \ref{mainth}}
\label{secmainth}
We derive Theorem \ref{mainth} from Theorem \ref{snaketh}.  To that end, we first need an upper bound of the upper box-counting dimension of $\cR$ under $\N_x$. Let us briefly recall the definition of the box-counting dimensions of a bounded subset $K\subset\R^d$: let $\veps\in(0,\infty)$ and let $n_\veps(K)$ stands for the minimal number of open balls of radius $\veps$ that are necessary to cover $A$. Then, 
\begin{equation}
\label{defdimboiteintro}
\underline{\dim}(K)=\liminf_{\veps\to 0}\frac{\log n_\veps(K)}{\log1/\veps}\quad \textrm{and} \quad \overline{\dim}(K)=\limsup_{\veps\to 0}\frac{\log n_\veps (K)}{\log1/\veps}.
\end{equation} 
Fix $x \! \in \! \R^d$ and recall Lemma \ref{snakhold} that asserts that 
for any $q \! \in \! (0, \frac{\bgamma-1}{2\bgamma})$, $\N_x$-a.e.~$(\widehat{W}_s)_{ s\in [0, \sigma]}$ is $q$-H\"older continuous. As already mentioned in Comment \ref{casOK}, it easily implies the following: 
\begin{equation} 
\label{upperboxrange}
\textrm{$\N_x$-a.e.} \quad \overline{{\rm dim}} (\cR) \leq \frac{2\bgamma}{\bgamma-1} \; , 
\end{equation}
where $\cR$ is the range of the Lévy snake as defined (\ref{rangeWdef}). We next prove the following lemma. 
\begin{lem}
\label{capacity} Assume that $\bgamma \! >\! 1$ and that $d \! > \! \frac{2\bgamma}{\bgamma-1}$. Let $x \! \in \! \R^d$. For any compact subset $K$ such that 
$\overline{{\rm dim}}(K) \! \leq \! \frac{2\bgamma}{\bgamma-1}$, we have $\N_x $-a.e.~$\cM (K)\! =\!  0$. 
\end{lem}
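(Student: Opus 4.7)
The plan is to compute the first moment $\N_x[\cM(K)]$ directly from the invariance formula (\ref{invar-snake}) and observe that the dimension hypothesis forces it to vanish.

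First, I would apply (\ref{invar-snake}) with the bounded Borel-measurable test function $F: M_f(\R_+) \times \cW \to \R_+$ defined by $F(\rho,\w) = \un_K(\w(\zeta_\w))$. The endpoint map $\w \mapsto \w(\zeta_\w)$ is continuous for the metric $\bd$ on $\cW$ and $K$ is Borel (being compact), so $F$ is indeed measurable. By definition (\ref{termpt}) of the endpoint process we have $F(\rho_s,W_s) = \un_K(\widehat{W}_s)$, and similarly $F(R_a,(\xi_s)_{0\leq s\leq a}) = \un_K(\xi_a)$ since the lifetime of the stopped path $(\xi_s)_{0\leq s\leq a}$ is $a$. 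Recalling from (\ref{occuWdef}) that $\cM(K) = \int_0^\sigma \un_K(\widehat{W}_s)\, \ddr s$, formula (\ref{invar-snake}) yields
\begin{equation*}
\N_x \bigl[\cM(K)\bigr] \; = \; \int_0^\infty e^{-\alpha a}\, \P_x(\xi_a \in K)\, \ddr a.
\end{equation*}

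Next, I would combine the two hypotheses to force the right-hand side to be zero. Since Hausdorff dimension is dominated by upper box-counting dimension, the assumptions yield $\dim_H(K) \leq \overline{\dim}(K) \leq \frac{2\bgamma}{\bgamma-1} < d$, so $K$ has zero $d$-dimensional Lebesgue measure. Because the $d$-dimensional Brownian motion $\xi$ under $\P_x$ has an absolutely continuous Gaussian one-time marginal for every $a > 0$, $\P_x(\xi_a \in K) = 0$ for every $a > 0$. Hence $\N_x[\cM(K)] = 0$, and since $\cM(K)$ is a nonnegative $\N_x$-measurable functional and $\N_x$ is $\sigma$-finite, this forces $\cM(K) = 0$ $\N_x$-a.e.

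There is no substantial obstacle: the proof reduces to a single application of the first-moment formula (\ref{invar-snake}) together with the fact that $d$-dimensional Brownian motion does not charge Lebesgue-null sets. The only point worth a sentence of verification is the measurability of $F$ and the valid application of (\ref{invar-snake}) to an indicator test function, both of which are immediate.
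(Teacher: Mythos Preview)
Your proof is correct, and it is genuinely more direct than the paper's argument. Both proofs start from the same first-moment identity derived from (\ref{invar-snake}),
\[
\N_x[\cM(K)] = \int_0^\infty e^{-\alpha a}\,\P_x(\xi_a\in K)\,\ddr a,
\]
but the paper then covers $K$ by balls, applies the Green-function bound $\int_0^\infty e^{-\alpha a}\P_x(\xi_a\in B(y,\varepsilon))\,\ddr a \leq C\|x-y\|^{2-d}\varepsilon^d$ (valid only for $x\notin K$), and lets $\varepsilon^d n_\varepsilon\to 0$ using $\overline{\dim}(K)<d$; it must then treat the case $x\in K$ separately via the diffuseness of $\cM$. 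Your route observes instead that $\overline{\dim}(K)<d$ already forces $K$ to be Lebesgue-null, so that $\P_x(\xi_a\in K)=0$ for every $a>0$ by absolute continuity of the Brownian marginals. This bypasses the covering argument, the Green-function estimate, the case distinction on whether $x\in K$, and the appeal to diffuseness of $\cM$; the paper's quantitative detour buys nothing here, since only the qualitative conclusion $\N_x[\cM(K)]=0$ is needed.
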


\begin{proof} Let us first assume that $x \notin K$ and set $k\! :=\!  \inf_{y \in K} \lVert x-y \rVert \! >\!  0$. For any 
$\varepsilon  \in (0, k/2)$, denote by $n_\varepsilon$ the minimal number of balls with radius $\veps$ that are necessary to cover $K$, and denote by $B(x^\varepsilon_1 , \varepsilon )$, ..., $B( x^\varepsilon_{n_\varepsilon} , \varepsilon)$ such balls. 
Then, (\ref{invar-snake}) combined with standard estimates of $d$-dimensional Green function entail the following inequalities.
 \begin{eqnarray*}
 \N_x \left( \cM (K)\right) & \leq &  \sum_{i=1}^{n_\varepsilon} \N_x \left( \cM(    B (x^\varepsilon_i , \varepsilon) ) \right) \\
 & \leq &  \sum_{i=1}^{n_\varepsilon}  \int_0^\infty \!\!\! e^{-\alpha a} \, \P_x  \! \left( \,  \xi_a \! \in\!  B (x^\varepsilon_i , \varepsilon ) 
     \right)  \, \ddr a \\
 & \leq & \Cl[c]{stone} \,  \sum_{i=1}^{n_\varepsilon} 
 \int_{B (x^\varepsilon_i , \varepsilon )}\!\!\!  \!\!\!\! \! \! \lVert x- y \rVert^{2-d}\,  \ddr y \\
 & \leq & \Cl[c]{charden} \, k^{2-d} \,  \varepsilon^{d}   n_\varepsilon ,  
\end{eqnarray*}
 where $\Cr{stone}, \Cr{charden} \! \in \! (0, \infty)$ only depend on $d$. Since $d \!  > \! \frac{2\bgamma}{\bgamma-1} \!  \geq  \! \overline{{\rm dim}} (K)$, the previous inequality implies that $\N_x (\cM (K))= 0$ as $\veps \rightarrow 0$. 
 
 Let us now consider the general  case: for any $r \!>\! 0$, the previous case applies to the compact set $K'= K \backslash B(x,r)$ and we get
$$ \textrm{$\N_x$-a.e.} \quad 
\cM (K) = \cM ( K \cap B(x, r)) + \cM (K \backslash B(x,r)) \leq \cM (B(x, r)) \; , $$ 
 which implies the desired result as $r\rightarrow 0$ since $\cM$ is diffuse.  \cqfd 
 \end{proof}
 
 \bigskip
 
The end of the proof of Theorem \ref{mainth} follows an argument due to 
Le Gall in \cite{LG99} pp.~312-313. Theorem \ref{snaketh}  and Lemma \ref{capacity} imply that for any compact set $K$ such that $\overline{{\rm dim}} (K) \leq \frac{2\bgamma}{\bgamma-1} $, and for any $x \in \R^d$,
 \begin{equation}
 \label{packingpolar}
\textrm{$\N_x$-a.e.} \quad  \cP_{g} (K \cap \cR ) = 0 \; .
 \end{equation}
 Recall the connection (\ref{connection}) in Theorem \ref{super} between ${\bf R}$, ${\bf M}$ and the excursions $W^j$, $j \in \cJ$, of the Brownian snake. An easy argument on Poisson point processes combined with (\ref{upperboxrange}) and  (\ref{packingpolar}) implies that almost surely $\cP_{g} \left( \cR_{W^{j}} \cap  \cR_{W^{i}}\right) =0$ for any $i\neq j $ in $\cJ$. Then, 
(\ref{connection}) entails  
$$ \cP_{g} \left( \, \cdot \cap {\bf R} \, \right) = \sum_{j \in \cJ } \cP_{g} \left( \, \cdot \cap \cR_{W^j}   \right) \; .$$
Theorem \ref{super} and (\ref{connection}) thus imply
$$ \kappa_{d,\psi} \, \cP_{g} \left( \, \cdot \cap {\bf R}  \right)  =  \sum_{j \in \cJ } \kappa_{d,\psi}  \,  \cP_{g} \left(\,  \cdot \cap \cR_{ W^{j} } \right)  =\sum_{j \in \cJ }  \cM_{W^{j} } = {\bf M}\; , $$
which is the desired result.

\subsection{Dimension of the range of the $\psi$-SBM.}

  We now prove Theorem \ref{thdimension}. 
To that end, recall that $\xi \! = \! (\xi_t)_{t\geq 0}$ is a continuous process defined on 
the auxiliary measurable space $(\Omega, \cF)$ and recall that  $\P_0$ is a probability measure on $(\Omega, \cF)$ under which $\xi$ is distributed as a standard $d$-dimensional Brownian motion starting from the origin $0$. 
Recall that $(V_t)_{ t\geq 0}$ be a subordinator defined on $(\Omega, \cF, \P_0)$ that is independent of $\xi$ and whose Laplace exponent is $\Dpsistar(\lambda)\! =\! \Dpsi(\lambda) \! -\! \alpha$. Recall from (\ref{defN}) that under $\P_0$, conditionally given $(\xi, V)$, 
$\Nstar(\ddr t \ddr W)=\sum_{j\in\mathcal{J}^*}\delta_{(t_j,W^j)}$ is a Poisson point process on $[0,\infty) \! \times \! C(\R_+,\cW)$ with intensity $\ddr V_t \, \N_{\xi_t} (\ddr W)$. Then recall from (\ref{defMa}) that for all $a \! \in \! \R_+$, we have set $\cM_a^* \! = \! \sum_{{j\in\mathcal{J}^*}}\un_{[0,a]}(t_j)\mathcal{M}_j$ where 
for all $j\! \in \! \mathcal{J}^*$, $\cM_j$ stands for the occupation measure of the snake $W^j$ as defined in (\ref{occuWdef}). Also recall from (\ref{defNatheta}) the definition of the following random variables: 
$$\forall \, t \geq s \geq 0  \quad N_r (s, t)=\#\left\lbrace j\in\cJ^* : s<t_j<t \;  \textrm{and} \,  \cR_j\cap\overline{B}(0,r)\neq\emptyset\right\rbrace,$$
that counts the snakes that are grafted on the spatial spine $\xi$ 
between times $s$ and $t$, and that hit the ball $\overline{B}(0, r)$. 
\begin{lem}\label{Maru}
Assume that $\bdelta>1$ and that $d>\frac{{2\bdelta}}{{\bdelta-1}}$. Then, for all $a \! \in\! (0,\infty)$, and for all $u \! \in \!  (0, \frac{{2\bgamma}}{{\bgamma-1}})$,
\begin{equation}\label{limMa}
\textrm{$\P_0$-a.s.}\quad \liminf\limits_{r\rightarrow0+}\,  r^{-u}\Ma(B(0,r))\, <\infty.
\end{equation}
\end{lem}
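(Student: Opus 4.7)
The argument closely mirrors that of Lemma \ref{upperbound}, with the gauge $g$ replaced by the power function $r\mapsto r^u$ and the radii $(\rho_n)$ from Lemma \ref{lesbonsradis} replaced by the sequence $(s_n)_{n\geq 1}$ provided by Lemma \ref{lemmeTgammabis} (applied with the given $u\in(0,\frac{2\bgamma}{\bgamma-1})$). The first step is purely deterministic: exactly as in the derivation of (\ref{inegMa2}), on the event
$$E_n:=\{N_{s_n}(\vartheta(2s_n),a+\vartheta(2s_n))=0\}\cap\{T_{\vartheta(2s_n)}-T_{\vartheta(2s_{n+1})}\leq s_n^u\}$$
one has $\cM_a^*(B(0,s_n))\leq s_n^u+T_{\gamma(2s_{n+1})}$, since $\vartheta\leq\gamma$ implies $T_{\vartheta(\cdot)}\leq T_{\gamma(\cdot)}$.

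The second step applies a Kochen--Stone argument to the indicators $Y_n:=\un_{E_n}$. By Lemma \ref{lemmeTc}$(i)$, the two indicators defining $Y_n$ are independent, so Lemma \ref{minoprobadelta} (whose hypothesis $d>\frac{2\bdelta}{\bdelta-1}$ is now used) together with the bound $T_{\vartheta(2s_n)}\leq T_{\gamma(2s_n)}$ yields
$$\E_0[Y_n]\,\geq\,\Cr{cminoprobadelta}\,\P_0\!\left(T_{\vartheta(2s_n)}-T_{\vartheta(2s_{n+1})}\leq s_n^u\right)\,\geq\,\Cr{cminoprobadelta}\,\P_0\!\left(T_{\gamma(2s_n)}\leq s_n^u\right),$$
and Lemma \ref{lemmeTgammabis}$(i)$ gives $\sum_n\E_0[Y_n]=\infty$. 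The same independence calculation as in (\ref{KS2}) produces $\E_0[Y_nY_m]\leq \Cr{cminoprobadelta}^{-1}\E_0[Y_n]\E_0[Y_m]$ for $n>m$, so Kochen--Stone implies $\P_0(\sum_n Y_n=\infty)>0$.

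In the third step, on $\{\sum_n Y_n=\infty\}$ infinitely many $E_n$ occur, and the deterministic comparison of the first step combined with Lemma \ref{lemmeTgammabis}$(ii)$ gives
$$\liminf_{r\to 0+} r^{-u}\cM_a^*(B(0,r))\,\leq\,\liminf_{n\to\infty}s_n^{-u}\cM_a^*(B(0,s_n))\,\leq\,1+\limsup_{n\to\infty}\frac{T_{\gamma(2s_{n+1})}}{s_n^u}\,<\,\infty.$$
Finally, the Blumenthal $0$-$1$ argument of Lemma \ref{limdeterm}, relying on Lemma \ref{lemmeJars} to replace $\cM_a^*(B(0,r))$ by $\cM_s^*(B(0,r))$ for arbitrarily small $s$, shows that $\liminf_{r\to 0+} r^{-u}\cM_a^*(B(0,r))$ is $\mathcal{G}_{0+}$-measurable and hence $\P_0$-a.s.\ constant in $[0,\infty]$; the previous paragraph forces this constant to be finite.

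The main obstacle is largely notational: everything reduces to checking that the structural estimates already proved carry over with the new threshold $s_n^u$. The only point that deserves attention is the Kochen--Stone second moment bound, which requires the independence of $Y_n$ and $Y_m$ for $n>m$: this follows from Lemma \ref{lemmeTc}$(i)$ applied to the nested radii $s_{n+1}<s_n<s_{m+1}<s_m$, which are strictly decreasing by construction of $(s_n)$ in Lemma \ref{lemmeTgammabis}. No new estimate on hitting probabilities or on the subordinator $T_{\gamma(\cdot)}$ is required beyond Lemmas \ref{lemmeTc}, \ref{minoprobadelta} and \ref{lemmeTgammabis}.
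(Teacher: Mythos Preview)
Your proposal is correct and follows essentially the same route as the paper's proof: the paper likewise defines $Y'_n=\un_{E_n}$ with the radii $(s_n)$ from Lemma \ref{lemmeTgammabis}, uses the deterministic inequality (\ref{inegMa2}) to bound $\Ma(B(0,s_n))$ by $s_n^u+T_{\gamma(2s_{n+1})}$ on $\{Y'_n=1\}$, applies Kochen--Stone via Lemma \ref{minoprobadelta} and Lemma \ref{lemmeTgammabis}$(i)$ to get $\P_0(\sum Y'_n=\infty)>0$, and concludes with Lemma \ref{lemmeTgammabis}$(ii)$. Your explicit invocation of the Blumenthal $0$--$1$ argument to pass from positive probability to almost sure finiteness is a welcome clarification; the paper handles this implicitly through the remark following Lemma \ref{limdeterm} (which states that the constancy of the liminf holds for any gauge, hence in particular for $r\mapsto r^u$).
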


\begin{proof} 
The present proof is very similar to that of Lemma \ref{minoprobadelta}. We detail only the main steps. 
Recall from Lemma \ref{lemmeTc} the definition of the processes $(T_{\vartheta(r)})_ {r\geq 0}$ and $(T_{\gamma(r)})_{ r\geq 0}$. Recall from (\ref{defNatheta}) 
the definition of $N_r(s,t)$. Recall from Lemma \ref{lemmeTgammabis} the definition of 
the sequence $(s_n)_{n\geq 0}$. Then, 
for all $n\! \geq \! 0$, we set  
\begin{equation}
Y^\prime_n=\un_{\{N_{s_n}(\vartheta(2 s_n),a+\vartheta(2 s_n))=0\}\cap\{T_{\vartheta(2s_n)}-T_{\vartheta(2s_{n+1})}\leq s_n^u\}}.
\end{equation}
Reasoning as in the proof of (\ref{inegMa2}), we get 
\begin{equation}\label{inegMa3}
\textrm{$\P_0$-a.s.~on $\{ Y^\prime_n = 1\}$}, \quad \Ma(B(0,s_n))\leq 1+T_{\gamma(2s_{n+1})}.
\end{equation}
Thus, if we show that 
\begin{equation}\label{ppositivedim}
\P_0\Big( \sum_{n\geq 1}Y^\prime_n=\infty\Big)>0,
\end{equation}
we get (\ref{limMa}), by use of Lemma \ref{lemmeTgammabis} $(ii)$.

  The inequality (\ref{ppositivedim}) is obtained using Kochen Stone Lemma, as in the proof of (\ref{pEpositive}). Indeed, under the assumption $d>\frac{2\bdelta}{\bdelta-1}$, Lemma \ref{minoprobadelta} entails that  for all $n\in\N$, $\P_0(N_{s_n}(\vartheta(2 s_n),a + \vartheta(2 s_n)) \! =\! 0) \! \geq\! \Cr{cminoprobadelta} \! >\! 0$; we then argue exactly as in the proof of Lemma \ref{upperbound} to obtain (\ref{ppositivedim}).  
We leave the details to the reader. \cqfd 
\end{proof}

\bigskip

\noi
\textbf{Proof of Theorem \ref{thdimension}.} Assume that $\bdelta \! >\! 1$ and that $d\! > \! \frac{2\bdelta}{\bdelta -1}$. Recall from (\ref{rangeWdef}) the definition of $\cR$, the range of the Lévy snake. By Theorem \ref{super} and by spatial invariance of the snake, it is sufficient to prove 
$$ \textrm{$\N_0$-a.e.} \quad \dim_p(\cR)=  \overline{{\rm dim}}(\cR) = \frac{2\bgamma}{\bgamma-1} \; .$$
Recall that for every bounded subset $K \! \subset\! \R^d$, $\dim_p(A)\! \leq \! \overline{\dim}(A)$ 
(see e.g.~Falconer \cite{Falbook}). By (\ref{upperboxrange}), it then only remains to prove 
\begin{equation}\label{minodimP}
\textrm{$\N_0$-a.e.} \quad \dim_p(\cR)\geq \frac{2\bgamma}{\bgamma-1} \; .
\end{equation}
Lemma \ref{Maru} comnied with (\ref{corPalm}) implies that for all $u\! \in \! (0,  \frac{2\bgamma}{\bgamma-1})$, 
\begin{equation}\label{MWpp}
\textrm{$\N_0$-a.e.~for $\cM$-almost all $x$},\quad \liminf\limits_{r\rightarrow 0 +} \, r^{-u}\cM(B(x,r))\, <\infty, 
\end{equation} 
which implies that $\N_0$-a.e.~$\dim_p(\cR)\! \geq \! u$ by the comparison results stated here as Theorem \ref{TaTrcomparesu}. This entails (\ref{minodimP}) and the proof of Theorem \ref{thdimension} is completed. \cqfd

%
%
%

{\small

}

\end{document}